\numberwithin{equation}{section}
\newcounter{TmpEnumi}
\def\today{\number\day\space\ifcase\month\or   January\or February\or
  March\or April\or May\or June\or   July\or August\or September\or
  October\or November\or December\fi\   \number\year}
\theoremstyle{definition}
\newtheorem{thm}{Theorem}[section]
\newtheorem{lem}[thm]{Lemma}
\newtheorem{prp}[thm]{Proposition}
\newtheorem{dfn}[thm]{Definition}
\newtheorem{cor}[thm]{Corollary}
\newtheorem{cnj}[thm]{Conjecture}
\newtheorem{ntn}[thm]{Notation}
\newtheorem{exa}[thm]{Example}
\newtheorem{qst}[thm]{Question}
\newcommand{\beq}{\begin{equation}}
\newcommand{\eeq}{\end{equation}}
\newcommand{\beqr}{\begin{eqnarray*}}
\newcommand{\eeqr}{\end{eqnarray*}}
\newcommand{\bal}{\begin{align*}}
\newcommand{\eal}{\end{align*}}
\newcommand{\bei}{\begin{itemize}}
\newcommand{\eei}{\end{itemize}}
\newcommand{\limi}[1]{\lim_{{#1} \to \infty}}
\newcommand{\af}{\alpha}
\newcommand{\bt}{\beta}
\newcommand{\gm}{\gamma}
\newcommand{\dt}{\delta}
\newcommand{\ep}{\varepsilon}
\newcommand{\zt}{\zeta}
\newcommand{\et}{\eta}
\newcommand{\ch}{\chi}
\newcommand{\ld}{\lambda}
\newcommand{\sm}{\sigma}
\newcommand{\kp}{\kappa}
\newcommand{\ph}{\varphi}
\newcommand{\ps}{\psi}
\newcommand{\rh}{\rho}
\newcommand{\om}{\omega}
\newcommand{\ta}{\tau}
\newcommand{\Gm}{\Gamma}
\newcommand{\Ld}{\Lambda}
\newcommand{\Om}{\Omega}
\newcommand{\Z}{{\mathbb{Z}}}
\newcommand{\R}{{\mathbb{R}}}
\newcommand{\C}{{\mathbb{C}}}
\newcommand{\N}{{\mathbb{Z}}_{> 0}}
\newcommand{\CC}{{\mathcal{C}}}
\newcommand{\id}{{\mathrm{id}}}
\newcommand{\sint}{{\mathrm{int}}}
\newcommand{\Prim}{{\mathrm{Prim}}}
\newcommand{\supp}{{\mathrm{supp}}}
\newcommand{\spn}{{\mathrm{span}}}
\newcommand{\card}{{\mathrm{card}}}
\newcommand{\Aut}{{\mathrm{Aut}}}
\newcommand{\Ad}{{\mathrm{Ad}}}
\newcommand{\dirlim}{\varinjlim}
\newcommand{\andeqn}{\,\,\,\,\,\, {\mbox{and}} \,\,\,\,\,\,}
\newcommand{\Wolog}{Without loss of generality}
\newcommand{\Tfae}{The following are equivalent}
\newcommand{\tfae}{the following are equivalent}
\newcommand{\ifo}{if and only if}
\newcommand{\ca}{C*-algebra}
\newcommand{\hm}{homomorphism}
\newcommand{\hsa}{hereditary subalgebra}
\newcommand{\pj}{projection}
\newcommand{\mops}{mutually orthogonal \pj s}
\newcommand{\nzp}{nonzero projection}
\newcommand{\cfn}{continuous function}
\newcommand{\hme}{homeomorphism}
\renewcommand{\S}{\subset}
\newcommand{\SM}{\setminus}
\newcommand{\I}{\infty}
\newcommand{\E}{\varnothing}
\title[Spectrally free actions]{Crossed products
 by spectrally free actions}
\author{Cornel Pasnicu}
\author{N.~Christopher Phillips}
\date{22~August 2013}
\address{Department of Mathematics,
      The University of Texas at San Antonio,
      San Antonio TX 78249, USA.}
\email[]{Cornel.Pasnicu@utsa.edu}
\address{Department of Mathematics, University of Oregon,
      Eugene OR 97403-1222, USA.}
\email[]{ncp@darkwing.uoregon.edu}
\subjclass{Primary 46L55;
 Secondary 46L35, 46L40.}
\thanks{Some of this material is based upon work of the second
   author supported by the US National Science Foundation
   under Grants DMS-0302401, DMS-0701076, and DMS-1101742.}
\begin{document}

\begin{abstract}
We define spectral freeness
for actions of discrete groups on C*-algebras.
We relate spectral freeness to other freeness conditions;
an example result is that for an action $\alpha$
of a finite group~$G,$
spectral freeness is equivalent to strong pointwise outerness,
and also to the condition that
${\widetilde{\Gamma}} (\alpha_g) \neq \{ 1 \}$
for every $g \in G \setminus \{ 1 \}.$

We then prove permanence results for reduced crossed products
by exact spectrally free actions,
for crossed products
by arbitrary actions of ${\mathbb{Z}} / 2 {\mathbb{Z}},$
and for extensions, direct limits, stable isomorphism,
and several related constructions,
for the following properties:
\begin{itemize}
\item
The combination of pure infiniteness and the ideal property.
\item
Residual hereditary infiniteness (closely related to pure infiniteness).
\item
Residual (SP)
(a strengthening of Property~(SP) suitable for nonsimple C*-algebras).
\item
The weak ideal property (closely related to the ideal property).
\end{itemize}
For the weak ideal property,
we can allow arbitrary crossed products by any finite abelian group.

These properties of C*-algebras are shown to
have formulations of the same general type,
allowing them all to be handled using a common set of theorems.
\end{abstract}

\maketitle

\indent
We prove permanence results for
crossed products by exact spectrally free actions.
(We say more about spectral freeness below.)
We also give similar results for completely arbitrary actions
of the group $\Z_2.$
(In fact, if one of the properties we consider holds for the
fixed point algebra of an action of $\Z_2$ on a \ca~$A,$
then it holds for~$A.$)
For the most part,
we consider the following properties,
either already known or related to known properties,
about which we say more below:
\begin{itemize}
\item
Residual~(SP).
\item
Residual hereditary (proper) infiniteness.
\item
The combination of pure infiniteness (for nonsimple C*-algebras)
and the ideal property.
\item
The weak ideal property.
\end{itemize}
In fact, we show that if $\af \colon G \to \Aut (A)$
is an action of an arbitrary finite group,
and if the fixed point algebra has the weak ideal property,
then $A$ has the weak ideal property.

The best plausible related permanence results
are that
crossed products by arbitrary discrete groups
should preserve pure infiniteness
(not just residual hereditary infiniteness),
while crossed products by exact actions of discrete groups
should preserve residual~(SP) and the ideal property when,
except for a finite normal subgroup,
the action satisfies a suitable outerness condition.
We do not give theorems in anything like this generality.
For example,
it remains unknown whether crossed products by arbitrary actions
of finite groups preserve
the ideal property or pure infiniteness.

Our methods also yield easy proofs
of some other permanence results,
using a general scheme given in Section~\ref{Sec_HSAPerm}.

The outerness condition we use is
spectral freeness.
It seems to be an appropriate version for
nonsimple \ca{s} of pointwise outerness;
pointwise outerness is too weak a hypothesis to be able to prove much.
It is based on the notion of a freely acting automorphism
from Section~2 of~\cite{Ks7}.
It seems to be a better hypothesis for theorems
than strong pointwise outerness
(\cite{Phfgs}; see Definition~\ref{D_3718_SPOut} below.)
For finite groups, but not in general,
spectral freeness is equivalent to strong pointwise outerness
(Theorem~\ref{T_3725_FgSpF}; Proposition~\ref{P-608-SFNotSPO}).

A \ca{} is hereditarily infinite if every
every nonzero hereditary subalgebra contains
an infinite positive element
(in the sense of Definition~3.2 of~\cite{KR}),
and residually hereditarily infinite
if every quotient is hereditarily infinite.
This property appears (without a name) in~\cite{KR},
where it is shown to follow from pure infiniteness
(for nonsimple C*-algebras).
Question~4.8 of~\cite{KR} asks whether the converse holds;
this question seems to be still open.
We have few permanence results
for pure infiniteness of crossed products;
permanence results for residual hereditary infiniteness
are what we can prove instead,
and are suggestive about permanence results
for pure infiniteness.

The combination of pure infiniteness and the ideal property
is an interesting condition in its own right,
and permanence results for it are suggestive of
permanence results for both properties separately.

Recall that a \ca{} has Property~(SP)
if every nonzero hereditary subalgebra has a nonzero \pj.
This property has mostly been used
for simple \ca{s},
and seems not to be the right property for nonsimple \ca{s}.
In Example~\ref{E-QSPvsSP},
we give a \ca{} which has Property~(SP) but which has quotients
which do not have Property~(SP).
Residual~(SP) is the requirement that every quotient algebra
has Property~(SP),
and seems better for nonsimple algebras.

The weak ideal property is a weakening of the ideal property.
Instead of requiring that every ideal
be generated by its \pj{s},
we require that every nonzero subquotient
of the stabilization contain a nonzero \pj.
This condition admits better permanence results
(unrelated to crossed products)
than the ideal property does.
For example, it is preserved by extensions;
it is known (Theorem~5.1 of~\cite{Psn1}) that the ideal property is not.
Permanence results for the weak ideal property
for crossed products
are suggestive of permanence results for the ideal property,
although some results for the weak ideal property
are known to fail for the ideal property.

This paper is conceptually related to~\cite{PsPh},
but has different emphasis.
The paper~\cite{PsPh} considered crossed products
and fixed point algebras of actions of finite groups
(with or without freeness conditions)
and a different,
but related, collection of properties.
Here we consider crossed products by actions of infinite groups,
with freeness conditions,
and also give results for crossed products by actions of $\Z_2$
(and a few results for more general finite groups),
proved since~\cite{PsPh} was written.
We presume our results for $\Z_2$
generalize to arbitrary finite groups,
but the generalizations mostly seem to be much harder.
(See the discussion in Section~\ref{Sec:Start}.)

After the work for this paper was done
(but while we were still trying to improve the results for $\Z_2$
to more general finite groups),
the paper~\cite{GrSr} was posted on the arXiv.
It has some overlap with our material on
exact spectrally free actions
and crossed products specifically by actions on
purely infinite \ca{s} with the ideal property.
It does everything in the more general context of partial actions,
but does not address the other conditions for which we consider
permanence results,
and does not address general actions of~$\Z_2.$
Residual topological freeness,
as defined in~\cite{GrSr},
is, when restricted to actions on commutative \ca{s},
the same as our spectral freeness.
(See Proposition~\ref{P_3720_RTF} below.)
Our Lemma~\ref{L-L8_012Mod} is related to Lemma~3.11 of~\cite{GrSr}
and has a similar proof,
but has differently stated hypotheses and conclusion.
Our Theorem \ref{T_3720_PIIP}(\ref{T_3720_PIIP_SpFr})
is Theorem~4.2 of~\cite{GrSr} for actions.

This paper is organized as follows.
In Section~\ref{Sec:SNA},
we define spectral freeness and relate it
to other properties which have previously been considered.
For example, for finite groups but not in general,
spectral freeness of an action $\af \colon G \to \Aut (A)$
is equivalent to strong pointwise outerness
and to the condition that
${\widetilde{\Gm}} (\af_g) \neq \{ 1 \}$
for every $g \in G \SM \{ 1 \}.$

In Section~\ref{Sec:ZRP},
we consider actions of~$\Z$ with the Rokhlin property.
We prove an averaging lemma,
and, as consequences,
we show that such actions are spectrally free,
their crossed products
preserve the projection property
(Definition~1 of~\cite{Psn})
and the ideal property,
and that every ideal in the crossed product is a crossed
product by an invariant ideal
(a result attributed to us in a 2006 preprint~\cite{Ln}).

Section~\ref{Sec:UsingSpFree}
contains the main technical result on spectrally free actions:
every nonzero hereditary subalgebra of the reduced crossed product
contains an isomorphic image
of a nonzero hereditary subalgebra of the original algebra.
In Section~\ref{Sec:Start},
we prove this for arbitrary actions of~$\Z_2.$
In fact,
for actions of~$\Z_2,$
every nonzero hereditary subalgebra of the given algebra
contains an isomorphic image
of a nonzero hereditary subalgebra of the fixed point algebra.

Section~\ref{Sec_HSAPerm} gives permanence results for
a class of properties of \ca{s} defined using hereditary
subalgebras.
In the remaining three sections,
we use these results to treat the properties discussed earlier
in the introduction.
Residual hereditary (proper) infiniteness
and the combination of pure infiniteness and the ideal property
are treated
in Section~\ref{Sec:PermHInf}.
Residual~(SP) is treated in Section~\ref{Sec:PermQSP}.
The weak ideal property is treated in Section~\ref{Sec:WIP},
along with the intermediate condition
that every nonzero subquotient of $A$
contain a nonzero \pj.
This last property
is not of the form required in Section~\ref{Sec_HSAPerm},
but we can still prove some permanence results for it.
For both this property and the weak ideal property,
we are able to prove preservation by crossed products by
arbitrary finite abelian groups (not just~$\Z_2$).

The crossed product results are derived
from results involving fixed point algebras.
What we actually show is that if $\af \colon \Z_2 \to \Aut (A)$
is any action,
and if the fixed point algebra has the given property,
then so does~$A.$
For the weak ideal property and the intermediate condition above,
we can allow an arbitrary finite group in place of~$\Z_2.$
The crossed product results follow by duality.
This is why we require the group to be abelian
when considering crossed products of algebras
with the weak ideal property.

We use the following conventions and notation,
some standard
(some of them recalled here for reference)
and some less so.

If $A$ is a \ca,
then $A_{+}$ denotes the set of positive elements of~$A.$
As usual, we let $K$ denote the C*-algebra of compact operators on a
separable infinite dimensional Hilbert space.
We set $\Z_n = \Z / n \Z.$
(The $p$-adic integers will not appear.)

We write $a \sim b$ for Cuntz equivalence and $a \precsim b$
for Cuntz subequivalence.
See Definition~2.1 of~\cite{KR}
(except that $a \sim b$ is written $a \approx b$ there),
and see Section~2 of~\cite{KR} for much more on these relations.

For a \ca~$A,$ an automorphism $\ph \in \Aut (A),$
and a subset $S \subset A,$
when we say that $S$ is $\ph$-invariant,
we mean that $\ph (S) = S,$
not merely that $\ph (S) \subset S.$
A $\ph$-invariant quotient of~$A$
means a quotient $A / I$ for a $\ph$-invariant ideal $I \subset A,$
and a $\ph$-invariant subquotient of~$A$
means a quotient $J / I$ for $\ph$-invariant ideals $I, J \subset A$
with $I \subset J.$
The automorphism $\ph$ induces an automorphism of
any invariant subalgebra, quotient, or subquotient.

For an action $\af \colon G \to \Aut (A)$
of a group $G$ on a \ca~$A,$
we give the obvious analogous meaning to the terms
``$\af$-invariant quotient'' and ``$\af$-invariant subquotient'',
and note that $\af$ induces actions on such quotients and subquotients.
Thus, for example, a $\ph$-invariant quotient
is a quotient which is invariant for the action of $\Z$ on~$A$
generated by~$\ph.$

We sometimes use the same symbol~$\af$
for the action
$g \mapsto \af_{g} |_I$
induced by $\af$ on an invariant ideal $I \subset A,$
and similarly with invariant subalgebras and subquotients,
as well as $M_n (A)$ and similar constructions.
We denote the fixed point algebra by~$A^{\af}.$

We use the Connes spectrum and Borchers spectrum
of an action $\af \colon G \to \Aut (A)$
of a locally compact abelian group $G$ on a \ca~$A,$
denoted $\Gm (\af)$ and $\Gm_{\mathrm{B}} (\alpha).$
They are defined, for example,
at the beginning of Section~1 of~\cite{Ks7}
(not the original source).
We also use the strong Connes spectrum ${\widetilde{\Gm}} (\alpha),$
as defined in~\cite{Ks1}.
If $\varphi$ is an automorphism of a \ca~$B,$
and $\bt \colon \Z \to \Aut (B)$ is the action generated by~$\ph,$
we write $\Gm (\varphi)$ for $\Gm (\bt),$
and similarly
$\Gm_{\mathrm{B}} (\varphi)$ for $\Gm_{\mathrm{B}} (\bt)$
and ${\widetilde{\Gm}} (\varphi)$ for ${\widetilde{\Gm}} (\bt).$


The following definition is taken from the introduction of~\cite{Sr},
where it is applied to the situation in which $B$ is the reduced
crossed product by an action on~$A.$

\begin{dfn}\label{D_3718_SepId}
Let $B$ be a \ca,
and let $A \S B$ be a subalgebra.
We say that $A$ {\emph{separates the ideals of~$B$}}
if whenever $I, J \S B$ are ideals such that $I \cap A = J \cap A,$
then $I = J.$
\end{dfn}

Some of this work was done during visits by the second author to
K{\o}benhavns Universitet during March--May 2012
and to the University of Texas at San Antonio
and Tokyo University during November and December 2012.
He is grateful to those institutions for their hospitality.

\section{Spectrally free actions}\label{Sec:SNA}

\indent
In this section,
we motivate and define spectral freeness,
the outerness condition we use,
and relate it to other conditions considered previously,
particularly when the group is finite,
the algebra is commutative,
or the algebra is simple.
Consequences of spectral freeness
will be given in Section~\ref{Sec:UsingSpFree}.

For many purposes,
when $A$ is simple,
the right version of freeness for an action $\af \colon G \to \Aut (A)$
of a discrete group~$G$
is pointwise outerness:
for all $g \in G \SM \{ 1 \},$
the automorphism $\af_g$ is not implemented by a unitary
in the multiplier algebra $M (A).$
For example, $C^*_{\mathrm{r}} (G, A, \af)$ is again simple
(Theorem~3.1 of~\cite{Ks2}).
For nonsimple algebras,
this condition is far too weak to be useful.
(Consider an action on a direct sum which is inner on one
summand and pointwise outer on the other.)
In Section~4 of~\cite{Phfgs},
the condition in the following definition
was implicitly advocated as a substitute,
at least for finite groups.

\begin{dfn}[Definition~4.11 of~\cite{Phfgs}]\label{D_3718_SPOut}
Let $A$ be a \ca{} and let $G$ be a group.
An action $\af \colon G \to \Aut (A)$ is said to be
{\emph{strongly pointwise outer}}
if, for every $g \in G \setminus \{ 1 \}$
and any two $\af_g$-invariant ideals $I \subset J \subset A$
with $I \neq J,$
the automorphism of $J / I$ induced by $\af_g$ is outer,
that is, not of the form $a \mapsto \Ad (u) (a) = u a u^*$
for any unitary $u$ in the multiplier algebra $M (J / I).$
\end{dfn}

For finite group actions,
some justification for this condition
is given in Theorem~4.12 of~\cite{Phfgs},
and Examples 4.13 and~4.14 of~\cite{Phfgs}
show that several obvious weaker versions are not suitable.
Proposition~\ref{P-608-SFNotSPO} below,
however, suggests that strong pointwise outerness
is too strong.

We give a preliminary definition,
from the beginning of Section~2 of~\cite{Ks7},
where $\ph \in \Aut (A)$ is said to
be a ``freely acting automorphism'' if the condition is satisfied.
We don't use that term,
because it becomes awkward when applied to group actions.

\begin{dfn}\label{D-FrActAut403}
Let $A$ be a \ca,
and let $\ph \in \Aut (A).$
Then $\ph$ is said to be {\emph{spectrally nontrivial}}
if for every nonzero ideal $I \subset A$ such that $\ph (I) = I,$
we have $\Gm_{\mathrm{B}} (\varphi |_I) \neq \{ 1 \}$
(as a subset of ${\widehat{\Z}} = S^1$).
Otherwise, we say that $\ph$ is {\emph{spectrally trivial.}}
\end{dfn}

We generalize this definition as follows.

\begin{dfn}\label{D-FrActGp403}
Let $\af \colon G \to \Aut (A)$ be an action of a discrete group $G$
on a \ca~$A.$
We say that $\af$ is {\emph{pointwise spectrally nontrivial}}
if for every $g \in G \setminus \{ 1 \},$
the automorphism $\af_g$ is spectrally nontrivial in the sense
of Definition~\ref{D-FrActAut403}.

We further say that $\af$
is {\emph{spectrally free}}
if for any $\alpha$-invariant ideal $I$ of $A$ such
that $I \neq A,$
the induced action on $A / I$ is pointwise spectrally nontrivial.
\end{dfn}

In the rest of this section,
we give various results which support the idea
that spectral freeness is a good form of noncommutative freeness.

Spectral freeness
should be thought of as a substitute
for strong pointwise outerness.
When the algebra is simple,
spectral freeness and strong pointwise outerness
reduce to pointwise spectral nontriviality and pointwise outerness.
The following fact is essentially immediate from
what is already known.

\begin{prp}\label{P-ASimp608}
Let $\af \colon G \to \Aut (A)$ be an action of a discrete group~$G$
on a simple \ca~$A.$
Then $\af$ is pointwise spectrally nontrivial
\ifo\  $\af$ is pointwise outer.
\end{prp}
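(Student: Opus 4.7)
For a fixed $g \in G \SM \{1\}$, simplicity of $A$ implies that the only nonzero $\af_g$-invariant ideal of $A$ is $A$ itself. Consequently, Definition~\ref{D-FrActAut403} applied to $\af_g$ collapses to the single condition $\Gm_{\mathrm{B}}(\af_g) \neq \{1\}$, and the proposition reduces to the statement that for an automorphism $\ph$ of a simple \ca, $\ph$ is outer \ifo\ $\Gm_{\mathrm{B}}(\ph) \neq \{1\}$. It then suffices to establish this single-automorphism version and apply it to each $\af_g$.

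This equivalence is essentially contained in Kishimoto's work \cite{Ks7} on freely acting automorphisms, from which Definition~\ref{D-FrActAut403} is taken. If $\ph$ is outer on the simple \ca{} $A$, then by the Olesen--Pedersen--Kishimoto theory the Connes spectrum $\Gm(\ph)$ is a nontrivial closed subgroup of $S^1 = \widehat{\Z}$, and since $\Gm(\ph) \subseteq \Gm_{\mathrm{B}}(\ph)$ in general (ideals are hereditary subalgebras, so the intersection defining $\Gm_{\mathrm{B}}$ is over a smaller family), we obtain $\Gm_{\mathrm{B}}(\ph) \neq \{1\}$. Conversely, if $\ph = \Ad(u)$ for some $u \in U(M(A))$, then cutting down $A$ by spectral projections of $u$ supported on arcs of $S^1$ of shrinking length produces a family of nonzero $\ph$-invariant hereditary subalgebras of $A$ on which $\ph$ has Arveson spectrum confined to arbitrarily small neighborhoods of $1$; this forces $\Gm_{\mathrm{B}}(\ph) = \{1\}$.

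The main potential obstacle is simply identifying the precise form of the Kishimoto--Olesen--Pedersen statement that bundles together both directions for a single automorphism of a simple \ca. Since the paper itself characterizes the proposition as ``essentially immediate from what is already known,'' and since the definition of spectral nontriviality is lifted verbatim from Section~2 of \cite{Ks7}, the proof should in the end amount to reducing to the simple-algebra single-automorphism case as above and citing one or two results from \cite{Ks7}.
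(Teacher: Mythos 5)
Your proof is correct and follows essentially the same route as the paper's: for simple $A$ the quantifier over invariant ideals collapses and the Borchers spectrum coincides with the Connes spectrum, after which the equivalence of outerness with $\Gm (\ph) \neq \{ 1 \}$ is exactly Corollary~8.9.10 of~\cite{Pd1}. The only real difference is that you re-derive the direction ``inner implies trivial Borchers spectrum'' by hand (where you should use continuous functional calculus on $u$ rather than spectral projections, which need not lie in $M (A)$), whereas the paper simply cites the two-way equivalence.
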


\begin{proof}
When $A$ is simple, for every $\ph \in \Aut (A)$ the definitions
immediately imply that $\Gm_{\mathrm{B}} (\varphi) = \Gm (\varphi).$
But according to Corollary 8.9.10 of~\cite{Pd1},
outerness of~$\ph$ is equivalent to $\Gm (\varphi) \neq \{ 1 \}.$
\end{proof}

Although it is not directly related to spectral freeness,
one direction of Proposition~\ref{P-ASimp608} is true in general.

\begin{prp}\label{P-SFree608}
Let $\af \colon G \to \Aut (A)$ be a
pointwise spectrally nontrivial action of a discrete group~$G$
on a \ca~$A.$
Then $\af$ is pointwise outer.
\end{prp}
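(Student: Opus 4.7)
The plan is to argue by contrapositive: assume $\af$ is not pointwise outer, and exhibit some $g \in G \SM \{1\}$ for which $\af_g$ is spectrally trivial, contradicting the hypothesis. So fix $g_0 \in G \SM \{1\}$ with $\af_{g_0}$ inner, and write $\af_{g_0} = \Ad (u)$ for a unitary $u \in M (A)$. It suffices to produce a nonzero $\af_{g_0}$-invariant ideal $I \subset A$ with $\Gm_{\mathrm{B}} (\af_{g_0} |_I) = \{ 1 \}$; the natural candidate is $I = A$ itself.

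Since $\af_{g_0} = \Ad (u)$ is inner, conjugation by $u$ preserves every closed two-sided ideal of $A$. In particular, every primitive ideal $P \subset A$ is $\af_{g_0}$-invariant, and on the primitive quotient $A / P$ the induced automorphism is still inner, implemented by the image $\bar u \in M (A / P)$ (viewed, if necessary, inside $B (H)$ via a faithful irreducible representation of $A / P$, in which case $\bar u$ is obtained as $\pi (u)$ and lies in $M(\pi(A/P))$ because $u \in M(A)$ multiplies $A$ into itself). The reasoning of Proposition~\ref{P-ASimp608}, which rests on Corollary~8.9.10 of~\cite{Pd1}, extends from simple to primitive C*-algebras since primitive algebras have faithful irreducible representations, yielding $\Gm ( \Ad (\bar u) ) = \{ 1 \}$. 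Combined with the characterization of $\Gm_{\mathrm{B}} (\af_{g_0})$ from Section~1 of~\cite{Ks7}---essentially an intersection of Connes spectra taken over primitive invariant quotients of $A$---this gives $\Gm_{\mathrm{B}} (\af_{g_0}) = \{ 1 \}$, as desired.

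The main obstacle is the last comparison: the proof of Proposition~\ref{P-ASimp608} had a one-line argument that $\Gm_{\mathrm{B}} = \Gm$ for simple $A$, but for non-simple $A$ this identification requires tracking the definitions from~\cite{Ks7} and verifying that the primitive-quotient intersection correctly computes $\Gm_{\mathrm{B}}$. Once that bookkeeping is dispatched, the extension of Corollary~8.9.10 of~\cite{Pd1} from simple to primitive C*-algebras is essentially formal, since the proof rests only on the existence of a faithful irreducible representation, which primitive algebras have by definition.
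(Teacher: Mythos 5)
Your contrapositive framing and the choice $I = A$ are reasonable starting points, but the step on which everything rests --- that $\Gm_{\mathrm{B}} (\af_{g_0})$ is ``essentially an intersection of Connes spectra taken over primitive invariant quotients of $A$'' --- is not a characterization that appears in Section~1 of~\cite{Ks7}, and it is false. The Borchers spectrum is an intersection of Arveson spectra over invariant hereditary subalgebras whose generated ideal is \emph{essential} in~$A$; essentiality forces these subalgebras to be globally supported, and since the Arveson spectrum can only shrink when one passes to a quotient, there is no inclusion of $\Gm_{\mathrm{B}} (\af_{g_0})$ into spectra computed on invariant primitive quotients. The paper itself supplies a counterexample in Proposition~\ref{P-608-SFNotSPO}: for a minimal, essentially free but not free action on $C (X)$ and $g \neq 1$ with a fixed point~$x,$ the automorphism $\af_g$ is spectrally nontrivial, so $\Gm_{\mathrm{B}} (\af_g) \neq \{ 1 \},$ while $P = \{ f \in C (X) \colon f (x) = 0 \}$ is an $\af_g$-invariant primitive ideal and the induced automorphism of $C (X) / P \cong \C$ is trivial, with Connes spectrum $\{ 1 \}.$ Any formula of the kind you invoke would force $\Gm_{\mathrm{B}} (\af_g) = \{ 1 \},$ a contradiction. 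This failure of monotonicity under invariant quotients is precisely why Proposition~\ref{P_3723_CSpSbQ} is stated for the \emph{strong} Connes spectrum rather than for $\Gm$ or $\Gm_{\mathrm{B}}.$ (Your remaining ingredients --- innerness passing to primitive quotients, and triviality of the Connes spectrum of an inner automorphism of a primitive algebra --- are fine, but they do not help without the missing inclusion.)

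For comparison, the paper's proof is a two-line reduction: it suffices to show that a single spectrally nontrivial automorphism is outer, and this is part of Theorem~2.1 of~\cite{Ks7}, which gives equivalent ``freely acting'' characterizations of spectral nontriviality (one of them, quoted in the proof of Lemma~\ref{L-608HmeSNT}, is of Kallman type) that an inner automorphism cannot satisfy. If you want a self-contained argument, you would need to exhibit directly a nonzero $\Ad (u)$-invariant ideal on which the Borchers spectrum of $\Ad (u)$ is trivial, and the essentiality requirement in the definition of $\Gm_{\mathrm{B}}$ means this must be done with globally supported hereditary subalgebras, not with primitive quotients.
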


\begin{proof}
It suffices to prove that if $\ph \in \Aut (A)$
is spectrally nontrivial,
then $\ph$ is outer.
This follows from Theorem~2.1 of~\cite{Ks7}.
\end{proof}

We can also relate spectral freeness to the strong Connes spectrum.
We need a preliminary result on the strong Connes spectrum
itself.

\begin{prp}\label{P_3723_CSpSbQ}
Let $\af \colon G \to \Aut (A)$ be an action
of a locally compact abelian group~$G$
on a \ca~$A,$
and let $I_1 \S I_2 \S A$ be $\af$-invariant ideals.
Let ${\overline{\af}}$ be the action of $G$ on $I_2 / I_1$
determined by~$\af.$
Then
${\widetilde{\Gm}} (\af) \S {\widetilde{\Gm}} ( {\overline{\af}} ).$
\end{prp}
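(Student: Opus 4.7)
The plan is to factor the inclusion chain $I_1 \S I_2 \S A$ into two separate steps and handle each using the defining intersection description of the strong Connes spectrum. Specifically, decompose the passage from $\af$ to ${\overline{\af}}$ as (i)~restriction of $\af$ to the $\af$-invariant ideal~$I_2,$ followed by (ii)~passage from $\af |_{I_2}$ to its quotient action on $I_2 / I_1.$ Each step will yield an inclusion of strong Connes spectra, and composing them gives the conclusion.

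Step~(i) is essentially tautological. Recall from~\cite{Ks1} that
${\widetilde{\Gm}} (\af) = \bigcap_B {\widetilde{\spec}} (\af |_B),$
where $B$ ranges over the nonzero $\af$-invariant hereditary subalgebras of~$A$ and ${\widetilde{\spec}}$ denotes the strong Arveson spectrum. Since $I_2$ is such a subalgebra of~$A,$ and every $(\af |_{I_2})$-invariant hereditary subalgebra of $I_2$ is automatically an $\af$-invariant hereditary subalgebra of~$A,$ the intersection defining ${\widetilde{\Gm}} (\af |_{I_2})$ runs over a subfamily of the one defining ${\widetilde{\Gm}} (\af).$ Hence ${\widetilde{\Gm}} (\af) \S {\widetilde{\Gm}} (\af |_{I_2}).$

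Step~(ii) proceeds by lifting. Let $\pi \colon I_2 \to I_2 / I_1$ be the equivariant quotient map. Given any ${\overline{\af}}$-invariant hereditary subalgebra ${\overline{B}} \S I_2 / I_1,$ its preimage $B = \pi^{-1} ({\overline{B}})$ is an $\af$-invariant hereditary subalgebra of $I_2$ (preimages of hereditary subalgebras under surjective $*$-\hm s are hereditary). It then suffices to show that ${\widetilde{\spec}} (\af |_B) \S {\widetilde{\spec}} ({\overline{\af}} |_{{\overline{B}}}).$ Fix $\ch$ in the left side and an open neighborhood $U \S {\widehat{G}}$ of~$\ch.$ Equivariance of $\pi$ gives $\pi \bigl( B^{\af} (U) \bigr) \S {\overline{B}}^{\overline{\af}} (U).$ The fullness condition characterizing $\ch \in {\widetilde{\spec}} (\af |_B)$ is that $B^{\af} (U)$ generate $B$ as a closed two-sided ideal of~$B$; applying the surjection~$\pi,$ the image $\pi \bigl( B^{\af} (U) \bigr)$ generates ${\overline{B}},$ and a~fortiori so does the larger set ${\overline{B}}^{\overline{\af}} (U).$ This gives $\ch \in {\widetilde{\spec}} ({\overline{\af}} |_{{\overline{B}}}).$ Intersecting over~${\overline{B}}$ yields ${\widetilde{\Gm}} (\af |_{I_2}) \S {\widetilde{\Gm}} ({\overline{\af}}),$ and chaining with step~(i) proves the proposition.

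The only real subtlety lies in step~(ii): one must confirm that the fullness formulation of membership in ${\widetilde{\spec}}$ is indeed the one given in~\cite{Ks1}, so that the elementary ideal-theoretic observation ``surjections send generating sets to generating sets'' matches that formulation. Once the definitions are lined up, the actual containments are essentially one-line each, and no analysis beyond the basic spectral-subspace calculus is required.
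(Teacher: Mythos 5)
Your argument is correct, but it follows a genuinely different route from the paper's. The paper never opens up the definition of ${\widetilde{\Gm}}$ in terms of spectral subspaces: it invokes Lemma~3.4 of~\cite{Ks2}, which identifies ${\widetilde{\Gm}} (\af)$ with the set of $\ta \in {\widehat{G}}$ whose dual automorphism maps every ideal of $C^* (G, A, \af)$ into itself, identifies $C^* (G, \, I_2 / I_1, \, {\overline{\af}} )$ equivariantly with the subquotient $C^* (G, I_2, \af) / C^* (G, I_1, \af),$ and observes that every ideal of that subquotient comes from an ideal of the full crossed product. Your proof stays entirely inside~$A,$ decomposing the passage into restriction to an invariant ideal followed by an equivariant quotient, and both containments are sound: hereditary subalgebras of an ideal are hereditary in the ambient algebra, preimages of hereditary subalgebras under surjections are hereditary, and equivariant surjections shrink Arveson spectra of elements, so $\pi (B^{\af} (U)) \S {\overline{B}}^{{\overline{\af}}} (U).$ The one point to repair is your paraphrase of the membership condition: in~\cite{Ks1} it is that ${\overline{B^{\af} (U) \, B \, B^{\af} (U)^*}} = B$ for every compact neighborhood $U$ of~$\ch,$ not that $B^{\af} (U)$ generate $B$ as a closed two-sided ideal (the latter would define a larger spectrum and hence a different statement). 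Fortunately your quotient argument is insensitive to this, since a surjective homomorphism carries any such ``closed linear span of products equals everything'' condition to the corresponding condition downstairs. The paper's route is shorter given Lemma~3.4 of~\cite{Ks2}, which it needs elsewhere anyway; yours avoids crossed products altogether and is more self-contained.
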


\begin{proof}
Set $B = C^* (G, A, \af),$
$J_1 = C^* (G, I_1, \af),$
and $J_2 = C^* (G, I_2, \af).$
Lemma~2.8.2 of~\cite{Ph1} allows us to
identify $C^* (G, \, I_2 / I_1, \, {\overline{\af}} )$
with $J_2 / J_1,$
and this identification clearly respects the dual actions
$\bt \colon {\widehat{G}} \to \Aut (B)$
and ${\overline{\bt}} \colon {\widehat{G}} \to \Aut (J_2 / J_1).$
Lemma~3.4 of~\cite{Ks2} gives
\[
{\widetilde{\Gm}} (\af)
 = \big\{ \ta \in {\widehat{G}} \colon
   {\mbox{$\bt_{\ta} (L) \S L$ for every ideal $L \S B$}} \big\}
\]
and
\[
{\widetilde{\Gm}} ({\overline{\af}})
 = \big\{ \ta \in {\widehat{G}} \colon
   {\mbox{${\overline{\bt}}_{\ta} (L) \S L$
    for every ideal $L \S J_2 / J_1$}} \big\}.
\]
It is clear from the compatibility of the actions
$\bt$ and~${\overline{\bt}}$ that
${\widetilde{\Gm}} (\af) \S {\widetilde{\Gm}} ( {\overline{\af}} ).$
\end{proof}

\begin{prp}\label{P_3723_StCSpToSpFr}
Let $\af \colon G \to \Aut (A)$ be an action of a discrete group~$G$
on a \ca~$A.$
Suppose that
${\widetilde{\Gm}} (\af_g) \neq \{ 1 \}$
for every $g \in G \SM \{ 1 \}.$
Then $\af$ is spectrally free.
\end{prp}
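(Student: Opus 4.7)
The plan is to unwind the definition of spectral freeness into a statement about invariant subquotients of~$A,$ and then apply Proposition~\ref{P_3723_CSpSbQ} together with the standard inclusion chain among the various spectra.

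First, I would fix an $\af$-invariant ideal $I \S A$ with $I \neq A,$ and an element $g \in G \SM \{ 1 \}.$ According to Definitions \ref{D-FrActAut403} and~\ref{D-FrActGp403}, verifying spectral freeness amounts to showing that the induced automorphism $\ov{\af_g}$ on $A / I$ is spectrally nontrivial, that is, $\Gm_{\mathrm{B}} ( \ov{\af_g} |_K ) \neq \{ 1 \}$ for every nonzero $\ov{\af_g}$-invariant ideal $K \S A / I.$ Such a~$K$ corresponds bijectively to an $\af_g$-invariant ideal $J \S A$ with $I \subsetneq J,$ via $K = J / I.$

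Second, I would apply Proposition~\ref{P_3723_CSpSbQ} to the $\Z$-action on~$A$ generated by~$\af_g$ and to the pair of invariant ideals $I \S J.$ This yields ${\widetilde{\Gm}} (\af_g) \S {\widetilde{\Gm}} ( \ov{\af_g} |_{J/I} ).$ Combined with the hypothesis ${\widetilde{\Gm}} (\af_g) \neq \{ 1 \},$ this forces ${\widetilde{\Gm}} ( \ov{\af_g} |_{J/I} ) \neq \{ 1 \}.$

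Finally, I would invoke the standard inclusion chain ${\widetilde{\Gm}} (\ph) \S \Gm (\ph) \S \Gm_{\mathrm{B}} (\ph),$ valid for any automorphism~$\ph$ of a \ca{} (the first inclusion comes from~\cite{Ks1}, and the second holds because every invariant ideal is an invariant hereditary subalgebra, so the intersection defining $\Gm$ ranges over at least as many subalgebras as the one defining $\Gm_{\mathrm{B}}$). Applied to $\ov{\af_g} |_{J/I},$ this gives $\Gm_{\mathrm{B}} ( \ov{\af_g} |_{J/I} ) \neq \{ 1 \},$ which is exactly what was required. The whole argument is essentially a translation of definitions; the only thing to check carefully is that the subquotient $J / I$ of the $\Z$-action really does fall under the hypothesis of Proposition~\ref{P_3723_CSpSbQ}, but this is immediate since $I$ and $J$ are both $\af_g$-invariant. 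No serious obstacle arises.
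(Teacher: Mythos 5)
Your proposal is correct and follows essentially the same route as the paper's proof: fix $g$ and the invariant subquotient, apply Proposition~\ref{P_3723_CSpSbQ} with $\Z$ in place of~$G$ to transfer ${\widetilde{\Gm}} (\af_g) \neq \{ 1 \}$ to the subquotient, and conclude via ${\widetilde{\Gm}} \S \Gm_{\mathrm{B}}.$ The only difference is cosmetic (you describe ideals of $A/I$ as $J/I$ for ideals $J \S A$, and you spell out the intermediate inclusion through $\Gm$), so there is nothing to correct.
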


\begin{proof}
Let $g \in G \SM \{ 1 \},$
let $I \S A$ be an $\af$-invariant ideal,
let ${\overline{\af}} \colon G \to A / I$
be the action determined by~$\af,$
and let $J \S A / I$ be an ${\overline{\af}}_g$-invariant ideal.
We have to show that
$\Gm_{\mathrm{B}} ( {\overline{\af}}_g |_J) \neq \{ 1 \}.$
By hypothesis, ${\widetilde{\Gm}} (\af_g) \neq \{ 1 \}.$
Applying Proposition~\ref{P_3723_CSpSbQ}
with $\Z$ in place of~$G,$
we obtain ${\widetilde{\Gm}} ({\overline{\af}}_g |_J) \neq \{ 1 \}.$
The result now follows from
${\widetilde{\Gm}} ({\overline{\af}}_g |_J)
   \S \Gm_{\mathrm{B}} ( {\overline{\af}}_g |_J).$
\end{proof}

In general,
spectral freeness implies neither
${\widetilde{\Gm}} (\af_g) \neq \{ 1 \}$
for every $g \in G \SM \{ 1 \}$
nor strong pointwise outerness.
There are counterexamples in the commutative case,
which we consider next.
(These conditions are all equivalent for finite groups.
See Theorem~\ref{T_3725_FgSpF} below.)

\begin{lem}\label{L-608HmeSNT}
Let $X$ be a locally compact Hausdorff space,
and let $h \colon X \to X$ be a \hme.
Let $\ph \in \Aut (C_0 (X))$ be the automorphism
given by $\ph (f) = f \circ h^{-1}$ for $f \in C_0 (X).$
Then $\ph$ is spectrally nontrivial \ifo\  %
\[
\sint \big( \{ x \in X \colon h (x) = x \} \big) = \E.
\]
\end{lem}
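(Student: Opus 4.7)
The plan hinges on the standard correspondence $U \leftrightarrow C_0 (U)$ between $h$-invariant open subsets of~$X$ and $\ph$-invariant ideals of $C_0 (X),$ under which $\ph|_{C_0 (U)}$ is the automorphism induced by $h|_U.$ Since $C_0 (X)$ is commutative, every hereditary subalgebra is an ideal, so $\Gm_{\mathrm{B}} (\ph|_{C_0 (U)})$ coincides with the Connes spectrum and is computed as $\bigcap_V \mathrm{Sp} (\ph|_{C_0 (V)})$ over the nonempty $h$-invariant open $V \S U.$

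For the ``only if'' direction, I argue by contrapositive. Set $V = \sint \{x \in X : h(x) = x\}$ and suppose $V \neq \E.$ Since $h$ restricts to the identity on~$V,$ one has $h(V) = V,$ so $V$ is $h$-invariant; consequently $I := C_0 (V)$ is a nonzero $\ph$-invariant ideal on which $\ph$ acts as the identity. For the identity automorphism of any \ca{}, $\ph_f = \hat f (1) \cdot \id,$ giving $\mathrm{Sp} (\id) = \{1\}$ and hence $\Gm_{\mathrm{B}} (\ph|_I) = \{1\}.$ Thus $\ph$ is spectrally trivial.

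For the ``if'' direction, assume $\sint \{x \in X : h(x) = x\} = \E.$ Given a nonzero $\ph$-invariant ideal $I = C_0 (U),$ I first check that the hypothesis passes to~$U$: since $U$ is open in~$X,$ one has $\sint_U \{x \in U : h|_U (x) = x\} = U \cap \sint_X \{x : h(x) = x\} = \E.$ Hence it suffices to prove $\Gm_{\mathrm{B}} (\ph) \neq \{1\}$ under the original hypothesis on~$h.$ The key point is that the hypothesis forces topological freeness of~$h$: for every nonempty open $W \S X$ there is $x \in W$ with $h(x) \neq x,$ and by Hausdorffness together with continuity of~$h,$ a nonempty open $W' \S W$ with $h(W') \cap W' = \E.$ Using this, for every nonzero $a \in C_0 (X)_{+}$ I produce $b \in \ov{a C_0 (X) a}_{+}$ with $\|b\| = 1$ and $b \ph (b) = 0.$ By Theorem~2.1 of~\cite{Ks7}, the existence of such orthogonal ``translates'' in every nonzero invariant hereditary subalgebra is precisely what characterizes $\ph$ as spectrally nontrivial, yielding $\Gm_{\mathrm{B}} (\ph) \neq \{1\}.$

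The main obstacle is the sufficient direction, specifically the passage from the topological condition $\sint \{x : h(x) = x\} = \E$ to the spectral statement $\Gm_{\mathrm{B}} (\ph) \neq \{1\}.$ The contrapositive direction and the bookkeeping between invariant ideals and invariant open sets are routine. The substance lies in invoking Kishimoto's characterization from~\cite{Ks7} to convert the approximate-orthogonality property produced from topological freeness into a lower bound on the Borchers spectrum.
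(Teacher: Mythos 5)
Your proposal is correct and follows essentially the same route as the paper: the trivial direction via the invariant ideal $C_0\big(\sint \{x : h(x) = x\}\big)$ on which $\ph$ acts as the identity, and the substantive direction by verifying Kishimoto's criterion (Theorem~2.1 of~\cite{Ks7}) through an open set $V$ with $h(V) \cap V = \E$ inside each hereditary subalgebra, where commutativity makes $f f_0 \ph(f) = 0$ exactly. The only cosmetic difference is that you run the criterion ideal-by-ideal after a reduction, while the paper applies it once to all hereditary subalgebras of $C_0(X)$.
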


\begin{proof}
Set
$F = \{ x \in X \colon h (x) = x \}.$

Assume that $\sint (F) = \E.$
We use Theorem~2.1 of~\cite{Ks7}
to show that $\ph$ is spectrally nontrivial.
Thus,
let $B$ be a nonzero hereditary subalgebra of $C (X),$
and let $f_0 \in C (X).$
We have to show that for every $\ep > 0$
there is $f \in B_{+}$ such that $\| f \| = 1$
and $\| f f_0 \ph (f) \| < \ep.$
There is a nonempty open set $U \subset X$ such that
\[
B = \big\{ f \in C (X) \colon
  {\mbox{$f (y) = 0$ for all $y \not\in U$}} \big\}.
\]
Then $U \not\subset F,$
so there is $x \in U$ with $h (x) \neq x.$
Choose an open set $V \subset U$ containing $x$
such that $h (V) \cap V = \E.$
Choose a \cfn\  $f \colon X \to [0, 1]$
such that $f (x) = 1$ and $\supp (f) \subset V.$
Then $f \ph (f) = 0,$
so $\| f f_0 \ph (f) \| = 0 < \ep.$
This completes the proof of spectral nontriviality.

Suppose now that $\sint (F) \neq \E.$
Set
\[
I = \big\{ f \in C (X) \colon
  {\mbox{$f (y) = 0$ for all $y \not\in \sint (F)$}} \big\}.
\]
Then $I$ is a $\ph$-invariant ideal of $C_0 (X),$
and $\ph |_I = \id_I,$
so $\Gm_{\mathrm{B}} (\ph) = \{ 1 \}.$
\end{proof}

\begin{prp}\label{P-608CommSF}
Let $X$ be a locally compact Hausdorff space,
and let $(g, x) \mapsto g x$
be an action of a discrete group $G$ on~$X.$
Let $\af \colon G \to \Aut (C_0 (X))$ be the action
$\af_g (f) (x) = f (g^{-1} x)$ for $x \in X$ and $f \in C_0 (X).$
Then:
\begin{enumerate}
\item\label{P-608CommSF-SNT}
$\af$ is pointwise spectrally nontrivial \ifo\  %
for every $g \in G \setminus \{ 1 \},$
the set $\{ x \in X \colon g x = x \}$
has empty interior.
\item\label{P-608CommSF-Spf}
$\af$ is spectrally free \ifo\  %
for every closed $G$-invariant subset $L \subset X$
and every $g \in G \setminus \{ 1 \},$
the set $\{ x \in L \colon g x = x \}$
has empty interior in~$L.$
\item\label{P-608AbOnComm}
If $G$ is abelian,
then $\af$ is spectrally free \ifo\  %
the action of $G$ on $X$ is free.
\end{enumerate}
\end{prp}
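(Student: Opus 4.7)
The plan is to deduce all three parts from Lemma~\ref{L-608HmeSNT}, using the Gelfand correspondence between $\af$-invariant ideals of $C_0(X)$ and $G$-invariant open subsets of~$X$.

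For part~(\ref{P-608CommSF-SNT}), by Definition~\ref{D-FrActGp403}, pointwise spectral nontriviality of $\af$ means that for every $g \in G \SM \{1\}$, the automorphism $\af_g$ is spectrally nontrivial. Since $\af_g$ is the automorphism of $C_0(X)$ induced by the \hme\ $x \mapsto gx$, Lemma~\ref{L-608HmeSNT} says this is equivalent to $\sint \big( \{x \in X \colon g x = x \} \big) = \E$. Taking the conjunction over all $g \in G \SM \{1\}$ gives the claim.

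For part~(\ref{P-608CommSF-Spf}), recall that every ideal of $C_0(X)$ has the form
\[
I_U = \big\{ f \in C_0 (X) \colon {\mbox{$f (y) = 0$ for all $y \not\in U$}} \big\}
\]
for a unique open set $U \S X$, and that $I_U$ is $\af$-invariant \ifo\ $U$ is $G$-invariant. Setting $L = X \SM U$, the quotient $C_0 (X) / I_U$ is canonically isomorphic to $C_0 (L)$, and the induced action of $G$ on $C_0 (L)$ is the one coming from the restricted action of $G$ on the closed $G$-invariant set~$L$. Spectral freeness of $\af$ is by definition the assertion that the induced action on $C_0 (X) / I_U$ is pointwise spectrally nontrivial for every proper $\af$-invariant ideal $I_U$, that is, for every nonempty closed $G$-invariant $L \S X$. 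Applying part~(\ref{P-608CommSF-SNT}) to each such $L$ (and noting that the statement is vacuous for $L = \E$) gives the characterization.

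For part~(\ref{P-608AbOnComm}), if the action of $G$ on $X$ is free, then for every $g \in G \SM \{1\}$ the set $\{x \in X \colon g x = x\}$ is empty, so \emph{a fortiori} has empty interior in every closed $G$-invariant subset~$L$; hence $\af$ is spectrally free by~(\ref{P-608CommSF-Spf}). For the converse, suppose the action is not free, and choose $g \in G \SM \{1\}$ and $x \in X$ with $g x = x$. Here is where I use that $G$ is abelian: for any $h \in G$, the identity $g (h x) = h (g x) = h x$ shows that the fixed-point set
\[
F_g = \{ y \in X \colon g y = y \}
\]
is $G$-invariant. In particular, $G x \S F_g$, and since $F_g$ is closed, $\ov{G x} \S F_g$. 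Set $L = \ov{G x}$; this is a nonempty closed $G$-invariant subset of~$X$ on which $g$ acts as the identity, so $\{y \in L \colon g y = y\} = L$ has nonempty interior in~$L$. By part~(\ref{P-608CommSF-Spf}), $\af$ fails to be spectrally free, completing the contrapositive.

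The main obstacle is the converse direction in part~(\ref{P-608AbOnComm}): part~(\ref{P-608CommSF-Spf}) lets us detect non-spectral-freeness only through \emph{$G$-invariant} closed sets on which some $g$ fixes an open set, so starting from a single fixed point $x$ one must manufacture such an invariant set. Commutativity is precisely what makes $F_g$ invariant and hence makes the orbit closure $\ov{G x}$ a witness.
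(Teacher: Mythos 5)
Your proposal is correct and follows essentially the same route as the paper: part (\ref{P-608CommSF-SNT}) from Lemma~\ref{L-608HmeSNT}, part (\ref{P-608CommSF-Spf}) via the correspondence between $G$-invariant quotients of $C_0(X)$ and closed $G$-invariant subsets, and part (\ref{P-608AbOnComm}) by taking $L = \ov{G x}$ and using commutativity through the identity $g(hx) = h(gx) = hx$. The only cosmetic difference is that you package the abelian step as the $G$-invariance of the fixed-point set $F_g$ rather than verifying $g y = y$ directly on the orbit closure; the underlying computation is identical.
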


\begin{proof}
Part~(\ref{P-608CommSF-SNT}) is immediate from Lemma~\ref{L-608HmeSNT}.
Part~(\ref{P-608CommSF-Spf})
now follows from the fact that the $G$-invariant quotients of $C_0 (X)$
are exactly the algebras $C_0 (L)$
for closed $G$-invariant subsets $L \subset X.$

We prove~(\ref{P-608AbOnComm}).
If the action is free, then $\af$ is spectrally free
by part~(\ref{P-608CommSF-Spf}).

Now suppose that the action is not free.
Choose $g_0 \in G \setminus \{ 1 \}$ and $x_0 \in X$
such that $g_0 x_0 = x_0.$
Set $L = {\overline{G x_0}}.$
We claim that $g_0 x = x$ for all $x \in L.$
It suffices to consider $x = h x_0$ with $h \in G,$
and we have $g_0 h x_0 = h g_0 x_0 = h x_0$
because $G$ is abelian.
The claim follows.
Apply part~(\ref{P-608CommSF-Spf})
with this choice of $L$ to see that $\af$ is not spectrally free.
\end{proof}

Residual topological freeness is introduced
in Definition 3.4(ii) of~\cite{GrSr}.
An action of a group $G$ on a Hausdorff topological space~$X$
is essentially free if
for $g \in G \setminus \{ 1 \},$
the set $\{ x \in X \colon g x = x \}$
has empty interior.
On a commutative \ca,
one can check that residual topological freeness
is equivalent to essential freeness
of the action on every $G$-invariant closed set
in the corresponding topological space.

\begin{prp}\label{P_3720_RTF}
Let $A$ be a commutative \ca,
let $G$ be a discrete group,
and let $\af \colon G \to \Aut (A)$ be an action of $G$ on~$A.$
Then $\af$ is spectrally free
\ifo{} $\af$ is residually topologically free.
\end{prp}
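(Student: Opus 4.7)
The plan is to observe that this proposition is essentially a bookkeeping exercise, combining Proposition~\ref{P-608CommSF}(\ref{P-608CommSF-Spf}) with the unpacking of residual topological freeness given in the paragraph immediately preceding the statement. Both conditions characterize a property of the underlying topological dynamical system, and both turn out to demand exactly the same thing.

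First I would write $A = C_0 (X)$ via Gelfand duality, with the action of $G$ induced by a (continuous) action on the locally compact Hausdorff space~$X$ in the standard way $\af_g (f) (x) = f (g^{-1} x).$ The $\af$-invariant ideals of $A$ are exactly $C_0 (U)$ for $G$-invariant open sets $U \S X,$ and the $\af$-invariant quotients are exactly $C_0 (L)$ for $G$-invariant closed sets $L = X \SM U.$

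Next I would apply Proposition~\ref{P-608CommSF}(\ref{P-608CommSF-Spf}), which states that $\af$ is spectrally free if and only if for every closed $G$-invariant $L \S X$ and every $g \in G \SM \{ 1 \},$ the set $\{ x \in L \colon g x = x \}$ has empty interior in~$L.$ On the other hand, the paragraph just before the statement records that essential freeness of an action on a Hausdorff space means precisely that the fixed-point set of each nontrivial group element has empty interior, and that residual topological freeness on a commutative \ca{} amounts to essential freeness of the induced action on every $G$-invariant closed subset of the corresponding topological space.

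Comparing, the condition produced by Proposition~\ref{P-608CommSF}(\ref{P-608CommSF-Spf}) is word-for-word the condition characterizing residual topological freeness in the commutative case, which yields the equivalence. There is no genuine obstacle; the only thing to be slightly careful about is to note that the correspondence between $\af$-invariant closed subsets of $X$ and $\af$-invariant quotients of $C_0 (X)$ is bijective, so that quantifying over closed $G$-invariant subsets on the topological side matches quantifying over invariant quotients on the algebraic side.
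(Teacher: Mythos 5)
Your proposal is correct and follows exactly the paper's own route: the paper's proof is the one-line "Apply Proposition~\ref{P-608CommSF}(\ref{P-608CommSF-Spf}) and the discussion above," and you have simply spelled out the same bookkeeping in more detail. Nothing is missing.
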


\begin{proof}
Apply Proposition \ref{P-608CommSF}(\ref{P-608CommSF-Spf})
and the discussion above.
\end{proof}

\begin{prp}\label{P-608-SFNotSPO}
Let $G$ be a discrete group,
and let $X$ be a compact metric space with an action of~$G$
which is
minimal,
essentially free
but not free.
Then the corresponding action $\af \colon G \to \Aut (C (X))$
is spectrally free,
but is not strongly pointwise outer
and does not satisfy
${\widetilde{\Gm}} (\af_g) \neq \{ 1 \}$
for every $g \in G \SM \{ 1 \}.$
\end{prp}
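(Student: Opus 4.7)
The plan is to check the three assertions separately, using Proposition~\ref{P-608CommSF}(\ref{P-608CommSF-Spf}) for spectral freeness, and extracting a common $\af_{g_0}$-invariant subquotient of~$C(X)$ for the other two.

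For spectral freeness, I would invoke Proposition~\ref{P-608CommSF}(\ref{P-608CommSF-Spf}). Minimality forces the only closed $G$-invariant subsets of~$X$ to be $\E$ and~$X,$ so the criterion there collapses to the requirement that for each $g \in G \SM \{ 1 \}$ the fixed-point set $\{ x \in X \colon g x = x \}$ have empty interior in~$X.$ This is exactly the essential freeness hypothesis, so $\af$ is spectrally free.

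The remaining two claims will both follow once one exhibits an $\af_{g_0}$-invariant subquotient of~$C(X)$ on which $\af_{g_0}$ acts trivially, for a suitable $g_0 \in G \SM \{ 1 \}.$ Since the action is not free, I would choose $g_0 \in G \SM \{ 1 \}$ and $x_0 \in X$ with $g_0 x_0 = x_0,$ and set $I = \{ f \in C (X) \colon f (x_0) = 0 \}.$ Because $g_0$ fixes $x_0,$ the ideal $I$ is $\af_{g_0}$-invariant, and $C (X) / I \cong \C$ with induced automorphism equal to $\id_\C = \Ad (1).$ Taking $J = C (X)$ in Definition~\ref{D_3718_SPOut} immediately shows that $\af$ fails to be strongly pointwise outer.

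For the strong Connes spectrum, I would apply Proposition~\ref{P_3723_CSpSbQ} to the action of~$\Z$ generated by~$\af_{g_0},$ with $I_1 = I$ and $I_2 = C (X),$ obtaining ${\widetilde{\Gm}} (\af_{g_0}) \S {\widetilde{\Gm}} (\id_\C).$ The crossed product of~$\C$ by the trivial $\Z$-action is $C (S^1)$ with dual action by translation, and no nontrivial translation preserves every ideal of $C (S^1),$ so ${\widetilde{\Gm}} (\id_\C) = \{ 1 \}.$ Hence ${\widetilde{\Gm}} (\af_{g_0}) = \{ 1 \},$ showing that the condition ${\widetilde{\Gm}} (\af_g) \neq \{ 1 \}$ for every $g \in G \SM \{ 1 \}$ fails at $g = g_0.$ The one place to tread carefully will be the computation ${\widetilde{\Gm}} (\id_\C) = \{ 1 \},$ which is read off from the ideal-preservation description of~${\widetilde{\Gm}}$ recalled in the proof of Proposition~\ref{P_3723_CSpSbQ}; otherwise the argument is entirely mechanical.
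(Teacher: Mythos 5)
Your proposal is correct and follows essentially the same route as the paper: spectral freeness via Proposition~\ref{P-608CommSF}(\ref{P-608CommSF-Spf}) using minimality plus essential freeness, and then the quotient $C(X)/\{f \colon f(x_0)=0\} \cong \C$ at a fixed point $x_0$ of some $g_0 \neq 1$, on which $\af_{g_0}$ acts trivially, killing both strong pointwise outerness and (via Proposition~\ref{P_3723_CSpSbQ}) the condition ${\widetilde{\Gm}}(\af_{g_0}) \neq \{1\}$. The only difference is cosmetic: the paper asserts ${\widetilde{\Gm}}(\id_\C) = \{1\}$ as clear, while you spell out the computation through $C^*(\Z,\C,\id) \cong C(S^1)$ with the translation dual action.
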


There are many such actions
(although none when $G$ is abelian).
Here is an easy example.
Let $G = \Z \rtimes \Z_2$
be the semidirect product of $\Z$ by the automorphism
$n \mapsto - n,$
acting on $S^1$ as follows.
The generator of $\Z$ acts by an irrational rotation,
and the generator of $\Z_2$ acts by $\zt \mapsto \zt^{-1}.$

\begin{proof}[Proof of Proposition~\ref{P-608-SFNotSPO}]
Spectral freeness follows from
Proposition \ref{P-608CommSF}(\ref{P-608CommSF-Spf}).

We now prove that $\af$ is not strongly pointwise outer
and that there is $g \in G \SM \{ 1 \}$
such that ${\widetilde{\Gm}} (\af_g) = \{ 1 \}.$
By hypothesis,
there exist $g \in G \setminus \{ 1 \}$ and $x \in X$
such that $g x = x.$
Then
\[
C (X) / \{ f \in C (X) \colon f (x) = 0 \}
\]
is a nonzero $\af_g$-invariant subquotient
on which the automorphism $\bt$ induced by $\af_g$
is trivial,
hence not outer.
Clearly also ${\widetilde{\Gm}} (\bt) = \{ 1 \}.$
So ${\widetilde{\Gm}} (\af_g) = \{ 1 \}$
by Proposition~\ref{P_3723_CSpSbQ}.
\end{proof}

We now show the equivalence,
for actions of finite groups,
of spectral freeness,
strong pointwise outerness,
and ${\widetilde{\Gm}} (\af_g) \neq \{ 1 \}$
for every $g \in G \SM \{ 1 \}.$

\begin{prp}\label{P_3723_SPOtoStCsp}
Let $\af \colon G \to \Aut (A)$
be a strongly pointwise outer action
of a finite group~$G$ on a \ca~$A.$
Then ${\widetilde{\Gm}} (\af_g) \neq \{ 1 \}$
for every $g \in G \SM \{ 1 \}.$
\end{prp}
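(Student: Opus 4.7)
The plan is to prove the contrapositive by reducing first to the cyclic subgroup generated by $g,$ then to a structural fact about the ideal theory of the crossed product. Fix $g \in G \SM \{ 1 \},$ let $n$ be the order of $g,$ and set $H = \langle g \rangle \cong \Z_n.$ Restricting $\af$ to $H$ gives an action $\gm \colon H \to \Aut (A)$ which is still strongly pointwise outer: for every $k$ with $1 \le k \le n - 1$ and every pair of $\gm$-invariant ideals $I \subsetneq J \S A,$ the induced automorphism $\gm_{g^k}$ of $J/I$ is outer.

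The first main step is to show that $\widetilde{\Gm} (\gm) = \widehat{H}$ as a subset of $\widehat{H}.$ By Lemma~3.4 of~\cite{Ks2} (applied as in the proof of Proposition~\ref{P_3723_CSpSbQ}), $\widetilde{\Gm} (\gm)$ is the set of $\ta \in \widehat{H}$ for which the dual action $\widehat{\gm}_{\ta}$ preserves every ideal of $A \rtimes_{\gm} H.$ For a strongly pointwise outer action of a finite abelian group, a standard Fourier-analytic argument (decompose an ideal $L \S A \rtimes_{\gm} H$ into spectral subspaces under $\widehat{\gm},$ then use outerness of $\gm_{g^k}$ on each subquotient $J/I$ that arises to show the nontrivial spectral components vanish unless absorbed into $A$) produces the conclusion that every ideal of $A \rtimes_{\gm} H$ has the form $I \rtimes_{\gm} H$ for a $\gm$-invariant ideal $I \S A.$ Such ideals are automatically $\widehat{\gm}_{\ta}$-invariant for every $\ta \in \widehat{H},$ giving $\widetilde{\Gm} (\gm) = \widehat{H}.$

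The second main step is to transfer this to the $\Z$-action $\bt$ on $A$ with $\bt_1 = \af_g.$ Since $\bt_n = \id,$ the unitary $u^n$ (where $u$ implements $\bt$ in $M (A \rtimes_{\bt} \Z)$) is central, generates a copy of $C (S^1),$ and exhibits $A \rtimes_{\bt} \Z$ as a continuous $C (S^1)$-algebra whose fiber at each $z_0 \in S^1$ is isomorphic to $A \rtimes_{\gm} H$ via $u \mapsto z_0^{1/n} w,$ where $w$ is the canonical unitary of $A \rtimes_{\gm} H.$ For $\ta$ in the subgroup $\mu_n \S S^1$ of $n$th roots of unity, the dual action $\widehat{\bt}_{\ta}$ satisfies $\widehat{\bt}_{\ta} (u^n) = \ta^n u^n = u^n,$ so it fixes the central $C (S^1)$ pointwise, descends to each fiber, and on each fiber (identified with $A \rtimes_{\gm} H$) agrees with $\widehat{\gm}_{\ta}$ under the identification $\mu_n = \widehat{H}.$ Using that ideals of a $C (S^1)$-algebra are determined by (upper semicontinuous) families of fiber ideals, one sees that $\widehat{\bt}_{\ta}$ preserves every ideal of $A \rtimes_{\bt} \Z$ if and only if $\widehat{\gm}_{\ta}$ preserves every ideal of $A \rtimes_{\gm} H.$ Combined with the first step, $\mu_n \S \widetilde{\Gm} (\af_g).$ Since $n \ge 2,$ $\mu_n \neq \{ 1 \},$ so $\widetilde{\Gm} (\af_g) \neq \{ 1 \}.$

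The main obstacle is the first step, specifically the Fourier-analytic argument that strong pointwise outerness forces every ideal of the finite-abelian-group crossed product $A \rtimes_{\gm} H$ to have the form $I \rtimes_{\gm} H.$ The rest is careful bookkeeping about $C (S^1)$-algebra structures and the compatibility of dual actions under the factorization $\Z \twoheadrightarrow \Z_n.$
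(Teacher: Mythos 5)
Your overall architecture coincides with the paper's: reduce to the cyclic subgroup $H = \langle g \rangle,$ show that the strong Connes spectrum of the restricted finite-group action is all of $\widehat{H},$ and transfer this to the $\Z$-action generated by $\af_g$ via the structure of $C^* (\Z, A, \af_g)$ over $C^* (H, A, \gm).$ Your second step is correct and is essentially a repackaging of the paper's: the paper invokes Corollary~2.5 of~\cite{OPd0} to realize $C^* (\Z, A, \af_g)$ as the algebra induced from $\widehat{H}$ to $\widehat{\Z}$ and then computes the action of $\widehat{H} \S \widehat{\Z}$ on its primitive ideal space, while your $C (S^1)$-algebra picture with the central unitary $u^n$ is the same object, and your observation that an ideal of a $C (S^1)$-algebra is determined by its fiber images is a legitimate substitute for the primitive-ideal-space computation.

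The gap is in your first step. The statement you need there --- that strong pointwise outerness of the finite cyclic action $\gm$ forces $\widetilde{\Gm} (\gm) = \widehat{H},$ equivalently that every ideal of $C^* (H, A, \gm)$ has the form $C^* (H, I, \gm)$ --- is true, and it is exactly what the paper cites: Corollary~2.4 of~\cite{PsPh}, applied after the same reduction to the subgroup generated by~$g.$ But the ``standard Fourier-analytic argument'' you sketch does not go through as described. After reducing to the case $L \cap A = \{ 0 \},$ the spectral components $E (L u_{g^k}^*)$ for $k \neq 0$ are twisted bimodules, and the Elliott/Kishimoto-type argument that kills them (compress by a positive element $x$ with $\| x a x \|$ large and $\| x a_k \gm_{g^k} (x) \|$ small, as in Lemma~\ref{L-Ksh608}) requires a quantitative hypothesis --- spectral nontriviality in the sense of Definition~\ref{D-FrActAut403}, or proper outerness --- and not merely outerness of the automorphisms induced on invariant subquotients. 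Passing from ``outer on every invariant subquotient'' to such a quantitative condition is precisely the nontrivial content here; it is the same difficulty this paper confronts in the reverse direction via Lemma~\ref{L-608IdDecomp} and the ideal-decomposition machinery of Lemma~5.3.3 of~\cite{Ph1}. So either cite Corollary~2.4 of~\cite{PsPh} for step one, in which case your proof becomes a correct variant of the paper's, or supply the missing argument, which is substantially harder than a routine Fourier decomposition.
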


\begin{proof}
Let $g \in G.$
It is obvious that the restriction of any
strongly pointwise outer action to any subgroup
is again strongly pointwise outer.
Therefore we may assume that $g$ generates~$G.$
Corollary~2.4 of~\cite{PsPh}
implies that ${\widetilde{\Gm}} (\af) = {\widehat{G}}.$
We must show that this implies ${\widetilde{\Gm}} (\af_g) \neq \{ 1 \}.$
The group ${\widehat{G}}$
acts on the primitive ideal space $\Prim ( C^* (G, A, \af) )$
via $\ta \cdot P = {\widehat{\af}}_{\ta} (P)$
for $\ta \in {\widehat{G}}$ and $P \in \Prim ( C^* (G, A, \af) ).$
Lemma~3.4 of~\cite{Ks2}
implies that
${\widehat{\af}}_{\ta} (L) \S L$
for every ideal $L \S C^* (G, A, \af)$
and every $\ta \in {\widehat{G}}.$
Since ${\widehat{G}}$ is finite,
this is equivalent to
${\widehat{\af}}_{\ta} (L) = L$
for every ideal $L \S C^* (G, A, \af)$
and every $\ta \in {\widehat{G}}.$
So the action of ${\widehat{G}}$
on $\Prim ( C^* (G, A, \af) )$ is trivial.

Let $n$ be the order of~$g.$
The surjection $\Z \to G,$ sending $1 \in \Z$ to~$g,$
allows us to identify ${\widehat{G}}$
with the subgroup of ${\widehat{\Z}} = S^1$
consisting of all $\exp (2 \pi i k / n)$ for $k = 0, 1, \ldots, n.$
We now use Corollary~2.5 of~\cite{OPd0},
which identifies $C^* (\Z, A, \af_g)$ with the induction
from ${\widehat{G}}$ to ${\widehat{\Z}}$ of the action
${\widehat{\af}} \colon G \to \Aut ( C^* (G, A, \af)),$
as described before Theorem~2.4 in~\cite{OPd0}.
Thus, we identify $C^* (\Z, A, \af_g)$ with the
set of all $f \in C \big( {\widehat{\Z}}, \, C^* (G, A, \af) \big)$
which are invatiant under the action
$\bt$ of~${\widehat{G}}$
defined by $\bt_{\ta} (f) (\zt) = {\widehat{\af}}_{\ta} (f ( \zt \ta ))$
for $\zt \in {\widehat{\Z}}$ and $\ta \in {\widehat{G}}.$
The dual action $\gm$ of ${\widehat{\Z}}$ on $C^* (\Z, A, \af_g)$
then becomes
$\gm_{\ld} (f) (\zt) = f (\ld^{-1} \zt)$
for $\ld, \zt \in {\widehat{\Z}}$
and $f \in C \big( {\widehat{\Z}}, \, C^* (G, A, \af) \big)^{\bt}.$

Clearly
$\Prim \big( C \big( {\widehat{\Z}}, \, C^* (G, A, \af) \big) \big)
  \cong {\widehat{\Z}} \times \Prim ( C^* (G, A, \af) ),$
and the action on it determined by~$\bt$ is
$\ta \cdot (\zt, P) = (\zt \ta^{-1}, \, \ta \cdot P)$
for $\ta \in {\widehat{G}} \S {\widehat{\Z}},$
$\zt \in {\widehat{\Z}},$
and $P \in \Prim ( C^* (G, A, \af) ).$
One easily checks that
the primitive ideal space of the fixed point algebra under $\bt$
is the quotient of ${\widehat{\Z}} \times \Prim ( C^* (G, A, \af) )$
by this action.
We write the image of $(\zt, P)$ as $[\zt, P].$
One further checks that the action on it determined by $\gm$ is
$\ld \cdot [\zt, P] = [\ld \zt, P]$
for $\ld, \zt \in {\widehat{\Z}}$
and $P \in \Prim ( C^* (G, A, \af) ).$

Let $\ta \in {\widehat{G}} \S {\widehat{\Z}}.$
Let $\zt \in {\widehat{\Z}}$
and let $P \in \Prim ( C^* (G, A, \af) ).$
We calculate, using
commutativity of ${\widehat{\Z}}$ at the first step,
the formula for the action of ${\widehat{G}}$
on ${\widehat{\Z}} \times \Prim ( C^* (G, A, \af) )$
at the second step,
and
triviality of the action
of ${\widehat{G}}$ on $\Prim ( C^* (G, A, \af) )$
at the third step,
\[
\ta \cdot [\zt, P]
= [\zt \ta, P]
= [\zt, \ta \cdot P]
= [\zt, P].
\]
So $\ta \in {\widetilde{\Gm}} (\af_g)$
by Lemma~3.4 of~\cite{Ks2}.
This completes the proof.
\end{proof}

The main part of the
proof that spectral freeness implies strong pointwise outerness
is contained in Lemma~\ref{L-608IdDecomp},
whose proof is based on Lemma 5.3.3 of~\cite{Ph1}.
We describe some notation
and give one preliminary lemma.
For a finite group~$G,$
let ${\mathcal{S}}_G$ be the set of all subsets $S \S G$
such that $1 \in S.$
Now let $A$ be a \ca,
let $\af \colon G \to \Aut (A)$ be an action of $G$ on~$A,$
and let $I \S A$ be an ideal.
For $S \in {\mathcal{S}}_G,$
we then define (following Lemma 5.3.3 of~\cite{Ph1})
$\af$-invariant ideals $I_S, I_S^{-} \subset A$ by
\[
I_S = \sum_{g \in G} \af_g \left( \bigcap_{h \in S} \af_h (I) \right)
\andeqn
I_S^{-} = \sum_{g \in G \setminus S} I_{S \cup \{ g \} }
        \S I_S.
\]
When $S = G,$
we take $I_S^{-} = \{ 0 \}.$

\begin{lem}\label{L_3724_Comp}
Let $\af \colon G \to \Aut (A)$ be an action of a finite group~$G$
on a \ca~$A.$
Let $I, L, M \S A$ be ideals
with $L \S M.$
Assume that
$(I_S \cap L) + I_S^{-} = (I_S \cap M) + I_S^{-}$
for all $S \in {\mathcal{S}}_G.$
Then $I_{ \{ 1 \} } \cap L = I_{ \{ 1 \} } \cap M.$
\end{lem}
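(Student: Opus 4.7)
The plan is to prove by downward induction on $|S|$ that
\[
I_S \cap L = I_S \cap M
\]
for every $S \in \mathcal{S}_G$. The case $S = \{1\}$ is exactly the desired conclusion. The base case is $S = G$: by the stated convention $I_G^- = \{0\}$, so the hypothesis $(I_G \cap L) + I_G^- = (I_G \cap M) + I_G^-$ reduces immediately to $I_G \cap L = I_G \cap M$.

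For the inductive step, fix $S \in \mathcal{S}_G$ with $S \neq G$ and assume the statement holds for all $T \in \mathcal{S}_G$ with $|T| > |S|$. For each $g \in G \SM S$, the set $S \cup \{g\}$ lies in $\mathcal{S}_G$ (because $1 \in S$) and has cardinality $|S| + 1 > |S|$, so by the inductive hypothesis $I_{S \cup \{g\}} \cap L = I_{S \cup \{g\}} \cap M$. The lattice of closed two-sided ideals of a \ca{} is distributive (and finite sums of closed ideals are automatically closed), so
\[
I_S^- \cap L
 = \sum_{g \in G \SM S} \bigl( I_{S \cup \{g\}} \cap L \bigr)
 = \sum_{g \in G \SM S} \bigl( I_{S \cup \{g\}} \cap M \bigr)
 = I_S^- \cap M.
\]

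It remains to deduce $I_S \cap L = I_S \cap M$. The inclusion $I_S \cap L \S I_S \cap M$ follows from $L \S M$. Conversely, let $x \in I_S \cap M$. The hypothesis for~$S$ gives
\[
I_S \cap M \S (I_S \cap M) + I_S^- = (I_S \cap L) + I_S^-,
\]
so we may write $x = y + z$ with $y \in I_S \cap L$ and $z \in I_S^-$. Since $y \in L \S M$ and $x \in M$, we have $z = x - y \in I_S^- \cap M$, which by the displayed equality above equals $I_S^- \cap L$. Thus $z \in L$, and so $x = y + z \in I_S \cap L$. The main things to be careful about are ensuring the sets $S \cup \{g\}$ genuinely lie in $\mathcal{S}_G$ (they do, since $1 \in S$) and invoking the distributivity of the ideal lattice, but the latter is a standard consequence of the correspondence between closed two-sided ideals of a \ca{} and open subsets of its primitive ideal space.
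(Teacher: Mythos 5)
Your proof is correct and follows essentially the same route as the paper's: downward induction on $S,$ with the base case $S = G$ coming from $I_G^- = \{ 0 \},$ and the identity $I_S^- \cap L = I_S^- \cap M$ obtained from the inductive hypothesis via distributivity of the ideal lattice. The only (harmless) difference is that you finish the inductive step with an element-wise decomposition $x = y + z,$ whereas the paper concludes by a chain of lattice identities starting from $I_S \cap L = [(I_S \cap L) + I_S^-] \cap L$; your version makes the use of $L \S M$ at that point especially transparent.
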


The hypothesis says that $L$ and $M$
have the same image in $I_S / I_S^{-}$ for all $S \in {\mathcal{S}}_G.$

\begin{proof}[Proof of Lemma~\ref{L_3724_Comp}]
We prove that $I_S \cap L = I_S \cap M$
by downwards induction on $S \in {\mathcal{S}}_G.$
When $S = G,$
since $I_G^{-} = \{ 0 \},$
the hypothesis implies immediately that
$I_G \cap L = I_G \cap M.$

Now let $S \in {\mathcal{S}}_G$ with $S \neq G,$
and suppose that
$I_{S \cup \{ g \} } \cap L = I_{S \cup \{ g \} } \cap M$
for every $g \in G \setminus S.$
Then
\begin{align}\label{Eq:608ISM}
I_S^{-} \cap L
& = \left( \sum_{g \in G \setminus S} I_{S \cup \{ g \} } \right) \cap L
  = \sum_{g \in G \setminus S} (I_{S \cup \{ g \} } \cap L)
     \\
& = \sum_{g \in G \setminus S} (I_{S \cup \{ g \} } \cap M)
  = \left( \sum_{g \in G \setminus S} I_{S \cup \{ g \} } \right) \cap M
  = I_S^{-} \cap M.  \notag
\end{align}
Using $I_S^{-} \subset I_S$ at the first and fifth step,
the assumption
$(I_S \cap L) + I_S^{-} = (I_S \cap M) + I_S^{-}$
at the second step,
$L \subset M$ at the third step,
and~(\ref{Eq:608ISM}) at the fourth step,
we get
\begin{align*}
I_S \cap L
& = [(I_S \cap L) + I_S^{-}] \cap L
  = [(I_S \cap M) + I_S^{-}] \cap L
      \\
& = (I_S \cap M) + (I_S^{-} \cap L)
  = (I_S \cap M) + (I_S^{-} \cap M)
  = I_S \cap M.
\end{align*}
This completes the induction step.

The lemma follows by taking $S = \{ 1 \}.$
\end{proof}

\begin{lem}\label{L-608IdDecomp}
Let $\af \colon G \to \Aut (A)$ be an action of a finite group~$G$
on a \ca~$A.$
Let $g_0 \in G.$
Let $I, J \subset A$ be ideals such that
\[
I \subset J,
\,\,\,\,\,\
\af_{g_0} (I) = I,
\andeqn
\af_{g_0} (J) = J.
\]
Then there exist ideals $L, M \subset A$ such that
\[
L \subset M,
\,\,\,\,\,\
{\mbox{$\af_{g} (L) = L$ for all $g \in G,$}}
\andeqn
\af_{g_0} (M) = M,
\]
and such that there is an isomorphism $\ph$ from $M / L$ to an
$\af_{g_0}$-invariant subquotient of $J / I$
which intertwines the automorphisms of $M / L$ and $J / I$
induced by~$\af_{g_0}.$
\end{lem}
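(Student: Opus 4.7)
The plan is to produce $L$ and $M$ by an explicit construction. The most direct version is to set
\[
L := \ssum{g \in G} \af_g (I) \andeqn M := L + J.
\]
Then $L$ is the smallest $\af$-invariant ideal of $A$ containing~$I$; $M$ is $\af_{g_0}$-invariant because $L$ is $G$-invariant and $J$ is $\af_{g_0}$-invariant; and $L \S M$ is clear. The second isomorphism theorem gives $M/L = (L + J)/L \cong J / (L \cap J),$ and since $I \S L \cap J \S J,$ the quotient $J/(L \cap J)$ is an $\af_{g_0}$-invariant subquotient of $J/I.$ The map $\ph \colon M/L \to J/(L \cap J)$ sending the class of $b + c$ (with $b \in L$ and $c \in J$) to the class of~$c$ is well defined, because two such decompositions of an element of $M$ differ by an element of $L \cap J,$ and it is clearly an $\af_{g_0}$-equivariant $*$-isomorphism.

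This direct construction suffices unless $J \S L,$ in which case $M/L = 0$ and the conclusion, though satisfied, is useless. To obtain a nonzero $M/L$ whenever $I \subsetneq J$ --- which is what subsequent applications of the lemma require --- the plan is to invoke the contrapositive of Lemma~\ref{L_3724_Comp}, following Lemma~5.3.3 of~\cite{Ph1}. Taking the ideal there to be $J$ and the $L, M$ there to be $I, J,$ the hypothesis fails, since $J_{\{ 1 \}} \cap I = I \subsetneq J = J_{\{ 1 \}} \cap J,$ so there exists $S \in {\mathcal{S}}_G$ with $(J_S \cap I) + J_S^{-} \subsetneq (J_S \cap J) + J_S^{-}.$ One then sets $L := J_S^{-}$ ($G$-invariant by construction) and $M := (J_S \cap J) + J_S^{-}$ (which is $\af_{g_0}$-invariant since $J_S$ is $G$-invariant and $J$ is $\af_{g_0}$-invariant), giving $L \subsetneq M$ and $M/L \cong (J_S \cap J)/(J \cap J_S^{-}).$ The natural target $\af_{g_0}$-invariant subquotient of $J/I$ is $\bigl( (J_S \cap J) + I \bigr) / \bigl( (J \cap J_S^{-}) + I \bigr).$

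The hard part will be verifying that the natural surjection of $M/L$ onto this subquotient of $J/I$ is in fact injective. A modular-law computation shows that its kernel is isomorphic to $(J_S \cap I)/(I \cap J_S^{-}),$ which vanishes exactly when $J_S \cap I \S J_S^{-}.$ I expect to force this either by choosing $S$ minimal (in an appropriate partial order) among the witnesses delivered by Lemma~\ref{L_3724_Comp}, or by enlarging $L$ to include the $G$-invariant ideal $\ssum{g \in G} \af_g (J_S \cap I) = J_S \cap \ssum{g \in G} \af_g (I)$ and correspondingly adjusting $M$ to maintain both the correct isomorphism type and the requisite $\af_{g_0}$-invariance. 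This verification is the technical heart of the proof and parallels the structure of Lemma~5.3.3 of~\cite{Ph1}.
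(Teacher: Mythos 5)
Your first construction does satisfy the literal statement (a zero subquotient such as $I/I$ is permitted), but, as you observe, it gives nothing when $J \S \ssum{g \in G} \af_g (I),$ and the application in Theorem~\ref{T_3725_FgSpF} needs $M \neq L$ whenever $I \neq J.$ The real content therefore lies in your second construction, and the step you defer --- killing the kernel $(J_S \cap I)/(I \cap J_S^{-})$ --- is a genuine gap rather than a routine verification: neither proposed fix works. Take $G = \Z_2 \times \Z_2 = \{1, a, b, c\},$ $A = C(G) \cong \C^4$ with $G$ acting by translation, $g_0 = a,$ $J = A,$ and $I$ the ($\af_a$-invariant) ideal of functions vanishing at $b$ and~$c.$ Here $J_S = A$ for every~$S,$ while $J_S^{-} = A$ for $S \neq G$ and $J_G^{-} = \{ 0 \},$ so $S = G$ is the \emph{unique} witness and there is no minimality to exploit; your construction gives $L = \{0\}$ and $M = A,$ and $M/L \cong \C^4$ is not isomorphic to any subquotient of $J/I \cong \C^2,$ so no adjustment of the map alone can help. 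The enlargement fix also fails: $\ssum{g \in G} \af_g (J_G \cap I) = \ssum{g \in G} \af_g (I) = A,$ so $M'/L' = 0.$ (Running the same scheme with $I_S$ in place of $J_S,$ as the paper does, moves the witness to $S = \{1, a\}$ but leads to the same collapse.)

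The missing ingredient is the structural input the paper takes from Lemma~5.3.3(5) of~\cite{Ph1}. After reducing (via your Case~1 dichotomy, which the paper runs with $I_S$ rather than $J_S$) to a witness $S$ and passing to $B = A / I_S^{-},$ that lemma decomposes $P_S$ as an internal direct sum $\bigoplus_{g \in R} \bt_g (N)$ of translates of a single ideal~$N,$ with $P \cap P_S = \bigoplus_{g \in R_0} \bt_g (N)$ a sub-sum. This yields an honest complement $E = \bigoplus_{g \in R \SM R_0} \bt_g (N)$ of $P \cap P_S$ inside $P_S,$ which is $\bt_{g_0}$-invariant because $\bt_{g_0} (P) = P$ forces $g_0 R_0 H = R_0 H$; one then takes $M/L = Q \cap E \cong (Q \cap P_S)/(Q \cap P \cap P_S),$ an ideal of $Q/P.$ In the example above $E = \af_b (I),$ a single translate of~$I$; it cannot be produced by sums and intersections of $I,$ $J,$ the $J_S,$ and their orbit ideals, which is why your construction cannot reach it. Any complete proof along your lines would have to import this direct-sum decomposition (or an equivalent complement-finding device), at which point you have essentially reconstructed the paper's argument.
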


\begin{proof}
Let the notation be as before Lemma~\ref{L_3724_Comp}.
We first assume that
\[
(I_S \cap J) + I_S^{-} = (I_S \cap I) + I_S^{-}
\]
for all $S \in {\mathcal{S}}_G.$
Then $I_{ \{ 1 \} } \cap J = I_{ \{ 1 \} } \cap I$
by Lemma~\ref{L_3724_Comp}.
Since
$I \subset I_{ \{ 1 \} }$
(Lemma 5.3.3(3) of~\cite{Ph1}),
it follows that $I_{ \{ 1 \} } \cap J = I.$
Take $M = I_{ \{ 1 \} } + J$ and $L = I_{ \{ 1 \} }.$
The canonical isomorphism
$M / L \to J / (I_{ \{ 1 \} } \cap J) = J / I$
clearly intertwines the automorphisms induced by~$\af_{g_0}.$
So the lemma is proved in this case.

We may therefore assume that there is
$S \in {\mathcal{S}}_G$ such that
\[
(I_S \cap J) + I_S^{-} \neq (I_S \cap I) + I_S^{-}.
\]
Since $I_S^{-}$ is $\af$-invariant
(Lemma 5.3.3(1) of~\cite{Ph1}),
we simplify the notation by defining
$B = A / I_S^{-},$
$P = (I + I_S^{-}) / I_S^{-},$
and $Q = (J + I_S^{-}) / I_S^{-},$
and further letting $\bt \colon G \to \Aut ( B )$
be the action induced by~$\af.$
Then
\[
P_S^{-} = \{ 0 \},
\,\,\,\,\,\,
P_S \cap P
 \subset P_S \cap Q,
\andeqn
P_S \cap P
 \neq P_S \cap Q.
\]
The quotient
\[
Q / P
 = (J + I + I_S^{-}) / (I + I_S^{-})
 = J / [(I + I_S^{-}) \cap J]
\]
is an $\af_{g_0}$-invariant subquotient of $J / I,$
so it suffices to use $Q / P$
in place of $J / I$
in the statement to be proved.

By Lemma 5.3.3(5) of~\cite{Ph1},
there is a subgroup $H \subset G,$
an $H$-invariant ideal $N \subset P_S,$
a system $R$ of left coset representatives for $H$ in~$G,$
and a subset $R_0 \subset R,$
such that we have internal direct sum decompositions
\[
P_S = \bigoplus_{g \in R} \bt_g (N)
\andeqn
P \cap P_S
 = \bigoplus_{g \in R_0} \bt_g (N).
\]
If $g, h \in G,$
then $\bt_g (N) = \bt_h (N)$
if $g H = h H,$
and
$\bt_g (N) \cap \bt_h (N) = \E$
otherwise.
The relation
$\bt_{g_0} ( P ) = P$
therefore implies that
$g_0 R_0 H = R_0 H.$
So $g_0 (R \setminus R_0) H = (R \setminus R_0) H,$
from which it follows that
the ideal $E = \sum_{g \in R \setminus R_0} \bt_g (N)$
satisfies $\bt_{g_0} (E) = E.$
Also
$P_S
 = ( P \cap P_S ) \oplus E.$
Since $Q \cap P_S$
is an ideal in
$P_S = \bigoplus_{g \in R} \bt_g (N),$
we have
$Q \cap P_S
 = ( Q \cap P \cap P_S )
       \oplus ( Q \cap P ).$
Since $Q \cap P_S$
properly contains $P \cap P_S,$
we have $Q \cap E \neq \{ 0 \}.$
Therefore $Q \cap E$
is a nonzero $\bt_{g_0}$-invariant ideal
in~$B.$
It is $\bt_{g_0}$-equivariantly isomorphic to
$(Q \cap P_S)
    / (P \cap Q \cap P_S),$
which in turn is $\bt_{g_0}$-equivariantly isomorphic
to the subquotient
$[ ( Q \cap P_S ) + P ] / P$
of $Q / P.$
This completes the proof.
\end{proof}

\begin{thm}\label{T_3725_FgSpF}
Let $\af \colon G \to \Aut (A)$
be an action
of a finite group~$G$ on a \ca~$A.$
Then \tfae:
\begin{enumerate}
\item\label{T_3725_FgSpF_SpFr}
$\af$ is spectrally free.
\item\label{T_3725_FgSpF_PtOut}
$\af$ is strongly pointwise outer.
\item\label{T_3725_FgSpF_GmTl}
${\widetilde{\Gm}} (\af_g) \neq \{ 1 \}$
for every $g \in G \SM \{ 1 \}.$
\end{enumerate}
\end{thm}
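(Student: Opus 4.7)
The plan is to complete the cycle of implications (\ref{T_3725_FgSpF_PtOut})$\Rightarrow$(\ref{T_3725_FgSpF_GmTl})$\Rightarrow$(\ref{T_3725_FgSpF_SpFr})$\Rightarrow$(\ref{T_3725_FgSpF_PtOut}). The implication (\ref{T_3725_FgSpF_PtOut})$\Rightarrow$(\ref{T_3725_FgSpF_GmTl}) is precisely Proposition~\ref{P_3723_SPOtoStCsp}, and (\ref{T_3725_FgSpF_GmTl})$\Rightarrow$(\ref{T_3725_FgSpF_SpFr}) is Proposition~\ref{P_3723_StCSpToSpFr}, so the only new content is (\ref{T_3725_FgSpF_SpFr})$\Rightarrow$(\ref{T_3725_FgSpF_PtOut}), which I would establish by contradiction.

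Suppose $\af$ is spectrally free but not strongly pointwise outer. Then there exist $g_0 \in G \SM \{ 1 \}$ and $\af_{g_0}$-invariant ideals $I \subsetneq J \S A$ such that the automorphism of $J/I$ induced by $\af_{g_0}$ is of the form $\Ad (u)$ for some unitary $u \in M(J/I).$ I would then apply Lemma~\ref{L-608IdDecomp} to $g_0,$ $I,$ and $J$ to produce a $G$-invariant ideal $L \S A,$ an $\af_{g_0}$-invariant ideal $M \S A$ with $L \S M,$ and an $\af_{g_0}$-equivariant isomorphism from $M/L$ onto an $\af_{g_0}$-invariant subquotient $P/Q$ of $J/I.$ Tracing through the two cases in the proof of the lemma shows $M/L \neq \{ 0 \},$ so $L$ is a proper ideal of $A.$

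Next I would observe that inner-ness descends to subquotients: since $P$ and $Q$ are $\af_{g_0}$-invariant ideals of $J/I,$ the unitary $u$ preserves them under left and right multiplication, hence restricts to a unitary in $M(P)$ and then descends to a unitary $\bar{u} \in M(P/Q)$ implementing the induced automorphism of $P/Q.$ Transporting along the isomorphism, the induced action of $\af_{g_0}$ on $M/L$ is inner. The contrapositive of Proposition~\ref{P-SFree608} applied to the single automorphism $\af_{g_0}|_{M/L}$ then yields that it is spectrally trivial, so there exists a nonzero $\af_{g_0}$-invariant ideal $K \S M/L$ with $\Gm_{\mathrm{B}} (\af_{g_0}|_K) = \{ 1 \}.$ Because closed two-sided ideals of ideals in a \ca{} are themselves closed two-sided ideals of the ambient algebra, $K$ is a nonzero $\af_{g_0}$-invariant ideal of $A/L$ with trivial Borchers spectrum. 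This contradicts spectral freeness of $\af$ applied to the proper $G$-invariant ideal $L,$ which forces $\af_{g_0}|_{A/L}$ to be spectrally nontrivial.

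The main obstacle is the content of Lemma~\ref{L-608IdDecomp} itself, namely converting an $\af_{g_0}$-invariant subquotient of $A$ on which $\af_{g_0}$ acts innerly into a subquotient whose bottom ideal is $G$-invariant, so that spectral freeness becomes applicable without losing the inner $\af_{g_0}$-dynamics. Once that decomposition is in hand, the remaining steps (descending inner-ness through a subquotient and passing from ``ideal of an ideal'' to ``ideal of the ambient algebra'') are routine, and the contradiction is immediate from Proposition~\ref{P-SFree608} and the definition of spectral freeness.
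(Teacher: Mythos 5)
Your proposal is correct and follows essentially the same route as the paper: the same cycle of implications, citing Propositions~\ref{P_3723_SPOtoStCsp} and~\ref{P_3723_StCSpToSpFr} for the first two arrows, and reducing (\ref{T_3725_FgSpF_SpFr})$\Rightarrow$(\ref{T_3725_FgSpF_PtOut}) to Lemma~\ref{L-608IdDecomp} exactly as the paper does. You merely spell out a few steps the paper leaves implicit (nontriviality of $M/L,$ descent of innerness to the subquotient, and deriving spectral triviality from Proposition~\ref{P-SFree608} rather than asserting $\Gm_{\mathrm{B}}$ of an inner automorphism is trivial), all of which are fine.
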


\begin{proof}
That (\ref{T_3725_FgSpF_GmTl}) implies~(\ref{T_3725_FgSpF_SpFr})
is Proposition~\ref{P_3723_StCSpToSpFr}
(even when $G$ is not finite).
That (\ref{T_3725_FgSpF_PtOut}) implies~(\ref{T_3725_FgSpF_GmTl})
is Proposition~\ref{P_3723_SPOtoStCsp}.

We prove that (\ref{T_3725_FgSpF_SpFr})
implies~(\ref{T_3725_FgSpF_PtOut}).
Suppose that $\af$ is not strongly pointwise outer.
Then there exist $g \in G \setminus \{ 1 \}$
and $\af_g$-invariant ideals $I, J \subset A$
such that $I \subset J,$ $I \neq J,$
and the automorphism $\bt$ of $J / I$ induced by $\af_g$ is inner.
By Lemma~\ref{L-608IdDecomp},
we may assume that $I$ is $\af$-invariant.
Now $\Gm_{\mathrm{B}} (\bt) = \{ 1 \}.$
Therefore the automorphism of $A / I$
induced by $\af_g$ is spectrally trivial,
so $\af$ is not spectrally free.
\end{proof}

\begin{qst}\label{Q-FreeAct403xx-4}
Does strong pointwise outerness imply spectral freeness
for groups which are not finite?
\end{qst}

We give a result on spectral freeness for actions
on tensor products (Proposition~\ref{P-610SfrTP} below).
One hopes to get the conclusion under much weaker hypotheses:
simplicity of $B$ should not be necessary,
and one hopes that the trivial action on~$B$
can be replaced by an arbitrary action.
We have not investigated such results.
The statement we give is easy to prove,
and is sufficient for a later application
(in the proof of Corollary~\ref{C-SpFrTopZimZ}).

We start with a lemma.

\begin{lem}\label{L-610GmBTP}
Let $\af \colon G \to \Aut (A)$
be an action
of a locally compact abelian group $G$ on a \ca~$A.$
Let $B$ be any simple nuclear \ca,
and let $\gm \colon G \to \Aut (B \otimes A)$
be the action $\gm_g = \id_{B} \otimes \af_g$
for $g \in G.$
Then $\Gm_{\mathrm{B}} (\gm) = \Gm_{\mathrm{B}} (\af).$
\end{lem}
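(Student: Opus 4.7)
The plan is to show that tensoring with a simple nuclear algebra $B$ preserves both the lattice of invariant ideals and the Arveson spectrum of the action restricted to each such ideal; the Borchers spectrum, being built from these two pieces of data, is therefore preserved.

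I would first establish an ideal correspondence. Since $B$ is simple and nuclear, every closed two-sided ideal of $B \otimes A$ has the form $B \otimes I$ for a unique ideal $I \S A,$ with $B \otimes I \neq 0$ iff $I \neq 0$ (the standard fact that $B \otimes (-)$ is a lattice isomorphism on ideals). From $\gm_g (B \otimes I) = B \otimes \af_g (I),$ this ideal is $\gm$-invariant iff $I$ is $\af$-invariant, giving a bijection between the nonzero $\gm$-invariant ideals of $B \otimes A$ and the nonzero $\af$-invariant ideals of $A.$

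Next, for a fixed nonzero $\af$-invariant ideal $I \S A,$ I would check that
\[
\spec(\gm |_{B \otimes I}) = \spec(\af |_I).
\]
For any $f \in L^1 (G)$ we have $(\gm |_{B \otimes I})_f = \id_B \otimes (\af |_I)_f$ as bounded linear operators on $B \otimes I.$ Since $B$ is nonzero, the operator $\id_B \otimes T$ vanishes iff $T$ does, so
\[
\{f \in L^1 (G) : (\gm |_{B \otimes I})_f = 0\}
 = \{f \in L^1 (G) : (\af |_I)_f = 0\}.
\]
These two $L^1$-ideals share the same hull in $\widehat{G},$ and this common hull is the Arveson spectrum on each side.

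Combining these, the Borchers spectrum of $\gm$ is
\[
\Gm_{\mathrm{B}}(\gm)
 = \bigcap_{I} \spec(\gm |_{B \otimes I})
 = \bigcap_{I} \spec(\af |_I)
 = \Gm_{\mathrm{B}}(\af),
\]
where $I$ runs over the nonzero $\af$-invariant ideals of $A.$ The main obstacle is the ideal identification in the first step: one must invoke the classical result that simple nuclear $B$ makes the ideal lattice of $B \otimes A$ a pullback of that of $A.$ The remaining spectrum computation and assembly are essentially mechanical.
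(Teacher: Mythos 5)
There is a genuine gap at the very first step: the identity
\[
\Gm_{\mathrm{B}} (\gm) = \bigcap_{I} \mathrm{Sp} \big( \gm |_{B \otimes I} \big),
\]
with $I$ running over the nonzero invariant ideals of~$A,$ is not the definition of the Borchers spectrum and is false in general. The Borchers spectrum is the intersection of the Arveson spectra $\mathrm{Sp} (\af |_D)$ as $D$ runs over the nonzero $\af$-invariant \emph{hereditary} subalgebras of $A$ whose generated ideal is essential (see the beginning of Section~1 of~\cite{Ks7}). Intersecting only over ideals gives a strictly larger set in general: if $A$ is simple and $\af$ is generated by an inner automorphism $\Ad (u)$ with $\mathrm{Sp} (\Ad (u)) = S^1,$ then the only nonzero invariant ideal is $A$ itself, so your formula yields $S^1,$ whereas $\Gm_{\mathrm{B}} (\af) = \Gm (\af) = \{ 1 \}$ by Corollary 8.9.10 of~\cite{Pd1}. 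So the concluding ``assembly'' does not compute the Borchers spectrum on either side of the desired equality.

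The real difficulty, which the reduction to the ideal lattice hides, is that the invariant hereditary subalgebras of $B \otimes A$ are not all of the form $B \otimes D$ (consider corners cut down by projections in~$B$), so there is no direct transfer of the defining intersection from $B \otimes A$ to~$A.$ The paper avoids this by invoking Proposition~2.1 of~\cite{Ks7}, which characterizes membership of $\ta$ in $\Gm_{\mathrm{B}} (\af)$ entirely in terms of the action of ${\widehat{G}}$ on $\Prim ( C^* (G, A, \af) )$ induced by the dual action; since $B$ is simple and nuclear, $C^* ( G, \, B \otimes A, \, \gm ) \cong B \otimes C^* (G, A, \af)$ and $\Prim ( B \otimes C^* (G, A, \af) ) \cong \Prim ( C^* (G, A, \af) )$ equivariantly, so that criterion is visibly unchanged. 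Your second step (that the Arveson spectrum of $\gm |_{B \otimes I}$ equals that of $\af |_I$) is correct but does not suffice; to salvage your approach you would need a characterization of $\Gm_{\mathrm{B}}$ that refers only to data preserved under $B \otimes (-),$ which is exactly what Kishimoto's criterion supplies.
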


\begin{proof}
We use the criterion of Proposition~2.1 of~\cite{Ks7}.
We write ${\widehat{G}}$ multiplicatively rather than additively,
and for $\Om \subset {\widehat{G}}$ and $k \in \N,$
we let
\[
\Om^k = \big\{ \ta_1 \ta_2 \cdots \ta_k \colon
      \ta_1, \ta_2, \ldots, \ta_k \in \Om \big\}.
\]
We further denote the action of ${\widehat{G}}$ on
$\Prim (C^* (G, A, \af))$ coming from ${\widehat{\af}}$
by $(\ta, x) \mapsto \ta x.$

Translated from ideals in $C^* (G, A, \af)$
to open subsets of $\Prim ( C^* (G, A, \af) ),$
Proposition~2.1 of~\cite{Ks7} states that
an element $\ta \in {\widehat{G}}$
is in $\Gm_{\mathrm{B}} (\af)$ \ifo\  %
for every open set $\Om \subset {\widehat{G}}$ containing~$\ta,$
every $n \in \N,$
and every open set $U \subset \Prim ( C^* (G, A, \af))$
such that ${\widehat{G}} U$ is dense
in $\Prim ( C^* (G, A, \af) ),$
there exist
$\sm_1 \in \Om,$ $\sm_2 \in \Om^2,$ \ldots, $\sm_n \in \Om^n$
such that
$U \cap \sm_1 U \cap \sm_2 U \cap \cdots \cap \sm_n U \neq \E.$
Since $B$ is simple and nuclear, and since
$C^* ( G, \, B \otimes A, \, \gm ) \cong B \otimes C^* (G, A, \af),$
there is an isomorphism
$\Prim ( C^* ( G, \, B \otimes A, \, \gm ) )
  \cong \Prim ( C^* (G, A, \af) )$
which is equivariant for the dual actions.
Therefore the criterion for $\ta$ to be in $\Gm_{\mathrm{B}} (\af)$
holds
\ifo\  the corresponding criterion
for $\ta$ to be in $\Gm_{\mathrm{B}} (\gm)$ holds.
\end{proof}

\begin{prp}\label{P-610SfrTP}
Let $\af \colon G \to \Aut (A)$
be an action
of a discrete group $G$ on a \ca~$A.$
Let $B$ be any simple nuclear \ca,
and let $\gm \colon G \to \Aut (B \otimes A)$
be the action $\gm_g = \id_{B} \otimes \af_g$
for $g \in G.$
\begin{enumerate}
\item\label{P-610SfrTP-SNT}
If $\af$ is pointwise spectrally nontrivial,
then so is~$\gm.$
\item\label{P-610SfrTP-SFr}
If $\af$ is spectrally free,
then so is~$\gm.$
\end{enumerate}
\end{prp}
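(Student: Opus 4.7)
The plan is to reduce both parts to Lemma~\ref{L-610GmBTP} via the standard fact that, because $B$ is simple and nuclear, the map $I \mapsto B \otimes I$ is a bijection between the ideals of $A$ and the ideals of $B \otimes A$. (See, e.g., the tensor product ideal structure theorem for nuclear C*-algebras.) Under this correspondence, $\gm_g$-invariance of $B \otimes I$ is equivalent to $\af_g$-invariance of~$I,$ since $\gm_g (B \otimes I) = B \otimes \af_g (I)$ and the tensor factor $B$ is unaffected. Likewise, quotients of the form $(B \otimes A)/(B \otimes I)$ are canonically isomorphic to $B \otimes (A/I),$ with the induced $G$-action being $\id_B \otimes \overline{\af},$ where $\overline{\af}$ denotes the induced action on $A/I.$

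For part~(\ref{P-610SfrTP-SNT}), fix $g \in G \SM \{ 1 \}$ and let $J \S B \otimes A$ be a nonzero $\gm_g$-invariant ideal. By the ideal correspondence, $J = B \otimes I$ for a nonzero $\af_g$-invariant ideal $I \S A.$ Then $\gm_g |_J = \id_B \otimes \af_g |_I.$ Applying Lemma~\ref{L-610GmBTP} to the action of $\Z$ generated by $\af_g |_I$ (which is a locally compact abelian group, so the hypothesis of that lemma is satisfied), we obtain
\[
\Gm_{\mathrm{B}} (\gm_g |_J) = \Gm_{\mathrm{B}} (\af_g |_I) \neq \{ 1 \},
\]
where nontriviality on the right follows from pointwise spectral nontriviality of~$\af.$ Hence $\gm_g$ is spectrally nontrivial, proving~(\ref{P-610SfrTP-SNT}).

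For part~(\ref{P-610SfrTP-SFr}), let $J \S B \otimes A$ be a $\gm$-invariant ideal with $J \neq B \otimes A.$ Write $J = B \otimes I$ for a $\af$-invariant ideal $I \S A$ with $I \neq A.$ Then $(B \otimes A)/J \cong B \otimes (A/I),$ and under this isomorphism the induced $G$-action is $\id_B \otimes \overline{\af},$ with $\overline{\af}$ the action on $A/I$ induced by~$\af.$ Since $\af$ is spectrally free, $\overline{\af}$ is pointwise spectrally nontrivial, so by part~(\ref{P-610SfrTP-SNT}) the action $\id_B \otimes \overline{\af}$ is pointwise spectrally nontrivial as well. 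This proves $\gm$ is spectrally free.

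The only nontrivial ingredient beyond Lemma~\ref{L-610GmBTP} is the ideal structure of $B \otimes A$ for $B$ simple nuclear, which is what makes the reduction clean; without it one would have to consider general $\gm_g$-invariant ideals of $B \otimes A$ directly, which is harder. Everything else is a routine equivariant identification of ideals and quotients.
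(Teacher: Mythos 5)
Your proposal is correct and follows essentially the same route as the paper: both arguments rest on the bijection $I \mapsto B \otimes I$ between ($\af_g$- or $\af$-invariant) ideals of $A$ and those of $B \otimes A$ for $B$ simple nuclear, on Lemma~\ref{L-610GmBTP} to identify the Borchers spectra of the restricted actions, and on the identification $(B \otimes A)/(B \otimes I) \cong B \otimes (A/I)$ to deduce part~(\ref{P-610SfrTP-SFr}) from part~(\ref{P-610SfrTP-SNT}). The only difference is that you spell out a few of the routine equivariance checks that the paper leaves implicit.
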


\begin{proof}
We prove~(\ref{P-610SfrTP-SNT}).
Let $\gm \in G \setminus \{ 1 \}.$
Since $B$ is simple and nuclear,
the map $J \mapsto B \otimes J$
defines a bijection from the $\af_g$-invariant ideals of~$A$
to the $\gm_g$-invariant ideals of $B \otimes A.$
Moreover,
Lemma~\ref{L-610GmBTP} implies that for each such~$J,$
we have $\Gm_{\mathrm{B}} (\gm_g |_J) = \Gm_{\mathrm{B}} (\af_g |_J).$
The desired conclusion follows.

Part~(\ref{P-610SfrTP-SFr}) follows from part~(\ref{P-610SfrTP-SNT})
because $J \mapsto B \otimes J$
defines a bijection from the $\af$-invariant ideals of~$A$
to the $\gm$-invariant ideals of $B \otimes A,$
and because $(B \otimes A) / (B \otimes J) = B \otimes A / J.$
\end{proof}

\section{Rokhlin actions of~$\Z$}\label{Sec:ZRP}

Let $A$ be a unital \ca,
and let $\af \colon \Z \to \Aut (A)$ be an action
with the Rokhlin property.
In this section, we show directly that
all ideals in $C^* (\Z, A, \af)$
are crossed product ideals.
In particular, $A$ separates the ideals in $C^* (\Z, A, \af).$
The main step is a kind of averaging result
which seems of independent interest
(Lemma~\ref{L-RPcsq323}).
This is a somewhat old result;
Lemma~\ref{L-RPcsq323} and Theorem~\ref{IdealZRP}
are attributed to us in~\cite{Ln}.
(See Lemma~4.1 of~\cite{Ln}.)

We use this result to prove that such crossed products
preserve the ideal and \pj{} properties.
The Rokhlin property is much stronger than needed
for this.
Some condition is needed, though,
since crossed products by the trivial action of~$\Z$
never have either the ideal or projection properties.
We also use this result to show that
actions of $\Z$ with the Rokhlin property
satisfy the hypotheses of some of our later theorems.

Recall (Definition~2.5 of the survey article~\cite{Iz1})
that an automorphism $\af$ of a unital \ca~$A$
(strictly speaking, the action of $\Z$ generated by~$\af$)
is said to have the Rokhlin property if
for every $\ep > 0,$ every $n \in \N,$
and every finite set $F \subset A,$
there exist orthogonal \pj s
\[
p_1, p_2, \ldots, p_n, q_1, q_2, \ldots, q_{n + 1} \in A
\]
such that $\sum_{j = 1}^n p_j + \sum_{j = 1}^{n + 1} q_j = 1,$
such that
$\| \af ( p_j ) - p_{j + 1} \| < \ep$ for $j = 1, 2, \ldots, n - 1$
and $\| \af ( q_j ) - q_{j + 1} \| < \ep$ for $j = 1, 2, \ldots, n,$
and such that $\| p_j a - a p_j \| < \ep$
for $j = 1, 2, \ldots, n$ and $a \in F$
and $\| q_j a - a q_j \| < \ep$
for $j = 1, 2, \ldots, n + 1$ and $a \in F.$

\begin{lem}\label{L-RPcsq323}
Let $A$ be a unital \ca,
and let $\af \in \Aut (A)$ have the Rokhlin property.
Regard $A$ as a subalgebra of $C^* (\Z, A, \af)$ in the usual way,
and let $E \colon C^* (\Z, A, \af) \to A$
be the standard conditional expectation.
Then for every finite set $F \subset C^* (\Z, A, \af)$
and every $\ep > 0,$
there exist $m \in \N$ and \mops\  %
$e_1, e_2, \ldots, e_m \in A$
such that $\sum_{j = 1}^m e_j = 1$ and
\[
\left\| E (a) - \sum_{j = 1}^m e_j a e_j \right\| < \ep
\]
for all $a \in F.$
\end{lem}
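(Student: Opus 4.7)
The idea is to approximate the canonical conditional expectation
$E \colon C^*(\Z, A, \af) \to A,$
which sends a Fourier polynomial $\sum_k a_k u^k$ to~$a_0,$
by a compression $a \mapsto \sum_{j=1}^m e_j a e_j$
built from Rokhlin projections.
By density of finite Fourier sums in the crossed product,
we may assume (after a small perturbation of~$F$ absorbed into the
final error) that every element of~$F$ has the form
$a = \sum_{|k| \le N} a_k u^k$
for some fixed~$N \in \N$ and some coefficients $a_k \in A;$
we let $S \S A$ be the finite set of all these coefficients.

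Apply the Rokhlin property to~$\af$
with a large integer~$n$ and a small tolerance~$\dt > 0,$
both to be chosen later,
and with the finite set~$S,$
obtaining orthogonal \pj s
$p_1, \ldots, p_n, q_1, \ldots, q_{n+1}$
summing to~$1,$ shifted by~$\af$ along each tower up to~$\dt,$
and $\dt$-commuting with each element of~$S.$
Relabel these $m = 2n+1$ projections as $e_1, \ldots, e_m.$
Using $u^k e_j = \af^k(e_j) u^k$ together with approximate commutation,
one checks that
\[
\sum_{j=1}^m e_j a e_j
 \approx \sum_{|k| \le N} a_k
        \bigg( \sum_{j=1}^m e_j \af^k(e_j) \bigg) u^k,
\]
with error controlled by $N,$ $m,$ $\dt,$ and $\max_k \|a_k\|.$
When $k = 0,$ the inner sum equals $\sum_j e_j = 1,$ contributing
the desired term $a_0 = E(a).$
It remains to show that $\bigl\| \sum_j e_j \af^k(e_j) \bigr\|$
is small for each~$k$ with $0 < |k| \le N.$

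Mutual orthogonality of the $e_j$ collapses this norm:
\[
\bigg\| \sum_{j=1}^m e_j \af^k(e_j) \bigg\|^2
 = \bigg\| \sum_{j=1}^m e_j \af^k(e_j) e_j \bigg\|
 = \max_{1 \le j \le m} \| e_j \af^k(e_j) \|^2,
\]
so it suffices to bound each $\| e_j \af^k(e_j) \|$ individually.
For ``interior'' indices~$j$ where both $j$ and $j + k$ lie in the same
tower, iterating the shift relation gives
$\|\af^k(e_j) - e_{j+k}\| \le |k| \dt,$
so $e_j \af^k(e_j) \approx e_j e_{j+k} = 0.$
For ``boundary'' indices~$j$ near the top of a tower,
where $\af^k(e_j)$ escapes the tower,
one expands $\af^k(e_j) = \sum_\ell \af^k(e_j) e_\ell$ instead,
and observes that whenever $e_\ell \approx \af^k(e_{\ell - k})$
with $\ell - k$ a valid index in the same tower (the ``Rokhlin range''
for~$\ell$) and $\ell - k \ne j,$
the term $\af^k(e_j) e_\ell$ is small.
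This concentrates $\af^k(e_j)$ onto the few projections $e_\ell$
lying at the bottoms of the two towers, which are orthogonal to~$e_j$
provided $n$ is large enough (say $n \ge 2N$) to keep
top-boundary and bottom-boundary indices disjoint.

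The main obstacle is this last estimate, specifically the boundary case,
since the Rokhlin property provides no wrap-around relation telling us
anything directly about $\af(p_n)$ or $\af(q_{n+1}).$
The trick is to describe these escaped projections indirectly,
through the orthogonality they must satisfy to the rest of the tower,
rather than by an explicit Rokhlin shift.
Once the boundary analysis is in hand,
choosing $n$ sufficiently large relative to~$N$
and $\dt$ sufficiently small relative to $n,$ $N,$ $\ep,$
and $\max_k \|a_k\|$
combines all the above estimates to give
$\bigl\| E(a) - \sum_j e_j a e_j \bigr\| < \ep$
for every $a \in F.$
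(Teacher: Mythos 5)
Your proposal is correct, and its skeleton --- truncate to Fourier polynomials with coefficients in a finite set $S,$ run the Rokhlin property against $S$ with tower length $n$ large relative to~$N,$ reduce everything to the single estimate that $\| e_j \af^k (e_j) \|$ is small for $0 < |k| \leq N,$ and reassemble --- is exactly the paper's. The one place you genuinely diverge is the wrap-around estimate for indices $j$ near the top of a tower. The paper gets its hands on the escaped projections directly: since the Rokhlin projections sum to $1$ and $\af (1) = 1,$ it derives $\| \af (p_n + q_{n+1}) - (p_1 + q_1) \| < (2n-1) \dt$ by passing to complements, then follows $\af^k (p_j + q_{j+1})$ around the corner onto $p_r + q_r$ with $r = j + k - n \neq j$ and invokes $p_j (p_r + q_r) = 0.$ You instead expand $\af^k (e_j)$ against the partition of unity $\sum_{\ell} e_{\ell} = 1,$ discard every term $\af^k (e_j) e_{\ell}$ with $\ell$ in the Rokhlin range of $\af^k$ (each is close to $\af^k (e_j e_{\ell - k}) = 0$), and are left with the bottom-of-tower projections, which are distinct from $e_j$ once $n \geq 2N.$ Both arguments rest on the same underlying fact; yours has the small advantage of never having to name where the top projections land. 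Two points to make explicit in a write-up: (i) to convert ``$\af^k (e_j)$ is concentrated on $\sum_{\ell \leq k} e_{\ell}$'' into ``$e_j \af^k (e_j)$ is small,'' pass to $\| e_j \af^k (e_j) \|^2 = \| e_j \af^k (e_j) e_j \|$ (or take adjoints) so that the orthogonality $e_{\ell} e_j = 0$ acts on the correct side of $\af^k (e_j)$; as literally phrased, the $e_{\ell}$ sit on the wrong side. (ii) Your norm-collapse identity $\| \sum_j e_j \af^k (e_j) \|^2 = \max_j \| e_j \af^k (e_j) \|^2$ is correct and tidier than the paper's plain triangle inequality over~$j,$ though it buys nothing quantitatively since $\dt$ is chosen after~$n.$
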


\begin{proof}
Recall that $u$ denotes the standard unitary in~$C^* (\Z, A, \af)$
which implements~$\af.$
Choose a finite set $S \subset A$ and $N \in \Z$ such that for
every $b \in F$ there exist
\[
b_{- N}, b_{ - N + 1}, \ldots, b_{N - 1}, b_N \in S
\]
such that
\[
\left\| b - \sum_{k = - N}^N b_k u^k \right\| < \frac{\ep}{3}.
\]
Set
\[
M = \sup_{a \in S} \| a \|,
\,\,\,\,\,\,
n = 2 N + 1,
\andeqn
\dt = \frac{\ep}{3 n (2 n + 1) (6 n M + 1)}.
\]
Apply the Rokhlin property
with $S$ in place of $F$ and $\dt$ in place of~$\ep,$
and let
\[
p_1, p_2, \ldots, p_n, q_1, q_2, \ldots, q_{n + 1} \in A
\]
be the resulting \pj s.

We claim that for
$k = - N, \, - N + 1, \, \ldots, \, N - 1, \, N,$
except for $k = 0,$
we have $\| p_j \af^k (p_j) \| < 6 n \dt$
for $j = 1, 2, \ldots, n$
and $\| q_j \af^k (q_j) \| < 6 n \dt$
for $j = 1, 2, \ldots, n + 1.$
We first observe that an induction argument gives
$\| \af^l (p_j) - p_{j + l} \| < l \dt$
whenever $1 \leq j \leq n$ and $0 \leq l \leq n - j,$
and $\| \af^l (q_j) - q_{j + l} \| < l \dt$
whenever $1 \leq j \leq n + 1$ and $0 \leq l \leq n + 1 - j.$
In particular,
under the given conditions,
we always have
\[
\| \af^l (p_j) - p_{j + l} \| < n \dt
\andeqn
\| \af^l (q_j) - q_{j + l} \| < n \dt.
\]
Next,
\begin{align*}
\big\| \af (p_n + q_{n + 1}) - (p_1 + q_1) \big\|
& = \big\| \af (1 - p_n - q_{n + 1}) - (1 - p_1 - q_1) \big\|
     \\
& = \left\| \af \left( \sum_{j = 1}^{n - 1} p_j + \sum_{j = 1}^n q_j
                             \right)
            - \left( \sum_{j = 1}^{n - 1} p_{j + 1}
               + \sum_{j = 1}^n q_{j + 1} \right) \right\|
     \\
& < (2 n - 1) \dt.
\end{align*}

Now consider $\| p_j \af^k (p_j) \|$ when
$k \in \{ 1, \, \ldots, \, N - 1, \, N \}$
and $j + k \leq n.$
Since $p_j p_{j + k} = 0$
and $\| \af^k (p_j) - p_{j + k} \| < n \dt,$
we get $\| p_j \af^k (p_j) \| < n \dt \leq 6 n \dt.$
Similarly,
if $j + k \leq n + 1,$
then $\| q_j \af^k (q_j) \| < n \dt \leq 6 n \dt.$
Next, suppose $j + k > n.$
Set $r = j + k - n.$
We have
\begin{align*}
\big\| \af^k (p_j + q_{j + 1}) - (p_r + q_r) \big\|
& \leq \big\| \af^{n - j} (p_j + q_{j + 1}) - (p_n + q_{n + 1}) \big\|
     \\
& \hspace*{4em} {\mbox{}}
       + \big\| \af (p_n + q_{n + 1}) - (p_1 + q_1) \big\|
     \\
& \hspace*{4em} {\mbox{}}
       + \big\| \af^{r - 1} (p_1 + q_1) - (p_r + q_r) \big\|
     \\
& < 2 n \dt + (2 n - 1) \dt + 2 n \dt
  < 6 n \dt.
\end{align*}
Since $\af^k (p_j) \leq \af^k (p_j + q_{j + 1})$
and $p_j (p_r + q_r) = 0$
(because $k \leq N$ and $n > N$ imply $r \neq j$),
we get $\| p_j \af^k (p_j) \| < 6 n \dt.$
A similar argument using $r = j + k - n - 1$
shows that if $j + k > n + 1,$
then $\| q_j \af^k (q_j) \| < 6 n \dt.$

Finally, suppose
$k \in \{ - N, \, - N + 1, \, \ldots, \, - 1 \}.$
Then
\[
\| p_j \af^k (p_j) \|
  = \| \af^k (p_j) p_j \|
  = \| p_j \af^{- k} (p_j) \|
  < 6 n \dt,
\]
and similarly $\| q_j \af^k (q_j) \| < 6 n \dt.$
This proves the claim.

Now let
$c = \sum_{k = - N}^N c_k u^k$
with
$c_{- N}, c_{ - N + 1}, \ldots, c_{N - 1}, c_N \in S.$
We claim that
\[
\left\| \sum_{j = 1}^n p_j c p_j + \sum_{j = 1}^{n + 1} q_j c q_j
   - E (c) \right\|
 < \frac{\ep}{3}.
\]

Set
\[
R = \{ - N, \, - N + 1, \, \ldots, \, N - 1, \, N \}
    \setminus \{ 0 \}.
\]
First, let $a \in S$ and
$k \in R.$
Then for $j = 1, 2, \ldots, n$ we have
\begin{align*}
\| p_j a u^k p_j \|
& = \| p_j a \af^k ( p_j ) u^k \|
     \\
& \leq \| p_j a - a p_j \|
        + \| a \| \cdot \| p_j \af^k (p_j) \| \cdot \| u^k \|
  < \dt + 6 n M \dt
  = (6 n M + 1) \dt.
\end{align*}
Also, for $k = 0,$
we get
\[
\| p_j a p_j - p_j a \|
  \leq \| p_j \| \cdot \| a p_j - p_j a \|
  < \dt.
\]
Similarly, for $j = 1, 2, \ldots, n + 1$ we have
\[
\| q_j a u^k q_j \| < (6 n M + 1) \dt
\andeqn
\| q_j a q_j - q_j a \| < \dt.
\]

For $c$ as above,
we now have
\begin{align*}
\left\| \sum_{j = 1}^n p_j c p_j + \sum_{j = 1}^{n + 1} q_j c q_j
   - E (c) \right\|
& = \left\| \sum_{j = 1}^n p_j c p_j + \sum_{j = 1}^{n + 1} q_j c q_j
    - \sum_{j = 1}^n p_j c_0 - \sum_{j = 1}^{n + 1} q_j c_0 \right\|
    \\
& \leq \sum_{k \in R} \left( \sum_{j = 1}^n \| p_j c_k u^k p_j \|
                           + \sum_{j = 1}^{n + 1} \| q_j c_k u^k q_j \|
                                    \right)
     \\
& \hspace*{2em} {\mbox{}}
          + \sum_{j = 1}^n \| p_j c_0 p_j - p_j c_0 \|
                + \sum_{j = 1}^{n + 1} \| q_j c_0 q_j - q_j c_0 \|
     \\
& < (2 N) (2 n + 1) (6 n M + 1) \dt + (2 n + 1) \dt
     \\
& \leq (2 N + 1) (2 n + 1) (6 n M + 1) \dt
     \\
& = n (2 n + 1) (6 n M + 1) \dt
  \leq \frac{\ep}{3}.
\end{align*}
This proves the claim.

To prove the lemma,
with $e_1, e_2, \ldots, e_m$ being
\[
p_1, p_2, \ldots, p_n, q_1, q_2, \ldots, q_{n + 1},
\]
let $a \in F.$
By hypothesis, there is $c$ of the form above
such that $\| c - a \| < \tfrac{1}{3} \ep.$
Then
\[
\left\| E (a) - \sum_{j = 1}^m e_j a e_j \right\|
  \leq \| E (a) - E (c) \|
    + \left\| E (c) - \sum_{j = 1}^m e_j c e_j \right\|
    + \| c - a \|
  < \frac{\ep}{3} + \frac{\ep}{3} + \frac{\ep}{3}
  = \ep.
\]
This completes the proof.
\end{proof}

As a corollary,
we obtain the following result.

\begin{thm}\label{IdealZRP}
Let $A$ be a unital \ca,
and let $\af \in \Aut (A)$ have the Rokhlin property.
Then for every $n \neq 0,$
the algebra $A$ separates the ideals in $C^* (\Z, A, \af^n).$
\end{thm}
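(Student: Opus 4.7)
The plan is to extract from Lemma~\ref{L-RPcsq323} the invariance $E (I) \S I \cap A$ for every closed two-sided ideal $I$ of $B = C^* (\Z, A, \af^n),$ where $E \colon B \to A$ is the standard conditional expectation and $u \in B$ is the canonical unitary implementing~$\af^n.$ First note that $\af^n$ inherits the Rokhlin property from~$\af$ for every nonzero $n \in \Z,$ by a standard regrouping of Rokhlin towers for~$\af,$ so Lemma~\ref{L-RPcsq323} is available in this setting.

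To prove $E (I) \S I \cap A,$ take $a \in I$ and apply Lemma~\ref{L-RPcsq323} with $F = \{ a \}.$ The averages $\sum_{j = 1}^m e_j a e_j$ produced by the lemma lie in~$I$ since the $e_j$ belong to $A \S B$ and $I$ is a two-sided ideal, and they approximate $E (a)$ to within any prescribed~$\ep.$ Because $I$ is closed, $E (a) \in I \cap A.$ Applying this with $a$ replaced by $a u^{-k}$ (still in~$I$) yields $E (a u^{-k}) \in I \cap A$ for every $k \in \Z.$

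Now let $L \S B$ be the closed two-sided ideal generated by $I \cap A,$ so that $L \S I$ automatically. Consider the gauge action $\gm \colon {\widehat{\Z}} \to \Aut (B)$ determined by $\gm_z |_A = \id_A$ and $\gm_z (u) = z u.$ For $a \in I,$ the $k$-th Fourier component of $a$ with respect to~$\gm$ is
\[
P_k (a) = \int_{{\widehat{\Z}}} z^{-k} \gm_z (a) \, dz = E (a u^{-k}) u^k,
\]
which belongs to~$L$ by the previous paragraph. Since $z \mapsto \gm_z (a)$ is a norm-continuous $B$-valued function on~${\widehat{\Z}},$ Fej\'{e}r's theorem for Banach-space-valued functions shows that the Ces\`{a}ro means
\[
\sm_N (a) = \sum_{k = -N}^N \left( 1 - \frac{|k|}{N + 1} \right) P_k (a)
\]
converge in norm to~$a$ as $N \to \I.$ Each $\sm_N (a)$ is a finite sum of elements of~$L,$ so $a \in L.$

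This shows $I = L,$ i.e., every ideal of~$B$ is generated by its intersection with~$A.$ Two ideals $I, J \S B$ with $I \cap A = J \cap A$ therefore coincide with the common closed ideal generated by this intersection, hence are equal. The conceptual heart of the argument is Lemma~\ref{L-RPcsq323}, which is what lets one transfer ideal membership across the conditional expectation; the only other ingredient is the routine Fej\'{e}r reconstruction on the dual~${\widehat{\Z}}.$ The only step that might tempt one to look for a subtler argument is the passage from $E (I) \S I \cap A$ to $I$ being generated by $I \cap A,$ but the gauge action and Fej\'{e}r summation handle it cleanly.
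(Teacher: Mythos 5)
Your proof is correct, and its skeleton is the same as the paper's: use the averaging lemma (Lemma~\ref{L-RPcsq323}) to show that the standard conditional expectation maps every ideal $I \S C^* (\Z, A, \af^n)$ into $I \cap A,$ and then recover each element of $I$ from its Fourier coefficients by Ces\`{a}ro summation (the paper cites Theorem VIII.2.2 of~\cite{Dv} for exactly the vector-valued Fej\'{e}r convergence you invoke). The one genuine divergence is how you make the averaging lemma available for~$\af^n$: you assert that the Rokhlin property passes from $\af$ to~$\af^n.$ That assertion is true, but it is the only step of your argument that is not written down, and it does require a small argument: for $n > 0,$ take $\af$-towers of heights $n N$ and $n N + 1,$ split each into $n$ interleaved $\af^n$-towers (giving $2 n - 1$ towers of height $N$ and one of height $N + 1$), and merge the equal-height towers levelwise; for $n < 0,$ first reverse the towers to handle~$\af^{-1}.$ The paper sidesteps this entirely by regarding $C^* (\Z, A, \af^n)$ as a subalgebra of $C^* (\Z, A, \af),$ noting that the conditional expectation of the larger algebra onto $A$ restricts to the one for~$\af^n,$ and applying Lemma~\ref{L-RPcsq323} for $\af$ itself; since the averaging projections lie in $A \S C^* (\Z, A, \af^n),$ the averages of an element of $I$ still lie in~$I.$ Both routes are sound; the paper's is slightly more economical, while yours is self-contained once the regrouping is written out, and it in fact proves the slightly stronger statement that every ideal of $C^* (\Z, A, \af^n)$ is generated by its intersection with~$A$ (which the paper also obtains, via $J = C^* (\Z, \, J \cap A, \, \af^n)$).
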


The proof uses the following standard lemma.
We recall that if
$\af \colon G \to \Aut (A)$ is an action
of a group~$G$ on a \ca~$A,$
and if $I \S A$ is a $G$-invariant ideal,
then $C^* (G, I, \af) \to C^* (G, A, \af)$ is injective
(Lemma~2.8.2 of~\cite{Ph1})
and $C^*_{\mathrm{r}} (G, I, \af) \to C^*_{\mathrm{r}} (G, A, \af)$
is injective
(Proposition 7.7.9 of~\cite{Pd1}).

\begin{lem}\label{P-JIntA323xx}
Let $A$ be a \ca, let $G$ be a discrete group, and let
$\af \colon G \to \Aut (A)$ be an action of $G$ on~$A.$
Regard $A$ as a subalgebra of $C^* (G, A, \af)$
in the usual way.
Let $J \subset C^* (G, A, \af)$ be an ideal.
Then $I = J \cap A$ is a $\af$-invariant ideal in~$A,$
and $C^* (G, I, \af) \subset J.$
The analogous statement holds for ideals
in $C^*_{\mathrm{r}} (G, A, \af).$
\end{lem}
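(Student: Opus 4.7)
The plan is straightforward and breaks into three short observations, handled first for the full crossed product and then transferred to the reduced one by the same argument.

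First, that $I = J \cap A$ is an ideal of $A$ is automatic: $A$ is a C*-subalgebra of $C^*(G,A,\af)$ and $J$ is an ideal of the larger algebra, so the intersection is closed under multiplication by elements of~$A.$ The only thing to check beyond this is $\af$-invariance. For $g \in G,$ the canonical unitary $u_g$ sits in the multiplier algebra $M(C^*(G,A,\af))$ and satisfies $u_g a u_g^* = \af_g(a)$ for every $a \in A.$ Given $a \in I,$ the element $u_g a u_g^*$ belongs to~$J$ (ideals are stable under multiplication by multipliers) and belongs to $A$ because $\af_g \in \Aut(A);$ therefore $\af_g(a) \in J \cap A = I.$ Since this holds for every $g,$ and $\af_{g^{-1}}$ is also an automorphism of $A,$ we obtain $\af_g(I) = I.$

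Second, to show $C^*(G,I,\af) \S J,$ I would use the injectivity of $C^*(G,I,\af) \to C^*(G,A,\af)$ (Lemma~2.8.2 of~\cite{Ph1}, cited just before the lemma) to identify $C^*(G,I,\af)$ with its image in $C^*(G,A,\af).$ That image is the closed linear span of elements of the form $a u_g$ with $a \in I$ and $g \in G.$ Each such element lies in~$J,$ since $a \in I \S J$ and $u_g \in M(C^*(G,A,\af)).$ Because $J$ is closed, the closed span is contained in~$J.$

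For the reduced version, replace Lemma~2.8.2 of~\cite{Ph1} by Proposition~7.7.9 of~\cite{Pd1}, which provides the analogous injectivity $C^*_{\mathrm{r}}(G,I,\af) \to C^*_{\mathrm{r}}(G,A,\af).$ The unitaries $u_g$ still lie in the multiplier algebra of $C^*_{\mathrm{r}}(G,A,\af)$ and still implement $\af,$ so the identical argument applies word for word. There is no serious obstacle here; the only point requiring any care is making sure to invoke the correct injectivity statement so that the image of $C^*(G,I,\af)$ really is the subalgebra generated by $I$ and the~$u_g,$ and this is precisely what the two cited results supply.
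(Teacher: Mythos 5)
Your proof is correct and follows essentially the same route as the paper's: $\af_g(a) = u_g a u_g^*$ lies in both $J$ and $A$ to give invariance, and $a u_g \in J$ for $a \in I$ to give $C^*(G, I, \af) \subset J$, with the cited injectivity results identifying $C^*(G,I,\af)$ with its image. The paper's proof is just a terser version of yours.
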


\begin{proof}
We prove the statement involving $C^* (G, A, \af)$;
the proof for the statement involving $C^*_{\mathrm{r}} (G, A, \af)$
is the same.

For $\af$-invariance,
let $a \in I$ and $g \in G.$
Then $\af_g (a) = u_g a u_g^*$ is in both $J$ and~$A.$

We have $C^* ( G, I, \af ) \subset J$
because, if $a \in I$ and $g \in G,$
then $a \in J,$ so $a u_g \in J.$
\end{proof}

\begin{proof}[Proof of Theorem~\ref{IdealZRP}]
Let $J \subset C^* (\Z, A, \af^n)$ be an ideal.
Set $I = J \cap A,$
which, by Lemma~\ref{P-JIntA323xx},
is an $\af^n$-invariant ideal in~$A$
such that $C^* ( \Z, I, \af^n ) \subset J.$

Let $E \colon C^* (\Z, A, \af^n) \to A$
be the standard conditional expectation.
We claim that $E (J) \subset I.$
So let $a \in J,$ and let $\ep > 0.$
Think of $a$ as an element of
$C^* (\Z, A, \af) \supset C^* (\Z, A, \af^n),$
and note that $E$ is the restriction to $C^* (\Z, A, \af^n)$
of the standard conditional expectation from $C^* (\Z, A, \af)$ to~$A.$
Lemma~\ref{L-RPcsq323} provides $m \in \N$ and elements
$e_1, e_2, \ldots, e_m \in A$
such that
\[
\left\| E (a) - \sum_{j = 1}^m e_j a e_j \right\| < \ep.
\]
Since
$\sum_{j = 1}^m e_j a e_j \in J$ and $\ep > 0$ is arbitrary,
it follows that $E (a) \in J.$
The claim follows.

We finish the proof by showing that $J \subset C^* ( \Z, I, \af^n ).$
Let $a \in J.$
Let $u \in C^* (\Z, A, \af^n)$ be the standard unitary,
that is, the one implementing the automorphism~$\af^n.$
For $k \in \Z,$
define $a_k = E (a u^{-k}) \in I.$
Then for every $m \in \Z,$ the Ces\`{a}ro sum
\[
b_m
 = \sum_{k = - m}^{m} \left( 1 - \frac{ |k| }{ m + 1} \right) a_k u^k
\]
is in $C^* ( \Z, I, \af^n ),$
and Theorem VIII.2.2 of~\cite{Dv} implies that
$\limi{m} b_m = a.$
So $a \in C^* ( \Z, I, \af^n ).$
\end{proof}

Recall (Definition~1 of~\cite{Psn})
that a \ca~$A$ has the projection property
if every ideal in~$A$ has an approximate identity consisting
of projections.

\begin{cor}\label{C:C_0005}
Let $A$ be a unital \ca,
and let $\af \in \Aut (A)$ have the Rokhlin property.
\begin{enumerate}
\item\label{C:C_0005-IdP}
If $A$ has the ideal property,
then $C^* (\Z, A, \alpha)$ has the ideal property.
\item\label{C:C_0005-PjP}
If $A$ has the projection property,
then $C^* (\Z, A, \alpha)$ has the projection property.
\end{enumerate}
\end{cor}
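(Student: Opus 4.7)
The plan is to leverage Theorem~\ref{IdealZRP}: under the Rokhlin property, every ideal $J$ of $C^*(\Z, A, \af)$ has the form $J = C^*(\Z, I, \af)$ with $I = J \cap A$ an $\af$-invariant ideal of~$A$. Indeed, Lemma~\ref{P-JIntA323xx} gives $C^*(\Z, I, \af) \S J$, and since $I = C^*(\Z, I, \af) \cap A$ (apply the standard conditional expectation $E$, which sends $C^*(\Z, I, \af)$ into~$I$), the separation of ideals by~$A$ forces equality. With this structural fact in hand, both parts of the corollary reduce to transferring properties of~$I$ in~$A$ up to properties of~$J$ in $C^*(\Z, A, \af)$.

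For part~(\ref{C:C_0005-IdP}), I would fix an ideal $J \S C^*(\Z, A, \af)$, write $J = C^*(\Z, I, \af)$ with $I = J \cap A$, and let $P \S I$ be the set of projections of~$I$. Since $A$ has the ideal property, $P$ generates $I$ as an ideal of~$A$. Letting $J'$ denote the ideal of $C^*(\Z, A, \af)$ generated by~$P$, the inclusion $J' \S J$ is immediate since $P \S J$. For the reverse inclusion, $J' \cap A$ is an $\af$-invariant ideal of~$A$ containing~$P$, hence containing~$I$, so by Lemma~\ref{P-JIntA323xx}
\[
J = C^*(\Z, I, \af) \S C^*(\Z, \, J' \cap A, \, \af) \S J'.
\]
Thus $J = J'$ is generated by its projections, which are projections in $C^*(\Z, A, \af).$

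For part~(\ref{C:C_0005-PjP}), I would again write $J = C^*(\Z, I, \af)$ and choose an approximate identity $(e_\ld)$ of projections in~$I$. The claim is that $(e_\ld)$ is still an approximate identity for~$J$. On the dense $*$-subalgebra of finite sums of elements $b u^k$ with $b \in I$ and $k \in \Z$, the left action is immediate: $e_\ld (b u^k) = (e_\ld b) u^k \to b u^k$. On the right, one rewrites $(b u^k) e_\ld = b \af^k (e_\ld) u^k$ and invokes $\af$-invariance of~$I$, so that $\af^k (e_\ld)$ is also an approximate identity of~$I$ and $b \af^k (e_\ld) \to b$. A standard $3 \ep$ argument extends both convergences to all of~$J$.

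The bulk of the work really lies in Theorem~\ref{IdealZRP}; once every ideal of $C^*(\Z, A, \af)$ is known to be of crossed-product form, both parts are essentially formal. The only mildly delicate point is the right-side convergence in part~(\ref{C:C_0005-PjP}), which uses the $\af$-invariance of~$I$ in an essential way to convert an approximate identity for~$I$ into one that commutes (in the limit) with the unitaries~$u^k$.
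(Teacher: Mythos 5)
Your proof is correct and takes essentially the same route as the paper: Theorem~\ref{IdealZRP} (every ideal of $C^*(\Z,A,\af)$ is $C^*(\Z, \, J\cap A, \, \af)$) is the key input, and the rest is a routine transfer of the generating projections, respectively the approximate identity of projections, from $J \cap A$ up to the crossed product ideal. The paper states this transfer as a one-line ``fact'' and you have simply written out the same verification in detail.
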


\begin{proof}
The first part follows from Theorem~\ref{IdealZRP}
and the fact that if $B$ is any C*-algebra
which is generated as an ideal by its \pj s,
and $\bt \in \Aut (B)$ is arbitrary,
then $C^* (\Z, B, \bt)$ is generated as an ideal by its \pj s.
The proof of the second part is similar.
\end{proof}

Finally,
we prove that, for actions of~$\Z,$
the Rokhlin property implies spectral freeness.
The same is true for actions of finite groups,
but we omit the proof.

\begin{prp}\label{P-611-RPInpSFr}
Let $A$ be a unital \ca,
and let $\af \in \Aut (A)$ have the Rokhlin property.
Then $\af$ generates an exact spectrally free action of $\Z$ on~$A.$
\end{prp}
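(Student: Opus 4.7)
The plan is as follows. Since $\Z$ is amenable (indeed abelian), every action of $\Z$ on a \ca{} is exact, so only spectral freeness requires proof. Unwinding the definition, we must show that for every $\af$-invariant proper ideal $I \S A$, every $n \in \Z \SM \{ 0 \}$, and every $\af^n$-invariant ideal $M \S A$ containing~$I$ with $M \neq I$, the Borchers spectrum of the automorphism of $M / I$ induced by~$\af^n$ is nontrivial. The Rokhlin property passes to the unital quotient $A / I$: the images of the Rokhlin projections remain orthogonal projections summing to $1 \in A/I$, with the same commutation and approximate-invariance estimates. Replacing $(A, \af)$ by $(A / I, \bar{\af})$, it therefore suffices to prove the following: if $A$ is a \uca{} on which $\af \in \Aut (A)$ has the Rokhlin property, then for every $n \neq 0$ and every nonzero $\af^n$-invariant ideal $M \S A$, one has $\Gm_{\mathrm{B}} (\af^n |_M) \neq \{ 1 \}$.

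The key observation is that, under these assumptions, $\widetilde{\Gm} (\af^n) = \widehat{\Z} = S^1$. Indeed, by Theorem~\ref{IdealZRP}, $A$ separates the ideals of $B := C^* (\Z, A, \af^n)$. Combined with Lemma~\ref{P-JIntA323xx} and the standard fact that $C^* (\Z, L, \af^n) \cap A = L$ (via the canonical conditional expectation), every ideal of $B$ has the form $C^* (\Z, L, \af^n)$ for some $\af^n$-invariant ideal $L \S A$. Such ideals are automatically invariant under the dual action of $\widehat{\Z}$, since on a generator $a u_g$ with $a \in L$ and $g \in \Z$ one has $\widehat{\af}_{\ta} (a u_g) = \ta (g)\, a u_g \in C^* (\Z, L, \af^n)$. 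The characterization of the strong Connes spectrum via Lemma~3.4 of~\cite{Ks2} (as recorded in the proof of Proposition~\ref{P_3723_CSpSbQ}) then yields $\widetilde{\Gm} (\af^n) = S^1$.

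To conclude, apply Proposition~\ref{P_3723_CSpSbQ} with the $\af^n$-invariant ideals $\{ 0 \} \S M$ to obtain $\widetilde{\Gm} (\af^n) \S \widetilde{\Gm} (\af^n |_M)$, so that $\widetilde{\Gm} (\af^n |_M) = S^1$ as well. The general inclusion $\widetilde{\Gm} \S \Gm_{\mathrm{B}}$ (used in the proof of Proposition~\ref{P_3723_StCSpToSpFr}) then gives $\Gm_{\mathrm{B}} (\af^n |_M) = S^1 \neq \{ 1 \}$, as required. All the substantive work is in Theorem~\ref{IdealZRP} (and therefore the averaging Lemma~\ref{L-RPcsq323}); the remaining steps are formal manipulations with crossed-product ideals and dual actions. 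The only point requiring care is the verification that the Rokhlin property descends to $A / I$ before invoking Theorem~\ref{IdealZRP} with the quotient in place of~$A$; everything else follows from spectrum-theoretic bookkeeping.
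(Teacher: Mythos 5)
Your proof is correct and follows essentially the same route as the paper's: reduce to quotients (where the Rokhlin property clearly persists), use Theorem~\ref{IdealZRP} to see that all ideals of the relevant crossed product are invariant under the dual action, conclude via Lemma~3.4 of~\cite{Ks2} that the strong Connes spectrum is all of $S^1,$ and pass to the Borchers spectrum. The only cosmetic difference is that you compute ${\widetilde{\Gm}} (\af^n)$ for $A$ itself and then restrict to the ideal $M$ via Proposition~\ref{P_3723_CSpSbQ}, whereas the paper applies the dual-invariance argument directly to the ideals of $C^* (\Z, M, \af^n)$ (which are ideals of $C^* (\Z, A, \af^n)$); both are valid.
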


\begin{proof}
It is clear that the Rokhlin property passes to
the induced automorphism on a
quotient by an $\af$-invariant ideal.
So we need only prove that
the action is pointwise spectrally nontrivial.

Fix $n \in \Z \setminus \{ 0 \}$
and let $I \subset A$ be an $\af^n$-invariant ideal.
Let $\bt \colon S^1 \to \Aut ( C^* ( \Z, A, \af^n ) )$
be the dual action.
Theorem~\ref{IdealZRP} implies that all ideals
in $C^* ( \Z, A, \af^n )$
are $\bt$-invariant.
Since $C^* ( \Z, I, \af^n )$
is an ideal in $C^* ( \Z, A, \af^n ),$
it follows that all ideals in
$C^* ( \Z, I, \af^n )$
are invariant under the dual of the action of $\af^n$ on~$I.$
It now follows from Lemma~3.4 of~\cite{Ks1}
that ${\widetilde{\Gm}} (\af^{n} |_I) = S^1.$
In particular, ${\widetilde{\Gm}} (\af^{n} |_I) \neq \{ 1 \},$
so the larger set $\Gm_{\mathrm{B}} (\af^{n} |_I)$ is nontrivial.
\end{proof}

\section{Using spectral freeness}\label{Sec:UsingSpFree}

\indent
In this section,
we give the technical results
(Lemma~\ref{L-L8_012Mod} and Corollary~\ref{C-HerIso606})
which we use to prove that crossed products by spectrally free
actions preserve some interesting properties of \ca{s}.

The proof of Lemma~\ref{L-L8_012Mod} is a modification of part
of the proof of Lemma~10 of~\cite{KsKu}.
(Also see~\cite{Ks2}
and Lemma~3.2 of~\cite{RrSr}.)
As preparation, we formally state the improvement of
Lemma~3.2 of~\cite{Ks2}
mentioned in Remark~2.3 of~\cite{Ks7}.
It is essentially Lemma~7.1 of~\cite{OPd3},
but without the separability condition in~\cite{OPd3}.

\begin{lem}[Remark~2.3 of~\cite{Ks7}]\label{L-Ksh608}
Let $A$ be a \ca,
let $a \in A_{+},$
let $n \in \N,$
let $a_1, a_2, \ldots, a_n \in A,$
and let $\af_1, \af_2, \ldots, \af_n \in \Aut (A)$
be spectrally nontrivial.
Then for every $\ep > 0$ there is $x \in A_{+}$ with $\| x \| = 1$
such that $\| x a x \| > \| a \| - \ep$
and $\| x a_k \af_k (x) \| < \ep$ for $k = 1, 2, \ldots, n.$
\end{lem}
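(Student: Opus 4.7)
Plan: The lemma is Lemma~7.1 of \cite{OPd3} with the separability hypothesis removed (cf.\ Lemma~3.2 of \cite{Ks2}, Lemma~10 of \cite{KsKu}, and Remark~2.3 of \cite{Ks7}). My plan is to follow the standard argument and verify that separability plays no role anywhere.

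First, the constraint $\|xax\| > \|a\| - \ep$ reduces to a hereditary-subalgebra containment by functional calculus. Fix $\eta \in (0, \ep)$ and choose a continuous $g \colon [0, \|a\|] \to [0, 1]$ with $g(\|a\|) = 1$ and $g(t) = 0$ for $t \leq \|a\| - \eta$. Since $\|a\| \in \spec(a)$, the element $d = g(a)$ is nonzero; set $B = \overline{dAd}$. In $A^{**}$ the support projection $p$ of $d$ is the spectral projection of $a$ for $(\|a\| - \eta, \|a\|]$, so $p a p \geq (\|a\| - \eta) p$. For any $x \in B_+$ with $\|x\| = 1$, we have $x = pxp$, so $xax \geq (\|a\| - \eta) x^2$ and hence $\|xax\| \geq \|a\| - \eta > \|a\| - \ep$. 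It therefore suffices to produce $x \in B_+$ of norm one with $\|x a_k \af_k(x)\| < \ep$ for $k = 1, 2, \ldots, n$.

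For this I argue by induction on~$n$, using as the engine the single-automorphism characterization employed in the proof of Lemma~\ref{L-608HmeSNT} (Theorem~2.1 of \cite{Ks7}): a spectrally nontrivial $\ph \in \Aut(A)$ has, for every nonzero hereditary subalgebra $C \S A$, every $c \in A$, and every $\mu > 0$, an element $f \in C_+$ with $\|f\| = 1$ and $\|f c \ph(f)\| < \mu$. The base case $n = 0$ is trivial. For the inductive step, one applies the inductive hypothesis inside $B$ with a tightened tolerance to obtain $y \in B_+$ of norm one satisfying the first $n-1$ estimates, and then applies Theorem~2.1 of \cite{Ks7} to $\af_n$ on the hereditary subalgebra $\overline{yAy}$ to produce $x \in \overline{yAy}_+$ of norm one satisfying $\|x a_n \af_n(x)\| < \ep$.

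The main technical obstacle is verifying that the first $n-1$ estimates survive the passage from $y$ to $x \in \overline{yAy}$. This is handled by writing $x$ through approximate-unit powers of~$y$ and controlling $\|y^s a_k \af_k(y^s)\|$ in terms of $\|y a_k \af_k(y)\|$ via polynomial approximations of $t \mapsto t^s$ in the functional calculus of $y$ and $\af_k(y)$; the choice of tightened tolerance in the inductive hypothesis is dictated by these estimates. No step invokes separability of~$A$: Theorem~2.1 of \cite{Ks7} holds without separability, and all other manipulations are functional-calculus and norm arguments, which are purely algebraic. Hence the lemma holds as stated.
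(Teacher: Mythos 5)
The paper offers no proof of this lemma --- it is quoted from Remark~2.3 of \cite{Ks7} (essentially Lemma~7.1 of \cite{OPd3} without separability) --- so your attempt can only be measured against the standard argument in those sources. Your reduction of the condition $\| x a x \| > \| a \| - \ep$ to producing $x$ in the hereditary subalgebra $B = \overline{d A d}$ is correct, and induction on $n$ with Theorem~2.1 of \cite{Ks7} as the engine is the right strategy. But the inductive step has a genuine gap at exactly the point you identify as the main obstacle: the purely norm-theoretic transfer it needs --- that $\| y a_k \af_k (y) \|$ small forces $\| x a_k \af_k (x) \|$ small for every norm-one $x \in \big( \overline{y A y} \big)_{+}$ --- is false. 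In $M_2$ take $y = \mathrm{diag} (1, \dt),$ $b = e_{2 1},$ and $\ph = \Ad (e_{1 2} + e_{2 1}),$ so $\ph (y) = \mathrm{diag} (\dt, 1).$ Then $\| y b \ph (y) \| = \dt^2$ is arbitrarily small, yet the norm-one positive element $x = e_{2 2} \in \overline{y A y}$ gives $\| x b \ph (x) \| = \| e_{2 2} e_{2 1} e_{1 1} \| = 1.$ The same example defeats the proposed repair via approximate-unit powers: $\| y^{1/m} b \ph (y)^{1/m} \| = \dt^{2/m} = \| y b \ph (y) \|^{1/m} \to 1,$ so the constants in any polynomial approximation of $t \mapsto t^{1/m}$ necessarily blow up with $m$; since $m$ must be taken large depending on the particular $x$ chosen, hence after the tightened tolerance has been fixed, the quantifiers cannot be untangled.

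The standard fix is to cut down by the top of the spectrum of $y$ rather than by all of $\overline{y A y}.$ Having found $y \in B_{+}$ with $\| y \| = 1$ and $\| y a_k \af_k (y) \| < \ep / 4$ for $k \leq n - 1,$ apply Theorem~2.1 of \cite{Ks7} inside the nonzero hereditary subalgebra $C = \overline{ \big( y - \tfrac{1}{2} \big)_{+} A \big( y - \tfrac{1}{2} \big)_{+} } \subset B.$ Choosing a continuous function $g \geq 0$ with $g (t) t = 1$ for $t \geq \tfrac{1}{2}$ and $\| g \|_{\infty} \leq 2,$ every $x \in C$ satisfies $x = x g (y) y = y g (y) x,$ whence $\| x a_k \af_k (x) \| = \big\| (x g (y)) \big( y a_k \af_k (y) \big) \big( \af_k (g (y)) \af_k (x) \big) \big\| \leq 4 \| y a_k \af_k (y) \| < \ep$ for all norm-one $x \in C_{+}.$ With this substitution your argument is complete, and you are right that separability plays no role anywhere.
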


After we proved the following lemma,
the papers \cite{HO} and~\cite{GrSr} were posted on the arXiv.
Lemma~5.1 of~\cite{HO} is a weaker version of our lemma.
(It assumes simplicity of the algebra.)
Lemma~3.11 of~\cite{GrSr} is more general
(applying to partial actions),
but is stated differently.
The proofs of both are similar to our proof.

\begin{lem}\label{L-L8_012Mod}
Let $\af \colon G \to \Aut (A)$
be a pointwise spectrally nontrivial action
of a discrete group $G$ on a \ca~$A.$
Identify $A$ with a subalgebra of $C^*_{\mathrm{r}} (G, A, \af)$
in the usual way.
Let $a \in C^*_{\mathrm{r}} (G, A, \af)_{+} \setminus \{ 0 \}.$
Then there is $z \in C^*_{\mathrm{r}} (G, A, \af)$
such that $z a z^*$ is a nonzero element of~$A.$
\end{lem}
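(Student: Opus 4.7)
The plan is to build $z$ in two stages: first, exploit pointwise spectral nontriviality via Lemma~\ref{L-Ksh608} to produce $x \in A_{+}$ so that $xax$ is norm-close to a nonzero element of $A$; then apply R{\o}rdam's standard perturbation lemma to convert this approximation into an honest equality of the form $zaz^* \in A \SM \{ 0 \}.$

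First, let $E \colon C^*_{\mathrm{r}} (G, A, \af) \to A$ be the canonical conditional expectation, which is faithful on positive elements; in particular $E (a) \neq 0,$ so $c := \| E (a) \| > 0.$ Fix a small $\ep > 0$ (to be specified at the end) and approximate $a$ by $a_0 = \sum_{g \in F} a_g u_g,$ where $F \S G$ is finite with $1 \in F,$ each $a_g \in A,$ and $\| a - a_0 \| < \ep.$ Then $a_1 = E (a_0)$ satisfies $\| a_1 \| > c - \ep.$

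The main step applies Lemma~\ref{L-Ksh608} to the positive element $a_1,$ the elements $\{ a_g \colon g \in F \SM \{ 1 \} \},$ and the automorphisms $\{ \af_g \colon g \in F \SM \{ 1 \} \},$ which are spectrally nontrivial by hypothesis. This yields $x \in A_{+}$ with $\| x \| = 1,$ $\| x a_1 x \| > \| a_1 \| - \ep,$ and $\| x a_g \af_g (x) \| < \ep / |F|$ for every $g \in F \SM \{ 1 \}.$ Since $x a_g u_g x = ( x a_g \af_g (x) ) u_g$ in the reduced crossed product, summing these estimates and absorbing the error $\| a - a_0 \| < \ep$ gives $\| x a x - y \| < 2 \ep,$ where $y := x a_1 x \in A_{+}$ satisfies $\| y \| > c - 2 \ep.$

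For the polishing step, set $b = x a x,$ so that $b, y \in C^*_{\mathrm{r}} (G, A, \af)_{+}$ with $\| b - y \| < 2 \ep.$ R{\o}rdam's perturbation lemma (Section~2 of~\cite{KR}) then supplies $v \in C^*_{\mathrm{r}} (G, A, \af)$ with $v^* b v = ( y - 2 \ep )_{+}.$ Choosing $\ep < c / 4$ forces $(y - 2 \ep)_{+} \neq 0;$ and since $y \in A,$ so is $(y - 2 \ep)_{+}.$ Setting $z = v^* x$ yields
$z a z^* = v^* ( x a x ) v = v^* b v = ( y - 2 \ep )_{+},$
a nonzero element of~$A,$ as required.

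The main obstacle is the central step: one must produce a single $x \in A_{+}$ that approximately preserves the norm of the ``$g = 1$ Fourier coefficient'' while simultaneously suppressing every off-diagonal term $x a_g u_g x$ with $g \neq 1.$ This is precisely the content of Lemma~\ref{L-Ksh608}, and it is here (and essentially only here) that pointwise spectral nontriviality of $\af$ is used.
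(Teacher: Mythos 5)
Your strategy is the same as the paper's: approximate $a$ by an element of finite support, use Lemma~\ref{L-Ksh608} to find $x \in A_{+}$ that kills the off-diagonal terms while approximately preserving the coefficient at~$1,$ and then upgrade the resulting approximation to an exact equality via the Kirchberg--R{\o}rdam cut-down lemma. The one genuine slip is positivity. The element $a_1 = E (a_0)$ is the coefficient at $g = 1$ of a generic finite-sum approximant $a_0$ of~$a,$ and there is no reason for it to be positive, or even selfadjoint: $a_0$ itself is not positive, only norm-close to the positive element~$a.$ (If instead you intended $a_g = E (a u_g^*),$ so that $a_1 = E (a)$ is positive, then the claim $\| a - a_0 \| < \ep$ for a plain truncation of the Fourier series is unjustified in general, already for $G = \Z.$) This matters at two points: Lemma~\ref{L-Ksh608} requires its distinguished element to lie in $A_{+},$ and the expression $( y - 2 \ep )_{+}$ with $y = x a_1 x$ is only defined for selfadjoint~$y$; moreover the cut-down lemma (Lemma~2.2 of~\cite{KR2}) requires both elements to be positive.

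The gap is easily repaired, in either of two ways. The paper's device is to approximate $a^{1/2}$ (rather than~$a$) by a finite sum $b = \sum_{g} b_g u_g$ and to work with $b^* b,$ which is positive, still has finite support, and has positive coefficient $E (b^* b)$ at $g = 1$; this is exactly why the paper's proof carries the extra $\dt$-estimates relating $b^* b$ to~$a.$ Alternatively, keep your $a_0$ but feed the genuinely positive element $E (a)$ into Lemma~\ref{L-Ksh608} in place of $E (a_0)$: since $\| E (a_0) - E (a) \| \leq \| a_0 - a \| < \ep,$ one gets $\| x a x - x E (a) x \| < 3 \ep,$ and the cut-down $\big( x E (a) x - 3 \ep \big)_{+}$ is then well defined, lies in~$A,$ and is nonzero once $\ep < c / 4.$ With either repair your argument is complete and coincides in substance with the paper's.
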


\begin{proof}
Let $E \colon C^*_{\mathrm{r}} (G, A, \af) \to A$
and $u_g$ for $g \in G$
be the standard conditional expectation and unitaries,
as in the introduction.
For $g \in G,$ define $a_g = E (a u_g^*).$
Since $E$ is faithful,
$a_1 \neq 0.$
By scaling,
we may therefore assume that $\| a_1 \| = 1.$
Then $\| a \| \geq 1.$

Set $\ep = \frac{1}{7}.$

We now follow the third paragraph
of the proof of Lemma~10 of~\cite{KsKu},
with, for convenience, an extra normalization.
Set
\[
\dt
 = \min \left( \| a \|^{1/2}, \, \frac{1}{6 \| a \|^{1/2}}, \,
          \frac{\ep}{27 \| a \|^{3/2} } \right).
\]
Choose a finite set $S_0 \subset G$
and an element $b \in C^*_{\mathrm{r}} (G, A, \af)$
of the form $b = \sum_{g \in S_0} b_g u_g$
with $b_g \in A$ for $g \in S_0,$
such that $\| b - a^{1/2} \| < \dt.$
Then
\begin{align*}
\| b^* b - a \|
& \leq \| b^* \| \cdot \| b - a^{1/2} \|
           + \| b^* - a^{1/2} \| \cdot \| a^{1/2} \|
\\
& < ( \| a \|^{1/2} + \dt) \dt + \dt \| a \|^{1/2}
  \leq 3 \| a \|^{1/2} \dt.
\end{align*}
Therefore $\| E (b^* b) - a_1 \| < 3 \| a \|^{1/2} \dt.$
Since $3 \| a \|^{1/2} \dt < 1,$
we have $E (b^* b) \neq 0.$
Set
\[
c = \| E (b^* b) \|^{-1} b^* b.
\]
Then there exist a finite set $S \subset G \setminus \{ 1 \}$
and elements $c_1 \in A$ and $c_g \in A$ for $g \in S$
such that $c = c_1 + \sum_{g \in S} c_g u_g.$
Moreover, $c_1 = \| E (b^* b) \|^{-1} E (b^* b),$
so $\| c_1 \| = 1.$
We also have, using $\| b^* b \| < \| a \| + 3 \| a \|^{1/2} \dt,$
\begin{align*}
\| c - a \|
& \leq \| c - b^* b \| + \| b^* b - a \|
  < \left( \frac{1}{1 - 3 \| a \|^{1/2} \dt} - 1 \right) \| b^* b \|
            + 3 \| a \|^{1/2} \dt
        \\
& \leq \left( \frac{3 \| a \|^{1/2} \dt}{1 - 3 \| a \|^{1/2} \dt}\right)
                      \left( \| a \| + 3 \| a \|^{1/2} \dt \right)
            + 3 \| a \|^{1/2} \dt
        \\
& < 6 \| a \|^{1/2} \dt (4 \| a \| ) + 3 \| a \|^{1/2} \dt
  \leq 27 \| a \|^{3/2} \dt
  \leq \ep.
\end{align*}


Lemma~\ref{L-Ksh608} now provides $x \in A_{+}$ such that
\[
\| x \| = 1,
\,\,\,\,\,\,
\| x c_1 x \| > 1 - \ep,
\andeqn
\max_{g \in S} \| x c_g \af_g (x) \| < \frac{\ep}{\card (S)}.
\]
%
%
We estimate
\begin{align}\label{Eq:612St}
\| x a x - x a_1 x \|
& \leq 2 \| a - c \| + \| x c x - x c_1 x \|
  \leq 2 \| a - c \| + \sum_{g \in S} \| x c_g u_g x \|
\\
& = 2 \| a - c \| + \sum_{g \in S} \| x c_g \af_g (x) u_g \|
  < 2 \ep + \ep
  = 3 \ep
  \notag
\end{align}
and
\begin{equation}\label{Eq:612StSt}
\| x a_1 x \|
  \geq \| x c_1 x \| - \| a - c \|
  > 1 - \ep - \ep
  = 1 - 2 \ep.
\end{equation}

Recall that for $y \in A_{+}$ and $\rh \geq 0,$
the expression $(y - \rh)_{+}$ is defined to be the
result of applying functional calculus to~$y$
with the function $t \mapsto \max (0, \, t - \rh).$
Using the Lemma~2.2 of~\cite{KR2} and~(\ref{Eq:612St}),
we find $d \in C^*_{\mathrm{r}} (G, A, \af)$
such that
\[
d x a x d^* = (x a_1 x - 4 \ep)_{+}.
\]
Since $\ep = \frac{1}{7}$ and $\| a_1 \| = 1,$
the estimate~(\ref{Eq:612StSt})
implies that $(x a_1 x - 4 \ep)_{+} \neq 0.$
Clearly $(x a_1 x - 4 \ep)_{+} \in A,$
so we complete the proof by taking $z = d x.$
\end{proof}

The following definition from~\cite{Sr} provides convenient
terminology for an important corollary.

\begin{dfn}[Definition~1.9 of~\cite{Sr}]\label{D_2Y30IntP}
Let $\af \colon G \to \Aut (A)$
be an action
of a discrete group $G$ on a \ca~$A.$
We say that $\af$ has
the {\emph{intersection property}}
if for every nonzero ideal $I \S C^*_{\mathrm{r}} (G, A, \af),$
we have $I \cap A \neq 0.$
We further say that $\af$ has
the {\emph{residual intersection property}}
if for every $\af$-invariant ideal $J \S A,$
the induced action of $G$ on $A / J$ has the intersection property.
\end{dfn}

In the separable case,
since spectral nontriviality implies proper outerness
(Remark 2.5 of~\cite{Ks7}),
Corollary~\ref{C_2Y30SpNtrIntP} (implicitly)
and Proposition~\ref{P-SNAQ-Sep608} below
are contained in Remark~2.23 of~\cite{Sr}.
Our proof uses less machinery.
(There is a misprint in~\cite{Sr}:
the reference to Theorem~1.10 of~\cite{Sr} there
should be to Theorem~1.13 of~\cite{Sr}.)
These results were also independently obtained,
in close to the form in which we give them
but generalized to partial actions,
as Theorem~3.12 and Corollary~3.13 of~\cite{GrSr}.

\begin{cor}\label{C_2Y30SpNtrIntP}
Let $\af \colon G \to \Aut (A)$
be a pointwise spectrally nontrivial action
of a discrete group $G$ on a \ca~$A.$
Then $\alpha$ has the intersection property.
\end{cor}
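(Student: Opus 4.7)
The plan is to derive this almost immediately from Lemma~\ref{L-L8_012Mod}. Let $I \S C^*_{\mathrm{r}} (G, A, \af)$ be a nonzero ideal. Since $I$ is a nonzero closed two-sided ideal, it is in particular a nonzero \ca{} in its own right, so there exists $a \in I_{+} \SM \{ 0 \}.$ This is the only nontrivial ``choice'' in the argument.

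Next I would apply Lemma~\ref{L-L8_012Mod} to this element~$a,$ which is permissible because the hypothesis of pointwise spectral nontriviality on $\af$ is exactly what the lemma requires. The lemma produces an element $z \in C^*_{\mathrm{r}} (G, A, \af)$ such that $z a z^*$ is a nonzero element of~$A.$

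Finally I would observe that $z a z^*$ lies in~$I$ as well, since $a \in I$ and $I$ is a two-sided ideal of $C^*_{\mathrm{r}} (G, A, \af).$ Hence $z a z^*$ is a nonzero element of $I \cap A,$ proving that $I \cap A \neq 0$ and therefore that $\af$ has the intersection property.

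There is no real obstacle here: the entire content of the corollary is packaged into Lemma~\ref{L-L8_012Mod}, and once that lemma is in hand the deduction is a one-line argument. If anything, the only thing to watch is that Lemma~\ref{L-L8_012Mod} is stated for $a \in C^*_{\mathrm{r}} (G, A, \af)_{+} \SM \{ 0 \},$ so I must remember to start from a positive element of~$I$ rather than an arbitrary one; this is immediate from the fact that $I$ is closed under the adjoint and under positive linear combinations.
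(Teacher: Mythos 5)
Your proposal is correct and coincides with the paper's own proof: both take a nonzero positive element $a$ of the ideal $I,$ apply Lemma~\ref{L-L8_012Mod} to obtain $z$ with $z a z^*$ a nonzero element of $A,$ and conclude from $z a z^* \in I$ that $I \cap A \neq 0.$ Nothing is missing.
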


\begin{proof}
Let $I \S C^*_{\mathrm{r}} (G, A, \af)$ be a
nonzero ideal.
Let $a \in I$ be a nonzero positive element.
Lemma~\ref{L-L8_012Mod}
provides $z \in C^*_{\mathrm{r}} (G, A, \af)$
such that $z a z^*$ is a nonzero element of~$A.$
Since $z a z^* \in I,$
we have $I \cap A \neq 0.$
\end{proof}

\begin{prp}\label{P-SNAQ-Sep608}
Let $\af \colon G \to \Aut (A)$
be a spectrally free action
of a discrete group $G$ on a \ca~$A.$
Suppose that $\af$ is exact
(in the sense of Definition~1.5 of~\cite{Sr}).
Then $A$ separates the ideals
of $C^*_{\mathrm{r}} (G, A, \af)$
(in the sense of Definition~\ref{D_3718_SepId}).
\end{prp}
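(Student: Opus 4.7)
The plan is to show that every ideal $I \S C^*_{\mathrm{r}}(G, A, \af)$ is equal to $C^*_{\mathrm{r}}(G, \, I \cap A, \, \af)$; this immediately implies that $A$ separates ideals, since then $I \cap A = J \cap A$ forces $I = C^*_{\mathrm{r}}(G, \, I \cap A, \, \af) = C^*_{\mathrm{r}}(G, \, J \cap A, \, \af) = J$. Fix such an $I$ and set $K = I \cap A$. The same argument as in Lemma~\ref{P-JIntA323xx} (applied to the reduced crossed product, as noted there) shows that $K$ is an $\af$-invariant ideal of $A$ and that $C^*_{\mathrm{r}}(G, K, \af) \S I$, so only the reverse inclusion must be proved.

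By the exactness hypothesis, we have a short exact sequence
\[
0 \to C^*_{\mathrm{r}}(G, K, \af) \to C^*_{\mathrm{r}}(G, A, \af) \stackrel{\pi}{\to} C^*_{\mathrm{r}}(G, \, A / K, \, \ov{\af}) \to 0,
\]
where $\ov{\af}$ is the action induced by $\af$ on $A/K$. It suffices to show $\pi(I) = 0$. First, $\pi(I) \cap (A / K) = 0$: if $a \in A$ satisfies $a + K \in \pi(I)$, choose $x \in I$ with $\pi(x) = a + K$ (viewing $a + K \in A/K \S C^*_{\mathrm{r}}(G, \, A/K, \, \ov{\af})$); then $x - a \in \ker(\pi) = C^*_{\mathrm{r}}(G, K, \af) \S I$, so $a \in I \cap A = K$, i.e., $a + K = 0$. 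Next, spectral freeness of $\af$ passes to the quotient action $\ov{\af}$: every $\ov{\af}$-invariant proper ideal of $A/K$ is of the form $J/K$ with $J$ an $\af$-invariant ideal of $A$ satisfying $K \S J \subsetneq A$, and $(A/K) / (J/K) \cong A / J$ carries an induced action which is pointwise spectrally nontrivial by spectral freeness of~$\af$. In particular $\ov{\af}$ is pointwise spectrally nontrivial, so Corollary~\ref{C_2Y30SpNtrIntP} supplies the intersection property; applied to the ideal $\pi(I)$, it yields $\pi(I) = 0$, as desired.

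The substantive input is Corollary~\ref{C_2Y30SpNtrIntP} (whose work is carried by Lemma~\ref{L-L8_012Mod}); the remaining step is essentially formal, once exactness is used to identify the quotient of $C^*_{\mathrm{r}}(G, A, \af)$ by $C^*_{\mathrm{r}}(G, K, \af)$ with $C^*_{\mathrm{r}}(G, \, A/K, \, \ov{\af})$ and once one observes that spectral freeness is preserved by passage to invariant quotients. Consequently I do not anticipate a genuine obstacle: the only subtlety is verifying that the hypotheses transfer correctly, which follows directly from the definitions of spectral freeness and exactness.
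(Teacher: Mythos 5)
Your argument is correct, and its substantive core is the same as the paper's: everything rests on Corollary~\ref{C_2Y30SpNtrIntP} (the intersection property for pointwise spectrally nontrivial actions), applied to the induced action on $A/K$, which is pointwise spectrally nontrivial precisely because $\af$ is spectrally free. The difference is in how the reduction to that corollary is organized. The paper simply invokes Theorem~1.13 of~\cite{Sr}, which says that $A$ separates the ideals of $C^*_{\mathrm{r}}(G,A,\af)$ \ifo{} the action is exact and has the residual intersection property, and then observes that the residual intersection property follows from Corollary~\ref{C_2Y30SpNtrIntP}. You instead reprove the relevant direction of Sierakowski's theorem by hand: using Lemma~\ref{P-JIntA323xx} to get $C^*_{\mathrm{r}}(G,K,\af)\S I$ for $K=I\cap A$, exactness to identify the quotient with $C^*_{\mathrm{r}}(G,\,A/K,\,\ov{\af})$, the computation $\pi(I)\cap(A/K)=0$, and then the intersection property of $\ov{\af}$ to force $\pi(I)=0$, hence $I=C^*_{\mathrm{r}}(G,K,\af)$. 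What your route buys is self-containedness (no black-box appeal to~\cite{Sr}) and the slightly stronger explicit conclusion that every ideal of the reduced crossed product is induced from an invariant ideal of~$A$; what the paper's route buys is brevity. The only points worth tightening are cosmetic: you should note the trivial case $K=A$ (where $I$ is everything and the quotient is zero), and for the final step you only need $\ov{\af}$ to be pointwise spectrally nontrivial, which is immediate from the definition of spectral freeness applied to the invariant ideal $K$ --- the verification that $\ov{\af}$ is itself spectrally free is more than you use.
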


\begin{proof}
By Theorem~1.13 of~\cite{Sr},
it is enough to prove that $\af$ has the residual intersection property.
{}From the definitions,
it is enough to prove that if $\af$ is spectrally nontrivial,
then $\af$ has the intersection property.
This is Corollary~\ref{C_2Y30SpNtrIntP}.
\end{proof}

Any converse to Proposition~\ref{P-SNAQ-Sep608} must have
very restrictive hypotheses.
According to Theorem~1.13 of~\cite{Sr},
the algebra $A$ separates the ideals
of $C^*_{\mathrm{r}} (G, A, \af)$ \ifo\  the action is exact
and has the residual intersection property.
The trivial action of a nonabelian free group on~$\C$
gives a simple reduced crossed product.
Example 4.2.3 of~\cite{Ph1}
contains an action $\af \colon \Z_2 \times \Z_2 \to M_2$
such that $C^* (\Z_2 \times \Z_2, \, M_2, \, \af)$
is simple but $\af_g$ is inner for every $g \in G.$
In both cases,
the action is exact and has the residual intersection property,
but is very far from being spectrally free.

\begin{cor}\label{C_2Z01_2nd}
Let $\af \colon G \to \Aut (A)$
be a pointwise spectrally nontrivial action
of a discrete group $G$ on a \ca~$A.$
For any
nonzero ideal $I \S C^*_{\mathrm{r}} (G, A, \af)$
there exists a nonzero ideal $J \S A$
such that $J \S I.$
\end{cor}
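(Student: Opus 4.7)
The plan is almost trivial: take $J = I \cap A$ and verify the three required properties.

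First, $J$ is nonzero. This is precisely the conclusion of Corollary~\ref{C_2Y30SpNtrIntP}, which establishes the intersection property for pointwise spectrally nontrivial actions: for any nonzero ideal $I \S C^*_{\mathrm{r}} (G, A, \af),$ we have $I \cap A \neq 0.$

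Second, $J$ is an ideal of $A.$ Indeed, $I \cap A$ is a norm-closed self-adjoint subspace (as the intersection of two such). If $a \in I \cap A$ and $x \in A,$ then $x \in A \S C^*_{\mathrm{r}} (G, A, \af),$ so $x a$ and $a x$ lie in $I$ (because $I$ is an ideal of the reduced crossed product) and also in $A$ (because $A$ is a subalgebra). Hence $x a, a x \in I \cap A = J.$

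Finally, $J \S I$ by construction. The main obstacle in this corollary is nothing more than the intersection property itself, which has already been established; the corollary is simply a convenient reformulation of Corollary~\ref{C_2Y30SpNtrIntP} in a form directly applicable to later arguments.
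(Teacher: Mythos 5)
Your proposal is correct and is exactly the paper's proof: take $J = I \cap A$ and invoke Corollary~\ref{C_2Y30SpNtrIntP} for nonvanishing. You merely spell out the routine verification that $I \cap A$ is an ideal of $A$, which the paper leaves implicit.
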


\begin{proof}
Take $J = I \cap A$ in Corollary~\ref{C_2Y30SpNtrIntP}.
\end{proof}

\begin{cor}\label{C_2Z01_3rd}
Let $\af \colon G \to \Aut (A)$
be a pointwise spectrally nontrivial action
of a discrete group $G$ on a \ca~$A.$
Suppose that every nonzero ideal in $A$ contains a \nzp.
Then every nonzero ideal in $C^*_{\mathrm{r}} (G, A, \af)$
contains a \nzp.
\end{cor}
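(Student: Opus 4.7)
The plan is very short: apply the preceding Corollary \ref{C_2Z01_2nd} directly and then use the projection hypothesis on~$A$.

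More precisely, let $I \subset C^*_{\mathrm{r}}(G, A, \af)$ be a nonzero ideal. By Corollary \ref{C_2Z01_2nd}, there is a nonzero ideal $J \subset A$ with $J \subset I$. By the hypothesis on~$A$, the ideal $J$ contains a nonzero projection~$p$. Since $J \subset I$, the projection $p$ lies in~$I$, which is what was to be shown.

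There is no real obstacle here; the corollary is essentially a one-line consequence of Corollary \ref{C_2Z01_2nd}, which in turn reduces (via Corollary \ref{C_2Y30SpNtrIntP}) to the main technical Lemma~\ref{L-L8_012Mod}. All the work has already been done in the earlier results, and the hypothesis that nonzero ideals of $A$ contain nonzero projections is used only once, in the very last step, to extract the projection from the ideal $J = I \cap A$.
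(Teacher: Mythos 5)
Your proof is correct and is exactly the paper's argument: the authors simply say the corollary is immediate from Corollary~\ref{C_2Z01_2nd}, and your write-up spells out that one-line deduction (extract a nonzero ideal $J \subset A$ contained in the given ideal, then take a nonzero projection in~$J$). Nothing is missing.
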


The main part of the following lemma,
namely $B \cong C,$
is in~1.4 of~\cite{Cu0},
with a slightly different proof.
Primarily for use in Section~\ref{Sec:Start},
we need additional information.

\begin{proof}
This is immediate from Corollary~\ref{C_2Z01_2nd}.
\end{proof}

\begin{lem}\label{L-Polar606}
Let $A$ be a \ca, and let $a \in A.$
In $A^{**},$
let $a = v (a^* a)^{1/2}$ be the polar decomposition of~$a.$
Set $B = {\overline{ (a^* a) A (a^* a)}}$
and $C = {\overline{ (a a^*) A (a a^*)}}.$
Let $x \in A.$
Then:
\begin{enumerate}
\item\label{L-Polar606_2Z10One}
$x^* x \in B$ implies $x v^* \in A$
and $x x^* \in B$ implies $v x \in A.$
\item\label{L-Polar606_2Z10Two}
$x \in B$ implies $v x v^* \in C.$
\item\label{L-Polar606_2Z10Three}
$x^* x \in B$ implies $x v^* v = x$
and $x x^* \in B$ implies $v^* v x = x.$
\end{enumerate}
Moreover, the formula $\ph (x) = v x v^*$ defines an isomorphism
$\ph \colon B \to C$
such that $\ph (a^* a) = a a^*$
and such that for every
$x \in B_{+},$
we have
$x \sim \ph (x)$ in~$A.$
\end{lem}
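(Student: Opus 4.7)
The plan is to combine standard polar decomposition identities with an approximate-identity argument. From $a = v (a^* a)^{1/2}$ one reads off, by taking adjoints and using the commutation $v f (a^* a) = f (a a^*) v,$ that $(a^* a)^{1/2} v^* = a^* \in A$ and $v (a^* a)^{1/2} = a \in A.$ Approximating any continuous $f$ with $f (0) = 0$ uniformly on $[0, \| a^* a \|]$ by polynomials of the form $t \mapsto t q (t),$ one concludes that $f (a^* a) v^* \in A$ and $v f (a^* a) \in A$ for every such~$f.$ In particular, $(a^* a)^{1/n} v^*$ and $v (a^* a)^{1/n}$ lie in~$A$ for all $n,$ and $\big\{ (a^* a)^{1/n} \big\}_{n \in \N}$ is a standard approximate identity for~$B.$

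Now I would prove part~(\ref{L-Polar606_2Z10Three}) first: since $v^* v (a^* a) = (a^* a),$ the projection $v^* v$ is a left and right identity on every generator of~$B,$ hence on all of~$B.$ If $x^* x \in B,$ then $v^* v x^* x v^* v = x^* x,$ and expanding $(x - x v^* v)^* (x - x v^* v)$ yields $x v^* v = x;$ the case $x x^* \in B$ is dual. For part~(\ref{L-Polar606_2Z10One}), take $x \in A$ with $x^* x \in B;$ then $x (a^* a)^{1/n} \to x$ in norm, by the usual expansion of $\| x - x (a^* a)^{1/n} \|^{2},$ and each $x (a^* a)^{1/n} v^* = x \cdot \big[ (a^* a)^{1/n} v^* \big]$ lies in~$A$ by the first paragraph, so $x v^* \in A.$ Replacing $x$ by~$x^*$ handles the second half. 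For part~(\ref{L-Polar606_2Z10Two}), if $x \in B$ then $x v^* \in A$ by~(\ref{L-Polar606_2Z10One}), and since $(x v^*) (x v^*)^* = x v^* v x^* = x x^* \in B$ by~(\ref{L-Polar606_2Z10Three}), a second application of~(\ref{L-Polar606_2Z10One}) yields $v (x v^*) = v x v^* \in A.$ To place $v x v^*$ in~$C,$ approximate $x$ by $(a^* a) y_n (a^* a);$ the calculation
\[
v (a^* a) y_n (a^* a) v^*
 = a \big[ (a^* a)^{1/2} y_n (a^* a)^{1/2} \big] a^*
\]
produces an element fixed under left and right multiplication by $v v^*$ (using $v v^* a = a$), hence in~$C,$ and norm convergence places the limit $v x v^*$ in~$C.$

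For the moreover statement, $\ph (x) = v x v^*$ is plainly linear and $*$-preserving, multiplicativity follows from $v^* v y = y$ for $y \in B,$ and injectivity from $v^* (v x v^*) v = v^* v x v^* v = x;$ the identity $\ph (a^* a) = v (a^* a)^{1/2} (a^* a)^{1/2} v^* = a a^*$ is immediate. Surjectivity is proved symmetrically: the polar decomposition of $a^*$ is $a^* = v^* (a a^*)^{1/2},$ so the analogs of parts (\ref{L-Polar606_2Z10One})--(\ref{L-Polar606_2Z10Three}) for $a^*$ produce, for each $y \in C,$ an element $v^* y v \in B$ with $\ph (v^* y v) = v v^* y v v^* = y.$ For $x \in B_{+},$ set $y = x^{1/2} v^*;$ part~(\ref{L-Polar606_2Z10One}) applied to $x^{1/2} \in B$ gives $y \in A,$ and then $y y^* = x^{1/2} v^* v x^{1/2} = x$ and $y^* y = v x^{1/2} \cdot x^{1/2} v^* = v x v^* = \ph (x),$ so $x \sim \ph (x).$ The main technical obstacle is the first paragraph; once the functional calculus identities $(a^* a)^{1/2} v^* = a^*$ and $v (a^* a)^{1/2} = a$ have been leveraged to show $f (a^* a) v^* \in A$ whenever $f (0) = 0,$ every remaining step reduces to a short manipulation.
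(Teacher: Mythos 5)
Your proof is correct and follows essentially the same route as the paper's: the functional-calculus fact that $v f(a^*a)$ and $f(a^*a)v^*$ lie in $A$, approximate-identity arguments for parts (\ref{L-Polar606_2Z10One})--(\ref{L-Polar606_2Z10Three}), the inverse $y \mapsto v^* y v,$ and the element $v x^{1/2}$ (your $y^*$) for the Cuntz equivalence. The only cosmetic differences are that you prove part (\ref{L-Polar606_2Z10Three}) by expanding $(x - x v^* v)^*(x - x v^* v)$ rather than via the limit $x (a^*a)^{1/n} v^* v \to x,$ and you get surjectivity of $\ph$ by symmetry in $a^*$ rather than by the paper's explicit verification that $v^* C v \subset B.$
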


\begin{proof}
It follows from Proposition~1.3 of~\cite{Cu0}
that for every \cfn{} $f \colon [0, \I) \to [0, \I)$
such that $f (0) = 0,$
we have $v f (a^* a) \in A.$
Therefore also $f (a^* a) v^* \in A.$

We claim that $x^* x \in B$ implies
$\limi{n} x (a^* a)^{1/n} = x.$
To see this,
we note that $y \in B$ implies
$\limi{n} y (a^* a)^{1/n} = \limi{n} (a^* a)^{1/n} y = y.$
We now estimate:
\begin{align*}
\| x (a^* a)^{1/n} - x \|^2
& = \big\| [x (a^* a)^{1/n} - x]^* [x (a^* a)^{1/n} - x] \big\|
       \\
& = \big\| \big[ (a^* a)^{1/n} x^* x - x^* x \big] (a^* a)^{1/n}
          + \big[x^* x (a^* a)^{1/n} - x^* x \big] \big\|,
\end{align*}
which converges to zero as $n \to \I.$
This proves the claim.
Similarly $x^* x \in C$ implies
$\limi{n} x (a a^*)^{1/n} = x.$

We now prove~(\ref{L-Polar606_2Z10One}).
Suppose $x^* x \in B.$
Then
$x v^* = \limi{n} x (a^* a)^{1/n} v^*$
and $x (a^* a)^{1/n} v^* \in A$ for all $n \in \N.$
If $x x^* \in B,$
we get $x^* v^* \in A,$
so $v x \in A.$

We next consider~(\ref{L-Polar606_2Z10Three}).
By standard properties of the polar decomposition,
we have
\begin{equation}\label{Eq_2Z13_Star}
v^* a = (a^* a)^{1/2}.
\end{equation}
It follows that
\[
v^* v a^* a
 = v^* v (a^* a)^{1/2} (a^* a)^{1/2}
 = v^* a (a^* a)^{1/2}
 = a^* a.
\]
Taking adjoints, we get $a^* a v^* v = a^* a.$
For any \cfn{} $f \colon [0, \I) \to [0, \I)$
such that $f (0) = 0,$
one now gets
$f (a^* a) v^* v = f (a^* a)$
by polynomial approximation.
If now $x \in A$ satisfies $x^* x \in B,$
we use the claim in the second paragraph to get
\[
x v^* v
 = \limi{n} x (a^* a)^{1/n} v^* v
 = \limi{n} x (a^* a)^{1/n}
 = x.
\]
This is one half of~(\ref{L-Polar606_2Z10Three}).
The other half follows by applying this to $x^*$
and taking adjoints.


We next claim that
\begin{equation}\label{Eq_2Z13_CRel}
v (a^* a) = (a a^*) v.
\end{equation}
Taking adjoints in~(\ref{Eq_2Z13_Star}),
we get $a^* v = (a^* a)^{1/2}.$
So
\[
a a^* v
 = a (a^* a)^{1/2}
 = v (a^* a)^{1/2} (a^* a)^{1/2}
 = v a^* a,
\]
as desired.

We prove~(\ref{L-Polar606_2Z10Two}).
Elements of $B$ of the form
$x = (a^* a)^{3/2} y (a^* a)^{3/2},$ with $y \in A,$
are dense in~$B,$
so it suffices to consider such elements.
We have, using~(\ref{Eq_2Z13_CRel}) and its adjoint at the second step,
\begin{align*}
v x v^*
& = v (a^* a) (a^* a)^{1/2} y (a^* a)^{1/2} (a^* a) v^*
  \\
& = (a a^*) v (a^* a)^{1/2} y (a^* a)^{1/2} v^* (a a^*)
  = (a a^*) a y a^* (a a^*)
  \in C,
\end{align*}
as desired.

We now know that $\ph$ as in the statement is a map from
$B$ to~$C.$
That it is a \hm{} follows from
the relations $v^* v x = x v^* v = x$ for $x \in B,$
which are a consequence of~(\ref{L-Polar606_2Z10Three}).

Define $\ps \colon C \to A^{**}$
by $\ps (x) = v^* x v$ for $x \in C.$
The same relations imply that $(\ps \circ \ph) (x) = x$
for all $x \in B.$

We now claim that $\ps (C) \S B.$
By continuity,
it suffices to show that for $y \in A$
we have
$v^* (a a^*) y (a a^*) v \in B.$
Using~(\ref{Eq_2Z13_CRel}) and its adjoint at the first step,
we get
\begin{align*}
v^* (a a^*) y (a a^*) v
& = (a^* a)^{1/2} (a^* a)^{1/2} v^* y v (a^* a)^{1/2} (a^* a)^{1/2}
  \\
& = (a^* a)^{1/2} a^* y a (a^* a)^{1/2}
  \in B,
\end{align*}
as desired.
Thus, we can treat $\ps$ as a function from $C$ to~$B.$

We saw above that $\ps \circ \ph = \id_B.$
It follows from~(\ref{Eq_2Z13_Star})
that $v v^* a a^* = a a^*.$
Using the same reasoning
as in the proof of~(\ref{L-Polar606_2Z10Three}),
by taking adjoints we get $a a^* v v^* = a a^*,$
and by polynomial approximation we get
$(a a^*)^{1/n} v v^* = (a a^*)^{1/n}$ for all $n \in \N.$
For $x \in C$ we then use the last statement in the second paragraph
to get
$x v v^* = x$ for all $x \in C,$
and take adjoints to get $v v^* x = x$ for all $x \in C.$
These relations imply that $\ph \circ \ps = \id_C.$

It remains only to prove that $\ph (x) \sim x$ for $x \in B_{+}.$
Set $c = v x^{1/2}.$
We have $c \in A$ by~(\ref{L-Polar606_2Z10One}),
$c^* c = x$ by~(\ref{L-Polar606_2Z10Three}),
and $c c^* = \ph (x),$
so the result follows
from the discussion after Definition~2.3 of~\cite{KR}.
\end{proof}

We need only one of the implications of the following proposition,
but it seems informative to include the whole result.

\begin{prp}\label{P_3720_P3Her}
Let $A$ be a \ca{}
and let $D \S A$ be a subalgebra.
\Tfae:
\begin{enumerate}
\item\label{P_3720_P3Her_Cz}
For every $a \in A_{+} \SM \{ 0 \}$
there is $b \in D_{+} \SM \{ 0 \}$
such that $b \precsim a.$
\item\label{P_3720_P3Her_Cut}
For every $a \in A_{+} \SM \{ 0 \}$
there is $z \in A$ such that $z a z^*$ is a nonzero elenent of~$D.$
\item\label{P_3720_P3Her_HIso}
For every nonzero \hsa{} $B \S A$
there is a nonzero \hsa{} $E \S D$
and an injective homomorphism $\ph \colon E \to B$
such that for all $x \in E_{+}$ we have $\ph (x) \sim x.$
\end{enumerate}
\end{prp}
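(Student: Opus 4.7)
The plan is to prove the cycle (1) $\Leftrightarrow$ (2) and then (2) $\Rightarrow$ (3) $\Rightarrow$ (1), with Lemma~\ref{L-Polar606} doing the real work in the step from~(2) to~(3). Condition~(2) is a strong form of~(1): rather than just an existential statement about Cuntz subequivalence, it asks for an explicit element of $A$ that cuts $a$ down to $D.$

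For the equivalence of (1) and (2), the implication (2) $\Rightarrow$ (1) is immediate, since $z a z^* \precsim a$ for any $z \in A$ and any $a \in A_{+}.$ For (1) $\Rightarrow$ (2), given $b \in D_{+} \SM \{ 0 \}$ with $b \precsim a,$ choose $\ep > 0$ small enough that $(b - \ep)_{+} \neq 0.$ Then $(b - \ep)_{+}$ lies in $D$ by the continuous functional calculus, and Lemma~2.2 of~\cite{KR2} provides $z \in A$ with $z a z^* = (b - \ep)_{+},$ a nonzero element of~$D.$

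For (2) $\Rightarrow$ (3), let $B \S A$ be a nonzero \hsa{} and choose $a \in B_{+} \SM \{ 0 \}.$ Apply~(2) to get $z \in A$ with $b := z a z^*$ a nonzero element of~$D.$ Set $c = a^{1/2} z^* \in A,$ so that $c^* c = z a z^* = b \in D$ while $c c^* = a^{1/2} z^* z a^{1/2} \in \overline{a A a} \S B.$ Let $\ph_0 \colon \overline{b A b} \to \overline{(c c^*) A (c c^*)},$ $\ph_0 (x) = v x v^*,$ be the isomorphism produced by Lemma~\ref{L-Polar606} applied to~$c,$ which satisfies $\ph_0 (x) \sim x$ for $x \in \overline{b A b}_{+}.$ Set $E = \overline{b D b};$ this is a nonzero \hsa{} of~$D$ (since $b \in D_{+} \SM \{ 0 \}$) and is contained in $\overline{b A b}.$ Since $c c^* \in B$ and $B$ is hereditary, $\overline{(c c^*) A (c c^*)} \S B,$ so restricting $\ph_0$ to~$E$ gives the required injective \hm{} $\ph \colon E \to B.$

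Finally, for (3) $\Rightarrow$ (1), given $a \in A_{+} \SM \{ 0 \},$ apply~(3) to the \hsa{} $B = \overline{a A a}.$ Choose any $x \in E_{+} \SM \{ 0 \};$ then $x \in D_{+} \SM \{ 0 \},$ and $\ph (x) \in B$ gives $\ph (x) \precsim a.$ Combined with $x \sim \ph (x),$ this yields $x \precsim a,$ establishing~(1). The only real content is in the step (2) $\Rightarrow$ (3); the main obstacle there is identifying the correct element $c = a^{1/2} z^*$ so that the polar decomposition isomorphism of Lemma~\ref{L-Polar606} sends a \hsa{} of~$D$ into~$B,$ rather than the reverse.
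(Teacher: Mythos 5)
Your proof is correct and follows essentially the same route as the paper's: the same conversion of (1) into (2) via the R{\o}rdam-type cutting lemma, the same element $a^{1/2} z^*$ (left implicit in the paper) fed into Lemma~\ref{L-Polar606} to get (2) $\Rightarrow$ (3), and the same argument for (3) $\Rightarrow$ (1). The only cosmetic difference is that you cite Lemma~2.2 of~\cite{KR2} as passing directly from $b \precsim a$ to $z a z^* = (b - \ep)_{+},$ whereas the paper first extracts $v$ with $\| v a v^* - b \| < \tfrac{1}{2}$ from the definition of Cuntz subequivalence and then applies Lemma~2.5(ii) of~\cite{KR}; this is the same standard fact.
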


\begin{proof}
We show that (\ref{P_3720_P3Her_Cz}) implies~(\ref{P_3720_P3Her_Cut}).
Let $a \in A_{+} \SM \{ 0 \}.$
Choose $b \in D_{+} \SM \{ 0 \}$
such that $b \precsim a.$
\Wolog{} $\| b \| = 1.$
The definition of Cuntz subequivalence provides
$v \in A$ such that $\| v a v^* - b \| < \frac{1}{2}.$
Lemma 2.5(ii) of~\cite{KR} provides
$w \in A$ such that $\big( b - \frac{1}{2} \big)_{+} = w v a v^* w^*.$
The element $\big( b - \frac{1}{2} \big)_{+}$ is nonzero and in~$D.$
Take $z = w v.$

Now assume~(\ref{P_3720_P3Her_Cut});
we prove~(\ref{P_3720_P3Her_HIso}).
Choose any nonzero element
$a \in B_{+}.$
Choose $z \in A$ such that $z a z^*$ is a nonzero elenent of~$D.$
Set
\[
d = z a z^*,
\,\,\,\,\,\,
E = {\overline{d D d}},
\andeqn
b = a^{1/2} z^* z a^{1/2} \in B.
\]
The last part of Lemma~\ref{L-Polar606}
provides an isomorphism
$\ph \colon {\overline{d A d}} \to {\overline{b A b}} \subset B$
such that $x \sim \ph (x)$ for all
$x \in {\overline{d A d}}.$
The conclusion of the lemma follows by restricting
to ${\overline{d D d}}.$

Finally, we prove that (\ref{P_3720_P3Her_HIso})
implies~(\ref{P_3720_P3Her_Cz}).
Let $a \in A_{+} \SM \{ 0 \}.$
Set $B = {\overline{a A a}}.$
Let $E \S D$ and $\ph \colon E \to B$ be as in~(\ref{P_3720_P3Her_HIso}).
Choose any $b \in D_{+} \SM \{ 0 \}.$
Then,
using Proposition 2.7(i) of~\cite{KR} for the second step,
we have $b \sim \ph (b) \precsim a.$
\end{proof}

\begin{cor}\label{C-HerIso606}
Let $\af \colon G \to \Aut (A)$
be a pointwise spectrally nontrivial action
of a discrete group $G$ on a \ca~$A.$
Identify $A$ with a subalgebra of $C^*_{\mathrm{r}} (G, A, \af)$
in the usual way.
Let $B \subset C^*_{\mathrm{r}} (G, A, \af)$
be a nonzero hereditary subalgebra.
Then there exists a nonzero hereditary subalgebra $E \subset A$
and an injective homomorphism $\ph \colon E \to B$
such that, for every $x \in E_{+},$
we have
$x \sim \ph (x)$
in $C^*_{\mathrm{r}} (G, A, \af).$
\end{cor}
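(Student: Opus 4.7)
The plan is essentially a one-step deduction from the two preceding results, Lemma~\ref{L-L8_012Mod} and Proposition~\ref{P_3720_P3Her}, applied in the right configuration. First I would set $A' = C^*_{\mathrm{r}}(G, A, \af)$ and view $D = A$ as a subalgebra of $A'$ via the canonical embedding. The goal is to verify the hypotheses of Proposition~\ref{P_3720_P3Her} with the pair $(A', D)$ playing the role of $(A, D)$ in the proposition, and then to read off condition~(\ref{P_3720_P3Her_HIso}) as the conclusion.

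Second, I would check condition~(\ref{P_3720_P3Her_Cut}) of Proposition~\ref{P_3720_P3Her}: for every $a \in A'_{+} \SM \{0\},$ there exists $z \in A'$ such that $z a z^*$ is a nonzero element of $D = A.$ This is precisely the content of Lemma~\ref{L-L8_012Mod}, which uses the pointwise spectral nontriviality of $\af.$ No further work is required for this step.

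Third, since~(\ref{P_3720_P3Her_Cut}) implies~(\ref{P_3720_P3Her_HIso}) in Proposition~\ref{P_3720_P3Her}, we conclude that for every nonzero hereditary subalgebra $B \S A'$ there exist a nonzero hereditary subalgebra $E \S A$ and an injective homomorphism $\ph \colon E \to B$ such that $x \sim \ph(x)$ for every $x \in E_{+}.$ The Cuntz equivalence here is computed in the ambient algebra $A' = C^*_{\mathrm{r}}(G, A, \af),$ which is exactly the relation required by the corollary.

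There is essentially no obstacle: the entire work has been packaged into Lemma~\ref{L-L8_012Mod} (the analytic averaging against pointwise spectrally nontrivial automorphisms, reducing a positive element to a nonzero element of $A$) and Proposition~\ref{P_3720_P3Her} (the general hereditary-subalgebra equivalence proved via polar decomposition and Lemma~\ref{L-Polar606}). The only thing to be careful about is that the Cuntz relation $x \sim \ph(x)$ furnished by Proposition~\ref{P_3720_P3Her} is interpreted in the correct larger algebra, namely $C^*_{\mathrm{r}}(G, A, \af),$ and this is automatic from the statement of that proposition once the substitution $A \leftrightarrow C^*_{\mathrm{r}}(G, A, \af)$ and $D \leftrightarrow A$ is made.
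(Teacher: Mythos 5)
Your argument is exactly the paper's proof: apply Proposition~\ref{P_3720_P3Her} with $C^*_{\mathrm{r}}(G,A,\af)$ in place of $A$ and with $A$ in place of $D,$ note that Lemma~\ref{L-L8_012Mod} is precisely condition~(\ref{P_3720_P3Her_Cut}), and read off condition~(\ref{P_3720_P3Her_HIso}). Your remark about the Cuntz equivalence being taken in the ambient crossed product is also the correct reading of the proposition's conclusion.
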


\begin{proof}
We apply Proposition~\ref{P_3720_P3Her}
with $C^*_{\mathrm{r}} (G, A, \af)$ in place of~$A$
and with $A$ in place of~$D.$
Lemma~\ref{L-L8_012Mod} states that (\ref{P_3720_P3Her_Cut}) holds,
and the conclusion is that (\ref{P_3720_P3Her_HIso}) holds.
\end{proof}

The fact that we get $x \sim \ph (x)$ is special to
the case in which we have some freeness condition.
In Proposition~\ref{P_2Z11_SubHer},
for an arbitrary action of~$\Z_2,$
we get all of the conclusion of Corollary~\ref{C-HerIso606}
except for this Cuntz equivalence.
For our applications,
it does not matter,
but it seems potentially useful.

\section{Actions of~$\Z_2$}\label{Sec:Start}

\indent
We conjecture the following analog of Corollary~\ref{C-HerIso606}
for finite groups, but with no condition on the action.
We omit the condition that for every $x \in D_{+},$
we have the Cuntz equivalence $x \sim \ph (x)$
in $C^* (G, A, \af).$
The example of $\Z_2$ acting trivially on~$\C$
shows that, in general,
this is not possible.

\begin{cnj}\label{C-609-FGpHIso}
Let $\af \colon G \to \Aut (A)$
be an action
of a finite group $G$ on a \ca~$A.$
Let $B \subset C^* (G, A, \af)$
be a nonzero hereditary subalgebra.
Then there exists a nonzero hereditary subalgebra $D \subset A$
and an injective homomorphism $\ph \colon D \to B.$
\end{cnj}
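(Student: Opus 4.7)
The plan is to argue by induction on~$|G|,$ with the base case $|G| = 1$ trivial since $C^* (G, A, \af) = A.$

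For the inductive step, suppose first that $G$ admits a nontrivial proper normal subgroup~$H.$ The iterated crossed product identification
\[
C^* (G, A, \af)
  \cong C^* \bigl( G/H, \, C^* (H, A, \af|_H), \, \overline{\af} \bigr),
\]
where $\overline{\af}$ is the induced (possibly twisted) action of~$G/H,$ lets us apply the conjecture inductively in two stages. First, using the smaller group~$G/H,$ produce from $B$ a nonzero \hsa{} $B_1 \S C^* (H, A, \af|_H)$ together with an injective \hm{} $\ps_1 \colon B_1 \to B.$ Then, using the smaller group~$H,$ produce from $B_1$ a nonzero \hsa{} $D \S A$ together with an injective \hm{} $\ps_0 \colon D \to B_1.$ The composition $\ph = \ps_1 \circ \ps_0 \colon D \to B$ is the required embedding. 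This reduces the conjecture, for all finite solvable groups, to the case $G = \Z_p$ for a prime~$p;$ some additional care is needed because the induced action~$\overline{\af}$ may genuinely be a twisted (cocycle) action of~$G/H,$ so strictly speaking the induction should be carried out uniformly for twisted actions as well.

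For the base of this reduction, the case $p = 2$ will be established as Proposition~\ref{P_2Z11_SubHer} in Section~\ref{Sec:Start}. For odd primes~$p,$ the natural attempt is to generalize that argument: given $b \in B_{+} \SM \{ 0 \},$ expand $b = \sum_{k = 0}^{p - 1} a_k u^k$ in terms of the canonical unitary $u$ with $u^p = 1,$ exploit the self-adjointness relations $a_k = \af^k ( a_{p - k}^* )$ (indices mod~$p$), and construct a nonzero \hsa{} $D \S A$ together with an embedding into~$B$ by judicious functional calculus applied to~$b,$ to $b^* b,$ or to the coefficient $E (b^* b) = \sum_l \af^{-l} (a_l^* a_l) \in A_{+}.$ The combinatorial bookkeeping of the cross terms is heavier than for $p = 2,$ but no essentially new structural difficulty should arise.

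The main obstacle is the non-solvable case. If the composition series of~$G$ contains a non-abelian finite simple factor, there is no nontrivial proper normal subgroup on which to base the iteration, and the inductive step above fails at precisely that level. One might try to substitute an inclusion $C^* (H, A, \af|_H) \S C^* (G, A, \af)$ for a non-normal cyclic subgroup~$H$ of prime order (furnished by Cauchy's theorem), viewing $C^* (G, A, \af)$ as a finitely generated free right $C^* (H, A, \af|_H)$-module of rank~$[G : H].$ However, in the absence of an action of $G/H$ by automorphisms on the inner algebra, we see no obvious way to transfer a given nonzero \hsa{} $B \S C^* (G, A, \af)$ to a nonzero \hsa{} of $C^* (H, A, \af|_H),$ and it is at this point that the argument stalls.
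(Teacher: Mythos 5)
You should first note that the statement you were asked to prove is stated in the paper as a \emph{conjecture}, not a theorem: the paper proves it only for $G = \Z_2,$ by applying Proposition~\ref{P_2Z11_SubHer} to the dual action of $\Z_2$ on $C^* (\Z_2, A, \af),$ whose fixed point algebra is~$A.$ So there is no proof in the paper to compare against, and your outline, as you yourself concede at the end, is not a proof either. The question is whether it closes any gap beyond $\Z_2,$ and it does not. The reduction by iterated crossed products over a normal subgroup is a reasonable idea (though even there the induced twisted action of $G / H$ must be untwisted by Packer--Raeburn stabilization, which replaces the coefficient algebra by its tensor product with~$K$ and so requires an additional stability argument you do not supply), but it only pushes the problem down to the simple composition factors, where everything still remains to be done.

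The genuine gap is your claim that for $G = \Z_p$ with $p$ an odd prime ``no essentially new structural difficulty should arise.'' This is contradicted by the obstructions recorded at the start of Section~\ref{Sec:Start}, and each ingredient of the $\Z_2$ proof is specific to order two. First, Lemma~\ref{L_2Z07_Avg} arranges $\af (y) = y^*$ by the substitution $y = x + \af (x^*),$ which has no analog compatible with $p$ pairwise interacting \hsa{s} $\af^k (D)$ when $p > 2.$ Second, Lemma~\ref{L_2Z07_DistPos} is a dichotomy for \emph{two} positive contractions at distance one, resting on the fact that two numbers in $[0, 1]$ at distance one must be $0$ and~$1$; for $p$ elements the corresponding estimates are much weaker, and the paper explicitly warns they may not suffice for an analog of that lemma. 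Third, Lemma~\ref{L_2Z11_ConeEqPj} uses equivariant semiprojectivity of the cone over $C (\Z_n)$ with the translation action, which by Proposition~2.10 of~\cite{PST} is known only when $n$ is a power of~$2.$ Moreover, the strategy you actually sketch for odd $p$ (expanding $b = \sum_k a_k u^k$ and cutting down by functional calculus) is not a generalization of the proof of Proposition~\ref{P_2Z11_SubHer} at all; it is the strategy of Lemma~\ref{L-L8_012Mod}, which depends essentially on a freeness hypothesis to suppress the off-diagonal terms and is unavailable for an arbitrary action. So the base case of your induction is open for every odd prime, independently of the non-solvable case where you acknowledge the argument stalls.
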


We prove this conjecture for $G = \Z_2.$
In fact, we prove a stronger result.
If $\af \colon \Z_2 \to \Aut (A)$
is any action on any \ca~$A,$
and if $D \S A$ is a nonzero hereditary subalgebra,
then there is a nonzero hereditary subalgebra $B$
of the fixed point algebra of $\af$
which is isomorphic to a (not necessarily hereditary)
subalgebra of~$D.$

There are two obstructions to generalization.
First,
we use equivariant semiprojectivity
of the cone over $C (\Z_2)$
with the translation action of $\Z_2$ on~$\Z_2.$
(See Lemma~\ref{L_2Z11_ConeEqPj}.)
The analogous statement for other groups is known
only for cyclic groups whose order is a power of~$2.$
(See Proposition~2.10 of~\cite{PST},
where equivariant projectivity is proved.)
Second, the combinatorics in proofs such as
that of Proposition~\ref{P_2Z11_SubHer}
would need to be much more complicated,
and may not give strong enough inequalities
to use as hypotheses in an analog of Lemma~\ref{L_2Z07_DistPos}.

\begin{ntn}\label{N_2Z07_Z2}
We systematically use the same letter for an
automorphism $\af$ of a \ca~$A$ such that $\af^2 = \id_A$
and the corresponding action $\af \colon \Z_2 \to \Aut (A).$
In particular,
when appropriate,
${\widehat{\af}} \in \Aut (C^* (\Z_2, A, \af))$
is the automorphism which generates the dual action.
We write the fixed point algebra as $A^{\af}.$
We also identify $A$ with its image in $C^* (\Z_2, A, \af)$
in the usual way.
\end{ntn}

\begin{lem}\label{C_2Z11_HerIso}
Let $A$ be a \ca, let $\ep > 0,$
and let $a, b \in A_{+}$ satisfy
$\| a - b \| < \ep.$
Then ${\overline{(b - \ep)_{+} A (b - \ep)_{+}}}$
is isomorphic to a hereditary subalgebra of ${\overline{a A a}}.$
\end{lem}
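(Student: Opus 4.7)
The plan is to get $(b-\ep)_+$ into the form $x^*x$ for some element $x$ whose ``column space'' $xx^*$ lies inside $\overline{aAa}$, and then invoke Lemma~\ref{L-Polar606} (the polar decomposition lemma) to obtain the desired isomorphism of hereditary subalgebras.

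First I would apply Lemma~2.5(ii) of \cite{KR2} (which is the standard Kirchberg--R\o rdam fact that if $\|a-b\|<\ep$ then there exists $d\in A$ with $(b-\ep)_+ = d\,a\,d^*$) to produce $d\in A$ such that
\[
(b-\ep)_+ \;=\; d\,a\,d^*.
\]
Then I would set $x = a^{1/2} d^*$, so that $x^*x = d a d^* = (b-\ep)_+$ and $xx^* = a^{1/2} d^*d\, a^{1/2}$.

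Next, I would apply Lemma~\ref{L-Polar606} with $x$ in place of $a$. This gives an isomorphism
\[
\ph \colon \overline{x^*x\, A\, x^*x} \;\longrightarrow\; \overline{xx^*\, A\, xx^*},
\]
i.e.\ an isomorphism from $\overline{(b-\ep)_+ A (b-\ep)_+}$ onto $\overline{a^{1/2}d^*d\,a^{1/2}\, A\, a^{1/2}d^*d\,a^{1/2}}$.

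It then remains to verify that the image $C := \overline{xx^*\, A\, xx^*}$ is a hereditary subalgebra of $\overline{aAa}$. For containment, I would use the standard identification $\overline{aAa} = \overline{a^{1/2} A a^{1/2}}$ to see that $xx^* = a^{1/2}(d^*d)a^{1/2}$ lies in $\overline{aAa}$, hence so does all of $C$. For hereditariness, $C$ is already a hereditary subalgebra of $A$ (being the hereditary subalgebra generated by a single positive element of $A$), so its intersection with any containing subalgebra of $A$, in particular with $\overline{aAa}$, makes it hereditary there as well. I do not expect a serious obstacle in this argument; the only mild subtlety is keeping the two identifications $\overline{aAa}=\overline{a^{1/2}Aa^{1/2}}$ and $C \subset \overline{aAa}$ straight, together with the correct invocation of Lemma~\ref{L-Polar606} applied to $x$ rather than $a$.
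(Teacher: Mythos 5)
Your proof is correct and is essentially the paper's own argument: the paper likewise obtains $v$ with $v a v^* = (b - \ep)_{+}$ from Lemma~2.5(ii) of \cite{KR} (note the reference is \cite{KR}, not \cite{KR2}), sets $c = a^{1/2} v^* v a^{1/2}$ --- which is exactly your $x x^*$ --- and invokes the last part of Lemma~\ref{L-Polar606}. The only difference is that you make the intermediate element $x = a^{1/2} d^*$ and the verification that $\overline{x x^* A x x^*}$ is a hereditary subalgebra of $\overline{a A a}$ explicit, which the paper leaves implicit.
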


\begin{proof}
Lemma~2.5(ii) of~\cite{KR}
provides $v \in A$ such that $v a v^* = (b - \ep)_{+}.$
Set $c = a^{1/2} v^* v a^{1/2}.$
Then
${\overline{c A c}} \cong {\overline{(b - \ep)_{+} A (b - \ep)_{+}}}$
by the last part of Lemma~\ref{L-Polar606},
and ${\overline{c A c}}$
is a hereditary subalgebra of ${\overline{a A a}}.$
\end{proof}

\begin{lem}\label{L_2Z07LessThan1}
Let $A$ be a \ca,
let $\af \colon \Z_2 \to \Aut (A)$ be an action of~$\Z_2$ on~$A,$
and let $x \in A$ satisfy
\[
0 \leq x \leq 1,
\,\,\,\,\,\,
\| x \| = 1,
\andeqn
\| x - \af (x) \| < 1.
\]
Then there exists a $\af$-invariant hereditary subalgebra $E \S A$
which is isomorphic to a
nonzero hereditary subalgebra of ${\overline{x A x}}.$
\end{lem}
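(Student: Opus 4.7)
My plan is to produce the invariant hereditary subalgebra $E$ by an averaging trick followed by a direct application of Lemma~\ref{C_2Z11_HerIso}.

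First, I would set $y = \tfrac{1}{2}\bigl(x + \af(x)\bigr)$. Since $\af$ is an involution, $\af(y)=y$, so $y$ is $\af$-invariant. Because $x,\af(x) \in A_+$, we also have $y \in A_+$. The key estimate is
\[
\|y - x\| = \left\|\tfrac{1}{2}\bigl(\af(x) - x\bigr)\right\| = \tfrac{1}{2}\|x - \af(x)\| < \tfrac{1}{2},
\]
which in turn gives $\|y\| \geq \|x\| - \|y-x\| > \tfrac{1}{2}$. Hence $(y - \tfrac{1}{2})_+ \neq 0$.

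Next, I would define
\[
E = \ov{(y - \tfrac{1}{2})_+\, A\, (y - \tfrac{1}{2})_+}.
\]
Since $y$ is $\af$-invariant, the functional calculus element $(y - \tfrac{1}{2})_+$ is $\af$-invariant, and so $E$ is an $\af$-invariant hereditary subalgebra of~$A$. It is nonzero because $(y - \tfrac{1}{2})_+ \neq 0$.

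Finally, applying Lemma~\ref{C_2Z11_HerIso} with $a = x$, $b = y$, and $\ep = \tfrac{1}{2}$ (the hypothesis $\|a - b\| < \ep$ is satisfied by the first paragraph), I conclude that $E$ is isomorphic to a hereditary subalgebra of $\ov{x A x}$, which is nonzero because it is isomorphic to the nonzero algebra~$E$. This completes the proof.

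There is no real obstacle here; the main idea is simply that the approximate invariance hypothesis $\|x - \af(x)\| < 1$ lets the average $y$ of $x$ and $\af(x)$ lie within distance $\tfrac{1}{2}$ of~$x$ while having norm strictly greater than $\tfrac{1}{2}$, so that the standard Cuntz-type comparison of Lemma~\ref{C_2Z11_HerIso} produces a nonzero hereditary subalgebra cut out from~$y$ and embedded in~$\ov{xAx}$.
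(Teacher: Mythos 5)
Your proof is correct and is essentially identical to the paper's: the paper also averages to get $a=\tfrac12[x+\af(x)]$, notes $\|a-x\|<\tfrac12$ hence $\|a\|>\tfrac12$, sets $E=\ov{(a-\tfrac12)_+A(a-\tfrac12)_+}$, and invokes Lemma~\ref{C_2Z11_HerIso}. The only difference is that you spell out the $\af$-invariance of $E$ explicitly, which the paper leaves implicit.
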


\begin{proof}
Set $a = \tfrac{1}{2} [x + \af (x)].$
Then
\[
\| a - x \| = \tfrac{1}{2} \| x - \af (x) \| < \tfrac{1}{2}.
\]
Therefore $\| a \| > \tfrac{1}{2}.$
Set $b = \big( a - \tfrac{1}{2} \big)_{+}$
and $E = {\overline{b A b}}.$
Then $E \neq 0,$
and is isomorphic to a hereditary subalgebra of ${\overline{x A x}}$
by Lemma~\ref{C_2Z11_HerIso}.
\end{proof}

\begin{dfn}\label{D_2Z07_Orth}
Let $A$ be a \ca{} and let $S, T \S A$ be selfadjoint subsets.
We say that $S$ and $T$ are
{\emph{orthogonal}}
if $a b = 0$ for all $a \in S$ and $b \in T.$
\end{dfn}

The definition is symmetric in $S$ and $T$
because of the requirement that $S$ and $T$ be selfadjoint.

\begin{lem}\label{L_2Z07_HerOrth}
Let $A$ be a \ca,
let $\af \colon \Z_2 \to \Aut (A)$ be an action of~$\Z_2$ on~$A,$
and let $x \in A$ be a nonzero element such that
$\af (x) = x^*.$
Let $D \S A$ be the hereditary subalgebra
generated by $x^* x.$
Suppose that $D$ and $\af (D)$ are orthogonal.
Let $F \S A$ be the hereditary subalgebra
generated by $D$ and $\af (D).$
Then $F$ is $\af$-invariant,
and there is an automorphism $\ps \in \Aut (D)$
such that $\ps^2 = \id_D$
and such that $D^{\ps}$ is isomorphic to a corner of $F \cap A^{\af}.$
\end{lem}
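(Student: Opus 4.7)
The plan is to exploit the polar decomposition $x = v|x|$ in $A^{**},$ with $v$ a partial isometry having initial projection $p = v^*v$ (the open support of $x^*x$) and final projection $q = vv^*$ (the open support of $xx^*$). Uniqueness of polar decomposition together with $\af(x) = x^*$ forces $\af(v) = v^*,$ $\af(|x|) = |x^*|,$ and $\af(p) = q;$ the orthogonality hypothesis becomes $pq = 0,$ from which a short manipulation yields $v^2 = 0,$ and hence the element $u := v + v^* \in (A^{**})^{\af}$ satisfies $u^2 = p + q.$ Invariance of $F$ under~$\af$ is then immediate because $\af$ interchanges the generators $|x|^2$ and $|x^*|^2.$

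Next I would define $\ps \colon D \to D$ by $\ps(d) = v^*\af(d)v.$ The point that $\ps(d)$ lies in~$A$ (and in fact in~$D$) rests on the formulas $v|x|^2 = x|x| \in A$ and $|x|^2 v^* = |x|x^* \in A,$ which are instances of Proposition~1.3 of~\cite{Cu0} (cf.\ Lemma~\ref{L-Polar606}). Using $v^*v = p$ acting as the identity on~$D$ and $\af^2 = \id,$ one computes $\ps^2(d) = v^*\af(v^*\af(d)v)v = v^*vdv^*v = d,$ so $\ps$ is an involutive $*$-automorphism of~$D.$

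The main construction is the projection $e := \tfrac{1}{2}(p + q + u) \in (A^{**})^{\af};$ the relations $u(p+q) = (p+q)u = u$ and $u^2 = p+q$ give $e^2 = e.$ I would then verify that $vF \S F$ and $Fv \S F,$ using $(|x|^2 + |x^*|^2)v = v|x|^2 = x|x| \in A$ together with the standard description of the hereditary subalgebra $F$ as $\{f \in A \colon (p+q)f = f(p+q) = f\}$ in $A^{**}.$ Consequently $u,$ and hence~$e,$ multiplies $F^{\af} = F \cap A^{\af}$ into itself, so $eF^{\af}e$ is a corner of~$F^{\af}.$

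Finally I would define
\[
\widetilde{\ph} \colon D^{\ps} \to F^{\af}, \qquad
\widetilde{\ph}(d) \;=\; 2\,ede \;=\; \tfrac{1}{2}(d + vd + dv^* + vdv^*).
\]
For $d \in D^{\ps}$ the relation $\af(d) = vdv^*$ yields $\af(\widetilde{\ph}(d)) = \widetilde{\ph}(d),$ the four summands lie in~$A$ by the formulas above, and the $(p+q)$-characterization of~$F$ places $\widetilde{\ph}(d)$ in~$F.$ The identities $dv = 0 = v^*d$ (both immediate from $pv = v^*p = 0$) reduce multiplicativity to the brief computation $d_1(1 + v + v^* + v^*v)d_2 = 2\,d_1 d_2,$ and $p\,\widetilde{\ph}(d)\,p = \tfrac{1}{2}d$ gives injectivity. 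The step I expect to be the main obstacle is the surjectivity $\widetilde{\ph}(D^{\ps}) = eF^{\af}e.$ For $g \in F^{\af}$ I would set $d_g := 2 p(ege) p;$ by approximating $ege \in F$ in norm by $(|x|^2+|x^*|^2)a_n(|x|^2+|x^*|^2)$ and using $p(|x|^2+|x^*|^2) = |x|^2,$ one obtains $p(ege)p = \lim |x|^2 a_n |x|^2 \in D,$ and the identities $\ps(d)v^* = v^*\af(d)$ and $v\ps(d) = \af(d)v$ then allow a component-by-component verification (in the $2 \times 2$ matrix decomposition of $A^{**}$ given by $p,q$) that $\widetilde{\ph}(d_g) = ege$ with $d_g \in D^{\ps},$ completing the isomorphism $D^{\ps} \cong eF^{\af}e.$
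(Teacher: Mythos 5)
Your proposal is correct, and at its core it is the same construction as the paper's: the polar decomposition $x = v (x^*x)^{1/2}$ with $\af^{**}(v) = v^*$ and $v^2 = 0,$ the involution $\ps (d) = v^* \af (d) v = \af (v d v^*),$ the averaging projection (your $e = \tfrac{1}{2}(p + q + v + v^*)$ is exactly the image in $A^{**}$ of the paper's $e = \tfrac{1}{2}\bigl(\begin{smallmatrix} 1 & 1 \\ 1 & 1 \end{smallmatrix}\bigr) \in M_2,$ and your $d \mapsto 2 e d e$ is exactly $\mu \circ \nu$). The difference is in the packaging: the paper first builds an explicit isomorphism $\mu \colon M_2 (D) \to F,$ proves surjectivity of $\mu$ by an approximate identity argument, and invokes Takai duality to identify the transported $\Z_2$-action on $M_2 (D)$ as the flip twisted by~$\ps,$ after which the corner computation happens inside $M_2 (D).$ You bypass $M_2 (D)$ and Takai duality entirely and work directly in $A^{**},$ which is more elementary; the price is that the surjectivity of $d \mapsto 2 e d e$ onto $e F^{\af} e$ must be checked by hand. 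Your sketch of that step does close: for $h = e g e$ one has $u h = h u = h$ with $u = v + v^*,$ and since $q u = v$ and $u q = v^*$ this forces $q h p = v (p h p),$ $p h q = (p h p) v^*,$ and $q h q = v (p h p) v^*,$ so $h = \widetilde{\ph}(2 p h p)$ with $2 p h p \in D^{\ps}$ (membership in $D$ coming from the density argument you indicate, which is the analogue of the paper's approximate identity step). Both routes rely equally on the regularity statements of Lemma~\ref{L-Polar606} to keep all products inside~$A.$
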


There is surely a generalization to actions
of arbitrary finite groups.
If $G$ is finite
and $\af \colon G \to \Aut (A)$ is an action,
then one should require that the hereditary subalgebras
$\af_g (D)$ be pairwise orthogonal
and that there exist an equivariant homomorphism as follows.
Define an action of $G$
on the cone $C_0 ( (0, 1]) \otimes C (G)$
by letting $G$ act on $G$ by translation
and letting $G$ act trivially on $C_0 ( (0, 1]).$
For $g \in G,$
let $p_g \in C (G)$ be the projection $p_g = \ch_{ \{ g \} }.$
Let $c \in C_0 ( (0, 1])$ be the function $c (t) = t$
for $t \in (0, 1].$
Then there should be an equivariant homomorphism
$\gm \colon C_0 ( (0, 1]) \otimes C (G) \to A$
such that $\af_g (D)$ is the hereditary subalgebra of~$A$
generated by $\gm (c \otimes p_g)$ for all $g \in G.$
(Actually, the last part follows if we merely assume
that $D$ is the hereditary subalgebra of~$A$
generated by $\gm (c \otimes p_1).$)

\begin{proof}[Proof of Lemma~\ref{L_2Z07_HerOrth}]
Let $\af^{**}$ be the automorphism of $A^{**}$
induced by $\af \in \Aut (A).$
Apply Lemma~\ref{L-Polar606} with $x$ in place of~$a,$
and let $v \in A^{**}$ be as there,
that is, the polar decomposition of $x$ is $x = v (x^* x)^{1/2},$
and $a \mapsto v a v^*$
is an isomorphism
from $D$ to ${\overline{(x x^*) A (x x^*)}} = \af (D).$
Then the polar decomposition of $x^*$ is $x^* = v^* (x x^*)^{1/2}.$
Therefore $\af^{**} (v) = v^*.$
Orthogonality of $x^* x$ and $x x^*$
implies that $v^2 = 0.$

Define $\mu \colon M_2 (D) \to A$ by
\[
\mu \left(  \left( \begin{matrix}
a_{1, 1} & a_{1, 2} \\
a_{2, 1} & a_{2, 2}
\end{matrix} \right)  \right)
= a_{1, 1} + a_{1, 2} v^* + v a_{2, 1} + v a_{2, 2} v^*
\]
for $a_{1, 1}, a_{1, 2}, a_{2, 1}, a_{2, 2} \in D.$
It follows from Lemma~\ref{L-Polar606}
that the range of~$\mu,$
which a priori is in~$A^{**},$
really is contained in~$A.$
A calculation using orthogonality of $D$ and $\af (D),$
using $a \in D$ implies $v a v^* \in \af (D)$
(Lemma~\ref{L-Polar606}(\ref{L-Polar606_2Z10Two})),
and using 
Lemma~\ref{L-Polar606}(\ref{L-Polar606_2Z10Three})
multiple times to both insert and remove factors of $v^* v,$
shows that $\mu$ is a \hm.

Identify $M_2 (D) = M_2 \otimes D,$
and let $(e_{j, k})_{j, k = 1, 2}$
be the standard system of matrix units for $M_2.$

The restriction of $\mu$ to $\C e_{1, 1} \otimes D$
is injective,
so $\mu$ is injective.

We claim that the range ${\operatorname{ran}} (\mu)$
of $\mu$ is equal to~$F.$
We first show that ${\operatorname{ran}} (\mu) \S F.$
Obviously $\mu ( \C e_{1, 1} \otimes D ) \S F$
and $\mu ( \C e_{2, 2} \otimes D ) = \af (D) \S F.$
For $a \in D,$
we have
\[
\mu (e_{1, 2} \otimes a)^* \mu (e_{1, 2} \otimes a)
 = v a^* a v^*
 \in \af (D)
 \S F
\]
and, using Lemma~\ref{L-Polar606}(\ref{L-Polar606_2Z10Three}),
\[
\mu (e_{1, 2} \otimes a) \mu (e_{1, 2} \otimes a)^*
 = a v^* v a^*
 = a a^*
 \in D
 \S F,
\]
so $\mu (e_{1, 2} \otimes a) \in F.$
Also,
$\mu (e_{2, 1} \otimes a) = \mu (e_{1, 2} \otimes a^*)^* \in F.$
Thus ${\operatorname{ran}} (\mu) \S F.$

We now show that $F \S {\operatorname{ran}} (\mu).$
Let $b \in F.$
Let $(e_{\ld})_{\ld \in \Ld}$
be an approximate identity for~$D.$
Using orthogonality of $D$ and $\af (D),$
we see that $D + \af (D)$ is a subalgebra of~$A,$
and that $\big( e_{\ld} + \af (e_{\ld}) \big)_{\ld \in \Ld}$
is an approximate identity for $D + \af (D)$
and therefore also for~$F.$
So it suffices to prove that
for $\ld \in \Ld$ we have
\[
e_{\ld} b e_{\ld}, \, e_{\ld} b \af (e_{\ld}), \,
\af (e_{\ld}) b e_{\ld}, \, \af (e_{\ld}) b \af (e_{\ld})
\in {\operatorname{ran}} (\mu).
\]
We have $e_{\ld} b e_{\ld} \in D \S {\operatorname{ran}} (\mu).$
Also
\[
\af (e_{\ld}) b \af (e_{\ld})
  \in \af (D)
  = v D v^*
  \S \mu (\C e_{2, 2} \otimes D)
  \S {\operatorname{ran}} (\mu).
\]
Next,
set $a = e_{\ld} b \af (e_{\ld}) v.$
Then $a \in A$ because $\af (e_{\ld}) v = \af (e_{\ld} v^*)$
and $e_{\ld} v^* \in A$
by Lemma~\ref{L-Polar606}(\ref{L-Polar606_2Z10One}).
Next,
\[
a a^*
 = e_{\ld} b \af (e_{\ld} v^* v e_{\ld}) b^* e_{\ld}
 \in D
\andeqn
a^* a
 = \af \big( v e_{\ld} \af( b^* e_{\ld}^2 b ) e_{\ld} v^* \big)
 \in \af (v D v^*)
 = D.
\]
Therefore $a \in D.$
So, using Lemma~\ref{L-Polar606}(\ref{L-Polar606_2Z10Three})
at the first step,
\[
e_{\ld} b \af (e_{\ld})
 = e_{\ld} b \af (e_{\ld} v^* v)
 = a v^*
 = \mu (e_{1, 2} \otimes a)
 \in {\operatorname{ran}} (\mu).
\]
Finally,
applying the case just done to~$b^*,$
we get
\[
\af (e_{\ld}) b e_{\ld}
 = \big( e_{\ld} b^* \af (e_{\ld}) \big)^*
 \in {\operatorname{ran}} (\mu).
\]
This completes the proof that ${\operatorname{ran}} (\mu) = F.$

Using Lemma~\ref{L-Polar606}(\ref{L-Polar606_2Z10Two}),
define $\ps \colon D \to D$
by $\ps (a) = \af (v a v^*) \in \af^2 (D) = D.$
We claim that $\ps^2 = \id_D.$
For $a \in D,$ we compute,
using Lemma~\ref{L-Polar606}(\ref{L-Polar606_2Z10Three})
at the last step,
\[
\ps^2 (a)
 = \af \big( v \af (v a v^*) v^* \big)
 = v^* \af^2 (v a v^*) v
 = v^* v a v^* v
 = a.
\]
This proves the claim.

By Takai duality (Theorem 7.9.3 of~\cite{Pd1}),
there is an isomorphism
\[
\rh \colon
 C^* \big( {\widehat{\Z_2}}, \, C^* (\Z_2, D, \ps), \, {\widehat{\ps}}
                              \big)
 \to M_2 (D)
\]
which is equivariant for the second dual action on the domain
and the $\sm$ action on $M_2 \otimes D$ given by conjugation by the
right regular representation on $M_2,$
tensored with~$\ps.$
The description of $\sm$ means that
\[
\sm \left( \left( \begin{matrix}
a_{1, 1} & a_{1, 2} \\
a_{2, 1} & a_{2, 2}
\end{matrix} \right) \right)
= \left( \begin{matrix}
\ps (a_{2, 2}) & \ps (a_{2, 1}) \\
\ps (a_{1, 2}) & \ps (a_{1, 1})
\end{matrix} \right)
\]
for $a_{1, 1}, a_{1, 2}, a_{2, 1}, a_{2, 2} \in D.$

We now claim that $\mu$ is equivariant for this action.
It is enough to check equivariance on a generating set for $M_2 (D).$
We choose
\[
\{ e_{1, 1} \otimes a, \, e_{1, 2} \otimes a \colon a \in D \}.
\]
We check, using the definition of $\ps$ at the second step in
both calculations,
and using Lemma~\ref{L-Polar606}(\ref{L-Polar606_2Z10Three}),
$\af^{**} (v) = v^*,$ and $\af^{**} (v^*) = v$
several times:
for $a \in D,$
\begin{align*}
(\mu \circ \sm) (e_{1, 1} \otimes a)
& = \mu (e_{2, 2} \otimes \ps (a))
 \\
& = v \af (v a v^*) v^*
  = \af ( (v^* v) a (v^* v) )
  = \af (a)
  = (\af \circ \mu) (e_{1, 1} \otimes a)
\end{align*}
and
\begin{align*}
(\mu \circ \sm) (e_{1, 2} \otimes a)
& = \mu (e_{2, 1} \otimes \ps (a))
 \\
& = v \af (v a v^*)
  = \af ( (v^* v) a v^*)
  = \af (a v^*)
  = (\af \circ \mu) (e_{2, 1} \otimes a).
\end{align*}
This proves the claim.

It follows that $F \cap A^{\af} = \mu ( M_2 (D)^{\sm}).$
(We have $M_2 (D)^{\sm} \cong C^* (\Z_2, D, \ps),$
but we won't use this fact.)
It is easy to see that
\[
M_2 (D)^{\sm} = \left\{ \left( \begin{matrix}
  a       &  b        \\
 \ps (b)  & \ps (a)
\end{matrix} \right)
    \colon a, b \in D \right\}
\]
Define a \pj{} $e \in M_2$ by
\[
e = \frac{1}{2} \left( \begin{matrix}
1 & 1 \\
1 & 1
\end{matrix} \right).
\]
Then $e$ is in the $\sm$-invariant part of the
multiplier algebra of $M_2 (D),$
so that $e M_2 (D)^{\sm} e$ is a corner
of $M_2 (D)^{\sm}.$
Computations show that there is an injective
\hm{} $\nu \colon D^{\ps} \to M_2 (D)^{\sm}$
given by
\[
\nu (c) = \frac{1}{2} \left( \begin{matrix}
c & c \\
c & c
\end{matrix} \right)
\]
for $c \in D^{\ps},$
that $e \nu (c) e = \nu (c)$
for $c \in D^{\ps},$
and that if $a, b \in D$ and we define
\[
c = \frac{1}{4} \big( a + b + \ps (a) + \ps (b) \big),
\]
then
\[
e \left( \begin{matrix}
  a       &  b        \\
 \ps (b)  & \ps (a)
\end{matrix} \right) e
= \nu (c).
\]
Therefore $\mu \circ \nu$ is an isomorphism from $D^{\ps}$
to the corner $\mu (e M_2 (D)^{\sm} e)$ of $F \cap A^{\af}.$
\end{proof}

\begin{lem}\label{L_2Z07_Avg}
Let $A$ be a \ca,
let $\af \colon \Z_2 \to \Aut (A)$ be an action of~$\Z_2$ on~$A,$
and let $D \S A$ be a hereditary subalgebra.
Suppose that there is a nonzero element $x \in A$
such that $x^* x \in D$ and $x x^* \in \af (D).$
Then there is a nonzero element $y \in A$
such that $y^* y \in D,$ $y y^* \in \af (D),$
and $\af (y) = y^*.$
\end{lem}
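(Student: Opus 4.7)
The plan is to try the symmetrized candidate $y = x + \af(x^*)$,
show it has the three required properties, and adjust for possible
degeneracy by a phase. Symmetry is automatic:
$\af(y) = \af(x) + x^* = y^*$. Expanding yields
\bal
y^* y
 & = x^* x + x^* \af(x^*) + \af(x) x + \af(x) \af(x^*) \\
 & = x^* x + \af(x x^*) + \af(x) x + (\af(x) x)^*,
\eal
and since $x^* x \in D$ and $\af(x x^*) \in \af(\af(D)) = D$,
the heart of the matter is to prove that $\af(x) x \in D$.

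For that I would first establish the general fact that if $D \S A$
is a \hsa{} and $c \in A$ satisfies $c^* c \in D$ and $c c^* \in D$,
then $c \in D$. Writing $c = v |c|$ in~$A^{**}$, one checks that
\[
(c c^*)^{1/n} \, c \, (c^* c)^{1/n} = v |c|^{1 + 4/n}
\longrightarrow c
 \quad \text{in norm as } n \to \I,
\]
while each approximant lies in $D A D \S D$ because
$(c c^*)^{1/n}, (c^* c)^{1/n} \in D$; so $c \in \overline{D} = D$.
Applying this with $c = \af(x) x$, the estimates
\bal
c^* c & = x^* \af(x^* x) x \leq \| x^* x \| \cdot x^* x \in D, \\
c c^* & = \af(x) (x x^*) \af(x^*) \leq \| x x^* \| \cdot \af(x x^*) \in D
\eal
put $c^* c$ and $c c^*$ in~$D$ by hereditariness, and the general
fact then gives $\af(x) x \in D$. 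Combining this with the expansion
above yields $y^* y \in D$, and applying $\af$ gives
$y y^* = \af(y^* y) \in \af(D)$.

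It remains to handle the possibility $y = 0$. If $y = 0$ then
$\af(x^*) = -x$. The element $ix$ satisfies the same hypotheses,
since $(ix)^*(ix) = x^* x \in D$ and $(ix)(ix)^* = x x^* \in \af(D)$,
and the analogous candidate for $ix$ is $y' = i (x - \af(x^*))$.
If $y'$ also vanishes, then $\af(x^*) = x$ as well, forcing $x = 0$,
contrary to hypothesis; so at least one of $y, y'$ is a nonzero
element with all three required properties. The main obstacle I
anticipate is the verification that $\af(x) x \in D$; once the
general hereditary-subalgebra lemma is established, the two
dominations above make that step routine, but they depend on
using the hypothesis on~$x$ in both of its forms via~$\af$.
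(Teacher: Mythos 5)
Your proof is correct and follows essentially the same route as the paper's: the same symmetrized candidate $y = x + \af(x^*)$, the same expansion of $y^* y$, a hereditary-subalgebra absorption lemma to place the cross terms in~$D$, and a multiplication by~$i$ to handle the degenerate case $\af(x) = - x^*$ (which the paper dispatches first rather than last). The only cosmetic difference is the form of the absorption lemma: the paper shows each cross term lies in $D$ via the fact that $a^* a, b^* b \in D$ implies $a^* b \in D$ (using Pedersen's factorization $a = v (a^* a)^{1/4}$), whereas you use the equivalent fact that $c^* c, c c^* \in D$ forces $c \in D,$ together with explicit dominations.
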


\begin{proof}
Suppose $\af (x) = - x^*.$
Set $y = i x.$ 
Then $y^* y = x^* x,$ $y y^* = x x^*,$
and $\af (y) = i (- x^*) = (i x)^* = y^*.$

So suppose $\af (x) \neq - x^*,$
and set $y = x + \af (x^*).$
Then $y \neq 0.$
We need the fact that if $B$ is a hereditary subalgebra of~$A,$
and $a, b \in A$ satisfy $a^* a, b^* b \in B,$
then $a^* b \in B.$
(To see this, use Proposition 1.4.5 of~\cite{Pd1}
to find $v, w \in A$
such that $a = v (a^* a)^{1/4}$ and $b = w (b^* b)^{1/4}.$
We have $(a^* a)^{1/4}, \, (b^* b)^{1/4} \in B,$
so $a^* b = (a^* a)^{1/4} v^* w (b^* b)^{1/4} \in B.$)
Now expand:
\[
y^* y
 = x^* x + x^* \af (x^*) + \af (x) x + \af (x) \af (x^*).
\]
We have $\af (x) \af (x^*) = \af (x x^*) \in \af^2 (D) = D.$
So all four terms have the form $a^* b$
with $a^* a, \, b^* b \in D.$
Therefore $y^* y \in D.$
Essentially the same argument shows that $y y^* \in \af (D).$
\end{proof}

\begin{lem}\label{L_2Z07_DistPos}
Let $A$ be a \ca,
and let $a, b \in A$ satisfy
\[
0 \leq a \leq 1,
\,\,\,\,\,\,
0 \leq b \leq 1,
\andeqn
\| a - b \| = 1.
\]
Then at least one of the following is true:
\begin{enumerate}
\item\label{L_2Z07_DistPos_bSuba}
For every $\ep > 0$ there is $x \in {\overline{a A a}}$
such that
\[
0 \leq x \leq 1,
\,\,\,\,\,\,
\| x \| = 1,
\,\,\,\,\,\,
\| x a - x \| < \ep,
\andeqn
\| x b \| < \ep.
\]
\item\label{L_2Z07_DistPos_aSubb}
For every $\ep > 0$ there is $x \in {\overline{b A b}}$
such that
\[
0 \leq x \leq 1,
\,\,\,\,\,\,
\| x \| = 1,
\,\,\,\,\,\,
\| x a \| < \ep,
\andeqn
\| x b - x \| < \ep.
\]
\setcounter{TmpEnumi}{\value{enumi}}
\end{enumerate}
\end{lem}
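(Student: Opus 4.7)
The plan is as follows. Since $a - b$ is selfadjoint with $\| a - b \| = 1$, its spectrum is a subset of $[-1, 1]$ meeting $\{-1, +1\}$. Interchanging $a$ and $b$ swaps the two conclusions of the lemma and negates $a - b$, so \wolog{} I may assume $1 \in \spec (a - b)$ and prove~(\ref{L_2Z07_DistPos_bSuba}).

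Fix $\ep > 0$ and choose $\dt > 0$ small (to be specified in terms of~$\ep$ at the end). The first step is to build, via continuous functional calculus, a candidate element $x_0 \in A$ satisfying the desired inequalities approximately but not yet lying in $\ov{a A a}$. Pick $g \colon [-1, 1] \to [0, 1]$ continuous with $g (1) = 1$ and $\supp (g) \S [1 - \dt, 1]$, and set $x_0 = g (a - b) \in A_{+}$. Since $1 \in \spec (a - b)$ we have $\| x_0 \| = 1$, and the elementary spectral estimate $| t \, g (t) - g (t) | \leq \dt$ on $[-1, 1]$ gives $\| x_0 (a - b) - x_0 \| \leq \dt$.

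The key step is to deduce that $\| x_0 b \| \leq \sqrt{\dt}$. Compressing by $x_0$ on both sides gives $\| x_0 a x_0 - x_0 b x_0 - x_0^2 \| \leq \dt$. The inequality $a \leq 1$ yields $x_0 a x_0 \leq x_0^2$, so rearranging forces $x_0 b x_0 \leq \dt \cdot 1$; since $x_0 b x_0 \geq 0$, this gives $\| x_0 b x_0 \| \leq \dt$. Because $b \leq 1$ implies $b^2 \leq b$, I then obtain $\| x_0 b \|^2 = \| x_0 b^2 x_0 \| \leq \| x_0 b x_0 \| \leq \dt$. Combining with $\| x_0 (a - b) - x_0 \| \leq \dt$, I also get $\| x_0 a - x_0 \| \leq \dt + \sqrt{\dt}$.

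To finish, I must transfer $x_0$ into $\ov{a A a}$. Set $y = a x_0 a \in \ov{a A a}_{+}$. Writing $y - x_0 = (a x_0 - x_0) a + (x_0 a - x_0)$ and using the bound on $\| x_0 a - x_0 \|$ together with its adjoint gives $\| y - x_0 \| \leq 2 (\dt + \sqrt{\dt})$. In particular $\| y \| \geq 1 - 2 (\dt + \sqrt{\dt})$, and for $\dt$ small enough the normalization $x = y / \| y \|$ is a positive element of $\ov{a A a}$ with $\| x \| = 1$ (and hence $0 \leq x \leq 1$). The required inequalities $\| x a - x \| < \ep$ and $\| x b \| < \ep$ then follow by routine triangle-inequality bookkeeping from the bounds on $x_0 a - x_0$, $x_0 b$, and $y - x_0$. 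The only point requiring care is to balance the $\sqrt{\dt}$ and $\dt$ scales so that the final renormalization does not inflate the error beyond~$\ep$; once that is done, no step is genuinely delicate.
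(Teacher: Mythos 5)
Your proposal is correct, and it takes a genuinely different route from the paper's proof. The paper obtains the dichotomy from the state space: it writes $\| a - b \| = \sup_{\omega \in S (A)} | \omega (a) - \omega (b) |,$ concludes that there are states with $\omega (a)$ near $1$ and $\omega (b)$ near $0$ (or vice versa), upgrades to a pure state via Krein--Milman, and then invokes excision of pure states (Proposition~2.3 of the Akemann--Anderson--Pedersen paper) to produce the element $y$ from which $x = \| a y a \|^{-1} a y a$ is built. You instead read the dichotomy off the spectrum of the selfadjoint element $a - b$ (one of $\pm 1$ must lie in $\mathrm{sp} (a - b)$ since the norm is~$1$), take $x_0 = g (a - b)$ by functional calculus with $g$ supported near~$1,$ and extract $\| x_0 b x_0 \| \leq \delta$ from the operator inequality $x_0 a x_0 \leq x_0^2$ combined with $\| x_0 (a - b) x_0 - x_0^2 \| \leq \delta$; the passage $\| x_0 b \|^2 = \| x_0 b^2 x_0 \| \leq \| x_0 b x_0 \|$ via $b^2 \leq b$ is exactly right, and the final transfer into ${\overline{a A a}}$ by compressing to $a x_0 a$ and normalizing mirrors the paper's last step. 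Your argument is more elementary --- it avoids excision entirely and needs only functional calculus and order inequalities --- and the case split is decided by a single fixed object ($\mathrm{sp} (a - b)$) rather than by a family of approximating states. The only points to make explicit in a final write-up are that $\delta < 1$ is needed so that $g (0) = 0$ and hence $x_0 \in A$ rather than merely in the unitization, and the routine choice of $\delta$ as a function of $\varepsilon$ at the end; neither is a gap.
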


\begin{proof}
\Wolog{} $A$ is unital.
For any \ca~$A,$
let $S (A)$ denote the state space of~$A,$
and let $P (A)$ denote the pure state space of~$A.$
For any normal element $c \in A,$
by using the Hahn-Banach theorem
to extend from $C^* (1, c) \S A,$
we see that
$\| c \| = \sup_{\om \in S (A)} | \om (c) |.$
In particular,
$\| a - b \| = \sup_{\om \in S (A)} | \om (a) - \om (b) |.$
Since we always have $\om (a), \om (b) \in [0, 1],$
it follows that at least one of the following is true:
\begin{enumerate}
\setcounter{enumi}{\value{TmpEnumi}}
\item\label{L_2Z07_DPf_bSuba}
For all $\dt > 0$
there is $\om \in S (A)$ such that $\om (a) > 1 - \dt$
and $\om (b) < \dt.$
\item\label{L_2Z07_DPf_aSubb}
For all $\dt > 0$
there is $\om \in S (A)$ such that $\om (a) < \dt$
and $\om (b) > 1 - \dt.$
\end{enumerate}
We will prove that~(\ref{L_2Z07_DPf_bSuba})
implies conclusion~(\ref{L_2Z07_DistPos_bSuba})
in the statement of the lemma;
the same proof shows that~(\ref{L_2Z07_DPf_aSubb})
implies conclusion~(\ref{L_2Z07_DistPos_aSubb})
in the statement of the lemma.

First, (\ref{L_2Z07_DPf_bSuba}) clearly implies that $\| a \| = 1.$
Set
\[
\dt = \min \left( \frac{1}{32}, \, \frac{\ep^2}{242} \right).
\]
By linearity and the Krein-Milman Theorem,
(\ref{L_2Z07_DPf_bSuba})~implies that there is a pure state $\om$
on~$A$
such that
\[
\om (a) > 1 - \dt
\andeqn
\om (b) < \dt.
\]
Proposition~2.3 of~\cite{AAP}
implies that the state~$\om$
can be excised in the sense of Definition~2.1 of~\cite{AAP}.
In particular,
there is $y \in A$ such that
\[
0 \leq y \leq 1,
\,\,\,\,\,\,
\| y \| = 1,
\,\,\,\,\,\,
\| y a y - \om (a) y^2 \| < \dt,
\andeqn
\| y b y - \om (b) y^2 \| < \dt.
\]
It follows that
\[
\| y (1 - a) y \| = \| y a y - y^2 \| < 2 \dt
\andeqn
\| y b y \| < 2 \dt.
\]
Since $(1 - a)^2 \leq 1 - a,$
we get
\[
y (1 - a)^2 y \leq y (1 - a) y,
\]
whence $\| y (1 - a)^2 y \| < 2 \dt.$
Therefore
\begin{equation}\label{Eq_2Z07_Dst_y1a}
\| y (1 - a) \| = \| (1 - a) y \| < \sqrt{2 \dt}.
\end{equation}
Similarly $\| y b \| < \sqrt{2 \dt}.$
So
\begin{equation}\label{Eq_2Z07_Dst_yay}
\| a y a - y \|
 \leq \| a y - y \| \cdot \| a \| + \| y a - y \|
 < \sqrt{2 \dt} + \sqrt{2 \dt}
 = 2 \sqrt{2 \dt}.
\end{equation}
Therefore
\begin{equation}\label{Eq_2Z07_Dst_yAndb}
\| a y a b \|
 \leq \| a y a - y \| \cdot \| b \| + \| y b \|
 < 2 \sqrt{2 \dt} + \sqrt{2 \dt}
 = 3 \sqrt{2 \dt}.
\end{equation}

Using~(\ref{Eq_2Z07_Dst_yay}) at the second step and
$\dt \leq \frac{1}{32}$ at the fourth step,
we get
\[
1 \geq \| a y a \|
  > \| y \| - 2 \sqrt{2 \dt}
  = 1 - 2 \sqrt{2 \dt}
  \geq \tfrac{1}{2}.
\]
Therefore
\[
1 \leq  \frac{1}{\| a y a \|}
  \leq 1 + 2 (1 - \| a y a \|)
  < 1 + 4 \sqrt{2 \dt}.
\]
Setting $x = \| a y a \|^{-1} a y a,$
we then get
\begin{equation}\label{Eq_3113_Dst_xaya}
\| x - a y a \| < 4 \sqrt{2 \dt},
\end{equation}
so
%
\[
\| x - y \|
  \leq  \| x - a y a \| + \| a y a - y \|
  < 4 \sqrt{2 \dt} + 2 \sqrt{2 \dt}
  = 6 \sqrt{2 \dt}
\]
%
and (using~(\ref{Eq_2Z07_Dst_y1a}))
%
\[
\| y a - x \|
  \leq \| y - a y \| \cdot \| a \| + \| a y a - x \|
  < \sqrt{2 \delta} + 4 \sqrt{2 \delta}
  = 5 \sqrt{2 \delta}.
\]
%
Combining the last two inequalities
and using $\| a \| \leq 1,$
we get
\[
\| x a - x \|
  \leq \| x - y \| + \| y a - x \|
  < 6 \sqrt{2 \dt} + 5 \sqrt{2 \dt}
  = 11 \sqrt{2 \dt}
  \leq \ep.
\]
Combining (\ref{Eq_3113_Dst_xaya}) with~(\ref{Eq_2Z07_Dst_yAndb})
and using $\| b \| \leq 1,$
we get
\[
\| x b \|
  \leq \| x - a y a \| + \| a y a b \|
  < 4 \sqrt{2 \dt} + 3 \sqrt{2 \dt}
  = 7 \sqrt{2 \dt}
  \leq \ep.
\]
This completes the proof.
\end{proof}

\begin{lem}\label{L_2Z11_ConeEqPj}
Let $m \in \N$ and let $n = 2^m.$
For every $\ep > 0$ there is $\dt > 0$ such that the following holds.
Let $A$ be a \ca,
let $\af \in \Aut (A)$ satisfy $\af^n = \id_A,$
and let $a_1, a_2, \ldots, a_{n} \in A$
safisfy
$0 \leq a_k \leq 1$ for $k = 1, 2, \ldots, n,$
$\| \af (a_k) - a_{k + 1} \| < \dt$
for $k = 1, 2, \ldots, n$ (with $a_{n + 1} = a_1$),
and $\| a_j a_k \| < \dt$ for $j, k \in \{ 1, 2, \ldots, n \}$
with $j \neq k.$
Then there exist $b_1, b_2, \ldots, b_{n} \in A$
such that for $k = 1, 2, \ldots, n$ we have
(taking $b_{n + 1} = b_1$)
\[
0 \leq b_k \leq 1,
\,\,\,\,\,\,
\af (b_k) = b_{k + 1},
\andeqn
\| b_k - a_k \| < \ep,
\]
and such that $b_j b_k = 0$ for $j, k \in \{ 1, 2, \ldots, n \}$
with $j \neq k.$
\end{lem}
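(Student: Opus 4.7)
The plan is to recognize the data $(a_1, \ldots, a_n)$ as an almost equivariant, almost multiplicative map out of a specific cone algebra, and then to invoke the equivariant projectivity of that cone in its quantitative ``stability'' form. Let $C = C_0((0,1]) \otimes C(\Z_n)$, with the $\Z_n$-action $\ta$ which is trivial on the cone factor and is translation on $C(\Z_n)$. Write $c \in C_0((0,1])$ for the function $c(t) = t$ and $p_k = \ch_{\{k\}} \in C(\Z_n)$, so that $C$ is the universal \ca{} generated by positive contractions $c \otimes p_1, \ldots, c \otimes p_n$ with pairwise zero products, cyclically permuted by~$\ta$.

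First I would set up the dictionary. Exact equivariant \hm s $\pi \colon C \to A$ intertwining $\ta$ with $\af$ correspond bijectively to tuples $(b_1, \ldots, b_n)$ of positive contractions in~$A$ with $b_j b_k = 0$ for $j \neq k$ and $\af(b_k) = b_{k+1}$ (indices mod~$n$), via $b_k = \pi(c \otimes p_k)$; this is precisely what the conclusion asks for. Meanwhile, the three inequalities in the hypothesis say exactly that the assignment $c \otimes p_k \mapsto a_k$ is a $\dt$-approximately equivariant $\dt$-approximate \hm{} on this list of generators.

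Next I would invoke Proposition~2.10 of~\cite{PST}, which asserts that $C$ equipped with the above $\Z_n$-action is equivariantly projective when $n$ is a power of~$2$. From equivariant projectivity one deduces, by a standard ultrapower/compactness argument carried out in the equivariant category, the corresponding quantitative weak stability statement: for every $\ep > 0$ there exists $\dt > 0$ such that every $\dt$-approximately equivariant $\dt$-approximate \hm{} on the specified generators is within $\ep$ of the generator-values of an exact equivariant \hm{} $C \to A$. Applying this to the approximate \hm{} $c \otimes p_k \mapsto a_k$ produces the desired $b_k$, and the three conclusions of the lemma are just the three listed properties of an exact equivariant \hm{} on the generators.

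The main obstacle, and the source of the hypothesis $n = 2^m$, is exactly the appeal to equivariant projectivity of $C$: this is known only when the cyclic group in question has order a power of~$2$, and the authors flag this in the surrounding discussion in Section~\ref{Sec:Start}. The deduction of the $\ep$-$\dt$ perturbation statement from abstract equivariant lifting is routine (argue by contradiction, promote a failing sequence to a genuine equivariant \hm{} into the $\af$-equivariant ultrapower $\prod_\om A / \bigoplus A$, lift it using equivariant projectivity, and read off the contradiction), and the translation between approximate homomorphism data on generators and tuples satisfying the listed approximate relations is purely bookkeeping.
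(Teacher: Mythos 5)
Your proposal is correct and rests on exactly the same key input as the paper's proof: the identification of the data with an approximately equivariant approximate morphism out of the cone $C_0((0,1]) \otimes C(\Z_n)$ with the translation action, and the equivariant projectivity of that cone from Proposition~2.10 of~\cite{PST}. The only difference is in how the quantitative $\ep$--$\dt$ stability is extracted -- you propose a direct ultraproduct/compactness argument (which, using projectivity in the non-unital category, also sidesteps the paper's detour through the unitization and its final check that the lifted elements have vanishing scalar part), whereas the paper invokes the formal generators-and-relations stability machinery of~\cite{Ph-EqSj}; this is a difference of bookkeeping rather than of substance.
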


The case $n = 2$
(which is what we need) has a fairly easy direct proof.
Set $x = a_1 - a_2.$
Then $\| x + \af (x) \| < 2 \dt.$
The element $y = \frac{1}{2} (x - \af (x))$
satisfies $\af (y) = - y$
and $\| y - x \| < \dt.$
Take $b_1$ to be the positive part of~$y$
and take $b_2$ to be the negative part of~$y.$

\begin{proof}[Proof of Lemma~\ref{L_2Z11_ConeEqPj}]
We write $\af \colon \Z_n \to \Aut (A)$
for the action of $\Z_n$ generated by~$\af.$
Replacing $A$ by the smallest $\af$-invariant subalgebra
containing $a_1, a_2, \ldots, a_{n},$
we may assume that $A$ is separable.

Let $C = C_0 ( (0, 1]) \otimes C (\Z_n)$
be the cone over $C (\Z_n),$
with the action of $\Z_n$
obtained by letting $\Z_n$ act on $\Z_n$ by translation
and letting $\Z_n$ act trivially on $C_0 ( (0, 1]).$
Then $C$ is equivariantly projective by Proposition~2.10
of~\cite{PST}.
Therefore $C$ is equivariantly semiprojective.
It follows from Lemma~1.5 of~\cite{PST}
that the unitization $C^{+}$ of~$C$
is equivariantly semiprojective in the unital category.

The algebra~$C^{+},$ with the specified action of~$\Z_n,$
is given by the following
$\Z_n$-equivariant set $(S, \sm, R)$ of generators and relations
in the sense of Definition~5.8 of~\cite{Ph-EqSj}.
We take $S = \{ x_1, x_2, \ldots, x_{n} \},$
we take $\sm$ to be the action of $\Z_n$ on $S$ generated
by the cyclic permutation $x_k \mapsto x_{k + 1}$
for $k = 1, 2, \ldots, n$ (with $x_{n + 1} = x_1$),
and we take $R$ to consist of the relations
$0 \leq x_k \leq 1$ for $k = 1, 2, \ldots, n$
and $x_j x_k = 0$ for $j, k \in \{ 1, 2, \ldots, n \}$
with $j \neq k.$
(See Remark~5.6 of~\cite{Ph-EqSj}
for the justification of these as relations.)
The pair $(S, R)$ is finite, admissible, and bounded
(Definition~5.2 of~\cite{Ph-EqSj}).
Apply Theorem~5.22 of~\cite{Ph-EqSj}
to conclude that $(S, \sm, R)$ is stable in the sense
of Definition~5.20 of~\cite{Ph-EqSj}.
It is immediate from this definition that for every
$\ep > 0$ there is $\dt > 0$
such that for any \ca~$A$
the conclusion of the lemma holds in the unitization~$A^{+}$
in place of~$A.$

So assume that $a_1, a_2, \ldots, a_{n} \in A,$
and let $b_1, b_2, \ldots, b_{n} \in A^{+}$
be the resulting elements.
Let $\kp \colon A^{+} \to \C$ be the map coming from the unitization.
Since $b_j b_k = 0$ for $j \neq k,$
we have $\kp (b_j) \kp (b_k) = 0$ for $j \neq k.$
Since the induced action of $\Z_n$ fixes $1 \in A^{+},$
we have $\kp (b_{k + 1}) = \kp (\af (b_k)) = \kp (b_k)$
for $k = 1, 2, \ldots, n - 1.$
These two collections of relations are compatible
only if $\kp (b_k) = 0$ for $k = 1, 2, \ldots, n.$
Therefore $b_1, b_2, \ldots, b_{n}$ are in fact in~$A.$
\end{proof}

\begin{prp}\label{P_2Z11_SubHer}
Let $A$ be a \ca,
let $\af \colon \Z_2 \to \Aut (A)$ be an action of~$\Z_2$ on~$A,$
and let $D \S A$ be a nonzero hereditary subalgebra.
Then there exists a nonzero hereditary subalgebra $B \S A^{\af}$
which is isomorphic to a (not necessarily hereditary)
subalgebra of~$D.$
\end{prp}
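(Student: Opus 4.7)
The plan is to split into two main cases depending on whether $D$ contains an element that is approximately fixed by~$\af$.

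In Case~1, suppose there exists $a \in D_{+}$ with $\|a\| = 1$ and $\|a - \af(a)\| < 1$. Lemma~\ref{L_2Z07LessThan1} then produces an $\af$-invariant nonzero hereditary subalgebra $E \S A$ isomorphic (as a $C^*$-algebra) to a nonzero hereditary subalgebra of $\overline{a A a} \S D$. Take $B = E^{\af}$; it is nonzero because the generator of $E$ in the proof of Lemma~\ref{L_2Z07LessThan1} lies in $A^{\af}$, and it is a hereditary subalgebra of $A^{\af}$ since $E$ is $\af$-invariant and hereditary in $A$. The embedding of $E$ into a subalgebra of $D$ restricts to an embedding $B \hookrightarrow D$.

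In Case~2, every $a \in D_{+}$ with $\|a\| = 1$ satisfies $\|a - \af(a)\| = 1$. Pick $d \in D_{+}$ with $\|d\| = 1$ and apply Lemma~\ref{L_2Z07_DistPos} to the pair $(d, \af(d))$. After replacing the output by $\af$ of it if necessary, for any preassigned $\ep > 0$ we obtain $x \in \overline{d A d}$ with $0 \le x \le 1$, $\|x\| = 1$, $\|xd - x\| < \ep$, and $\|x \af(d)\| < \ep$. Using self-adjointness of $x$, $d$ and isometry of $\af$ one has $\|d \af(x)\| = \|\af(x) d\| = \|x \af(d)\| < \ep$, hence
\[
\|x \af(x)\| \le \|(x - xd)\af(x)\| + \|x \cdot d \af(x)\| < \ep + \ep = 2\ep.
\]
For $\ep$ small enough, Lemma~\ref{L_2Z11_ConeEqPj} (with $n = 2$) applied to $(x, \af(x))$ yields $b_1, b_2 \in A$ with $0 \le b_k \le 1$, $\af(b_1) = b_2$, $b_1 b_2 = 0$, and $\|b_1 - x\|$ as small as desired. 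Set $\tilde b_k = (b_k - \eta)_{+}$ for an appropriate $\eta > 0$; then $\tilde b_1 \tilde b_2 = 0$, $\af(\tilde b_1) = \tilde b_2$, and by Lemma~\ref{C_2Z11_HerIso} the hereditary subalgebra $\tilde E_1 := \overline{\tilde b_1 A \tilde b_1}$ is isomorphic to a hereditary subalgebra of $\overline{x A x} \S \overline{d A d} \S D$; set $\tilde E_2 = \overline{\tilde b_2 A \tilde b_2} = \af(\tilde E_1)$.

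Now split Case~2 according to whether there exists $a \in A$ with $\tilde b_1 a \tilde b_2 \neq 0$. In Subcase~2a, such $a$ exists; then $y := \tilde b_2 a^* \tilde b_1$ is nonzero with $y^*y \in \tilde E_1$ and $yy^* \in \tilde E_2$, so Lemma~\ref{L_2Z07_Avg} produces a nonzero $y'$ with $\af(y') = (y')^*$, $(y')^*y' \in \tilde E_1$, and $y'(y')^* \in \tilde E_2$. Applying Lemma~\ref{L_2Z07_HerOrth} to $y'$ (noting that $D'' := \overline{(y')^*y' \, A \, (y')^*y'} \S \tilde E_1$ and $\af(D'') \S \tilde E_2$ are orthogonal) yields an involutive $\psi \in \Aut(D'')$ and an isomorphism from $(D'')^{\psi}$ onto a corner $B$ of $F'' \cap A^{\af}$, where $F''$ is the hereditary subalgebra of $A$ generated by $D'' \cup \af(D'')$. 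This $B$ is nonzero (the conditional expectation $c \mapsto \tfrac{1}{2}(c + \psi(c))$ is nonzero on any nonzero positive element of $D''$), is hereditary in $A^{\af}$ as a corner of $F'' \cap A^{\af}$, and embeds into $D$ through $B \cong (D'')^{\psi} \S D'' \S \tilde E_1 \hookrightarrow D$. In Subcase~2b, $\tilde b_1 A \tilde b_2 = 0$, so $(\tilde b_1 + \tilde b_2) a (\tilde b_1 + \tilde b_2) = \tilde b_1 a \tilde b_1 + \tilde b_2 a \tilde b_2$ for all $a \in A$, whence $F := \overline{(\tilde b_1 + \tilde b_2) A (\tilde b_1 + \tilde b_2)} = \tilde E_1 + \tilde E_2$ is an internal direct sum of orthogonal $C^*$-subalgebras on which $\af$ acts as the swap. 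The map $c \mapsto c + \af(c)$ is a $*$-isomorphism from $\tilde E_1$ onto $F^{\af}$ (multiplicativity uses $\tilde E_1 \tilde E_2 = 0$), so $B := F^{\af}$ is a nonzero hereditary subalgebra of $A^{\af}$ isomorphic to $\tilde E_1 \hookrightarrow D$. The main technical step is the bound $\|x \af(x)\| < 2\ep$, which exploits both $\|x - xd\| < \ep$ and (after adjoint and $\af$) $\|d \af(x)\| < \ep$; once this is in hand, the 2a/2b dichotomy is clean (the connecting-element condition is symmetric under adjoint), and the remainder is choosing $\ep$, $\eta$, and the tolerance in Lemma~\ref{L_2Z11_ConeEqPj} compatibly.
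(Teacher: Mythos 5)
Your proof is correct and uses the same toolbox as the paper's own argument --- Lemmas \ref{L_2Z07LessThan1}, \ref{L_2Z07_Avg}, \ref{L_2Z07_HerOrth}, \ref{L_2Z07_DistPos}, \ref{L_2Z11_ConeEqPj}, and \ref{C_2Z11_HerIso} --- in essentially the same way; the only difference is organizational, in that you branch first on whether $D$ contains a norm-one positive $a$ with $\| a - \af(a)\| < 1$ and only afterwards on orthogonality and the existence of a connecting element, whereas the paper branches first on orthogonality of $D$ and $\af(D)$ and reduces the non-orthogonal case to the orthogonal one. Both case analyses are exhaustive and the individual steps coincide, so this is essentially the paper's proof.
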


\begin{proof}
We divide the proof into several cases.
At any stage,
we may replace $D$ by any nonzero hereditary subalgebra of~$D,$
and then perhaps appeal to a case already done.



{\textbf{Case~1:}} $D$ and $\af (D)$ are orthogonal,
and there is no nonzero $x \in A$ such that
$x^* x \in D$ and $x x^* \in \af (D).$

Orthogonality implies that $D + \af (D)$ is a subalgebra of~$A$
which is equivariantly isomorphic to $D \oplus D$
with the action generated by $\bt (a, b) = (b, a)$
for $a, b \in D.$
We claim that $D + \af (D)$ is a hereditary subalgebra of~$A.$
So let $y \in A$ satisfy $y^* y, \, y y^* \in D + \af (D).$
Let $(e_{\ld})_{\ld \in \Ld}$
be an approximate identity for~$D.$
Then $\big( e_{\ld} + \af (e_{\ld}) \big)_{\ld \in \Ld}$
is an approximate identity for $D + \af (D)$
and therefore also for the hereditary subalgebra
generated by $D + \af (D).$
For $\ld \in \Ld,$
the element $x = \af (e_{\ld}) y e_{\ld}$
satisfies $x^* x \in D$ and $x x^* \in \af (D).$
Therefore $\af (e_{\ld}) y e_{\ld} = 0.$
Similarly $\af (e_{\ld}) y^* e_{\ld} = 0,$
so $e_{\ld} y \af (e_{\ld}) = 0.$
Therefore
\[
y = \lim_{\ld \in \Ld}
      [ e_{\ld} + \af (e_{\ld}) ] y [ e_{\ld} + \af (e_{\ld}) ]
  = \lim_{\ld \in \Ld}
      \big( e_{\ld} y e_{\ld} + \af (e_{\ld}) y \af (e_{\ld}) \big)
  \in D + \af (D).
\]
The claim is proved.

Now $[D + \af (D)]^{\af}$ is a hereditary subalgebra of~$A^{\af},$
and $[D + \af (D)]^{\af} \cong D.$
This completes the proof of Case~1.

{\textbf{Case~2:}} $D$ and $\af (D)$ are orthogonal,
and there is a nonzero element $x \in A$ such that
$x^* x \in D$ and $x x^* \in \af (D).$

By Lemma~\ref{L_2Z07_Avg},
we may assume that $\af (x) = x^*.$
Therefore $\af (x^* x) = x x^*.$
Since we can replace $D$ by a nonzero hereditary subalgebra of~$D,$
we may assume that $D = {\overline{(x^* x) A (x^* x)}}.$
So $\af (D) = {\overline{(x x^*) A (x x^*)}}.$
Let $F \S A$ be the hereditary subalgebra
generated by $D$ and $\af (D).$
We apply Lemma~\ref{L_2Z07_HerOrth}
to get an automorphism $\ps \in \Aut (D)$
such that $\ps^2 = \id_D$
and such that $D^{\ps}$ is isomorphic to a corner of $F \cap A^{\af}.$
Then $D^{\ps}$ is a nonzero subalgebra of~$D$
which is isomorphic to a hereditary subalgebra of $A^{\af}.$

{\textbf{Case~3:}} $D$ and $\af (D)$ are not orthogonal.
Then there exist $a \in D$ and $b \in \af (D)$
such that $b a \neq 0.$
Set $c = b a.$
Then $c^* c \in D$ and $c c^* \in \af (D).$
By Lemma~\ref{L_2Z07_Avg},
there is $x \in A$
such that
\[
x \neq 0,
\,\,\,\,\,\,
x^* x \in D,
\,\,\,\,\,\,
x x^* \in \af (D),
\andeqn
\af (x) = x^*.
\]
Then $\af (x^* x) = x x^*.$
We may clearly assume that $\| x \| = 1.$
Since we can replace $D$ by a nonzero hereditary subalgebra of~$D,$
we may assume that $D = {\overline{(x^* x) A (x^* x)}}.$
So $\af (D) = {\overline{(x x^*) A (x x^*)}}.$

Suppose now that $\| x^* x - x x^* \| < 1.$
Apply Lemma~\ref{L_2Z07LessThan1}
with $x^* x$ in place of~$x.$
We obtain a nonzero $\af$-invariant hereditary subalgebra
$E \S A$ which is isomorphic to a
nonzero hereditary subalgebra of~$D.$
Then $E^{\af} = E \cap A^{\af}$
is a nonzero hereditary subalgebra of $A^{\af}$
which is isomorphic to a subalgebra of~$D.$

Otherwise,
we have $\| x^* x - x x^* \| \geq 1.$
In fact, we must have $\| x^* x - x x^* \| = 1.$

Use Lemma~\ref{L_2Z11_ConeEqPj}
to choose $\dt > 0$
such that whenever $y \in A$ satisfies $0 \leq y \leq 1$
and $\| y \af (y) \| < \dt,$
then there exists $z \in A$ such that
\begin{equation}\label{Eq_2Z11_Forz}
0 \leq z \leq 1,
\,\,\,\,\,\,
z \af (z) = 0,
\andeqn
\| z - y \| < \tfrac{1}{2}.
\end{equation}
Apply Lemma~\ref{L_2Z07_DistPos}
with $a = x^* x$ and $b = x x^*.$
We may exchange $x$ and $x^*$ if desired.
(This exchanges $D$ and $\af (D).$
But finding a nonzero hereditary subalgebra of $A^{\af}$
isomorphic to a subalgebra of $D$
is equivalent to finding a nonzero hereditary subalgebra of $A^{\af}$
isomorphic to a subalgebra of $\af (D).$)
We may therefore assume that from Lemma~\ref{L_2Z07_DistPos}
we obtain
$y \in D$
such that
\[
y \in D,
\,\,\,\,\,\,
0 \leq y \leq 1,
\,\,\,\,\,\,
\| y \| = 1,
\,\,\,\,\,\,
\| y (x^* x) - y \| < \frac{\dt}{2},
\andeqn
\| y (x x^*) \| < \frac{\dt}{2}.
\]
Then also $\| (x^* x) y - y \| < \frac{\dt}{2}.$

Now
\begin{align*}
\| y \af (y) \|
& \leq \| y \| \cdot \| \af (y) - (x x^*) \af (y) \|
     + \| y (x x^*) \| \cdot \| \af (y) \|
     \\
& = \| \af (y - (x^* x) y) \| + \| y (x x^*) \|
  < \frac{\dt}{2} + \frac{\dt}{2}
  = \dt.
\end{align*}
By the choice of~$\dt,$
there exists $z \in A$ satisfying~(\ref{Eq_2Z11_Forz}).
Since $\| y \| = 1,$
we have $\| z \| > \tfrac{1}{2},$
and therefore $c = \big( z - \tfrac{1}{2} \big)_{+}$
is nonzero.
Clearly $c \af (c) = 0.$
Let $E = {\overline{c A c}}.$
Then $E$ and $\af (E)$ are orthogonal hereditary subalgebras of~$A.$
By Case~1 or Case~2, as appropriate,
there exists a nonzero hereditary subalgebra $B \S A^{\af}$
which is isomorphic to a subalgebra of~$E.$
Lemma~\ref{C_2Z11_HerIso} implies that~$E,$
and hence~$B,$
is isomorphic to a subalgebra
of ${\overline{y A y}} \S D.$
\end{proof}

The following lemma is surely well known.

\begin{lem}\label{L_3112_ImHered}
Let $A$ and $B$ be \ca{s}
and let $\pi \colon A \to B$ be a surjective \hm.
Let $D \S A$ be a \hsa.
Then $\pi (D)$ is a \hsa{} of~$B.$
\end{lem}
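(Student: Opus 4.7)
The plan is to establish two things: that $\pi(D)$ is a closed $*$-subalgebra of $B$, and that it satisfies the hereditary condition. Closedness is automatic: the restriction $\pi|_D \colon D \to B$ is a $*$-homomorphism between C*-algebras, and such a map always has norm-closed range (it factors as the isomorphism $D/\ker(\pi|_D) \to \pi(D)$ composed with the inclusion into $B$, and an injective $*$-homomorphism between C*-algebras is isometric).

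For the hereditary property, I would take $b \in B_+$ and $c \in \pi(D)_+$ with $b \leq c$ and show $b \in \pi(D)$. Since $\pi|_D \colon D \to \pi(D)$ is a surjective $*$-homomorphism, it maps $D_+$ onto $\pi(D)_+$, so I can choose $d \in D_+$ with $\pi(d) = c$. Because $D$ is hereditary in $A$, the hereditary C*-subalgebra $\overline{d A d}$ of $A$ is contained in $D$.

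Next I would check that $\pi$ restricts to a surjection $\overline{d A d} \to \overline{c B c}$. Given $\beta \in B$, any lift $\alpha \in A$ with $\pi(\alpha) = \beta$ satisfies $\pi(d \alpha d) = c \beta c$, so $\pi(d A d) = c B c$; taking closures and using that the image of a C*-algebra under a $*$-homomorphism is automatically closed (by the closedness argument of the first paragraph, applied now to $\pi$ restricted to $\overline{d A d}$) yields $\pi(\overline{d A d}) = \overline{c B c}$.

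Finally, since $0 \leq b \leq c$, the element $b$ lies in the hereditary subalgebra of $B$ generated by $c$, namely $\overline{c B c}$. By the surjection just established, there exists $a' \in \overline{d A d} \subseteq D$ with $\pi(a') = b$, so $b \in \pi(D)$. The only mildly nontrivial ingredient is the surjectivity of $\pi$ between the two hereditary subalgebras $\overline{d A d}$ and $\overline{c B c}$; everything else is routine once closedness of $\pi(D)$ is in hand.
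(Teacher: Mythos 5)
Your proof is correct and follows essentially the same route as the paper's: lift the dominating element to $D_{+}$, use hereditariness of $D$ to see that compressions of lifts of the dominated element land in $D$, and invoke closedness of the range of a $*$-homomorphism. The only cosmetic difference is that you package the compression step as the identity $\pi\big(\overline{d A d}\big) = \overline{c B c}$ combined with $b \in \overline{c B c}$, whereas the paper directly approximates $y$ by $\pi\big(a^{1/n} b a^{1/n}\big)$; the underlying ingredients are identical.
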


\begin{proof}
Let $x \in \pi (D)$ and $y \in B$ satisfy $0 \leq y \leq x.$
Then $\limi{n} x^{1/n} y x^{1/n} = y.$
Choose $a \in D_{+}$ and $b \in A_{+}$
such that $\pi (a) = x$ and $\pi (b) = y.$
Then the elements $a^{1/n} b a^{1/n}$ are in $D$
and $\limi{n} \pi \big( a^{1/n} b a^{1/n} \big) = y.$
Therefore $y \in {\overline{\pi (D)}} = \pi (D).$
\end{proof}

\begin{lem}\label{L_3112_FixQ}
Let $A$ be a \ca{} and let $\af$ be an action of $\Z_2$ on~$A.$
Let $I \S A$ be an $\af$-invariant ideal.
Then $I^{\af}$ is an ideal in $A^{\af},$
and $A^{\af} / I^{\af}$ is isomorphic to the fixed point algebra
of the induced action of $\Z_2$ on~$A / I.$
\end{lem}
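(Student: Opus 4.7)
The plan is to use the averaging map (conditional expectation) $E \colon A \to A^{\af}$ defined by $E(a) = \tfrac{1}{2}(a + \af(a))$, which is available because we are working with $\Z_2$ on a C*-algebra. This map is a contractive, $A^{\af}$-bimodule projection onto $A^{\af}$, and it is equivariant with respect to any quotient by an $\af$-invariant ideal.

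First I would verify that $I^{\af} = I \cap A^{\af}$ is an ideal in $A^{\af}$. Given $a \in A^{\af}$ and $b \in I^{\af}$, the product $ab$ lies in $I$ because $I$ is an ideal, and $\af(ab) = \af(a)\af(b) = ab$, so $ab \in I^{\af}$; similarly on the other side. Next, let $\pi \colon A \to A/I$ be the quotient map and let $\bar\af$ denote the induced action of $\Z_2$ on $A/I$. Since $\af$ preserves $I$, $\pi$ is equivariant, so $\pi$ restricts to a $*$-homomorphism $\pi^{\af} \colon A^{\af} \to (A/I)^{\bar\af}$. Its kernel is $A^{\af} \cap I = I^{\af}$, so by the first isomorphism theorem we get an injection $A^{\af}/I^{\af} \hookrightarrow (A/I)^{\bar\af}$.

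The only nontrivial point is surjectivity of $\pi^{\af}$. Given $\bar a \in (A/I)^{\bar\af}$, choose any lift $a \in A$ with $\pi(a) = \bar a$ and set $a_0 = E(a) = \tfrac{1}{2}(a + \af(a)) \in A^{\af}$. Then
\[
\pi^{\af}(a_0) = \tfrac{1}{2}\bigl(\pi(a) + \pi(\af(a))\bigr) = \tfrac{1}{2}\bigl(\bar a + \bar\af(\bar a)\bigr) = \bar a,
\]
using equivariance of $\pi$ at the second step and $\bar\af(\bar a) = \bar a$ at the last. Hence $\pi^{\af}$ is surjective, and the induced map $A^{\af}/I^{\af} \to (A/I)^{\bar\af}$ is an isomorphism.

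There is no real obstacle here: the whole argument rests on the fact that for $\Z_2$ acting on a C*-algebra we may divide by~$2$, which gives an honest conditional expectation onto the fixed point algebra. (This is exactly the step that makes the lemma harder for general finite groups only in the sense that one must work with $\tfrac{1}{|G|}\sum_{g \in G} \af_g$ and verify it makes sense, which it does in any C*-algebra.)
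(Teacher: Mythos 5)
Your proof is correct. The paper itself gives no argument at all here: it disposes of the lemma in one line by citing Lemma~1.6 of the preprint \emph{Equivariant semiprojectivity} (reference \cite{Ph-EqSj} in the bibliography), which establishes the statement for general group actions admitting a conditional expectation onto the fixed point algebra. What you have written is, in effect, a self-contained proof of the special case being invoked: the identification of $I^{\af}$ with $I \cap A^{\af}$, the observation that the equivariant quotient map $\pi$ restricts to a map $A^{\af} \to (A/I)^{\overline{\af}}$ with kernel $I^{\af}$, and the use of the averaging projection $E(a) = \tfrac{1}{2}(a + \af(a))$ to lift fixed points of the quotient, which is the only point requiring an idea. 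Your closing remark is also accurate: the same argument works verbatim for any finite group using $\tfrac{1}{|G|}\sum_{g \in G} \af_g$ (and for compact groups using Haar integration), which is presumably exactly how the cited lemma is proved. So your route buys self-containedness at essentially no cost, while the paper's citation buys brevity and the more general statement; there is no gap in what you wrote.
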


\begin{proof}
This is a special case of Lemma~1.6 of~\cite{Ph-EqSj}.
\end{proof}

\begin{prp}\label{P_3112_SubHer}
Let $A$ be a \ca,
let $\af$ be an action of~$\Z_2$ on~$A,$
let $I \S A$ be an ideal (not necessarily $\af$-invariant),
and let $D \S A / I$ be a nonzero hereditary subalgebra.
Then there exists an ideal $J \S A^{\af}$
and a nonzero hereditary subalgebra $B \S A^{\af} / J$
which is isomorphic to a (not necessarily hereditary)
subalgebra of~$D.$
\end{prp}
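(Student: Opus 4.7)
The plan is to reduce to the $\af$-invariant ideal case already handled by Proposition~\ref{P_2Z11_SubHer}, after splitting into two cases according to how $D$ meets the image of $\af(I)$ in $A/I$. Set $I_0 = I \cap \af (I)$ and $L = I + \af (I)$; both are $\af$-invariant ideals of~$A$. Let $K = L/I$, an ideal of $A/I,$ and note the canonical C*-algebra isomorphism $\sm \colon K = (I + \af (I))/I \to \af (I)/I_0$ given by $\sm (a + I) = a + I_0$ for $a \in \af (I).$

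In Case~1, $D \cap K = \{0\},$ so the quotient map $\pi \colon A/I \to A/L$ is injective on~$D.$ Then by Lemma~\ref{L_3112_ImHered} the image $\pi(D)$ is a nonzero hereditary subalgebra of $A/L$ isomorphic to~$D.$ Since $L$ is $\af$-invariant, $A/L$ inherits the action; applying Proposition~\ref{P_2Z11_SubHer} yields a nonzero hereditary subalgebra $B \S (A/L)^{\af}$ isomorphic to a subalgebra of $\pi(D),$ hence (via the isomorphism $D \to \pi(D)$) to a subalgebra of~$D.$ By Lemma~\ref{L_3112_FixQ}, $(A/L)^{\af} = A^{\af}/L^{\af},$ so taking $J = L^{\af}$ completes this case.

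In Case~2, $D \cap K \neq \{0\}$; as the intersection of a hereditary subalgebra of $A/I$ with an ideal of $A/I,$ it is a nonzero hereditary subalgebra of $A/I$ contained in~$K.$ Its image $D' = \sm (D \cap K)$ is then a nonzero hereditary subalgebra of $\af (I)/I_0,$ and since $\af (I)/I_0$ is an ideal of $A/I_0,$ $D'$ is also a hereditary subalgebra of $A/I_0.$ Since $I_0$ is $\af$-invariant, applying Proposition~\ref{P_2Z11_SubHer} in $A/I_0$ gives a nonzero hereditary subalgebra $B \S (A/I_0)^{\af}$ isomorphic to a subalgebra of~$D',$ which via $\sm^{-1}$ corresponds to a subalgebra of $D \cap K \S D.$ By Lemma~\ref{L_3112_FixQ}, $(A/I_0)^{\af} = A^{\af}/I_0^{\af},$ so taking $J = I_0^{\af}$ completes this case.

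The main point requiring care is the dichotomy itself: in Case~1 we need $\pi|_D$ to be injective so that a subalgebra of $\pi(D)$ genuinely pulls back to a subalgebra of~$D$ (rather than merely a quotient), while in Case~2 we exploit that $D \cap K$ is intrinsically isomorphic, via the elementary identification $(I + \af (I))/I \cong \af (I)/I_0,$ to a hereditary subalgebra sitting inside the $\af$-invariant quotient $A/I_0.$ Once this split is in place, both cases are routine reductions to the $\af$-invariant result of Proposition~\ref{P_2Z11_SubHer}, combined with Lemma~\ref{L_3112_FixQ} to identify the fixed point algebra of the quotient with a quotient of the fixed point algebra.
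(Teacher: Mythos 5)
Your proof is correct, and it takes a genuinely different --- and more economical --- route than the paper's. The paper's argument runs through five cases: it first handles the $\af$-invariant case, then reduces (by passing to $A/(I \cap \af(I))$) to the situation $I \cap \af(I) = \{0\}$, where it identifies $I + \af(I)$ equivariantly with $C \oplus C$ carrying the flip action and uses the diagonal embedding $C \cong (C \oplus C)^{\mathrm{flip}}$ to treat the branch in which $\pi^{-1}(D)$ meets $I + \af(I)$ nontrivially. You instead dichotomize at the outset on whether $D$ meets $K = (I + \af(I))/I$, and in the nontrivial branch you invoke the second isomorphism theorem $(I + \af(I))/I \cong \af(I)/(I \cap \af(I))$ to transport the hereditary subalgebra $D \cap K$ into the $\af$-invariant quotient $A/(I \cap \af(I))$, where Proposition~\ref{P_2Z11_SubHer} applies directly; your other branch coincides with the paper's injectivity argument but does not need the preliminary reduction to $I \cap \af(I) = \{0\}$. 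This collapses the paper's Cases 2, 3, and 5 into a single step and dispenses with the explicit flip-action model. The one thing the paper's longer route buys is slightly sharper information in that branch: there the hereditary subalgebra is produced inside $(I+\af(I))^{\af} \subseteq A^{\af}$ itself, so with $J = \{0\}$ and via only the elementary diagonal observation, whereas you take $J = (I \cap \af(I))^{\af}$ and call on the full strength of Proposition~\ref{P_2Z11_SubHer} a second time; both versions satisfy the statement as formulated, and all the supporting facts you use (injectivity of $\pi|_D$ exactly when $D \cap K = \{0\}$, heredity of $D \cap K$ in $K$ and of its image in $A/(I\cap\af(I))$, $\af$-invariance of $I + \af(I)$ and $I \cap \af(I)$) check out.
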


\begin{proof}
We divide the proof into several cases,
some of which will be done by reduction to previous cases,
possibly for a different choice of $A$ and~$I.$
We let $\pi \colon A \to A / I$ be the quotient map,
and we set $E = \pi^{-1} (D),$
which is a \hsa{} of~$A$ such that $I \S E.$

{\textbf{Case~1:}}
The ideal $I$ is $\af$-invariant.

There is an induced action
${\overline{\af}}$ of $\Z_2$ on $A / I.$
By Proposition~\ref{P_2Z11_SubHer},
there is a subalgebra $B \S D$
which is isomorphic to a nonzero \hsa{}
of $(A / I)^{\overline{\af}}.$
Set $J = I^{\af}.$
Then $(A / I)^{\overline{\af}} \cong A^{\af} / J$
by Lemma~\ref{L_3112_FixQ}.
This proves Case~1.

{\textbf{Case~2:}}
There is a \ca~$C$ such that $A = C \oplus C$
with $\af (x, y) = (y, x)$ for $x, y \in C,$
and $I = \{ 0 \} \oplus C.$
We claim that in this case, we can take $J = \{ 0 \}.$

We have $A / I \cong C,$
and there is an isomorphism $\ph \colon C \to A^{\af}$
given by $\ph (x) = (x, x)$ for all $x \in C.$
So $D$ is isomorphic to a \hsa{} of~$C,$
and thus $D$ is isomorphic to a \hsa{} of~$A^{\af}.$
This proves Case~2 with $J = \{ 0 \}.$

{\textbf{Case~3:}}
$I \cap \af (I) = \{ 0 \}$
and $I$ is a proper subset of $E \cap (I + \af (I)).$

Set $F = E \cap (I + \af (I)).$
Set $C = (I + \af (I)) / I,$
let $\kp \colon I + \af (I) \to C$
be the quotient map,
and define $\ph \colon I + \af (I) \to C \oplus C$
by $\ph (a) = (\kp (a), \, (\kp \circ \af) (a) )$
for $a \in I + \af (I).$
Then $\ph$ is bijective since $I \cap \af (I) = \{ 0 \}.$
Let $\gm \colon C \oplus C \to C \oplus C$
be the automorphism given by $\gm (x, y) = (y, x)$ for $x, y \in C.$
Using the action of $\Z_2$ that $\gm$ generates,
$\ph$ becomes equivariant.

We have $\ph (I) = \{ 0 \} \oplus C$
and $\ph (F) = \kp (F) \oplus C.$
The hypotheses of this case imply that $\kp (F) \neq \{ 0 \}.$
Lemma~\ref{L_3112_ImHered}
implies that $\kp (F)$ is a hereditary subalgebra of $C \S A / I,$
and clearly $\kp (F) \S D.$
Case~2 implies that $\kp (F)$ contains a subalgebra
isomorphic to a nonzero hereditary subalgebra of $I + \af (I),$
and hence isomorphic to a nonzero hereditary subalgebra of~$A.$
This proves Case~3.

{\textbf{Case~4:}}
$I \cap \af (I) = \{ 0 \}$
and $I$ is not a proper subset of $E \cap (I + \af (I)).$

Since $I \S E \cap (I + \af (I)),$
the hypotheses of this case imply that $E \cap (I + \af (I)) = I.$
Let $\ps \colon A / I \to A / (I + \af (I))$ be the quotient map.
Then $\ps |_D$ is injective.
Therefore $D$ is isomorphic to a nonzero hereditary subalgebra
of $A / (I + \af (I)).$
Since $I + \af (I)$ is $\af$-invariant,
the result in Case~4 follows from Case~1.

{\textbf{Case~5:}}
None of the previous cases applies.
Thus $I$ is not $\af$-invariant and $I \cap \af (I) \neq \{ 0 \}.$
Let
\[
\et \colon A \to A / (I \cap \af (I))
\andeqn
\mu \colon A / (I \cap \af (I)) \to A / I
\]
be the quotient maps.
Set $L = \et (I).$
Let ${\overline{\af}}$ be the induced action
of $\Z_2$ on $A / (I \cap \af (I)).$
Then $L \cap {\overline{\af}} (L) = \{ 0 \}$
and $\mu$ induces an isomorphism
$[A / (I \cap \af (I))] / L \to A / I.$
Therefore Case~3 or Case~4 applies with
$A / (I \cap \af (I))$ in place of~$A$
and $L$ in place of~$I.$
Thus there is an ideal $J_0 \S [A / (I \cap \af (I))]^{\overline{\af}}$
and a nonzero hereditary subalgebra
$B \S [A / (I \cap \af (I))]^{\overline{\af}} / J_0$
which is isomorphic to a (not necessarily hereditary)
subalgebra of~$D.$
We have
\[
[A / (I \cap \af (I))]^{\overline{\af}}
  \cong A^{\af} / (I \cap \af (I))^{\af}
\]
by Lemma~\ref{L_3112_FixQ}.
Let $J$ be the inverse image of $J_0$ in
$A^{\af}.$
Then $A^{\af} / J \cong [A / (I \cap \af (I))]^{\overline{\af}} / J_0,$
so we can identify $B$ with a \hsa{} of $A^{\af} / J.$
This completes the proof of Case~5.
\end{proof}

\section{Permanence for properties defined in terms of hereditary
  subalgebras}\label{Sec_HSAPerm}

\indent
To avoid repetition,
we present an abstract theory
which gives permanence results
for the properties we consider.
Most of
Sections \ref{Sec:PermHInf}, \ref{Sec:PermQSP}, and~\ref{Sec:WIP}
consists of applications of this theory.

\begin{dfn}\label{D_3719_UpClass}
Let $\CC$ be a class of \ca{s}.
We say that $\CC$ is
{\emph{upwards directed}} if
whenever $A$ is a \ca{} which contains a subalgebra isomorphic to
an algebra in~$\CC,$
then $A \in \CC.$
\end{dfn}

\begin{dfn}\label{D_3719_ResHerC}
Let $\CC$ be an upwards directed class of \ca{s},
and let $A$ be a \ca.
\begin{enumerate}
\item\label{D_3719_ResHerC_Her}
We say that $A$ is
{\emph{hereditarily in~$\CC$}}
if every nonzero \hsa{} of~$A$ is in~$\CC.$
\item\label{D_3719_ResHerC_Res}
We say that $A$ is
{\emph{residually hereditarily in~$\CC$}}
if $A / I$ is hereditarily in~$\CC$ for every ideal $I \S A$
with $I \neq A.$
\end{enumerate}
\end{dfn}

To show the usefulness of this concept now,
we point out that a \ca{} hereditarily contains a nonzero \pj{}
\ifo{} it has Property~(SP),
and that a \ca{}
residually hereditarily contains an infinite \pj{}
\ifo{} it is purely infinite and has the ideal property
(see~Proposition~\ref{P-609IdealPI} below).
In this section,
we give permanence results for the classes of
algebras which are (residually) hereditarily in such a class~$\CC.$

\begin{thm}\label{T_3719_CCSpFree}
Let $\CC$ be an upwards directed class of \ca{s}.
Let $\af \colon G \to \Aut (A)$
be a pointwise spectrally nontrivial action
of a discrete group $G$ on a \ca~$A.$
\begin{enumerate}
\item\label{T_3719_CCSpFree_Ntr}
If $\af$ is pointwise spectrally nontrivial
and $A$ is hereditarily in~$\CC,$
then $C^*_{\mathrm{r}} (G, A, \af)$ is hereditarily in~$\CC.$
\item\label{T_3719_CCSpFree_SF}
If $\af$ is exact and spectrally free,
and $A$ is residually hereditarily in~$\CC,$
then $C^*_{\mathrm{r}} (G, A, \af)$ is residually hereditarily in~$\CC.$
\end{enumerate}
\end{thm}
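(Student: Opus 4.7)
To prove part~(\ref{T_3719_CCSpFree_Ntr}), the plan is a direct application of Corollary~\ref{C-HerIso606}. Given any nonzero \hsa{} $B \S C^*_{\mathrm{r}} (G, A, \af),$ that corollary produces a nonzero \hsa{} $E \S A$ and an injective \hm{} $\ph \colon E \to B.$ Since $A$ is hereditarily in $\CC,$ we have $E \in \CC,$ so $B$ contains a subalgebra isomorphic to a member of~$\CC,$ and upward-directedness of~$\CC$ (Definition~\ref{D_3719_UpClass}) forces $B \in \CC.$ No further input is needed.

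For part~(\ref{T_3719_CCSpFree_SF}), my plan is to bootstrap part~(\ref{T_3719_CCSpFree_Ntr}) after pinning down the ideal structure of $C^*_{\mathrm{r}} (G, A, \af).$ Fix a proper ideal $J \S C^*_{\mathrm{r}} (G, A, \af)$ and set $I = J \cap A.$ By Lemma~\ref{P-JIntA323xx}, $I$ is an $\af$-invariant ideal of~$A$ and $C^*_{\mathrm{r}} (G, I, \af) \S J.$ The standard conditional expectation on the reduced crossed product restricts to the identity on~$A$ and sends $C^*_{\mathrm{r}} (G, I, \af)$ into~$I,$ which forces $C^*_{\mathrm{r}} (G, I, \af) \cap A = I = J \cap A.$ Invoking Proposition~\ref{P-SNAQ-Sep608} (which uses both exactness and spectral freeness) then yields $J = C^*_{\mathrm{r}} (G, I, \af).$ Exactness of~$\af$ next supplies the isomorphism $C^*_{\mathrm{r}} (G, A, \af) / J \cong C^*_{\mathrm{r}} (G, \, A / I, \, {\overline{\af}}),$ where ${\overline{\af}}$ is the induced action on $A / I.$ That action is pointwise spectrally nontrivial by the definition of spectral freeness applied to the $\af$-invariant proper ideal~$I,$ and $A / I$ is hereditarily in $\CC$ because $A$ is residually hereditarily in~$\CC.$ Applying part~(\ref{T_3719_CCSpFree_Ntr}) to ${\overline{\af}}$ concludes that $C^*_{\mathrm{r}} (G, \, A / I, \, {\overline{\af}})$ is hereditarily in~$\CC,$ hence so is $C^*_{\mathrm{r}} (G, A, \af) / J.$

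The only step that is not essentially formal is the ideal identification $J = C^*_{\mathrm{r}} (G, I, \af).$ This is where exactness and spectral freeness jointly earn their keep: spectral freeness, via Corollary~\ref{C_2Y30SpNtrIntP}, furnishes the residual intersection property, and Theorem~1.13 of~\cite{Sr} combined with exactness upgrades this to ideal separation. Once the ideal structure is pinned down, both parts reduce to Corollary~\ref{C-HerIso606} together with the bookkeeping of upward-directed classes.
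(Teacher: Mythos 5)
Your proposal is correct and follows essentially the same route as the paper: part (1) is a direct application of Corollary~\ref{C-HerIso606} with upward-directedness, and part (2) identifies every ideal of the reduced crossed product as $C^*_{\mathrm{r}}(G,I,\af)$ via Proposition~\ref{P-SNAQ-Sep608}, uses exactness to identify the quotient, and then invokes part (1). Your extra detail on deriving $J = C^*_{\mathrm{r}}(G, J\cap A, \af)$ from ideal separation is a correct elaboration of a step the paper leaves implicit.
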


\begin{proof}
Part~(\ref{T_3719_CCSpFree_Ntr})
is immediate from Corollary~\ref{C-HerIso606}.

We prove~(\ref{T_3719_CCSpFree_SF}).
Let $J$ be an ideal in $C^*_{\mathrm{r}} (G, A, \af).$
By Proposition~\ref{P-SNAQ-Sep608},
there is an $\alpha$-invariant ideal $I$ in~$A$
such that $J = C^*_{\mathrm{r}} (G, I, \af).$
Exactness of the action
implies that
$C^*_{\mathrm{r}} (G, A, \af) / J \cong C^*_{\mathrm{r}} (G, A/I, \af).$
By hypothesis,
$A / I$ is hereditarily in~$\CC$
and the action of $G$ on $A / I$
is spectrally nontrivial,
so $C^*_{\mathrm{r}} (G, A, \af) / J$ is hereditarily in~$\CC$
by part~(\ref{T_3719_CCSpFree_Ntr}).
\end{proof}

\begin{cor}\label{C_3719_CCZRP}
Let $\CC$ be an upwards directed class of \ca{s}.
Let $A$ be a unital \ca,
and let $\af \in \Aut (A)$ have the Rokhlin property.
\begin{enumerate}
\item\label{C_3719_CCZRP_Her}
If $A$ is hereditarily in~$\CC,$
then $C^* (\Z, A, \alpha)$ is hereditarily in~$\CC.$
\item\label{C_3719_CCZRP_Res}
If $A$ is residually hereditarily in~$\CC,$
then $C^* (\Z, A, \alpha)$ is residually hereditarily in~$\CC.$
\end{enumerate}
\end{cor}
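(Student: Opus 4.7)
The plan is to derive both parts directly from Theorem~\ref{T_3719_CCSpFree} together with Proposition~\ref{P-611-RPInpSFr}, using only the amenability of $\Z$ to pass between reduced and full crossed products. There should be essentially no new work to do.

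First, by Proposition~\ref{P-611-RPInpSFr}, the automorphism $\af$ generates an exact spectrally free action of $\Z$ on~$A$. In particular, this action is pointwise spectrally nontrivial (since spectral freeness, applied with the zero ideal, yields pointwise spectral nontriviality). Second, because $\Z$ is amenable, the reduced and full crossed products coincide, so $C^*_{\mathrm{r}}(\Z, A, \af) = C^*(\Z, A, \af)$, and any statement about $C^*_{\mathrm{r}}(\Z, A, \af)$ transfers to $C^*(\Z, A, \af)$.

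With these two observations, part~(\ref{C_3719_CCZRP_Her}) follows at once from Theorem~\ref{T_3719_CCSpFree}(\ref{T_3719_CCSpFree_Ntr}): the action is pointwise spectrally nontrivial and $A$ is hereditarily in~$\CC$, so $C^*_{\mathrm{r}}(\Z, A, \af) = C^*(\Z, A, \af)$ is hereditarily in~$\CC$. Part~(\ref{C_3719_CCZRP_Res}) follows in the same way from Theorem~\ref{T_3719_CCSpFree}(\ref{T_3719_CCSpFree_SF}), using that the action is exact and spectrally free and that $A$ is residually hereditarily in~$\CC$.

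There is no real obstacle: the whole point of having isolated Theorem~\ref{T_3719_CCSpFree} and Proposition~\ref{P-611-RPInpSFr} earlier in the paper is precisely so that corollaries like this become a one-line combination. The only subtlety worth flagging explicitly in the write-up is the identification $C^*_{\mathrm{r}}(\Z, A, \af) = C^*(\Z, A, \af)$, which justifies stating the conclusion in terms of the full crossed product even though Theorem~\ref{T_3719_CCSpFree} is phrased for the reduced one.
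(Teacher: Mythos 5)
Your proposal is correct and is essentially the paper's own proof, which simply cites Proposition~\ref{P-611-RPInpSFr} and Theorem~\ref{T_3719_CCSpFree}; your extra remarks about deducing pointwise spectral nontriviality from spectral freeness and identifying the reduced with the full crossed product via amenability of $\Z$ just make explicit what the paper leaves implicit.
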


\begin{proof}
In view of Proposition~\ref{P-611-RPInpSFr},
this follows from Theorem~\ref{T_3719_CCSpFree}.
\end{proof}

\begin{thm}\label{T_3719_CCZ2}
Let $\CC$ be an upwards directed class of \ca{s}.
Let $\af \colon \Z_2 \to \Aut (A)$
be an arbitrary action of $\Z_2$ on a \ca~$A.$
\begin{enumerate}
\item\label{T_3719_CCZ2_Her}
If $A^{\af}$ is hereditarily in~$\CC,$
then $A$ is hereditarily in~$\CC.$
\item\label{T_3719_CCZ2_Res}
If $A^{\af}$ is residually hereditarily in~$\CC,$
then $A$ is residually hereditarily in~$\CC.$
\end{enumerate}
\end{thm}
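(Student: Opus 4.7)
The plan is to deduce both parts directly from the structural results proved in Section~\ref{Sec:Start}, together with the defining property of an upwards directed class. The essential inputs are Proposition~\ref{P_2Z11_SubHer} (for part~(\ref{T_3719_CCZ2_Her})) and Proposition~\ref{P_3112_SubHer} (for part~(\ref{T_3719_CCZ2_Res})); once these are in hand, there is no analytic content left.

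For part~(\ref{T_3719_CCZ2_Her}), I would start with an arbitrary nonzero hereditary subalgebra $D \S A$ and apply Proposition~\ref{P_2Z11_SubHer} to obtain a nonzero hereditary subalgebra $B \S A^{\af}$ which embeds into~$D.$ Since $A^{\af}$ is hereditarily in~$\CC,$ we have $B \in \CC.$ By upward directedness of~$\CC$ (Definition~\ref{D_3719_UpClass}), any \ca{} containing an isomorphic copy of a member of $\CC$ belongs to~$\CC,$ so $D \in \CC.$ As $D$ was arbitrary, $A$ is hereditarily in~$\CC.$

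For part~(\ref{T_3719_CCZ2_Res}), I would let $I \S A$ be any ideal with $I \neq A$ (not necessarily $\af$-invariant) and let $D \S A / I$ be an arbitrary nonzero \hsa. Apply Proposition~\ref{P_3112_SubHer} to obtain an ideal $J \S A^{\af}$ and a nonzero hereditary subalgebra $B \S A^{\af} / J$ isomorphic to a subalgebra of~$D.$ Since $B$ is nonzero, $J \neq A^{\af},$ so the hypothesis that $A^{\af}$ is residually hereditarily in~$\CC$ gives $A^{\af} / J$ hereditarily in~$\CC,$ whence $B \in \CC.$ Upward directedness then yields $D \in \CC,$ and since $I$ and $D$ were arbitrary, $A$ is residually hereditarily in~$\CC.$

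There is no real obstacle here: all the difficulty has been absorbed into Propositions~\ref{P_2Z11_SubHer} and~\ref{P_3112_SubHer}. The only point worth verifying carefully is that Proposition~\ref{P_3112_SubHer} is strong enough to handle ideals that are not $\af$-invariant, which is precisely why it is stated in that generality; this is what lets us pass from hereditary containment in the fixed point algebra to \emph{residual} hereditary containment in~$A$ without any further work.
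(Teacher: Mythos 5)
Your proof is correct and follows exactly the paper's route: the paper also deduces part~(\ref{T_3719_CCZ2_Her}) immediately from Proposition~\ref{P_2Z11_SubHer} and part~(\ref{T_3719_CCZ2_Res}) immediately from Proposition~\ref{P_3112_SubHer}, with the upward directedness of~$\CC$ doing the rest. You have simply written out the routine details that the paper leaves implicit.
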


\begin{proof}
Part~(\ref{T_3719_CCZ2_Her})
is immediate from Proposition~\ref{P_2Z11_SubHer}.
Part~(\ref{T_3719_CCZ2_Res})
is immediate from Proposition~\ref{P_3112_SubHer}.
\end{proof}

\begin{cor}\label{C_3719_CCZ2CP}
Let $\CC$ be an upwards directed class of \ca{s}.
Let $\af \colon \Z_2 \to \Aut (A)$
be an arbitrary action of $\Z_2$ on a \ca~$A.$
\begin{enumerate}
\item\label{C_3719_CCZ2CP_Her}
If $A$ is hereditarily in~$\CC,$
then $C^* (\Z_2, A, \alpha)$ is hereditarily in~$\CC.$
\item\label{C_3719_CCZ2CP_Res}
If $A$ is residually hereditarily in~$\CC,$
then $C^* (\Z_2, A, \alpha)$ is residually hereditarily in~$\CC.$
\end{enumerate}
\end{cor}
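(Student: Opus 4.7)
The plan is to reduce Corollary~\ref{C_3719_CCZ2CP} to Theorem~\ref{T_3719_CCZ2} by passing to the dual action. Set $B = C^* (\Z_2, A, \af)$ and let $\widehat{\af} \colon \widehat{\Z_2} \to \Aut (B)$ be the dual action. Since $\Z_2$ is finite and self-dual, we canonically identify $\widehat{\Z_2}$ with $\Z_2,$ so $\widehat{\af}$ is an action of $\Z_2$ on~$B.$

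The key observation is that the fixed point algebra $B^{\widehat{\af}}$ is canonically isomorphic to~$A,$ identified in the usual way with its image in~$B.$ This follows from the explicit description of the dual action: writing a general element of~$B$ as $a_1 + a_g u_g$ with $a_1, a_g \in A$ and $u_g$ the canonical unitary implementing $\af_g$ (where $g$ is the nontrivial element of~$\Z_2$), the nontrivial character $\chi \in \widehat{\Z_2}$ acts by $\widehat{\af}_\chi (a_1 + a_g u_g) = a_1 - a_g u_g,$ so $\widehat{\af}$-fixed elements are exactly those with $a_g = 0.$ Hence $B^{\widehat{\af}} = A.$

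Now the two parts follow immediately by applying Theorem~\ref{T_3719_CCZ2} to the action $\widehat{\af}$ of $\Z_2$ on~$B.$ For part~(\ref{C_3719_CCZ2CP_Her}), if $A$ is hereditarily in~$\CC,$ then $B^{\widehat{\af}} = A$ is hereditarily in~$\CC,$ so $B$ is hereditarily in~$\CC$ by Theorem~\ref{T_3719_CCZ2}(\ref{T_3719_CCZ2_Her}). For part~(\ref{C_3719_CCZ2CP_Res}), if $A$ is residually hereditarily in~$\CC,$ then $B^{\widehat{\af}} = A$ is residually hereditarily in~$\CC,$ so $B$ is residually hereditarily in~$\CC$ by Theorem~\ref{T_3719_CCZ2}(\ref{T_3719_CCZ2_Res}).

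There is essentially no obstacle here: the entire content of the corollary is the duality trick that converts a statement about crossed products into a statement about fixed point algebras, which is precisely what Theorem~\ref{T_3719_CCZ2} handles. The only thing requiring any care is to make the identification $B^{\widehat{\af}} \cong A$ explicit, but this is a standard and elementary fact about dual actions of finite abelian groups on crossed products.
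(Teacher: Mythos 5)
Your proof is correct and is essentially identical to the paper's: the paper's entire proof is to apply Theorem~\ref{T_3719_CCZ2} with $C^* (\Z_2, A, \af)$ in place of $A$ and the dual action $\widehat{\af}$ in place of $\af,$ exactly the duality trick you describe. Your explicit verification that the fixed point algebra of the dual action is $A$ is a correct (if routine) detail that the paper leaves implicit.
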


\begin{proof}
Apply Theorem~\ref{T_3719_CCZ2}
with $C^* (\Z_2, A, \alpha)$ in place of $A$
and the dual action ${\widehat{\af}}$ in place of~$\af.$
\end{proof}

The same theory gives some permanence results
not involving crossed products.

\begin{prp}\label{L-2Of3_CC-428}
Let $\CC$ be an upwards directed class of \ca{s}.
Let $A$ be a \ca,
and let $J \subset A$ be an ideal.
Then $A$ is residually hereditarily in~$\CC$
\ifo\  $J$ and $A / J$ are both residually hereditarily in~$\CC.$
\end{prp}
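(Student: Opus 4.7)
The plan is to prove the two directions of the equivalence separately. For the forward direction, assume $A$ is residually hereditarily in~$\CC.$ Every quotient of $A/J$ is also a quotient of~$A$ (by an ideal containing $J$), so residual heredity passes from $A$ to $A/J$ at once. For~$J,$ I will use the standard fact that a closed two-sided ideal $I$ of $J$ is automatically a closed two-sided ideal of~$A,$ proved using an approximate identity $(e_\ld)$ for~$J$: for $a \in A$ and $i \in I,$ $ai = \lim (a e_\ld) i$ with $a e_\ld \in J,$ hence $(a e_\ld) i \in I.$ Given such an $I \S J$ with $I \neq J$ and a nonzero hereditary subalgebra $D$ of $J/I,$ note that $J/I$ is an ideal of $A/I$ (hence hereditary in $A/I$), so $D$ is hereditary in $A/I$ as well; residual heredity of~$A$ then gives $D \in \CC.$

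For the converse, assume $J$ and $A/J$ are both residually hereditarily in~$\CC.$ Fix a proper ideal $I \S A$ and a nonzero hereditary subalgebra $D \S A/I;$ I will split on whether $D$ meets the ideal $(J+I)/I$ of $A/I.$ If $D \cap (J+I)/I \neq 0,$ pick a nonzero positive $d$ in this intersection and set $D' = \overline{d (A/I) d}.$ Then $D' \S D$ (since $D$ is hereditary in $A/I$ and contains~$d$), $D'$ is a nonzero hereditary subalgebra of $(J+I)/I \cong J/(J \cap I),$ and $J \cap I$ is a proper ideal of $J$ (otherwise $(J+I)/I$ would be zero and so would $D'$). Residual heredity of~$J$ forces $D' \in \CC,$ and upward directedness of~$\CC$ then gives $D \in \CC.$ Otherwise $D \cap (J+I)/I = 0,$ and the quotient map $\pi \colon A/I \to A/(J+I)$ restricts to an injection on~$D;$ since $D \neq 0,$ this forces $J + I \neq A,$ so $(J+I)/J$ is a proper ideal of $A/J.$ Lemma~\ref{L_3112_ImHered} shows $\pi(D)$ is a nonzero hereditary subalgebra of $A/(J+I),$ a quotient of $A/J,$ so residual heredity of $A/J$ yields $\pi(D) \in \CC,$ and hence $D \in \CC$ since $D \cong \pi(D).$

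The main obstacle, insofar as there is one, is bookkeeping of degeneracies: one must verify that in each branch of the case split the relevant ideal ($J \cap I$ in the first branch, $J + I$ in the second) is proper, so that the inductive hypothesis on~$J$ or~$A/J$ is available. In each branch this properness is exactly the negation of the case hypothesis collapsing to the trivial situation, so it drops out automatically, and no separate treatment of the boundary cases $J \S I,$ $J + I = A,$ or $J \in \{0, A\}$ is required.
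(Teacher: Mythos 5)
Your proposal is correct and follows essentially the same route as the paper: the converse is proved by the same case split on whether the hereditary subalgebra $D \S A/I$ meets the ideal $(J+I)/I,$ reducing to a quotient of $J$ in the first case and to a quotient of $A/J$ via the injectivity of $\pi|_D$ in the second. The only cosmetic differences are that you work with $\overline{d(A/I)d}$ rather than the full intersection $D \cap [(J+I)/I]$ used in the paper, and that you spell out the forward direction and the properness checks that the paper leaves as obvious.
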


\begin{proof}
It is obvious that if $A$ is residually hereditarily in~$\CC$
then $J$ and $A / J$ are both residually hereditarily in~$\CC.$
So assume that $J$ and $A / J$
are residually hereditarily in~$\CC,$
let $I \subset A$ be an ideal,
and let $B \subset A / I$ be a nonzero hereditary subalgebra.
Set $C = B \cap [ (I + J) / I],$
which is a hereditary subalgebra of $A / I.$

Suppose first that $C \neq \{ 0 \}.$
Then $C$ is a nonzero hereditary subalgebra
of $(I + J) / I \cong J / (J \cap I).$
Since $J$ is residually hereditarily in~$\CC,$
we get $C \in \CC,$
whence $B \in \CC$ because $\CC$ is upwards directed.

Now suppose that $C = \{ 0 \}.$
Let $\pi \colon A / I \to A / (I + J)$
be the quotient map.
Then $\pi |_B$ is injective.
Therefore $B$ is isomorphic to a nonzero hereditary subalgebra
of $[ A / J] / [ (I + J) / J ].$
Since $A / J$ is residually hereditarily in~$\CC,$
we get $B \in \CC.$
\end{proof}

\begin{prp}\label{P-609_CCQHILim}
Let $\CC$ be an upwards directed class of \ca{s}.
Let $(A_{\ld})_{\ld \in \Ld}$ be a direct system of \ca{s}
with maps $\ph_{\mu, \ld} \colon A_{\ld} \to A_{\mu}$
for $\ld, \mu \in \Ld$ satisfying $\ld \leq \mu,$
with direct limit $A = \dirlim_{\ld} A_{\ld},$
and
with canonical maps $\ph_{\ld} \colon A_{\ld} \to A$ for $\ld \in \Ld.$
\begin{enumerate}
\item\label{P-609_CCQHILim-HI}
Suppose that $A_{\ld}$ is hereditarily in~$\CC$
for all $\ld \in \Ld$
and that $\ph_{\mu, \ld}$ is injective
for all $\ld, \mu \in \Ld$ satisfying $\ld \leq \mu.$
Then $A$ is hereditarily in~$\CC.$
\item\label{P-609_CCQHILim-Q}
Suppose that $A_{\ld}$ is residually hereditarily in~$\CC$
for all $\ld \in \Ld.$
Then $A$ is residually hereditarily in~$\CC.$
\end{enumerate}
\end{prp}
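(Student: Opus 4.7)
The plan is to handle part~(\ref{P-609_CCQHILim-HI}) directly, using the injectivity hypothesis together with Lemma~\ref{C_2Z11_HerIso}, and then reduce part~(\ref{P-609_CCQHILim-Q}) to part~(\ref{P-609_CCQHILim-HI}) by passing to a quotient system.

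For part~(\ref{P-609_CCQHILim-HI}), let $B \S A$ be a nonzero \hsa, and choose $b \in B_{+}$ with $\| b \| = 1.$ Fix $\ep \in (0, 1/2).$ Using density of $\bigcup_{\ld} \ph_{\ld} (A_{\ld})$ in $A$ together with standard approximation by positive elements, I would find $\ld \in \Ld$ and $a_{\ld} \in (A_{\ld})_{+}$ such that $\| \ph_{\ld} (a_{\ld}) - b \| < \ep.$ Applying Lemma~\ref{C_2Z11_HerIso} with $a = b$ and with $\ph_{\ld} (a_{\ld})$ playing the role of the lemma's~$b,$ I obtain an isomorphism of $\overline{(\ph_{\ld} (a_{\ld}) - \ep)_{+} A (\ph_{\ld} (a_{\ld}) - \ep)_{+}}$ onto a \hsa{} of $\overline{b A b} \S B.$ Since $\ph_{\ld}$ is a \hm, functional calculus gives $\ph_{\ld} ((a_{\ld} - \ep)_{+}) = (\ph_{\ld} (a_{\ld}) - \ep)_{+},$ so the image $\ph_{\ld} \big( \overline{(a_{\ld} - \ep)_{+} A_{\ld} (a_{\ld} - \ep)_{+}} \big)$ is a subalgebra of $\overline{(\ph_{\ld} (a_{\ld}) - \ep)_{+} A (\ph_{\ld} (a_{\ld}) - \ep)_{+}}.$ By the injectivity hypothesis, $\ph_{\ld}$ is isometric, so this image is isomorphic to the \hsa{} $\overline{(a_{\ld} - \ep)_{+} A_{\ld} (a_{\ld} - \ep)_{+}}$ of $A_{\ld}.$ The choice $\ep < 1/2$ forces $\| a_{\ld} \| \geq \| \ph_{\ld} (a_{\ld}) \| \geq 1 - \ep > \ep,$ so $(a_{\ld} - \ep)_{+} \neq 0$ and this \hsa{} of~$A_{\ld}$ is nonzero, hence in~$\CC$ by hypothesis. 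Therefore $B$ contains a subalgebra isomorphic to a member of~$\CC,$ and $B \in \CC$ by upwards directedness.

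For part~(\ref{P-609_CCQHILim-Q}), let $I \S A$ be an ideal with $I \neq A,$ and set $I_{\ld} = \ph_{\ld}^{-1} (I)$ for each $\ld \in \Ld.$ The induced maps $\overline{\ph_{\mu, \ld}} \colon A_{\ld} / I_{\ld} \to A_{\mu} / I_{\mu}$ are injective: if $\ph_{\mu, \ld} (a) \in I_{\mu},$ then $\ph_{\ld} (a) = \ph_{\mu} (\ph_{\mu, \ld} (a)) \in I,$ so $a \in I_{\ld}.$ By the standard compatibility of direct limits with quotients, the system $(A_{\ld} / I_{\ld})_{\ld \in \Ld}$ has direct limit $A / I.$ The set $\Ld_0 = \{ \ld \in \Ld : I_{\ld} \neq A_{\ld} \}$ is cofinal in~$\Ld$: if for some $\mu \in \Ld$ every $\nu \geq \mu$ had $I_{\nu} = A_{\nu},$ then $\ph_{\nu} (A_{\nu}) \S I$ for all such~$\nu,$ forcing $A \S I.$ Restricting to $\Ld_0$ gives a cofinal subsystem of nonzero \ca{s}, each of which is hereditarily in~$\CC$ (as a quotient of some $A_{\ld}$ by a proper ideal), with injective connecting maps and direct limit $A / I.$ Part~(\ref{P-609_CCQHILim-HI}) then implies that $A / I$ is hereditarily in~$\CC.$

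The main step is the use of injectivity of $\ph_{\ld}$ in part~(\ref{P-609_CCQHILim-HI}): it identifies a nonzero \hsa{} of $A_{\ld}$ with an isomorphic subalgebra of~$B.$ Without injectivity, the image in~$A$ would only be a quotient of a \hsa{} of~$A_{\ld},$ which explains why the stronger ``residually hereditarily in~$\CC$'' hypothesis is needed in part~(\ref{P-609_CCQHILim-Q}) and why the reduction there passes through a quotient system.
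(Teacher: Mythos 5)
Your proposal is correct and follows essentially the same route as the paper: part~(\ref{P-609_CCQHILim-HI}) by approximating a norm-one positive element of the hereditary subalgebra from some $A_{\ld}$ and cutting down via Lemma~\ref{C_2Z11_HerIso}, and part~(\ref{P-609_CCQHILim-Q}) by passing to the quotient system $(A_{\ld} / \ph_{\ld}^{-1}(I))$ with its injective induced maps and invoking part~(\ref{P-609_CCQHILim-HI}). Your extra care about cofinality of the indices with $I_{\ld} \neq A_{\ld}$ is harmless but not needed, since the zero algebra is vacuously hereditarily in~$\CC.$
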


\begin{proof}
We prove~(\ref{P-609_CCQHILim-HI}).
We may assume that the algebras $A_{\ld}$ are all subalgebras of~$A,$
with $A_{\ld} \subset A_{\mu}$
for $\ld, \mu \in \Ld$ satisfying $\ld \leq \mu,$
and that $A = {\overline{\bigcup_{\ld \in \Ld} A_{\ld} }}.$

Let $B \subset A$ be a nonzero hereditary subalgebra.
Choose $z \in B_{+}$ such that $\| z \| = 1.$
Choose $\ld \in \Ld$ and $y \in (A_{\ld})_{+}$ such that
$\| z - y \| < \tfrac{1}{3}.$
Set $x = \big( y - \tfrac{1}{3} \big)_{+} \in (A_{\ld})_{+}.$
Then $x \neq 0$
and Lemma~\ref{C_2Z11_HerIso}
implies that 
${\overline{x A x}}$
is isomorphic to a hereditary subalgebra of ${\overline{z A z}}.$
Therefore ${\overline{x A_{\ld} x}}$
is isomorphic to a subalgebra of~$B.$
The hypotheses imply that ${\overline{x A x}} \in \CC.$
So $B \in \CC$ because $\CC$ is upwards directed.
This proves~(\ref{P-609_CCQHILim-HI}).

Now we prove~(\ref{P-609_CCQHILim-Q}).
Let $I \subset A$ be an ideal.
For $\ld \in \Ld,$
define $I_{\ld} = A_{\ld} / \ph_{\ld}^{-1} (I).$
Then $( A_{\ld} / I_{\ld} )_{\ld \in \Ld}$
is a direct system of \ca{s}
with injective maps and whose direct limit is $A / I.$
The algebras $A_{\ld} / I_{\ld}$ are hereditarily in~$\CC$
by definition,
so $A / I$ is hereditarily in~$\CC$ by~(\ref{P-609_CCQHILim-HI}).
Since $I$ is arbitrary,
this proves that $A$ is residually hereditarily in~$\CC.$
\end{proof}

\begin{prp}\label{P-609_CCQHIHered}
Let $\CC$ be an upwards directed class of \ca{s}.
Let $A$ be a \ca,
and let $B \subset A$ be a hereditary subalgebra.
\begin{enumerate}
\item\label{P-609_CCQHIHered-HI}
If $A$ is hereditarily in~$\CC,$
then $B$ is hereditarily in~$\CC.$
\item\label{P-609_CCQHIHered-Q}
If $A$ is residually hereditarily in~$\CC,$
then $B$ is residually hereditarily in~$\CC.$
\end{enumerate}
\end{prp}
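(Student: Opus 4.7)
The plan is to reduce both parts to results already in hand. For part~(\ref{P-609_CCQHIHered-HI}), the only content is the standard observation that a \hsa{} of a \hsa{} of~$A$ is itself a \hsa{} of~$A.$ Namely, if $D \S B$ is a nonzero \hsa{} of~$B$ and $(e_{\ld})$ is an approximate identity for~$B,$ then for $d_1, d_2 \in D$ and $a \in A$ we have
\[
d_1 a d_2 = \lim_{\ld} d_1 (e_{\ld} a e_{\ld}) d_2,
\]
with $e_{\ld} a e_{\ld} \in B$ by heredity of $B$ in~$A,$ so $d_1 (e_{\ld} a e_{\ld}) d_2 \in D B D \S D.$ Hence $D A D \S D,$ so $D$ is hereditary in~$A,$ and therefore $D \in \CC$ by the hypothesis on~$A.$

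For part~(\ref{P-609_CCQHIHered-Q}), I need to show that for every ideal $I \S B$ with $I \neq B,$ the quotient $B / I$ is hereditarily in~$\CC.$ The plan is to identify $B / I$ with a \hsa{} of a suitable quotient of~$A.$ Let $J = {\overline{A I A}}$ be the ideal of~$A$ generated by~$I.$ The key preliminary fact is the standard identity
\[
J \cap B = I.
\]
To prove it, let $x \in J \cap B$ and let $(e_{\ld})$ be an approximate identity for~$B.$ Write $x$ as a limit of sums of terms of the form $a y a'$ with $a, a' \in A$ and $y \in I,$ and use an approximate identity $(f_{\mu})$ of~$I$ to replace each $y$ by $\lim_{\mu} f_{\mu} y f_{\mu}.$ Then $e_{\ld} a f_{\mu}$ and $f_{\mu} a' e_{\ld}$ lie in $B A B \S B$ by heredity of~$B$ in~$A,$ so each approximating term $(e_{\ld} a f_{\mu}) \, y \, (f_{\mu} a' e_{\ld})$ lies in $B I B \S I.$ Passing to limits gives $e_{\ld} x e_{\ld} \in I,$ and then $x = \lim_{\ld} e_{\ld} x e_{\ld} \in I.$

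With the identity $J \cap B = I$ established, the quotient map $A \to A / J$ restricts on~$B$ to a homomorphism with kernel $B \cap J = I,$ yielding an injection $B / I \hookrightarrow A / J$ whose image is a \hsa{} of $A / J$ by Lemma~\ref{L_3112_ImHered}. Moreover $J \neq A,$ since $J = A$ would give $I = J \cap B = B,$ contrary to the assumption $I \neq B.$ Since $A$ is residually hereditarily in~$\CC,$ the algebra $A / J$ is hereditarily in~$\CC.$ Any nonzero \hsa{} of $B / I$ is, via the embedding, a nonzero \hsa{} of the \hsa{} $B / I \S A / J,$ hence a nonzero \hsa{} of $A / J$ by part~(\ref{P-609_CCQHIHered-HI}), and therefore belongs to~$\CC.$ As $I$ was arbitrary, $B$ is residually hereditarily in~$\CC.$ The only step with any genuine content is the identity $J \cap B = I$; everything else is formal.
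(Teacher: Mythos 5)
Your proof is correct and follows essentially the same route as the paper: reduce part~(2) to part~(1) by extending the ideal of $B$ to an ideal of $A$ meeting $B$ in the given ideal, and then identify $B/I$ with a hereditary subalgebra of the corresponding quotient of~$A$ via Lemma~\ref{L_3112_ImHered}. The only difference is that you prove the extension identity $\overline{AIA} \cap B = I$ directly (correctly), where the paper simply cites Theorem~3.2.7 of~\cite{Mr}.
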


\begin{proof}
Part~(\ref{P-609_CCQHIHered-HI}) is immediate from
the fact that hereditary subalgebras
of hereditary subalgebras
are hereditary.

We prove~(\ref{P-609_CCQHIHered-Q}).
So let $J \subset B$ be an ideal.
Theorem 3.2.7 of~\cite{Mr} provides an ideal $I \subset A$
such that $I \cap B = J.$
Let $\pi \colon A \to A / I$
be the quotient map.
Then the restriction to $B$ of $\pi$
induces an injective \hm\  $\ph \colon B / J \to A / I.$
To finish the proof,
by part~(\ref{P-609_CCQHIHered-HI}) it suffices to show that
$\ph (B / J)$ is a hereditary subalgebra of $A / I.$
Since $\ph (B / J) = \pi (B),$
this follows from Lemma~\ref{L_3112_ImHered}.
\end{proof}

\begin{prp}\label{P-609_CCQHIMn}
Let $A$ be a \ca,
and let $n \in \N.$
Then:
\begin{enumerate}
\item\label{P-609_CCQHIMn-HI}
$A$ is hereditarily in~$\CC$
\ifo\  $M_n (A)$ is hereditarily in~$\CC.$
\item\label{P-609_CCQHIMn-Q}
$A$ is residually hereditarily in~$\CC$
\ifo\  $M_n (A)$ is residually hereditarily in~$\CC.$
\end{enumerate}
\end{prp}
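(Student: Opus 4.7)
The plan is to reduce everything to Proposition~\ref{P_3720_P3Her} via the standard corner inclusion $A \hookrightarrow M_n(A)$ that identifies $A$ with $e_{11} M_n(A) e_{11}$, together with the usual bijection $I \mapsto M_n(I)$ between ideals of $A$ and ideals of $M_n(A)$ (for which $M_n(A)/M_n(I) \cong M_n(A/I)$).

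For the forward direction of~(\ref{P-609_CCQHIMn-HI}), assume $A$ is hereditarily in~$\CC$, and let $D \S M_n(A)$ be the corner $e_{11}M_n(A)e_{11}$, identified with~$A$. I claim condition~(\ref{P_3720_P3Her_Cut}) of Proposition~\ref{P_3720_P3Her} holds: given $b \in M_n(A)_{+} \SM \{ 0 \}$, some diagonal entry $b_{kk} \in A$ is nonzero, because a positive matrix whose diagonal is $0$ is itself~$0$ (its $(i,i)$-entry would be a sum of elements of the form $c^* c$ and hence zero forces each $c = 0$). Take $z = e_{1k}$; then $z b z^* = b_{kk} e_{11}$, a nonzero element of~$D$. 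Hence (\ref{P_3720_P3Her_HIso}) of Proposition~\ref{P_3720_P3Her} applies: every nonzero \hsa{} $B \S M_n(A)$ contains a subalgebra isomorphic to a nonzero \hsa{} $E \S D \cong A$. By hypothesis $E \in \CC$, and since $\CC$ is upwards directed, $B \in \CC$. For the reverse direction, assume $M_n(A)$ is hereditarily in~$\CC$ and let $D \S A$ be a nonzero \hsa. Viewing $D$ inside the corner $e_{11} M_n(A) e_{11}$, it is a \hsa{} of a \hsa, hence itself a \hsa{} of $M_n(A)$, so $D \in \CC$ by hypothesis.

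For~(\ref{P-609_CCQHIMn-Q}), use the ideal correspondence. If $A$ is residually hereditarily in~$\CC$ and $J \S M_n(A)$ is any ideal, then $J = M_n(I)$ for an ideal $I \S A$, and $M_n(A) / J \cong M_n(A / I)$. Since $A/I$ is hereditarily in~$\CC$, part~(\ref{P-609_CCQHIMn-HI}) gives that $M_n(A/I)$ is hereditarily in~$\CC$, so $M_n(A)/J$ is hereditarily in~$\CC$. The converse is symmetric: given an ideal $I \S A$, apply the assumption to $M_n(I) \S M_n(A)$ to get that $M_n(A/I) \cong M_n(A)/M_n(I)$ is hereditarily in~$\CC$, and then part~(\ref{P-609_CCQHIMn-HI}) gives that $A/I$ is hereditarily in~$\CC$.

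The steps are all routine; the only point requiring care is the observation that a nonzero positive matrix over a \ca{} must have a nonzero diagonal entry, which is what makes condition~(\ref{P_3720_P3Her_Cut}) of Proposition~\ref{P_3720_P3Her} easy to verify for the corner inclusion. Once that is in hand, everything else is bookkeeping with the ideal correspondence.
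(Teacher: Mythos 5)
Your proposal is correct and follows essentially the same route as the paper: the paper's forward direction for part~(\ref{P-609_CCQHIMn-HI}) is exactly your cut-down to a diagonal corner, carried out directly with $f_k = e_{k,k} \otimes 1$ and the last part of Lemma~\ref{L-Polar606} rather than by citing Proposition~\ref{P_3720_P3Her} (whose (\ref{P_3720_P3Her_Cut})$\Rightarrow$(\ref{P_3720_P3Her_HIso}) step is that same polar-decomposition argument), and the backward direction and part~(\ref{P-609_CCQHIMn-Q}) are handled just as you do, via hereditary subalgebras and the ideal correspondence $J = M_n(I)$. The only imprecision is that $z = e_{1 k}$ lies in $M_n(M(A))$ rather than in $M_n(A)$ when $A$ is nonunital, whereas condition~(\ref{P_3720_P3Her_Cut}) of Proposition~\ref{P_3720_P3Her} asks for $z$ in the algebra itself; this is repaired in one line by taking $z = e_{1 k} b^{1/2}$ (so that $z b z^* = (b^2)_{k k} e_{1, 1}$, which is nonzero since $(b^2)_{k k} \geq b_{k k}^2$), or by multiplying $e_{1 k}$ by a suitable element of an approximate identity for~$A.$
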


\begin{proof}
If $M_n (A)$ is (residually) hereditarily in~$\CC,$
then Proposition~\ref{P-609_CCQHIHered}
implies that $A$ is (residually) hereditarily in~$\CC.$

Now assume that $A$ is hereditarily in~$\CC.$
Let $B \subset M_n (A)$ be a nonzero hereditary subalgebra.
Choose $z \in B_{+} \setminus \{ 0 \}.$
Let $(e_{j, k})_{j, k = 1}^n$ be the standard system
of matrix units in~$M_n.$
For $k = 1, 2, \ldots, n,$
define $f_k = e_{k, k} \otimes 1 \in M_n (M (A)).$
Since $\sum_{k = 1}^n f_k = 1$ and $z^{1/2} \neq 0,$
there is $k$ such that $f_k z^{1/2} \neq 0.$
Set
\[
x = f_k z f_k \in f_k M_n (A) f_k \cong A
\andeqn
y = z^{1/2} f_k z^{1/2} \in B.
\]
Then ${\overline{x M_n (A) x}}$ is isomorphic to
a hereditary subalgebra of~$A,$
whence ${\overline{x M_n (A) x}} \in \CC.$
The last part of Lemma~\ref{L-Polar606} implies
that ${\overline{y M_n (A) y}} \cong {\overline{x M_n (A) x}}.$
Since ${\overline{y M_n (A) y}} \S B,$
we get $B \in \CC.$

Finally, assume that $A$ is residually hereditarily in~$\CC.$
Let $J \S M_n (A)$ be an ideal.
Then there is an ideal $I \S A$ such that $J = M_n (I).$
The hypotheses imply
that $A / I$ is hereditarily in~$\CC,$
so $M_n (A) / J$ is hereditarily in~$\CC$
by the previous paragraph.
\end{proof}

\begin{prp}\label{P-609_CCQHIStab}
Let $\CC$ be an upwards directed class of \ca{s}
and let $A$ be a \ca.
Then:
\begin{enumerate}
\item\label{P-609_CCQHIStab-HI}
$A$ is hereditarily in~$\CC$
\ifo\  $K \otimes A$ is hereditarily in~$\CC.$
\item\label{P-609_CCQHIStab-Q}
$A$ is residually hereditarily in~$\CC$
\ifo\  $K \otimes A$ is residually hereditarily in~$\CC.$
\end{enumerate}
\end{prp}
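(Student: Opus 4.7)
The plan is to reduce Proposition~\ref{P-609_CCQHIStab} to the combination of Propositions \ref{P-609_CCQHILim}, \ref{P-609_CCQHIHered}, and~\ref{P-609_CCQHIMn}, which have all been established already. No new technical estimates should be needed; the essential point is just that $K \otimes A$ is a direct limit, with injective connecting maps, of the matrix algebras $M_n (A).$

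For the ``only if'' directions in both (\ref{P-609_CCQHIStab-HI}) and~(\ref{P-609_CCQHIStab-Q}), note that $A$ is isomorphic to the hereditary subalgebra $(e_{1,1} \otimes 1) (K \otimes A) (e_{1,1} \otimes 1) = e_{1,1} \otimes A$ of $K \otimes A.$ So if $K \otimes A$ is (residually) hereditarily in~$\CC,$ then $A$ is (residually) hereditarily in~$\CC$ by Proposition~\ref{P-609_CCQHIHered}.

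For the ``if'' direction of~(\ref{P-609_CCQHIStab-HI}), assume $A$ is hereditarily in~$\CC.$ Identify $K \otimes A$ with the direct limit $\dirlim_n M_n (A)$ taken along the injective upper-left corner embeddings $M_n (A) \hookrightarrow M_{n + 1} (A).$ By Proposition \ref{P-609_CCQHIMn}(\ref{P-609_CCQHIMn-HI}), each $M_n (A)$ is hereditarily in~$\CC.$ Since the connecting maps are injective, Proposition \ref{P-609_CCQHILim}(\ref{P-609_CCQHILim-HI}) implies that $K \otimes A$ is hereditarily in~$\CC.$

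For the ``if'' direction of~(\ref{P-609_CCQHIStab-Q}), assume $A$ is residually hereditarily in~$\CC.$ Then, by Proposition \ref{P-609_CCQHIMn}(\ref{P-609_CCQHIMn-Q}), each $M_n (A)$ is residually hereditarily in~$\CC.$ Applying Proposition \ref{P-609_CCQHILim}(\ref{P-609_CCQHILim-Q}) to the same direct system as above (which does not require injectivity of the connecting maps) yields that $K \otimes A$ is residually hereditarily in~$\CC.$ There is no real obstacle here; the only thing to be careful about is citing the injective version of the direct-limit result in part~(\ref{P-609_CCQHIStab-HI}) and the general version in part~(\ref{P-609_CCQHIStab-Q}), since hereditary-in-$\CC$ is not known in this generality to pass to direct limits with arbitrary connecting maps.
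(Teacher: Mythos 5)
Your proof is correct and follows the paper's argument exactly: the ``only if'' directions come from viewing $A$ as a hereditary subalgebra (a corner) of $K \otimes A$ together with Proposition~\ref{P-609_CCQHIHered}, and the ``if'' directions apply Propositions \ref{P-609_CCQHIMn} and~\ref{P-609_CCQHILim} to the relation $K \otimes A = \dirlim_n M_n (A).$ Your added care about which part of the direct-limit proposition to cite (injective connecting maps for the hereditary case, arbitrary maps for the residual case) is exactly the right point and is implicit in the paper's more terse proof.
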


\begin{proof}
If $K \otimes A$ is (residually) hereditarily in~$\CC,$
then Proposition~\ref{P-609_CCQHIHered}
implies that $A$ is (residually) hereditarily in~$\CC.$
If $A$ is (residually) hereditarily in~$\CC,$
then apply Proposition~\ref{P-609_CCQHIMn}
and Proposition~\ref{P-609_CCQHILim}
to the relation $K \otimes A = \dirlim_n M_n (A)$
to see that $K \otimes A$ is (residually) hereditarily in~$\CC.$
\end{proof}

\begin{cor}\label{C-609_CCQHIMorita}
Let $\CC$ be an upwards directed class of \ca{s}
Let $A$ and $B$ be Morita equivalent separable \ca{s}.
Then:
\begin{enumerate}
\item\label{C-609_CCQHIMorita-HI}
$A$ is hereditarily in~$\CC$
\ifo\  $B$ is hereditarily in~$\CC.$
\item\label{C-609_CCQHIMorita-Q}
$A$ is residually hereditarily in~$\CC$
\ifo\  $B$ is residually hereditarily in~$\CC.$
\end{enumerate}
\end{cor}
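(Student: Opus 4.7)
The plan is to reduce immediately to Proposition~\ref{P-609_CCQHIStab} via the Brown--Green--Rieffel stabilization theorem, which asserts that two separable \ca{s} $A$ and $B$ are Morita equivalent if and only if $K \otimes A \cong K \otimes B.$ Given this isomorphism, the classes of \ca{s} hereditarily (respectively residually hereditarily) in~$\CC$ are manifestly preserved under isomorphism, so $K \otimes A$ is (residually) hereditarily in~$\CC$ \ifo\ $K \otimes B$ is.

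Now I would apply Proposition~\ref{P-609_CCQHIStab} twice. For part~(\ref{C-609_CCQHIMorita-HI}), I chain the equivalences
\[
A \text{ hereditarily in } \CC
\iff K \otimes A \text{ hereditarily in } \CC
\iff K \otimes B \text{ hereditarily in } \CC
\iff B \text{ hereditarily in } \CC,
\]
with the outer two equivalences coming from Proposition \ref{P-609_CCQHIStab}(\ref{P-609_CCQHIStab-HI}) and the middle one from the Brown--Green--Rieffel isomorphism. The argument for part~(\ref{C-609_CCQHIMorita-Q}) is identical, using Proposition \ref{P-609_CCQHIStab}(\ref{P-609_CCQHIStab-Q}) instead.

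There is essentially no obstacle: the only external input beyond what has already been established in Section~\ref{Sec_HSAPerm} is the Brown--Green--Rieffel theorem, which is where the separability hypothesis enters. It is worth remarking in the proof that one could drop separability if one instead works with $\sigma$-unital Morita equivalence via the extended stabilization theorem of Brown, but the stated hypothesis is enough for the applications intended.
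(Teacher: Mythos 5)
Your proposal is correct and follows exactly the same route as the paper: invoke the Brown--Green--Rieffel theorem to convert Morita equivalence of separable C*-algebras into stable isomorphism, and then apply Proposition~\ref{P-609_CCQHIStab} to both algebras. The paper's proof is a one-line version of your argument, so there is nothing to add.
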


\begin{proof}
Since Morita equivalence for separable \ca{s} implies
stable isomorphism (Theorem~1.2 of~\cite{BGR}),
the result is immediate from Proposition~\ref{P-609_CCQHIStab}.
\end{proof}

\section{Hereditary infiniteness}\label{Sec:PermHInf}

\indent
We do not know whether the crossed product
of a purely infinite C*-algebra
by a discrete group is again purely infinite,
even under extra conditions such as finiteness of the group
or spectral freeness of the action.
(We do have results under the additional assumption
of the ideal property; see Theorem~\ref{T_3720_PIIP}.)
We therefore consider two formally weaker conditions,
namely
residual hereditary  infiniteness
and residual hereditary proper infiniteness,
for which we can use the methods of Section~\ref{Sec_HSAPerm}
to obtain permanence results,
in particular for crossed products.
It is not known whether our properties are equivalent to
pure infiniteness.
For the weaker one, this is Question~4.8 of~\cite{KR},
which is one of the motivations for these properties.
It is also not known whether they are equivalent to each other.

No freeness condition should be necessary
for permanence results for crossed products.
After all, the crossed product of a purely infinite C*-algebra
by a trivial action
is expected to be again purely infinite.

\begin{dfn}\label{D-HI428Mod}
Let $A$ be a \ca.
We say that $A$ is
{\emph{hereditarily infinite}}
if for every nonzero hereditary subalgebra $B \subset A,$
there is an infinite positive element $a \in B$
in the sense of Definition~3.2 of~\cite{KR},
that is, there is $b \in A_{+} \setminus \{ 0 \}$
such that $a \oplus b \precsim a.$
We say that $A$ is
{\emph{residually hereditarily infinite}}
if $A / I$ is hereditarily infinite for every ideal $I$ in~$A.$
\end{dfn}

There is a possible alternate definition.

\begin{dfn}\label{D-HI428Other}
Let $A$ be a \ca.
We say that $A$ is
{\emph{hereditarily properly infinite}}
if for every nonzero hereditary subalgebra $B \subset A,$
there is a properly infinite positive element $a \in B$
in the sense of Definition~3.2 of~\cite{KR},
that is,
such that $a \neq 0$ and $a \oplus a \precsim a.$
We say that $A$ is
{\emph{residually hereditarily properly infinite}}
if $A / I$ is hereditarily properly infinite for every ideal $I$ in~$A.$
\end{dfn}

The zero \ca{} satisfies all the conditions
in Definition~\ref{D-HI428Mod} and Definition~\ref{D-HI428Other}.
This seems to be the convenient choice.

Pure infiniteness implies residual hereditary proper infiniteness,
by Theorems 4.16 and 4.19 of~\cite{KR}.
Clearly residual hereditary proper infiniteness
implies residual hereditary infiniteness.
Question~4.8 of~\cite{KR} asks whether
a residually hereditarily infinite C*-algebra
is necessarily purely infinite.
As far as we know, this question is still open.

As a further motivation,
we cite the following result,
which is the equivalence of conditions (ii) and~(iv)
of Proposition~2.11 of~\cite{PR}
(valid, as shown there, even when $A$ is not separable).

\begin{prp}[Proposition~2.11 of~\cite{PR}]\label{P-609IdealPI}
Let $A$ be a \ca.
Then $A$ is purely infinite and has the ideal property
\ifo\  for every ideal $I \subset A,$
every nonzero hereditary subalgebra of $A / I$
contains an infinite projection.
\end{prp}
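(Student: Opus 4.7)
The plan is to verify the two implications separately, using in an essential way that both pure infiniteness and the ideal property pass to quotients of~$A.$

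For the forward direction, I would argue as follows. Assume $A$ is purely infinite and has the ideal property. Pure infiniteness passes to quotients by Proposition~4.3 of~\cite{KR}, and the ideal property passes to quotients by a short direct argument. So, given an ideal $I \subset A,$ it suffices to show that every nonzero \hsa~$B$ of $A/I$ contains an infinite \pj{} (working henceforth inside $A/I,$ which we relabel~$A$). Pick $b \in B_{+} \SM \{ 0 \}.$ By the ideal property, the ideal $\overline{A b A}$ is generated by its \pj{s}, so contains a nonzero \pj~$q.$ Since $q \in \overline{A b A}$ and $A$ is purely infinite, the defining condition of pure infiniteness (Definition~4.1 of~\cite{KR}) gives $q \precsim b.$ Because $q$ is a \pj, $(q - \ep)_{+}$ is a scalar multiple of~$q$ for $\ep < 1,$ so Proposition~2.4 of~\cite{KR} produces $v \in A$ with $v^* v = q$ and $v v^* \in \overline{b A b} \S B.$ Then $p = v v^*$ is a \pj{} in~$B$ that is Murray-von Neumann equivalent to~$q,$ and in a purely infinite \ca{} every nonzero \pj{} is properly infinite (Theorem~4.16 of~\cite{KR}), hence infinite.

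For the reverse direction, assume that for every ideal $I \subset A,$ every nonzero \hsa{} of $A / I$ contains an infinite \pj. I would first establish the ideal property. Let $J \S A$ be an ideal and let $J_0 \S J$ be the closed ideal generated by the \pj{s} in~$J.$ If $J_0 \neq J,$ then $J / J_0$ is a nonzero \hsa{} of $A / J_0,$ hence contains an infinite \pj~$\bar p.$ Infiniteness produces $\bar s \in J / J_0$ with $\bar s^* \bar s = \bar p$ and $\bar s \bar s^* < \bar p$ strictly; this strict subprojection condition is exactly what is needed to invoke the standard lifting result for infinite \pj{s}, producing a \pj{} $p \in J$ with $p + J_0 = \bar p.$ Then $p \in J_0,$ forcing $\bar p = 0,$ a contradiction.

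Next I would show pure infiniteness. By Theorem~4.16 of~\cite{KR} it suffices to prove that every $a \in A_{+} \SM \{ 0 \}$ is properly infinite. Given such~$a,$ the hypothesis applied to the \hsa~$\overline{a A a} \S A$ yields an infinite \pj~$p \in \overline{a A a},$ so in particular $p \precsim a.$ To upgrade from $p$ infinite to $p$ properly infinite (and therefore to $a \oplus a \precsim a$), I would iterate the hypothesis on subquotients: writing $p \sim q < p$ with $p - q \neq 0,$ apply the hypothesis to the \hsa{} $(p-q) A (p-q)$ to find an infinite \pj{} orthogonal to~$q,$ and combine with the ideal property established above and standard Cuntz calculus (Proposition~2.4 and Section~2 of~\cite{KR}) to conclude $p \oplus p \precsim p,$ hence $a \oplus a \precsim a.$

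The main obstacle is the lifting of infinite \pj{s} in the reverse direction, which requires a non-trivial but classical argument using polar decomposition and functional calculus, and the careful iteration needed to pass from the existence of an infinite \pj{} in $\overline{a A a}$ to proper infiniteness of~$a$ itself. The forward direction, by contrast, is essentially a clean application of the Cuntz-theoretic machinery of~\cite{KR} once one observes that the ideal property delivers the projection needed to exploit pure infiniteness.
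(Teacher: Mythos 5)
You should first be aware that the paper does not prove this statement at all: it is quoted as the equivalence of conditions (ii) and~(iv) of Proposition~2.11 of~\cite{PR}, so your argument has to stand on its own. Your forward direction does stand: quotient permanence of both hypotheses, extraction of a nonzero \pj{} $q$ from the ideal $\overline{A b A}$ via the ideal property, $q \precsim b$ from pure infiniteness, transfer of $q$ to a Murray--von Neumann equivalent \pj{} inside $\overline{b A b} \S B$ (Lemma~2.5 of~\cite{KR}), and proper infiniteness of nonzero \pj{s} from Theorem~4.16 of~\cite{KR}. That half is correct and is the expected argument.

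The reverse direction has two genuine gaps, located exactly at the steps you describe as standard. First, there is no ``standard lifting result for infinite \pj{s}.'' An infinite, indeed properly infinite, \pj{} in a quotient need not lift to any \pj{} whatsoever: under evaluation at $1,$ the cone $C_0 ( (0, 1] ) \otimes \OT$ maps onto $\OT,$ whose unit is properly infinite, yet the cone contains no nonzero \pj{} at all --- and the cone is even purely infinite (Proposition~4.5 of~\cite{KR}), so proving pure infiniteness first would not repair this step. The intended contradiction ``$p \in J_0,$ forcing $\bar{p} = 0$'' is therefore unavailable as stated; the hypothesis has to be exploited downstairs in $A$ itself, where it genuinely does place \pj{s} in every nonzero \hsa, rather than recovered by lifting from $A / J_0.$ Second, in the pure infiniteness step, a properly infinite \pj{} $p \in \overline{a A a}$ gives $p \oplus p \precsim p \precsim a,$ but this does not yield $a \oplus a \precsim a$: that would additionally require $a \precsim p,$ which you never establish and which is the actual crux, since a positive element need not be Cuntz dominated by any single \pj{} of its \hsa{} (think of an element with spectrum accumulating at~$0$). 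The workable route is the fact, from Section~3 of~\cite{KR}, that $a$ is properly infinite \ifo{} its image in every quotient is zero or infinite; this forces one to prove that every nonzero positive element of every quotient is itself infinite, not merely that its \hsa{} contains an infinite \pj. These two points are precisely the difficulties that the proof in~\cite{PR} is organized to overcome, and as written your sketch does not close either of them.
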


\begin{lem}\label{L_3719_InfHer}
Let $A$ be a \ca,
let $B \S A$ be a \hsa,
and let $a \in B_{+}.$
Suppose that there is $x \in A_{+} \SM \{ 0 \}$
such that $a \oplus x \precsim a$ in $M_2 (A).$
Then there is $y \in B_{+} \SM \{ 0 \}$
such that $a \oplus y \precsim a$ in $M_2 (B).$
\end{lem}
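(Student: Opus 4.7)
The plan is as follows. Since $x \neq 0,$ the hypothesis forces $a \neq 0.$ From $a \oplus x \precsim a$ in $M_2 (A)$ one obtains $x \precsim a$ in $A$ by standard properties of Cuntz subequivalence (passing to the $(2,2)$-corner). Fix $\ep$ with $0 < \ep < \| x \|,$ so that $(x - \ep)_{+} \neq 0,$ and apply Lemma~2.5(ii) of~\cite{KR} to produce $e \in A$ with $(x - \ep)_{+} = e a e^*.$

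The key step is to set $c = e a^{1/2} \in A.$ Then $c c^* = e a e^* = (x - \ep)_{+} \neq 0,$ while
\[
c^* c = a^{1/2} e^* e a^{1/2} \in a^{1/2} A a^{1/2} \S B,
\]
the inclusion using that $a \in B$ and $B$ is hereditary. So $y := c^* c$ is a nonzero element of $B_{+},$ and the final assertion of Lemma~\ref{L-Polar606} yields $y \sim c c^* = (x - \ep)_{+}$ in $A,$ whence $y \precsim x$ in $A$ and therefore
\[
a \oplus y \;\precsim\; a \oplus x \;\precsim\; a
\qquad \text{in $M_2 (A).$}
\]

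It remains to upgrade this subequivalence from $M_2 (A)$ to $M_2 (B).$ Since $a, y \in B,$ both $a \oplus 0$ and $a \oplus y$ lie in $M_2 (B),$ which is a hereditary subalgebra of $M_2 (A).$ Here I would invoke the standard fact that if $p, q \in M_2 (B)_{+}$ satisfy $p \precsim q$ in $M_2 (A),$ then $p \precsim q$ in $M_2 (B).$ Concretely: given $w \in M_2 (A)$ with $\| w (a \oplus 0) w^* - (a \oplus y) \|$ small, let $(e_\ld)$ be an approximate identity for $B$ and set $f_\ld = e_\ld \oplus e_\ld \in M_2 (B).$ Because $B$ is hereditary, $f_\ld w f_\ld \in M_2 (B),$ and the convergences
\[
f_\ld (a \oplus 0) f_\ld \to a \oplus 0, \qquad
f_\ld (a \oplus y) f_\ld \to a \oplus y
\]
show that $f_\ld w f_\ld (a \oplus 0) f_\ld w^* f_\ld$ approximates $a \oplus y$ arbitrarily well. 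The conceptual heart of the argument is the construction of $y$ via $c = e a^{1/2}$ and the polar decomposition; the main (though modest) obstacle is this concluding approximate-identity bookkeeping needed to transfer the subequivalence from $M_2 (A)$ down to $M_2 (B).$
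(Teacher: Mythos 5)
Your proof is correct and follows essentially the same route as the paper's: both use Lemma~2.5(ii) of~\cite{KR} to write $(x - \ep)_{+} = v^* a v,$ transport it into $B$ via the element $a^{1/2} v v^* a^{1/2}$ (your $y = c^* c$ with $c = e a^{1/2}$ is literally the paper's $d$), invoke the last part of Lemma~\ref{L-Polar606} for the Cuntz equivalence $y \sim (x - \ep)_{+},$ and then pass from $M_2 (A)$ to $M_2 (B)$ using the fact that Cuntz subequivalence between elements of a hereditary subalgebra can be witnessed inside that subalgebra (the paper simply cites Lemma~2.2(iii) of~\cite{KR} for this last step, whereas you sketch its approximate-identity proof).
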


\begin{proof}
Choose $\ep > 0$ such that $(x - \ep)_{+} \neq 0.$
Lemma~2.5(ii) of~\cite{KR} provides $v \in A$
such that $(x - \ep)_{+} = v^* a v.$
Set $d = a^{1/2} v v^* a^{1/2} \in B.$
Then the last part of
Lemma~\ref{L-Polar606}
provides an isomorphism
$\ph \colon {\overline{(x - \ep)_{+} A (x - \ep)_{+}}}
    \to {\overline{d A d}} \S B$
such that $\ph (z) \sim z$
for all $z \in {\overline{(x - \ep)_{+} A (x - \ep)_{+}}}.$
Set $y = \ph \big( (x - \ep)_{+} \big).$
Then, in $M_2 (A),$
we have
\[
a \oplus y
 \sim a \oplus (x - \ep)_{+}
 \leq a \oplus x
 \precsim a.
\]
Since $y \in B,$
it follows from Lemma 2.2(iii) of~\cite{KR}
that $a \oplus y \precsim a$ in $M_2 (B).$
\end{proof}

\begin{cor}\label{C_3719_HerInfClass}
Let $A$ be a \ca.
Let $\CC$ be the class of all \ca{s} which
contain an infinite element.
Then $\CC$ is upwards directed
and $A$ is (residually) hereditarily infinite
\ifo{} $A$ is (residually) hereditarily in~$\CC.$
\end{cor}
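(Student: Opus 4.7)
The plan is to first check that the class $\CC$ is upwards directed, then establish the basic equivalence of hereditary infiniteness with being hereditarily in~$\CC,$ and finally note that the residual versions follow by applying the basic equivalence to each quotient.

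Upwards directedness of~$\CC$ is essentially formal. Suppose $A$ is a \ca{} containing a subalgebra $B$ isomorphic to an algebra in~$\CC.$ Pick an infinite element $a \in B_{+},$ witnessed by $b \in B_{+} \SM \{ 0 \}$ with $a \oplus b \precsim a$ in $M_2 (B).$ Cuntz subequivalence in $M_2 (B)$ implies Cuntz subequivalence in $M_2 (A)$ (Lemma~2.2(iii) of~\cite{KR}), so $a$ is infinite in $A$ with witness $b \in A_{+} \SM \{ 0 \},$ giving $A \in \CC.$

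For the nonresidual equivalence, observe that $A$ being hereditarily in~$\CC$ says that every nonzero \hsa~$B \S A$ contains $a \in B_{+}$ and $b \in B_{+} \SM \{ 0 \}$ with $a \oplus b \precsim a$ in $M_2 (B);$ whereas $A$ being hereditarily infinite says the same, but only requires $b \in A_{+} \SM \{ 0 \}$ with $a \oplus b \precsim a$ in $M_2 (A).$ The direction ``hereditarily in~$\CC$ implies hereditarily infinite'' is immediate, by the same passage of Cuntz subequivalence from $M_2 (B)$ to $M_2 (A)$ used above. The converse is exactly the content of Lemma~\ref{L_3719_InfHer}: given the witness $b \in A_{+} \SM \{ 0 \}$ and $a \in B_{+}$ with $a \oplus b \precsim a$ in $M_2 (A),$ one extracts $y \in B_{+} \SM \{ 0 \}$ with $a \oplus y \precsim a$ in $M_2 (B),$ showing $a$ is infinite in~$B$ and hence $B \in \CC.$

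The residual version is then immediate. Both ``residually hereditarily infinite'' and ``residually hereditarily in~$\CC$'' mean that every proper quotient $A / I$ enjoys the corresponding nonresidual property (with the convention, noted after Definition~\ref{D-HI428Other}, that the zero algebra trivially satisfies the nonresidual condition, so the quantifiers agree). Since the nonresidual equivalence has just been established for arbitrary~$A,$ it applies to every such quotient, and the two residual properties coincide. There is no real obstacle here: all the substantive work was done in Lemma~\ref{L_3719_InfHer}, which precisely reconciles the location of the witness element~$b.$
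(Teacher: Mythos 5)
Your proof is correct and takes essentially the same approach as the paper, whose entire argument is the two observations you make: upwards directedness of $\CC$ is formal, and the equivalence of the two hereditary conditions is exactly Lemma~\ref{L_3719_InfHer}. You have simply made explicit the bookkeeping the paper leaves implicit, namely where the witness element lives and in which algebra the Cuntz subequivalence is asserted, together with the zero-algebra convention needed to match the quantifiers in the two residual definitions.
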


\begin{proof}
It is obvious that $\CC$ is upwards directed,
and the second part follows from Lemma~\ref{L_3719_InfHer}.
\end{proof}

We can now give the permanence theorems.
In both of them,
we have listed the crossed product results first.

\begin{thm}\label{T_3719_HIPr}
The following operations preserve hereditary (proper) infiniteness:
\begin{enumerate}
\item\label{T_3719_HIPr_SpNtr}
Reduced crossed products by spectrally nontrivial actions
of discrete groups.
\item\label{T_3719_HIPr_RkZ}
Crossed products by Rokhlin actions of~$\Z.$
\item\label{T_3719_HIPr_Z2FP}
Passage to a \ca~$A$ from the fixed point algebra under an action
of $\Z_2$:
if $\af \colon \Z_2 \to \Aut (A)$
is an arbitrary action of $\Z_2$ on~$A,$
and $A^{\af}$ is hereditarily (properly) infinite,
then so is~$A.$
\item\label{T_3719_HIPr_Z2Cr}
Crossed products by arbitrary actions of~$\Z_2.$
\item\label{T_3719_HIPr_her}
Passage to hereditary subalgebras.
\item\label{T_3719_HIPr_InjLim}
Direct limits (over arbitrary index sets)
of systems in which all the maps are injective.
\end{enumerate}
\end{thm}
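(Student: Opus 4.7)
The plan is to reduce the entire theorem to the abstract machinery of Section~\ref{Sec_HSAPerm}. Define $\CC_1$ to be the class of \ca{s} containing an infinite positive element (in the sense of Definition~\ref{D-HI428Mod}), and $\CC_2$ to be the class of \ca{s} containing a properly infinite positive element. Both classes are obviously upwards directed: if $D \S A$ is any subalgebra containing an element $a$ which is infinite (resp.\ properly infinite) in~$D,$ then the witnessing Cuntz subequivalence in $M_2 (D)$ immediately gives one in $M_2 (A),$ so $a$ is still infinite (resp.\ properly infinite) in~$A.$

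I would then verify that $A$ is hereditarily infinite (resp.\ hereditarily properly infinite) \ifo{} $A$ is hereditarily in~$\CC_1$ (resp.\ $\CC_2$). For $\CC_1$ this is Corollary~\ref{C_3719_HerInfClass}. For $\CC_2$ the argument is in fact easier: if a hereditary subalgebra $B \S A$ contains an element $a$ with $a \oplus a \precsim a$ in $M_2 (A),$ then since $a \oplus a$ and $a$ both lie in $M_2 (B)$ and $M_2 (B)$ is a hereditary subalgebra of $M_2 (A),$ Lemma~2.2(iii) of~\cite{KR} gives $a \oplus a \precsim a$ in $M_2 (B),$ so $a$ is already properly infinite as an element of~$B.$

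With these equivalences in hand, each of the six assertions becomes a direct application of a general theorem from Section~\ref{Sec_HSAPerm}, applied with $\CC = \CC_1$ or $\CC = \CC_2$ as appropriate. Specifically: item~(\ref{T_3719_HIPr_SpNtr}) follows from Theorem~\ref{T_3719_CCSpFree}(\ref{T_3719_CCSpFree_Ntr}); item~(\ref{T_3719_HIPr_RkZ}) from Corollary~\ref{C_3719_CCZRP}(\ref{C_3719_CCZRP_Her}); item~(\ref{T_3719_HIPr_Z2FP}) from Theorem~\ref{T_3719_CCZ2}(\ref{T_3719_CCZ2_Her}); item~(\ref{T_3719_HIPr_Z2Cr}) from Corollary~\ref{C_3719_CCZ2CP}(\ref{C_3719_CCZ2CP_Her}); item~(\ref{T_3719_HIPr_her}) from Proposition~\ref{P-609_CCQHIHered}(\ref{P-609_CCQHIHered-HI}); and item~(\ref{T_3719_HIPr_InjLim}) from Proposition~\ref{P-609_CCQHILim}(\ref{P-609_CCQHILim-HI}). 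The only subtle point is the equivalence for $\CC_1$: the definition of infinite element in Definition~\ref{D-HI428Mod} permits the auxiliary element $b$ to lie anywhere in $A_{+}$ rather than inside the hereditary subalgebra under consideration, and recovering a witness inside the subalgebra is precisely the content of Lemma~\ref{L_3719_InfHer}. Beyond that, no step presents a real obstacle; the theorem is essentially a bookkeeping consequence of the framework built up in Section~\ref{Sec_HSAPerm}.
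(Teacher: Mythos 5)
Your proposal is correct and follows essentially the same route as the paper: the published proof also reduces both properties to ``(residually) hereditarily in~$\CC$'' for the class of algebras containing an infinite (resp.\ properly infinite) element, invoking Corollary~\ref{C_3719_HerInfClass} for the infinite case and then citing the results of Section~\ref{Sec_HSAPerm}. Your extra remark justifying the properly infinite case via Lemma~2.2(iii) of~\cite{KR} makes explicit a point the paper dismisses as holding ``by definition,'' which is a harmless (indeed slightly more careful) elaboration.
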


\begin{thm}\label{T_3719_ResIP}
The following operations preserve
residual hereditary (proper) infiniteness:
\begin{enumerate}
\item\label{T_3719_ResIPr_SpFr}
Reduced crossed products by exact spectrally free actions
of discrete groups.
\item\label{T_3719_ResIPr_RkZ}
Crossed products by Rokhlin actions of~$\Z.$
\item\label{T_3719_ResIPr_Z2FP}
Passage to a \ca~$A$ from the fixed point algebra under an action
of $\Z_2$:
if $\af \colon \Z_2 \to \Aut (A)$
is an arbitrary action of $\Z_2$ on~$A,$
and $A^{\af}$ is residually hereditarily (properly) infinite,
then so is~$A.$
\item\label{T_3719_ResIPr_Z2Cr}
Crossed products by arbitrary actions of~$\Z_2.$
\item\label{T_3719_ResIPr_her}
Passage to hereditary subalgebras.
\item\label{T_3719_ResIPr_DLim}
Direct limits (over arbitrary index sets).
(The maps of the system need not be injective.)
\item\label{T_3719_ResIPr_2Of3}
Two out of three in short exact sequences:
if $A$ is a \ca,
and $J \subset A$ is an ideal,
then $A$ is residually hereditarily (properly) infinite
\ifo\  $J$ and $A / J$ are both
residually hereditarily (properly) infinite.
\item\label{T_3719_ResIPr_Stab}
Stable isomorphism,
equivalently,
$A$ is residually hereditarily (properly) infinite
\ifo{} $K \otimes A$ is residually hereditarily (properly) infinite.
\item\label{T_3719_ResIPr_Mor}
Morita equivalence for separable \ca{s}.
\end{enumerate}
\end{thm}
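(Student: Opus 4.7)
The plan is to deduce every item from the abstract framework of Section~\ref{Sec_HSAPerm}. I take $\CC$ to be the class of all \ca{s} that contain a nonzero infinite positive element, and $\CC'$ to be the class of all \ca{s} that contain a nonzero properly infinite positive element. Corollary~\ref{C_3719_HerInfClass} already records that $\CC$ is upwards directed and that residual hereditary infiniteness coincides with being residually hereditarily in~$\CC.$ I would verify the analogous statement for $\CC'$: upwards directedness is immediate, and if $B \S A$ is a \hsa{} and $a \in B$ satisfies $a \oplus a \precsim a$ in $M_2 (A),$ then Lemma~2.2(iii) of~\cite{KR} yields $a \oplus a \precsim a$ in $M_2 (B),$ since $a \oplus a$ already lies in $M_2 (B).$ Hence $A$ is (residually) hereditarily properly infinite \ifo{} $A$ is (residually) hereditarily in~$\CC'.$

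With this reformulation in place, each assertion is a direct application of a general result from Section~\ref{Sec_HSAPerm} to $\CC$ or~$\CC'$: item~(\ref{T_3719_ResIPr_SpFr}) from Theorem~\ref{T_3719_CCSpFree}(\ref{T_3719_CCSpFree_SF}); item~(\ref{T_3719_ResIPr_RkZ}) from Corollary~\ref{C_3719_CCZRP}(\ref{C_3719_CCZRP_Res}); item~(\ref{T_3719_ResIPr_Z2FP}) from Theorem~\ref{T_3719_CCZ2}(\ref{T_3719_CCZ2_Res}); item~(\ref{T_3719_ResIPr_Z2Cr}) from Corollary~\ref{C_3719_CCZ2CP}(\ref{C_3719_CCZ2CP_Res}); item~(\ref{T_3719_ResIPr_her}) from Proposition~\ref{P-609_CCQHIHered}(\ref{P-609_CCQHIHered-Q}); item~(\ref{T_3719_ResIPr_DLim}) from Proposition~\ref{P-609_CCQHILim}(\ref{P-609_CCQHILim-Q}); item~(\ref{T_3719_ResIPr_2Of3}) from Proposition~\ref{L-2Of3_CC-428}; item~(\ref{T_3719_ResIPr_Stab}) from Proposition~\ref{P-609_CCQHIStab}(\ref{P-609_CCQHIStab-Q}); and item~(\ref{T_3719_ResIPr_Mor}) from Corollary~\ref{C-609_CCQHIMorita}(\ref{C-609_CCQHIMorita-Q}). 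The theorem thus becomes a bookkeeping exercise once the two classes are identified.

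The only genuinely delicate point is the equivalence between hereditary infiniteness and being hereditarily in~$\CC,$ which rests on Lemma~\ref{L_3719_InfHer}: the definition of an infinite positive element in a \hsa~$B$ requires a witness $y \in B_{+} \SM \{ 0 \}$ rather than merely in $A_{+} \SM \{ 0 \},$ and the lemma constructs such a witness from an ambient one using the polar-decomposition isomorphism of Lemma~\ref{L-Polar606}. For $\CC'$ no replacement is needed, because $a$ serves as its own witness. Once these matters are settled, no further work is needed: the abstract machinery does all the heavy lifting and the rest of the proof is merely a list of citations. The main conceptual obstacle, namely producing a witness of infiniteness inside a prescribed \hsa, was already overcome in the preparation for Corollary~\ref{C_3719_HerInfClass}, so no new ideas should be required here.
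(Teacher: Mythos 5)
Your proposal is correct and follows essentially the same route as the paper: both reduce the theorem to the framework of Section~\ref{Sec_HSAPerm} by taking $\CC$ (respectively $\CC'$) to be the class of algebras containing an infinite (respectively properly infinite) positive element, invoking Corollary~\ref{C_3719_CCZRP} and its companions. Your explicit check via Lemma~2.2(iii) of~\cite{KR} that proper infiniteness passes to the hereditary subalgebra is a point the paper dispatches with ``by definition,'' but this is a matter of exposition, not of substance.
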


\begin{proof}[Proofs of Theorem~\ref{T_3719_HIPr}
   and Theorem~\ref{T_3719_ResIP}]
The statements for (residual) hereditary infiniteness
follow from Corollary~\ref{C_3719_HerInfClass}
and the results of Section~\ref{Sec_HSAPerm}.

For the statements for (residual) hereditary proper infiniteness,
let $\CC$ be the class of all \ca{s} which contain a properly
infinite element.
This class is obviously upwards directed,
and a \ca{} is (residually) hereditarily properly infinite
\ifo{} it is (residually) hereditarily in~$\CC$
by definition.
These statements therefore also follow from
the results of Section~\ref{Sec_HSAPerm}.
\end{proof}

Proposition~\ref{P-609IdealPI}
shows that
purely infinite \ca{s} with the ideal property
are also covered by our methods.
We thus have (Theorem~\ref{T_3720_PIIP} below)
the analog of Theorem~\ref{T_3719_ResIP}.
Before stating and proving it,
we make some comments on the parts.

Theorem \ref{T_3720_PIIP}(\ref{T_3720_PIIP_SpFr})
has been independently proved for partial actions
in Theorem~4.2 of~\cite{GrSr}.
For separable~$A,$
the it is essentially already in the literature.
First, apply Remark~2.5 of~\cite{Ks7}
to deduce that spectral freeness implies that,
for every $\af$-invariant ideal $I \subset A$ and every
$g \in G \setminus \{ 1 \},$
the induced automorphism of $A / I$ is properly outer.
Then use Remark~2.23 of~\cite{Sr}
(relying on Theorem~1.13 of~\cite{Sr};
the reference to Theorem~1.10 of~\cite{Sr} is a misprint)
to conclude that
$A$ separates the ideals in $C^*_{\mathrm{r}} (G, A, \af).$
This is enough to apply the proof of Lemma~3.1 of~\cite{RrSr},
and thus show that
$C^*_{\mathrm{r}} (G, A, \af)$ has the ideal property.
Proper outerness in place of essentially free action of $G$
on ${\widehat{A}}$
is also enough for the proof of Lemma~3.2 of~\cite{RrSr}
to be valid.
Combining this with the fact that
$A$ separates the ideals in $C^*_{\mathrm{r}} (G, A, \af),$
the proof of (i) implies~(iii) in Theorem~3.3 of~\cite{RrSr}
goes through,
and shows that $C^*_{\mathrm{r}} (G, A, \af)$ is purely infinite.
Separability enters because the proof of Remark~2.23 of~\cite{Sr}
relies on Lemma~7.1 of~\cite{OPd3},
which is only stated for separable \ca{s}.
We do not need separability for our version,
namely Lemma~\ref{L-L8_012Mod}.
Otherwise, our proof is fairly close.

On the other hand,
the results for actions of~$\Z_2,$
Theorem \ref{T_3720_PIIP}(\ref{T_3720_PIIP_Z2FP})
and Theorem \ref{T_3720_PIIP}(\ref{T_3720_PIIP_Z2Cr}),
seem to be completely new.

There are situations
(such as for minimal actions on infinite compact Hausdorff spaces)
in which one gets the ideal property for a crossed product
even without assuming it for the original algebra.
No such result can be expected here,
as can be seen by considering the tensor product
of a pointwise outer action on a purely infinite simple C*-algebra
with the trivial action on $C ([0, 1]).$

The permanence results in Theorem~\ref{T_3720_PIIP}
which don't involve group actions seem not to have
been previously published
(except that part of Theorem \ref{T_3720_PIIP}(\ref{T_3720_PIIP_2Of3})
is Corollary 4.4(ii) of~\cite{PR}),
but presumably could easily have been proved earlier.
With the additional assumptions of separability and nuclearity,
many of them are in Proposition 3.7(2) of~\cite{Psn2};
see Definition 3.6 and Proposition 3.7(1) of~\cite{Psn2}.
Theorem \ref{T_3720_PIIP}(\ref{T_3720_PIIP_Stab})
isn't valid for the ideal property by itself,
as one sees by using the algebra in Example~\ref{Ex_3719_WkNotInter}.
Also,
extensions need not preserve the ideal property;
see Theorem~5.1 of~\cite{Psn1}.

\begin{thm}\label{T_3720_PIIP}
The following operations preserve
the class of
purely infinite \ca{s} with the ideal property:
\begin{enumerate}
\item\label{T_3720_PIIP_SpFr}
Reduced crossed products by exact spectrally free actions
of discrete groups.
\item\label{T_3720_PIIP_RkZ}
Crossed products by Rokhlin actions of~$\Z.$
\item\label{T_3720_PIIP_Z2FP}
Passage to a \ca~$A$ from the fixed point algebra under an action
of $\Z_2$:
if $\af \colon \Z_2 \to \Aut (A)$
is an arbitrary action of $\Z_2$ on~$A,$
and $A^{\af}$ is purely infinite and has the ideal property,
then the same is true of~$A.$
\item\label{T_3720_PIIP_Z2Cr}
Crossed products by arbitrary actions of~$\Z_2.$
\item\label{T_3720_PIIP_her}
Passage to hereditary subalgebras.
\item\label{T_3720_PIIP_DLim}
Direct limits (over arbitrary index sets).
(The maps of the system need not be injective.)
\item\label{T_3720_PIIP_2Of3}
Two out of three in short exact sequences:
if $A$ is a \ca,
and $J \subset A$ is an ideal,
then $A$ is purely infinite and has the ideal property
\ifo\  $J$ and $A / J$ are both
purely infinite and both have the ideal property.
\item\label{T_3720_PIIP_Stab}
Stable isomorphism,
equivalently,
$A$ is purely infinite and has the ideal property
\ifo{} $K \otimes A$ is purely infinite and has the ideal property.
\item\label{T_3720_PIIP_Mor}
Morita equivalence for separable \ca{s}.
\end{enumerate}
\end{thm}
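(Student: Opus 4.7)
The plan is to reduce Theorem~\ref{T_3720_PIIP} to the general framework of Section~\ref{Sec_HSAPerm} by selecting the appropriate class~$\CC$. Define $\CC$ to be the class of all \ca{s} that contain an infinite projection. First I would verify that $\CC$ is upwards directed in the sense of Definition~\ref{D_3719_UpClass}: if $A$ contains a subalgebra $B$ isomorphic to an element of~$\CC$, pick an infinite projection $p \in B$ and a partial isometry $v \in B$ with $v^* v = p$ and $v v^* \lneqq p$; both $p$ and $v$ survive in~$A$, so $p$ remains an infinite projection in~$A$, giving $A \in \CC$. Next, Proposition~\ref{P-609IdealPI} furnishes the key translation: a \ca~$A$ is purely infinite and has the ideal property \ifo\ $A$ is residually hereditarily in~$\CC$ in the sense of Definition \ref{D_3719_ResHerC}(\ref{D_3719_ResHerC_Res}).

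Once this identification is in place, each part of the theorem is a direct specialization of a general result from Section~\ref{Sec_HSAPerm}, applied to this particular~$\CC$. Specifically:
(\ref{T_3720_PIIP_SpFr})~is Theorem \ref{T_3719_CCSpFree}(\ref{T_3719_CCSpFree_SF});
(\ref{T_3720_PIIP_RkZ})~is Corollary \ref{C_3719_CCZRP}(\ref{C_3719_CCZRP_Res});
(\ref{T_3720_PIIP_Z2FP})~is Theorem \ref{T_3719_CCZ2}(\ref{T_3719_CCZ2_Res});
(\ref{T_3720_PIIP_Z2Cr})~is Corollary \ref{C_3719_CCZ2CP}(\ref{C_3719_CCZ2CP_Res});
(\ref{T_3720_PIIP_her})~is Proposition \ref{P-609_CCQHIHered}(\ref{P-609_CCQHIHered-Q});
(\ref{T_3720_PIIP_DLim})~is Proposition \ref{P-609_CCQHILim}(\ref{P-609_CCQHILim-Q}) (the residual version requires no injectivity of the connecting maps);
(\ref{T_3720_PIIP_2Of3})~is Proposition~\ref{L-2Of3_CC-428};
(\ref{T_3720_PIIP_Stab})~is Proposition \ref{P-609_CCQHIStab}(\ref{P-609_CCQHIStab-Q});
and (\ref{T_3720_PIIP_Mor})~is Corollary \ref{C-609_CCQHIMorita}(\ref{C-609_CCQHIMorita-Q}).

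The main conceptual step is thus Proposition~\ref{P-609IdealPI}, which has already been cited from the literature. There is essentially no further obstacle: the whole point of the axiomatization in Section~\ref{Sec_HSAPerm} is to make results of precisely this shape fall out uniformly from upwards directedness of~$\CC$, and nothing further about infinite projections (as opposed to, say, positive infinite elements used in Theorem~\ref{T_3719_ResIP}) is needed beyond the one-line verification above. The mild cost of this uniform treatment is that one loses direct access to pure infiniteness separately from the ideal property; however, the two are inseparable in the formulation of Proposition~\ref{P-609IdealPI}, so this is not a restriction for the present statement.
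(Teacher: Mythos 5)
Your proposal is correct and follows essentially the same route as the paper: the paper's proof likewise takes $\CC$ to be the class of \ca{s} containing an infinite projection, notes it is upwards directed, invokes Proposition~\ref{P-609IdealPI} for the translation, and derives all parts from Section~\ref{Sec_HSAPerm}. You merely spell out the upwards-directedness check and the part-by-part citations that the paper leaves implicit.
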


\begin{proof}
Let $\CC$ be the class of \ca{s}~$A$
which contain an infinite projection.
Then $\CC$ is clearly upwards directed.
The result therefore follows
from the results of Section~\ref{Sec_HSAPerm}.
\end{proof}

Recall (Remark 2.5(vi) of~\cite{BP})
that a \ca~$A$ is said to have topological dimension zero
if the topology of $\Prim (A)$
has a base consisting of compact open sets.
(We do not require that these sets be closed.)

\begin{cor}\label{C-SpFrTopZimZ}
Let $\af \colon G \to \Aut (A)$
be an exact and spectrally free action
of a discrete group $G$ on a separable \ca~$A.$
If $A$ has topological dimension zero,
then the same is true of $C^*_{\mathrm{r}} (G, A, \af).$
\end{cor}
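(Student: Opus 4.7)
The plan is to tensor with the Cuntz algebra $\OT$ and invoke Theorem~\ref{T_3720_PIIP}(\ref{T_3720_PIIP_SpFr}), the paper's permanence result for purely infinite \ca{s} with the ideal property. The starting point is the standard characterization that, for a separable \ca~$B,$ the algebra $B$ has topological dimension zero if and only if $\OT \otimes B$ has the ideal property. Since $\OT$ is simple and nuclear, the map $J \mapsto \OT \otimes J$ is a lattice isomorphism between closed ideals of $B$ and of $\OT \otimes B,$ so $\OT \otimes B$ has the ideal property precisely when every open subset of $\Prim(B)$ is a union of compact open subsets, which is the defining condition of topological dimension zero. Moreover, $\OT \otimes B$ is purely infinite for any nonzero $B,$ since $\OT$ is purely infinite simple nuclear and absorbs itself tensorially.

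Granting this characterization, I would form the tensored action $\gm = \id_{\OT} \otimes \af$ of $G$ on $\OT \otimes A.$ By Proposition~\ref{P-610SfrTP}(\ref{P-610SfrTP-SFr}), $\gm$ is spectrally free; exactness of $\gm$ follows from exactness of $\af$ together with nuclearity of $\OT$ via a standard argument using the ideal bijection noted above. Since $A$ is assumed to have topological dimension zero, the characterization yields that $\OT \otimes A$ is purely infinite and has the ideal property. Hence Theorem~\ref{T_3720_PIIP}(\ref{T_3720_PIIP_SpFr}) applies. Nuclearity of $\OT$ then gives a natural isomorphism
\[
C^*_{\mathrm{r}}(G, \, \OT \otimes A, \, \gm)
 \cong \OT \otimes C^*_{\mathrm{r}}(G, A, \af),
\]
and the algebra on the left is purely infinite with the ideal property. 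Applying the characterization in the reverse direction, $C^*_{\mathrm{r}}(G, A, \af)$ has topological dimension zero, as required.

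The main obstacle is the nontrivial implication of the characterization: given that $\Prim(A)$ has a basis of compact open sets, one must produce, for each such compact open $U \S \Prim(A),$ a projection in $\OT \otimes A$ generating the ideal of $\OT \otimes A$ corresponding to $U.$ This is accomplished in the Bratteli--Elliott and Pasnicu--R\o rdam framework by using the abundance of mutually orthogonal projections in $\OT$ (each Murray--von Neumann equivalent to~$1$) to lift the characteristic function of $U$ across the $\OT$-fiber; separability of $A$ enters here to reduce the construction to a countable exhaustion. Once the characterization is in hand, the rest of the argument is routine bookkeeping with Proposition~\ref{P-610SfrTP}, Theorem~\ref{T_3720_PIIP}(\ref{T_3720_PIIP_SpFr}), and the standard commutation of reduced crossed products with minimal tensor products against a nuclear factor carrying the trivial action.
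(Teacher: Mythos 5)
Your proposal is correct and follows essentially the same route as the paper: tensor with $\OT,$ use Proposition~\ref{P-610SfrTP}(\ref{P-610SfrTP-SFr}) for spectral freeness of $\id_{\OT} \otimes \af,$ apply Theorem~\ref{T_3720_PIIP}(\ref{T_3720_PIIP_SpFr}), commute the reduced crossed product past the nuclear tensor factor, and read off topological dimension zero from $\Prim.$ The only difference is that the characterization you sketch by hand in your final paragraph is exactly what the paper obtains by citing Proposition~4.5 of~\cite{KR} (pure infiniteness of $\OT \otimes A$) together with the equivalence (i)$\Leftrightarrow$(ii) of Theorem~2.11 of~\cite{PR}, so no new construction of projections is needed.
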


\begin{proof}
It follows from Proposition~4.5 of~\cite{KR}
that ${\mathcal{O}}_2 \otimes A$
is purely infinite.
Now $\Prim ( {\mathcal{O}}_2 \otimes A) \cong \Prim (A),$
so ${\mathcal{O}}_2 \otimes A$ has topological dimension zero.
Using (i) implies~(ii) in Theorem~2.11 of~\cite{PR},
we deduce that ${\mathcal{O}}_2 \otimes A$ has the ideal property.
Let $\bt \colon G \to \Aut ({\mathcal{O}}_2 \otimes A)$
be the action $\bt_g = \id_{{\mathcal{O}}_2} \otimes \af_g$
for $g \in G.$
Proposition~\ref{P-610SfrTP}(\ref{P-610SfrTP-SFr})
implies that $\bt$ is spectrally free.
Theorem \ref{T_3720_PIIP}(\ref{T_3720_PIIP_SpFr}) now implies that
${\mathcal{O}}_2 \otimes C^*_{\mathrm{r}} (G, A, \af)
  \cong C^*_{\mathrm{r}}
          ( G, \, {\mathcal{O}}_2 \otimes A, \, \bt )$
is purely infinite and has the ideal property.
Using (ii) implies~(i) in Theorem~2.11 of~\cite{PR},
we deduce that ${\mathcal{O}}_2 \otimes C^*_{\mathrm{r}} (G, A, \af)$
has topological dimension zero.
Since
$\Prim \big( {\mathcal{O}}_2 \otimes C^*_{\mathrm{r}} (G, A, \af) \big)
  \cong \Prim \big( C^*_{\mathrm{r}} (G, A, \af) \big),$
it follows that $C^*_{\mathrm{r}} (G, A, \af)$
has topological dimension zero.
\end{proof}

In Corollary~\ref{C-SpFrTopZimZ},
separability should not be necessary.

\section{Property~(SP) and residual Property~(SP)}\label{Sec:PermQSP}

\indent
Recall that a \ca{} has Property~(SP) if every nonzero hereditary
subalgebra contains a \nzp.
This property is commonly used for simple \ca{s}.
For nonsimple \ca{s},
it seems more appropriate
to ask for the following strengthened version.

\begin{dfn}\label{D-QSP606}
Let $A$ be a \ca.
We say that $A$ has {\emph{residual~(SP)}}
if $A / I$ has Property~(SP) for every ideal $I \subset A.$
\end{dfn}

This property has already implicitly appeared
as a hypothesis in the literature;
see Proposition~\ref{P-609IdealPI}.

We prove permanence results for Property~(SP) and residual~(SP).
The results for crossed products by infinite groups
definitely require some sort of freeness condition,
as one can see by considering the trivial action of $\Z$ on~$\C.$

Property~(SP) does not imply residual~(SP).

\begin{exa}\label{E-QSPvsSP}
Let $H = l^2 (\Z)$ and let $D \subset L (H)$ be the \ca{} generated
by $K = K (H)$ and the bilateral shift.
We claim that $D$ has Property~(SP) but not residual~(SP).

That $D$ does not have residual~(SP) follows from the fact
that $D / K \cong C (S^1).$

We verify that $D$ has Property~(SP).
Let $B \subset D$ be a nonzero hereditary subalgebra.
We first claim that $B \cap K \neq \{ 0 \}.$
Choose $b \in B_{+} \setminus \{ 0 \}.$
Choose $\xi \in H$ such that $b \xi \neq 0.$
Let $p \in K$ be the projection on $\C \xi.$
Then $b p \xi \neq 0.$
Therefore $b p b = (b p) (b p)^*$ is a nonzero element of $B \cap K.$
This proves the claim.

Now $B \cap K$ is a nonzero hereditary subalgebra of~$K,$
so contains a \nzp, as desired.
\end{exa}

It follows from Theorem~4.2 of~\cite{JO} that
if $\af \colon G \to \Aut (A)$
is a pointwise outer action
of a discrete group $G$ on a simple \ca~$A$
with Property~(SP),
then $C^*_{\mathrm{r}} (G, A, \af)$ has Property~(SP).
(The actual statement has slightly weaker hypotheses:
one only requires that $\af_g$ be outer for $g$ outside
some finite normal subgroup of~$G.$)
Theorem \ref{T_3719_SPPr}(\ref{T_3719_SPPr_SpNtr})
and Theorem \ref{T_3719_ResidSP}(\ref{T_3719_ResidSP_SpFr}) below
give generalizations to nonsimple \ca{s}.

\begin{thm}\label{T_3719_SPPr}
The following operations preserve Property~(SP):
\begin{enumerate}
\item\label{T_3719_SPPr_SpNtr}
Reduced crossed products by spectrally nontrivial actions
of discrete groups.
\item\label{T_3719_SPPr_RkZ}
Crossed products by Rokhlin actions of~$\Z.$
\item\label{T_3719_SPPr_Z2FP}
Passage to a \ca~$A$ from the fixed point algebra under an action
of $\Z_2$:
if $\af \colon \Z_2 \to \Aut (A)$
is an arbitrary action of $\Z_2$ on~$A,$
and $A^{\af}$ has Property~(SP),
then so does~$A.$
\item\label{T_3719_SPPr_Z2Cr}
Crossed products by arbitrary actions of~$\Z_2.$
\item\label{T_3719_SPPr_her}
Passage to hereditary subalgebras.
\item\label{T_3719_SPPr_InjLim}
Direct limits (over arbitrary index sets)
of systems in which all the maps are injective.
\end{enumerate}
\end{thm}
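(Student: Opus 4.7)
The plan is to reduce everything to the abstract framework of Section~\ref{Sec_HSAPerm}. Let $\CC$ be the class of all \ca{s} that contain a \nzp. I first verify that $\CC$ is upwards directed in the sense of Definition~\ref{D_3719_UpClass}: if a \ca~$A$ contains a subalgebra isomorphic to some $B \in \CC,$ then the image of a \nzp{} of~$B$ under the embedding is a \nzp{} in~$A,$ so $A \in \CC.$ Next I observe that, directly from Definition~\ref{D_3719_ResHerC}(\ref{D_3719_ResHerC_Her}), the condition that $A$ be hereditarily in~$\CC$ is precisely Property~(SP): every \nzz{} \hsa{} of~$A$ contains a \nzp.

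Once these two observations are in place, the six parts of the theorem follow immediately from the corresponding abstract permanence results applied to this particular class~$\CC.$ Specifically, part~(\ref{T_3719_SPPr_SpNtr}) follows from Theorem~\ref{T_3719_CCSpFree}(\ref{T_3719_CCSpFree_Ntr}); part~(\ref{T_3719_SPPr_RkZ}) follows from Corollary~\ref{C_3719_CCZRP}(\ref{C_3719_CCZRP_Her}); part~(\ref{T_3719_SPPr_Z2FP}) follows from Theorem~\ref{T_3719_CCZ2}(\ref{T_3719_CCZ2_Her}); part~(\ref{T_3719_SPPr_Z2Cr}) follows from Corollary~\ref{C_3719_CCZ2CP}(\ref{C_3719_CCZ2CP_Her}); part~(\ref{T_3719_SPPr_her}) follows from Proposition~\ref{P-609_CCQHIHered}(\ref{P-609_CCQHIHered-HI}); and part~(\ref{T_3719_SPPr_InjLim}) follows from Proposition~\ref{P-609_CCQHILim}(\ref{P-609_CCQHILim-HI}).

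There is essentially no obstacle, since all the real content is packaged into Section~\ref{Sec_HSAPerm}; the only task is to recognize Property~(SP) as a ``hereditarily in~$\CC$'' condition for a suitable upwards directed class. The one point worth flagging is that part~(\ref{T_3719_SPPr_InjLim}) is restricted to direct systems with injective connecting maps, because Property~(SP) is not in general preserved by quotients (a full residual version, covered separately in the next theorem, would be needed to drop injectivity); this matches exactly the hypothesis of Proposition~\ref{P-609_CCQHILim}(\ref{P-609_CCQHILim-HI}). Note also that, in contrast to Theorem~\ref{T_3719_HIPr}, no analog of the ``two out of three'' or ``stable isomorphism'' statements appears, precisely because those rely on the residual version (treated in the forthcoming residual~(SP) theorem via Proposition~\ref{L-2Of3_CC-428} and Proposition~\ref{P-609_CCQHIStab}).
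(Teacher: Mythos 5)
Your proof is correct and takes essentially the same route as the paper: the paper's (combined) proof of this theorem and the residual~(SP) theorem also takes $\CC$ to be the class of C*-algebras containing a nonzero projection, notes that $\CC$ is upwards directed and that Property~(SP) is by definition the condition of being hereditarily in~$\CC,$ and then invokes the permanence results of Section~\ref{Sec_HSAPerm}. Your explicit matching of each part to the corresponding abstract result is exactly what the paper leaves implicit.
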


\begin{thm}\label{T_3719_ResidSP}
The following operations preserve
residual~(SP):
\begin{enumerate}
\item\label{T_3719_ResidSP_SpFr}
Reduced crossed products by exact spectrally free actions
of discrete groups.
\item\label{T_3719_ResidSP_RkZ}
Crossed products by Rokhlin actions of~$\Z.$
\item\label{T_3719_ResidSP_Z2FP}
Passage to a \ca~$A$ from the fixed point algebra under an action
of $\Z_2$:
if $\af \colon \Z_2 \to \Aut (A)$
is an arbitrary action of $\Z_2$ on~$A,$
and $A^{\af}$ has residual~(SP),
then so does~$A.$
\item\label{T_3719_ResidSP_Z2Cr}
Crossed products by arbitrary actions of~$\Z_2.$
\item\label{T_3719_ResidSP_her}
Passage to hereditary subalgebras.
\item\label{T_3719_ResidSP_DLim}
Direct limits (over arbitrary index sets).
(The maps of the system need not be injective.)
\item\label{T_3719_ResidSP_2Of3}
Two out of three in short exact sequences:
if $A$ is a \ca,
and $J \subset A$ is an ideal,
then $A$ has residual~(SP)
\ifo\  $J$ and $A / J$ both
have residual~(SP).
\item\label{T_3719_ResidSP_Stab}
Stable isomorphism,
equivalently,
$A$ has residual~(SP)
\ifo{} $K \otimes A$ has residual~(SP).
\item\label{T_3719_ResidSP_Mor}
Morita equivalence for separable \ca{s}.
\end{enumerate}
\end{thm}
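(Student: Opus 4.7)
The plan is to recognize residual~(SP) as a special instance of the abstract framework developed in Section~\ref{Sec_HSAPerm}, and then to invoke the permanence results there more or less mechanically. Let $\CC$ denote the class of all \ca{s} that contain a \nzp. This class is upwards directed in the sense of Definition~\ref{D_3719_UpClass}: if $A$ has a subalgebra isomorphic to some $B \in \CC$, the image in~$A$ of a \nzp{} of~$B$ is a \nzp{} of~$A$, so $A \in \CC$.

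Next, I would reformulate the two versions of the property in the language of Definition~\ref{D_3719_ResHerC}. By the definition of Property~(SP), a \ca~$A$ has Property~(SP) \ifo{} every nonzero \hsa{} of~$A$ belongs to~$\CC$, i.e., \ifo{} $A$ is hereditarily in~$\CC$. Consequently, $A$ has residual~(SP) \ifo{} $A / I$ is hereditarily in~$\CC$ for every ideal $I \S A$ with $I \neq A$, i.e., \ifo{} $A$ is residually hereditarily in~$\CC$.

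With these identifications in hand, each of the nine parts of the theorem reduces to a direct citation of the corresponding general statement. Part~(\ref{T_3719_ResidSP_SpFr}) is Theorem~\ref{T_3719_CCSpFree}(\ref{T_3719_CCSpFree_SF}); part~(\ref{T_3719_ResidSP_RkZ}) is Corollary~\ref{C_3719_CCZRP}(\ref{C_3719_CCZRP_Res}); part~(\ref{T_3719_ResidSP_Z2FP}) is Theorem~\ref{T_3719_CCZ2}(\ref{T_3719_CCZ2_Res}); part~(\ref{T_3719_ResidSP_Z2Cr}) is Corollary~\ref{C_3719_CCZ2CP}(\ref{C_3719_CCZ2CP_Res}); part~(\ref{T_3719_ResidSP_her}) is Proposition~\ref{P-609_CCQHIHered}(\ref{P-609_CCQHIHered-Q}); part~(\ref{T_3719_ResidSP_DLim}) is Proposition~\ref{P-609_CCQHILim}(\ref{P-609_CCQHILim-Q}); part~(\ref{T_3719_ResidSP_2Of3}) is Proposition~\ref{L-2Of3_CC-428}; part~(\ref{T_3719_ResidSP_Stab}) is Proposition~\ref{P-609_CCQHIStab}(\ref{P-609_CCQHIStab-Q}); and part~(\ref{T_3719_ResidSP_Mor}) is Corollary~\ref{C-609_CCQHIMorita}(\ref{C-609_CCQHIMorita-Q}).

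Because Section~\ref{Sec_HSAPerm} already does all the real work abstractly---and the same scheme has been used to prove Theorem~\ref{T_3719_ResIP} and Theorem~\ref{T_3720_PIIP}, with $\CC$ taken respectively to consist of \ca{s} having an infinite element, a properly infinite element, or an infinite \pj{}---there is no real obstacle here beyond the essentially trivial verification that $\CC$ is upwards directed for this particular choice. The only conceptual check worth highlighting is that the analog of Example~\ref{E-QSPvsSP} shows Property~(SP) is strictly weaker than residual~(SP), so the separate ``hereditarily in~$\CC$'' permanence results of Section~\ref{Sec_HSAPerm} would have to be cited to prove a parallel version of Theorem~\ref{T_3719_SPPr} (for Property~(SP) rather than residual~(SP)); but for the present theorem it is the ``residually hereditarily in~$\CC$'' half of Section~\ref{Sec_HSAPerm} that applies directly.
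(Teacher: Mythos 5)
Your proposal is correct and follows exactly the paper's own proof: take $\CC$ to be the class of \ca{s} containing a \nzp, observe it is upwards directed and that residual~(SP) is the same as being residually hereditarily in~$\CC,$ and then cite the permanence results of Section~\ref{Sec_HSAPerm}. Your part-by-part matching of citations is accurate; the paper simply states the reduction once and invokes the section wholesale.
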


\begin{proof}[Proofs of Theorem~\ref{T_3719_SPPr}
   and Theorem~\ref{T_3719_ResidSP}]
Let $\CC$ be the class of all \ca{s} which contain a nonzero
projection.
This class is obviously upwards directed,
and by definition a \ca{} has (residual)~(SP)
\ifo{} it is (residually) hereditarily in~$\CC.$
All parts therefore follow from
the results of Section~\ref{Sec_HSAPerm}.
\end{proof}

Some condition on the action is needed to be able to prove
that a crossed product has Property~(SP),
as one can see by considering the trivial action of $\Z$ on~$\C.$

The following example shows that pointwise spectral nontriviality
(without requiring anything about the action on quotients)
is not a strong enough condition to prove preservation of
residual~(SP).

\begin{exa}\label{E-606CPNoQSP}
Let $G = \Z,$
let $X = \Z \cup \{ \infty \}$
be the one point compactification of~$\Z,$
and let $\Z$ act on $X$ by translation on~$\Z$
and with $\I$ fixed.
Set $A = C (X),$
and let $\af \colon \Z \to \Aut (A)$
be the corresponding action.
The algebra~$A$ clearly has residual~(SP).
The crossed product $C^* (\Z, A, \af)$ is isomorphic to
the algebra~$D$ of Example~\ref{E-QSPvsSP},
and thus does not have residual~(SP).

For $n \in \Z \setminus \{ 0 \},$
the set of fixed points for the action of $n$ on $X$ is $\{ \infty \},$
which has empty interior.
So $\af$ is pointwise spectrally nontrivial
by Lemma~\ref{L-608HmeSNT}.

Of course, it is easy to see that
$\af$ is not spectrally free,
by considering the quotient by the invariant ideal $C_0 (\Z).$
\end{exa}

\section{The weak ideal property}\label{Sec:WIP}

\indent
We do not know whether the crossed product of a \ca{} with the
ideal property by an arbitrary action of a finite group
again has the ideal property,
and this may be false.
However, a related but weaker property,
which we call the weak ideal property,
can be treated by the methods of this paper.
In particular, the weak ideal property
is preserved by crossed products by
exact spectrally free actions.
It is also preserved by arbitrary actions of finite abelian groups.
(We don't yet know about crossed products
by finite nonabelian groups.)
This weaker property also admits
better permanence results
of other kinds.
Here, like for~(SP), the trivial action of $\Z$ on~$\C$
shows that permanence results for crossed products by infinite groups
require a freeness condition.

\begin{dfn}\label{D-WIP2710}
Let $A$ be a \ca.
We say that $A$ has the {\emph{weak ideal property}}
if every nonzero subquotient of $K \otimes A$
contains a nonzero \pj.
\end{dfn}

An intermediate requirement is also possible:
one can require that every nonzero subquotient of $A$
contain a nonzero \pj.
We also consider this property,
although we do not give it a name.

\begin{prp}\label{L-IPWIP-2710}
Let $A$ be a \ca{} with the ideal property.
Then $A$ has the weak ideal property;
in fact, every nonzero subquotient of $A$
contains a nonzero \pj.
\end{prp}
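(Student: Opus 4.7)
The plan is to prove the stronger ``in fact'' statement first, namely that every nonzero subquotient of~$A$ itself contains a \nzp{}, and then derive the weak ideal property as a formal consequence.

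For the stronger statement, I take a nonzero subquotient $J / I$ of~$A,$ where $I \subsetneq J$ are ideals in~$A.$ The ideal property for~$A$ says that $J$ is generated, as an ideal, by its projections. Were every projection in~$J$ to lie in~$I,$ the ideal they generate would be contained in~$I,$ forcing $J \S I$ and contradicting $I \neq J.$ Hence there is a projection $p \in J$ with $p \notin I,$ and its image in $J / I$ is a \nzp{}. This is essentially a one-line argument once the ideal property has been unpacked.

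To pass from this to the weak ideal property, I use the standard bijection between ideals in $K \otimes A$ and ideals in~$A,$ given by $I \mapsto K \otimes I.$ Consequently any nonzero subquotient of $K \otimes A$ has the form $(K \otimes J) / (K \otimes I) \cong K \otimes (J / I)$ for ideals $I \subsetneq J$ in~$A.$ By the first step, $J / I$ contains a \nzp{}~$q,$ and then $e_{1, 1} \otimes q$ is a \nzp{} in $K \otimes (J / I).$

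There is no real obstacle: the core content is the observation that if $J$ is generated as an ideal by its projections then $J / I$ has nontrivial projections whenever $I \neq J,$ and the passage to $K \otimes A$ is a routine invocation of the ideal correspondence for stabilizations. I will arrange the exposition so that the stronger statement is highlighted, since it also supplies the intermediate property referred to in the section's opening discussion.
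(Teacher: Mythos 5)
Your argument is correct and is essentially the paper's own proof: both rest on the observation that if every projection of $J$ lay in $I,$ the ideal they generate would be contained in~$I,$ contradicting the ideal property, so some projection survives in $J / I.$ The only difference is that you spell out the routine passage to $K \otimes A$ via the ideal correspondence, which the paper leaves implicit.
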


\begin{proof}
Let $M \S L \S A$ be ideals with $M \neq L.$
Since the \pj s in $L$ generate $L$ as an ideal in~$A,$
there is a \pj\  $p \in L \SM M.$
Then $p + M$ is a \nzp\  in $L / M.$
\end{proof}

\begin{exa}\label{Ex_3719_WkNotInter}
Let $D$ be any infinite dimensional simple separable unital \ca{}
with no \pj{s} other than $0$ and~$1.$
(Example: The Jiang-Su algebra.)
Let $A \S D$ be any proper nonzero \hsa.
Then $A$ has the weak ideal property,
because $K \otimes A \cong K \otimes D$
(by Theorem~2.8 of~\cite{Brn}).
However, $A$ has no nonzero \pj{s},
so it is certainly not true that every nonzero subquotient of $A$
contain a nonzero \pj.
\end{exa}

\begin{exa}\label{Ex_3719_InterNotSt}
There is a separable \ca~$A$ with the property that
that every nonzero subquotient of $A$
contain a nonzero \pj,
but which does not have the ideal property.
Our example depends on the fact that
extensions of algebras with the ideal property
need not have the ideal property.
Let $A$ be the C*-algebra
constructed in the proof of Theorem~5.1 of~\cite{Psn1}.
There is a short exact sequence
\[
0 \longrightarrow I
  \longrightarrow A
  \longrightarrow \C
  \longrightarrow 0,
\]
in which $I$ is a stabilized Bunce-Deddens algebra,
and which does not split.
(In particular, $A$ is not unital.)
The only nonzero subquotients of $A$ are $A,$ $I,$ and $A / I \cong \C.$
All contain nonzero \pj s.
Thus $A$ has the weak ideal property.
However, by Theorem~5.1 of~\cite{Psn1},
the algebra $A$ does not have the ideal property.
\end{exa}

The methods of Section~\ref{Sec_HSAPerm}
give the following permanence results
for the weak ideal property.
We omit the results inolving actions of~$\Z_2,$
since we obtain better versions with a separate argument.

\begin{thm}\label{T_3719_PrWkId}
The following operations preserve
the weak ideal property:
\begin{enumerate}
\item\label{T_3719_PrWkId_SpFr}
Reduced crossed products by exact spectrally free actions
of discrete groups.
\item\label{T_3719_PrWkId_RkZ}
Crossed products by Rokhlin actions of~$\Z.$
\item\label{T_3719_PrWkId_her}
Passage to hereditary subalgebras.
\item\label{T_3719_PrWkId_DLim}
Direct limits (over arbitrary index sets).
(The maps of the system need not be injective.)
\item\label{T_3719_PrWkId_2Of3}
Two out of three in short exact sequences:
if $A$ is a \ca,
and $J \subset A$ is an ideal,
then $A$ has the weak ideal property
\ifo\  $J$ and $A / J$ both
have the weak ideal property.
\item\label{T_3719_PrWkId_Stab}
Stable isomorphism,
equivalently,
$A$ has the weak ideal property
\ifo{} $K \otimes A$ has the weak ideal property.
\item\label{T_3719_PrWkId_Mor}
Morita equivalence for separable \ca{s}.
\end{enumerate}
\end{thm}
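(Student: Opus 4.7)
The plan is to exhibit the weak ideal property as a property of the form ``residually hereditarily in $\CC$'' for an upwards directed class $\CC,$ so that all seven parts reduce uniformly to the corresponding permanence results in Section~\ref{Sec_HSAPerm}. Take $\CC$ to be the class of \ca{s} $B$ for which $K \otimes B$ contains a nonzero \pj. This class is upwards directed in the sense of Definition~\ref{D_3719_UpClass}, because any embedding $B \hookrightarrow A$ induces an embedding $K \otimes B \hookrightarrow K \otimes A$ which transports a \pj{} in the former to a \pj{} in the latter.

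The core step is to show that $A$ has the weak ideal property if and only if $A$ is residually hereditarily in~$\CC.$ I would use the standard identification of ideals in $K \otimes A$ with ideals in~$A$ via $L \mapsto K \otimes L,$ so that any nonzero subquotient of $K \otimes A$ takes the form $K \otimes M$ for some nonzero subquotient $M = L_2 / L_1$ of~$A.$ If $A$ is residually hereditarily in~$\CC,$ then $M,$ being an ideal (hence a \hsa) of the quotient $A / L_1,$ lies in~$\CC,$ whence $K \otimes M$ has a nonzero \pj; this is the ``if'' direction. For the ``only if'' direction, let $B \S A / I$ be a nonzero \hsa{} of a quotient and let $J \S A / I$ be the ideal generated by~$B.$ The weak ideal property applied to the subquotient $J$ of~$A$ (which pulls back to an ideal of~$A$ containing~$I$) produces a nonzero \pj{} in $K \otimes J;$ to transfer such a \pj{} into $K \otimes B$ I would first replace $B$ by the \hsa{} $\ov{b B b}$ for a chosen nonzero $b \in B_{+},$ making it $\sigma$-unital, and then invoke Brown's theorem (\cite{BGR}) on the resulting full hereditary inclusion.

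With this equivalence in hand, each part of the theorem is a direct quotation from Section~\ref{Sec_HSAPerm}: part~(\ref{T_3719_PrWkId_SpFr}) from Theorem~\ref{T_3719_CCSpFree}(\ref{T_3719_CCSpFree_SF}); part~(\ref{T_3719_PrWkId_RkZ}) from Corollary~\ref{C_3719_CCZRP}(\ref{C_3719_CCZRP_Res}); part~(\ref{T_3719_PrWkId_her}) from Proposition~\ref{P-609_CCQHIHered}(\ref{P-609_CCQHIHered-Q}); part~(\ref{T_3719_PrWkId_DLim}) from Proposition~\ref{P-609_CCQHILim}(\ref{P-609_CCQHILim-Q}); part~(\ref{T_3719_PrWkId_2Of3}) from Proposition~\ref{L-2Of3_CC-428}; part~(\ref{T_3719_PrWkId_Stab}) from Proposition~\ref{P-609_CCQHIStab}(\ref{P-609_CCQHIStab-Q}); and part~(\ref{T_3719_PrWkId_Mor}) from Corollary~\ref{C-609_CCQHIMorita}(\ref{C-609_CCQHIMorita-Q}).

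The hard part is the ``only if'' direction of the equivalence, namely the passage from a \pj{} in $K \otimes J$ to a \pj{} in $K \otimes B$ when $B$ is a not-a-priori-$\sigma$-unital full \hsa{} of~$J.$ The $\sigma$-unital reduction sketched above handles this, provided one observes that the ideal of $A/I$ generated by a single positive element of $B$ does still arise as the image of an ideal of~$A$ containing~$I$ (which is automatic from the quotient map), so that the weak ideal property is genuinely applicable to the cut-down subquotient.
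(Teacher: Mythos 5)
Your overall strategy coincides with the paper's: the same class $\CC$ (algebras $B$ such that $K \otimes B$ contains a nonzero \pj), the same claim that the weak ideal property is equivalent to being residually hereditarily in~$\CC,$ and the same appeal to the results of Section~\ref{Sec_HSAPerm} for all seven parts. The ``if'' direction of the equivalence is also argued as in the paper. The gap is in the ``only if'' direction, at exactly the point you identify as the hard part. Both Brown's theorem (Theorem~2.8 of~\cite{Brn}) and the Brown--Green--Rieffel theorem require \emph{both} algebras in the full hereditary inclusion to be $\sigma$-unital. Cutting $B$ down to $\ov{b B b}$ makes the small algebra $\sigma$-unital, but the ideal of $A / I$ that it generates need not be: for instance, the ideal of $L (H)$ generated by a rank one \pj{} is $K (H),$ which is not $\sigma$-unital when $H$ is nonseparable. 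So the ``resulting full hereditary inclusion'' is not one to which the theorem you cite applies, and the $\sigma$-unital reduction as you describe it does not close the argument.

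The paper's repair is a separabilization of the ambient algebra rather than a cut-down of $B$ alone. A nonzero \pj{} $p \in K \otimes I$ (with $I$ the ideal generated by~$B$) lies in the closed span of elementary tensors $k \otimes a_1 b a_2$ with $a_1, a_2$ and $b$ drawn from countable sets $S \S A$ and $T \S B.$ Setting $A_0 = C^* (S \cup T)$ (separable), letting $B_0 \S A_0 \cap B$ be the \hsa{} of $A_0$ generated by~$T,$ and letting $I_0$ be the ideal of $A_0$ generated by~$B_0,$ one gets $p \in K \otimes I_0,$ and now Brown's theorem applies to the separable pair $B_0 \S I_0,$ producing a nonzero \pj{} in $K \otimes B_0 \S K \otimes B.$ (An alternative that bypasses stable isomorphism entirely: since $p$ lies in the ideal generated by~$B,$ one has $p \precsim b_1 \oplus \cdots \oplus b_n$ for suitable $b_i \in B_{+},$ and a \pj{} Cuntz subequivalent to an element of $M_n (B)_{+}$ is equivalent to a \pj{} in a hereditary subalgebra of $M_n (B).$) Some such additional step is needed; with it, the rest of your proof is exactly the paper's.
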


\begin{proof}
Let $\CC$ be the class of all \ca{s}~$A$
such that $K \otimes A$ contains a nonzero \pj.
Clearly $\CC$ is upwards directed.

Let $A$ be a \ca.
We claim that $A$ has the weak ideal property
\ifo{} $A$ is residually hereditarily in~$\CC.$
It suffices to show that $A$ has the property that
every nonzero ideal in $K \otimes A$ contains a nonzero \pj{}
\ifo{} $A$ is hereditarily in~$\CC.$

Assume that $A$ is hereditarily in~$\CC,$
and let $J \S K \otimes A$ be a nonzero ideal.
Then there is a nonzero ideal $I \S A$
such that $J = K \otimes I.$
Since ideals are \hsa{s},
it follows from the definition of $\CC$
that $J$ contains a nonzero \pj.

Now assume that
every nonzero ideal in $K \otimes A$ contains a nonzero \pj.
Let $B \S A$ be a nonzero \hsa.
Let $I \S A$ be the ideal generated by~$B,$
and let $p \in K \otimes I$ be a nonzero \pj.
Then $p$ can be approximated arbitrarily well in norm
by finite sums of elementary tensors of the form $k \otimes a_1 b a_2,$
with $k \in K,$ $a_1, a_2 \in A,$ and $b \in B.$
Therefore there are countable subsets $S \S A$ and $T \S B$
such that
\begin{equation}\label{Eq_3719_pInIdeal}
p \in {\overline{\spn \big(
  \big\{ k \otimes a_1 b a_2 \colon
   {\mbox{$k \in K,$ $a_1, a_2 \in S,$ and $b \in T$}} \big\} \big) }}.
\end{equation}
Let $A_0 \S A$ be the C*-subalgebra of $A$ generated by~$S \cup T,$
which is a separable \ca.
Let $B_0 \S A_0 \cap B$
be the \hsa{} of~$A_0$ generated by~$T.$
Let $I_0 \S A_0 \cap I$ be the ideal in~$A_0$ generated by~$B_0.$
It follows from~(\ref{Eq_3719_pInIdeal})
that $p \in K \otimes I_0.$
Since $B_0$ is full in~$I_0,$
we have $K \otimes B_0 \cong K \otimes I_0$
by Theorem~2.8 of~\cite{Brn}.
So $K \otimes B_0$ contains a nonzero \pj.
Since $K \otimes B_0 \subset K \otimes B,$
it follows that $K \otimes B$ contains a nonzero \pj.
This completes the proof of the claim.

Given the claim,
the theorem follows immediately
from the results in Section~\ref{Sec_HSAPerm}.
\end{proof}

Some of these permanence results also hold for
the property that every nonzero subquotient of $A$
contains a nonzero \pj.
It follows from Example~\ref{Ex_3719_WkNotInter}
that hereditary subalgebras, stable isomorphism, and Morita equivalence
do not preserve this condition.
It is not hard to prove directly that
the analogs of Theorem \ref{T_3719_PrWkId}(\ref{T_3719_PrWkId_DLim})
and Theorem \ref{T_3719_PrWkId}(\ref{T_3719_PrWkId_2Of3})
are valid.
We state explicitly
the analog of Theorem \ref{T_3719_PrWkId}(\ref{T_3719_PrWkId_SpFr}).
The analog of Theorem \ref{T_3719_PrWkId}(\ref{T_3719_PrWkId_RkZ})
then follows from Proposition~\ref{P-611-RPInpSFr}.

\begin{prp}\label{P-715SWIP-SP}
Let $\af \colon G \to \Aut (A)$
be an exact spectrally free action
of a discrete group $G$ on a \ca~$A.$
Suppose that every nonzero subquotient of $A$
contains a nonzero \pj.
Then the same is true of $C^*_{\mathrm{r}} (G, A, \af).$
\end{prp}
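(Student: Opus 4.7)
The plan is to reduce to Corollary~\ref{C_2Z01_3rd} by identifying each subquotient of $B = C^*_{\mathrm{r}}(G, A, \af)$ with a reduced crossed product of a subquotient of~$A$.

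First I would fix ideals $I \S J \S B$ with $I \neq J$ and aim to produce a \nzp{} in $J / I$. Since $\af$ is exact and spectrally free, Proposition~\ref{P-SNAQ-Sep608} says $A$ separates the ideals of~$B$. Setting $I_0 = I \cap A$ and $J_0 = J \cap A,$ I would invoke Lemma~\ref{P-JIntA323xx} to obtain $\af$-invariant ideals of~$A$ with $C^*_{\mathrm{r}}(G, I_0, \af) \S I$ and $C^*_{\mathrm{r}}(G, J_0, \af) \S J$. Intersecting these two reduced crossed products back with $A$ recovers $I_0$ and $J_0$ via the faithful standard conditional expectation, so the separation property forces $I = C^*_{\mathrm{r}}(G, I_0, \af)$ and $J = C^*_{\mathrm{r}}(G, J_0, \af)$, with necessarily $I_0 \neq J_0$.

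Next I would invoke exactness of $\af$ to identify
\[
J / I \cong C^*_{\mathrm{r}}(G, \, J_0 / I_0, \, \ov{\af}),
\]
where $\ov{\af}$ denotes the induced action of $G$ on $J_0 / I_0$. Spectral freeness of $\af$ ensures that the induced action on $A / I_0$ is pointwise spectrally nontrivial, and this property descends to the invariant ideal $J_0 / I_0 \S A / I_0$ (immediate from Definition~\ref{D-FrActAut403} since an invariant ideal of an invariant ideal is invariant in the larger algebra). Hence $\ov{\af}$ on $J_0 / I_0$ is pointwise spectrally nontrivial.

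The remaining step is to verify the hypothesis of Corollary~\ref{C_2Z01_3rd} for $\ov{\af}$, namely that every nonzero ideal of $J_0 / I_0$ contains a \nzp. Any such ideal has the form $L / I_0$ for an ideal $L \S A$ with $I_0 \subsetneq L \S J_0$ (using that closed two-sided ideals of an ideal are themselves ideals of the ambient algebra), and is therefore a nonzero subquotient of~$A$; by hypothesis it contains a \nzp. Corollary~\ref{C_2Z01_3rd} then yields that every nonzero ideal in $C^*_{\mathrm{r}}(G, J_0 / I_0, \ov{\af})$ contains a \nzp; taking this ideal to be the whole algebra produces the desired projection in $J/I$. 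No step should present a serious obstacle, since the key separation and averaging work is already packaged in Proposition~\ref{P-SNAQ-Sep608} and Corollary~\ref{C_2Z01_3rd}; the argument is essentially a bookkeeping exercise matching up ideals on the two sides of the crossed product construction.
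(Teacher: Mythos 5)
Your argument is correct and is exactly the route the paper takes: its one-line proof ("In view of Proposition~\ref{P-SNAQ-Sep608}, this follows from Corollary~\ref{C_2Z01_3rd}") is precisely the bookkeeping you have spelled out, identifying each subquotient of the crossed product with $C^*_{\mathrm{r}}(G, \, J_0/I_0, \, \ov{\af})$ via ideal separation and exactness and then applying Corollary~\ref{C_2Z01_3rd}. No discrepancy to report.
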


\begin{proof}
In view of Proposition~\ref{P-SNAQ-Sep608},
this follows from Corollary~\ref{C_2Z01_3rd}.
\end{proof}

We now consider actions of finite groups.
We need several lemmas.

\begin{lem}\label{L_3721_DSum}
Let $\af \colon G \to \Aut (A)$
be an action
of a finite group $G$ on a \ca~$A,$
and let the notation related to Lemma 5.3.3 of~\cite{Ph1}
be as before Lemma \ref{L_3724_Comp}.
Let $S \in {\mathcal{S}}_G,$
and let $M \S I_S / I_S^{-}$ be a nonzero ideal.
(In particular, $I_S^{-} \neq I_S.$)
Then there is a nonzero subquotient $L$ of $A^{\af}$
and an injective \hm{} from $L$ to~$M.$
\end{lem}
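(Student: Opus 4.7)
The plan is to exploit the internal direct sum decomposition of $I_S / I_S^-$ provided by Lemma~5.3.3(5) of~\cite{Ph1} (the same decomposition used in the proof of Lemma~\ref{L-608IdDecomp}). That lemma gives a subgroup $H \S G,$ an $H$-invariant ideal $N$ (inside $A / I_S^-$), and a system $R$ of left coset representatives for $H$ in $G$ with
\[
I_S / I_S^- \;=\; \bigoplus_{g \in R} \af_g(N),
\]
where $G$ permutes the summands transitively and $H$ fixes~$N.$ Since $M$ is a nonzero ideal in this direct sum of ideals, $M$ splits as $M = \bigoplus_{g \in R}\bigl(M \cap \af_g(N)\bigr),$ and some component $M \cap \af_{g_0}(N)$ is nonzero. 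Setting $P := \af_{g_0}^{-1}\bigl(M \cap \af_{g_0}(N)\bigr),$ a nonzero ideal of $N,$ and noting that $\af_{g_0}$ is an isomorphism from $P$ onto a subideal of~$M,$ it is enough to build an injective \hm{} from a nonzero subquotient $L$ of $A^{\af}$ to~$P.$

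Next I would identify the relevant subquotients of~$A^{\af}.$ The averaging argument available for finite group actions gives canonical isomorphisms
\[
N^H \;\xrightarrow{\;\cong\;}\; (I_S / I_S^-)^{\af} \;\xrightarrow{\;\cong\;}\; I_S^{\af} / (I_S^-)^{\af},
\]
where the first is $x \mapsto \sum_{g \in R} \af_g(x)$ and the second follows from Lemma~\ref{L_3112_FixQ} (and its finite-group analogue, by the same averaging). Thus $N^H$ is our source of subquotients of~$A^{\af},$ and ideals of $N^H$ are themselves subquotients of~$A^{\af}$ by the standard fact that closed two-sided ideals of closed two-sided ideals are again ideals in a \ca.

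The main step, and what I expect to be the chief obstacle, is to embed a nonzero ideal of $N^H$ into~$P.$ My plan is a quotient construction: choose an ideal $J \S N$ such that
\[
P \cap J = 0, \qquad N^H \cap J = 0, \qquad N^H \S P + J.
\]
Given such a~$J,$ the prescription $x \mapsto p,$ where $p$ is the unique element of~$P$ with $x - p \in J,$ defines an injective $*$-\hm{} from $N^H$ into~$P,$ and composing with $\af_{g_0}$ gives the desired map into~$M.$ The natural candidate is $J = \mathrm{ann}_N(P),$ possibly after first replacing $N$ by its $H$-equivariant quotient modulo $\mathrm{ann}_N\bigl(\sum_{h \in H} \af_h(P)\bigr)$ to force the first two conditions. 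Verifying the third condition $N^H \S P + J$ in full generality is the delicate part; if it cannot be arranged with $L = N^H,$ one instead takes $L := N^H \cap (P + J),$ which is still an ideal of $N^H,$ and must then show $L \neq 0,$ most likely by induction on $|H|$ together with a careful analysis of the $H$-orbit of~$P$ inside~$N$ analogous to the orbit/stabilizer argument that appears at the end of the proof of Lemma~\ref{L-608IdDecomp}.
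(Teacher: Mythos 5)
Your setup coincides with the paper's: the same appeal to Lemma~5.3.3(5) of~\cite{Ph1}, the same splitting of $M$ into components and passage to a nonzero component $P = \af_{g_0}^{-1}\bigl(M \cap \af_{g_0}(N)\bigr) \S N,$ and the same identification of $N^{H}$ with $(I_S/I_S^-)^{\af} \cong I_S^{\af}/(I_S^-)^{\af},$ so that ideals of $N^{H}$ are subquotients of~$A^{\af}.$ The gap is entirely in your final step. You try to produce an injective \hm{} from $N^{H}$ (or from $N^{H} \cap (P+J)$) into~$P$ by finding an ideal $J \S N$ with $P \cap J = 0,$ $N^{H} \cap J = 0,$ and $N^{H} \S P + J.$ Such a $J$ need not exist: already when $H = \{1\},$ so that $N^{H} = N,$ the second condition forces $J = 0$ and the third then forces $P = N,$ whereas $P$ may be any nonzero ideal of~$N$ --- for instance a proper essential ideal, which admits no complementary ideal at all; your candidate $J = \mathrm{ann}_N(P)$ is then zero and $P + J \neq N.$ The fallback ($L = N^{H}\cap(P+J),$ induction on $|H|$) is not carried out, so the argument does not close.

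The paper's proof avoids the splitting problem entirely by not insisting that the source of the map be all of $N^{H}$ (nor a quotient-like piece of it): it takes $L$ to be the $H$-fixed part of the component, $L = P \cap N^{H},$ which is an ideal of $N^{H}$ and hence already a subquotient of~$A^{\af},$ and the injective \hm{} into $M$ is simply the inclusion $L \hookrightarrow P$ followed by $\af_{g_0},$ whose image lies in $\af_{g_0}(P) = M \cap \af_{g_0}(N) \S M.$ No complementary ideal, no quotient construction, and no induction on $|H|$ are needed; the only point remaining at that step is that this fixed-point ideal is nonzero, which is what the paper asserts there. Note that in the trivial-stabilizer case this reduces to taking $L = P$ itself, which your formulation rules out unless $P = N.$
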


\begin{proof}
Since $I_S^{-}$ is $\af$-invariant
(Lemma 5.3.3(1) of~\cite{Ph1})
and $I_S^{-} \S I_S$
(Lemma 5.3.3(2) of~\cite{Ph1}),
we can define
$B = A / I_S^{-}$
and $J = (I + I_S^{-}) / I_S^{-},$
and further let
$\bt \colon G \to \Aut (B)$
be the action induced by~$\af.$
Then $J_S^{-} = \{ 0 \}$
and $J_S = (I_S + I_S^{-}) / I_S^{-} = I_S / I_{S}^{-},$
so
\begin{align*}
(J_S \cap J) / (J_S^{-} \cap J)
& = J_S \cap J
  = [(I_S + I_S^{-}) \cap (I + I_S^{-})] / I_S^{-}
      \\
& = [(I_S \cap I) + I_S^{-})] / I_S^{-}
  \cong (I_S \cap I) / (I_S^{-} \cap I).
\end{align*}
Moreover,
$B^{\bt} \cong A^{\af} / (I_{ \{ 1 \} } \cap A^{\af})$
(Lemma~1.6 of~\cite{Ph-EqSj}).
So it suffices to prove that there in an ideal in $B^{\bt}$
which is isomorphic to a subalgebra of~$M.$

By Lemma 5.3.3(5) of~\cite{Ph1},
there is a subgroup $H \subset G,$
an $H$-invariant ideal $N \subset J_S,$
a system $R$ of left coset representatives for $H$ in~$G,$
and a subset $R_0 \subset R,$
such that we have internal direct sum decompositions
\[
J_S = \bigoplus_{g \in R} \bt_g (N)
\andeqn
J \cap J_S = \bigoplus_{g \in R_0} \bt_g (N).
\]
Since $J_S \neq \{ 0 \},$
we have $\bigcap_{h \in S} \bt_h (J) \neq \{ 0 \}.$
Therefore $R_0 \neq \varnothing$
and $N \neq \{ 0 \}.$

We claim that $(J_S)^{\bt} \cong N^{\bt |_H}.$
Define an injective \hm{} $\ps \colon N \to J_S$
by $\ps (x) = \big( \bt_k (x) \big)_{k \in R}.$
Temporarily fix $g \in G.$
There is a bijection $\sm \colon R \to R$
and a function $\et \colon R \to H$
such that $g k = \sm (k) \et (k)$ for all $k \in R.$
For $l \in R,$
we then get
$\bt_g \big( \big( \bt_k (x) \big)_{k \in R} \big)_{\sm (l)}
  = \bt_{\sm (l) \et (l)} (x).$
Therefore
\begin{equation}\label{Eq_3721_Str}
\bt_g \big( (x_k)_{k \in R} \big)
 = \big( \bt_{k \et ( \sm^{-1} (k))} (x) \big)_{k \in R}.
\end{equation}
It is now clear that if $x \in N^{\bt |_H},$
then $\bt_g (\ps (x)) = \ps (x).$
Now suppose that $x \in N$ and $\bt_g (\ps (x)) = \ps (x)$
for all $g \in G.$
Let $h \in H$; we show $\bt_h (x) = x.$
There is a unique element $k_0 \in R \cap H.$
Set $g = h k_0^{-1} \in H.$
Let $\sm$ and $\et$ be as above.
Then $\sm (k_0) = k_0,$ so $\sm^{-1} (k_0) = k_0,$
and $\et (k_0) = k_0^{-1} h.$
Taking $k = k_0$ in~(\ref{Eq_3721_Str})
gives $\bt_h (x) = x.$
So $\ps (x) \in (J_S)^{\bt}$ \ifo{} $x \in N^{\bt |_H},$
and the claim follows.

Since $M$ is an ideal in $\bigoplus_{g \in R} \bt_g (N),$
there are ideals $M_g \S N$ for $g \in \R$
such that $M = \bigoplus_{g \in R} \bt_g (M_g).$
Since $M \neq \{ 0 \},$
there is $g \in R$ such that $M_g \neq \{ 0 \}.$
Then $L = M_g^{\bt |_H}$ is a nonzero ideal in~$N,$
hence in~$B^{\bt},$
and $\bt_g |_L$ is an injective \hm{}
from $L$ to~$M.$
This completes the proof.
\end{proof}

\begin{lem}\label{L_3721_SbQt}
Let $\af \colon G \to \Aut (A)$ be an action
of a finite group~$G$ on a \ca~$A,$
and let $C$ be a nonzero subquotient of~$A.$
Then there exists a nonzero subquotient of~$A^{\af}$
which is isomorphic to a subalgebra of~$C.$
\end{lem}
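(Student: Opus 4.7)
My plan is to split into two cases based on the largest $\alpha$-invariant ideal $\hat J := \bigcap_{g \in G}\alpha_g(J)$ of $A$ contained in $J$. The main preliminary tool is a fullness observation: for any action of a finite group on a \ca~$B$, the averaging $b \mapsto \tfrac{1}{|G|}\sum_g \alpha_g(b)$ dominates $\tfrac{1}{|G|}b$ on positive elements, so $B^\alpha$ generates $B$ as an ideal and cannot lie inside any proper ideal of~$B$.

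In the case $\hat J \not\subset I$, the ideal $(\hat J)^\alpha = \hat J \cap A^\alpha$ of $A^\alpha$ does not lie in the proper ideal $\hat J \cap I$ of $\hat J$ by the fullness observation applied to~$\hat J$. Hence the composition $(\hat J)^\alpha \hookrightarrow \hat J \S J \twoheadrightarrow J/I$ induces an injective \hm{} from the nonzero subquotient $(\hat J)^\alpha/((\hat J)^\alpha \cap I)$ of $A^\alpha$ into~$J/I$.

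In the case $\hat J \S I$, I first quotient $A$ by $\hat J$, using Lemma~1.6 of~\cite{Ph-EqSj} to identify $(A/\hat J)^\alpha$ as a quotient of $A^\alpha$, so that I may assume $\hat J = 0$. I then apply Lemma~\ref{L_3721_DSum} with the ideal $J$ in the role of the ideal ``$I$'' of that lemma. Concretely, I choose $S \in \mathcal{S}_G$ of maximal cardinality such that $N := \bigcap_{h \in S}\alpha_h(J)$ is nonzero and $N \not\subset I$; maximality forces $J_S^- = 0$ and the decomposition $J_S = \bigoplus_{g \in R}\alpha_g(N)$ of Lemma~5.3.3(5) of~\cite{Ph1}, in which $N$ is a summand and thus a nonzero ideal of $J_S = J_S/J_S^-$. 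Taking $M := N$, Lemma~\ref{L_3721_DSum} produces a nonzero subquotient $L$ of $A^\alpha$ together with an injective \hm{} $L \hookrightarrow M \S J$. Composing with $J \twoheadrightarrow J/I$ and using $N \not\subset I$ together with a second application of the fullness observation inside~$N$ (to see that the kernel of this composition is a proper ideal of $L$) gives the desired embedding of a nonzero subquotient of $L$, hence of $A^\alpha$, into~$J/I$.

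The main obstacle will be the combinatorial step in the second case: establishing that an $S$ of maximal cardinality can be chosen with $N \not\subset I$. The existence of some $S$ with $N \ne 0$ is immediate ($S = \{1\}$ gives $N = J$), but ensuring simultaneously that $S$ is maximal (so $J_S^- = 0$) and that $N$ avoids $I$ requires tracking the orbit of $S$ under the $G$-action: if every maximal choice yielded $N \S I$, then the $\alpha$-invariant ideal $\sum_{\text{maximal }S} N_S$ would lie in $\tilde I = \sum_g \alpha_g(I)$, and one derives a contradiction from $\hat J = 0$ together with $J \not\subset I$ by tracing through the lattice of intersections.
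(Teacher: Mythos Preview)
Your Case~1 is correct: the fullness observation does what you claim, and $(\hat J)^{\af}/((\hat J)^{\af} \cap I)$ is a legitimate subquotient of~$A^{\af}$ embedding in~$J/I$.

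Case~2 fails. Your assertion that ``maximality forces $J_S^{-} = 0$'' is wrong under your stated condition (maximal cardinality with $N \neq 0$ \emph{and} $N \not\subset I$): adding some $g \notin S$ may give $N_{S \cup \{g\}}$ nonzero but contained in~$I$, so $J_{S \cup \{g\}}$ and hence $J_S^{-}$ can be nonzero. If instead you mean $S$ maximal for $N_S \neq 0$ alone and then hope that some such $S$ satisfies $N_S \not\subset I$ (as your final paragraph suggests), that hope is unfounded. Take $G = \Z_3$ acting on $A = C(\Z_3 \times \{a, b\}) \cong \C^6$ by translation in the first coordinate, let $J$ correspond to $Y = \{(0,a), (0,b), (1,b)\}$, and let $I$ correspond to $Z = \{(0,b), (1,b)\}$. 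Then $\hat J = 0$ and $I \subsetneq J$, but the only $S$'s maximal for $N_S \neq 0$ are $\{0,1\}$ and $\{0,2\}$, giving $N_{\{0,1\}} = \{(1,b)\}$ and $N_{\{0,2\}} = \{(0,b)\}$, both contained in~$I$. The point $(0,a)$ lies in no maximal $N_S$; your sketched contradiction cannot be completed because the sum of the $J_S$'s over maximal~$S$ simply need not contain~$J$.

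The paper sidesteps this by splitting on the opposite end: rather than the largest invariant ideal $\hat J \subset J$, it uses the smallest invariant ideal $I_{\{1\}} = \sum_g \af_g(I) \supset I$, distinguishing the cases $J \not\subset I_{\{1\}}$ and $J \subset I_{\{1\}}$. In both cases the key step is Lemma~\ref{L_3724_Comp}, a downward induction over~$\mathcal{S}_G$ which guarantees the existence of some $S$ with $(I_S \cap I) + I_S^{-} \neq (I_S \cap J) + I_S^{-}$; one then works in $I_S/I_S^{-}$ via Lemma~\ref{L_3721_DSum} and the direct sum decomposition. Crucially, this argument never requires $I_S^{-} = 0$: it only needs the \emph{images} of $I$ and $J$ in $I_S/I_S^{-}$ to differ, which is exactly what Lemma~\ref{L_3724_Comp} delivers. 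Your approach lacks an analog of this lemma.
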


\begin{proof}
Choose ideals $I, J \S A$ such that $I \S J$ and $J / I = C.$
We use Lemma 5.3.3 of~\cite{Ph1},
and we follow the notation before Lemma~\ref{L_3721_DSum}.

First assume $J \not\subset I_{ \{ 1 \} }.$
Then $J / ( J \cap I_{ \{ 1 \} })$ is a nonzero subquotient of~$A.$
Since $I \S I_{ \{ 1 \} }$
(Lemma 5.3.3(3) of~\cite{Ph1}),
it suffices to prove that
there is a nonzero subquotient of~$A^{\af}$
which is isomorphic to a subquotient of $J / ( J \cap I_{ \{ 1 \} }).$
Since $I_{ \{ 1 \} }$ is $\af$-invariant
(Lemma 5.3.3(1) of~\cite{Ph1}),
we simplify the notation by defining
$B = A / I_{ \{ 1 \} }$
and $M = (J + I_{ \{ 1 \} }) / I_{ \{ 1 \} },$
and letting $\bt \colon G \to \Aut (B)$
be the action induced by~$\af.$
Then $M \cong J / ( J \cap I_{ \{ 1 \} }),$
and $B^{\bt} \cong A^{\af} / (I_{ \{ 1 \} } \cap A^{\af})$
by Lemma~1.6 of~\cite{Ph-EqSj}.
Since
$M \subset M_{ \{ 1 \} }$
(Lemma 5.3.3(3) of~\cite{Ph1})
and $M \neq \{ 0 \},$
Lemma~\ref{L_3724_Comp}
provides $S \in {\mathcal{S}}_G$
such that
$(M_S \cap \{ 0 \}) + M_S^{-} \subsetneq (M_S \cap M) + M_S^{-}.$
Then $M_S^{-} \subsetneq (M_S \cap M) + M_S^{-},$
so $M_S^{-} \cap M \subsetneq M_S \cap M.$
Lemma~\ref{L_3721_DSum} provides a nonzero subquotient of~$B^{\bt}$
which is isomorphic to
a subalgebra of the subquotient $(M_S \cap M) / (M_S^{-} \cap M)$
of~$M.$
The conclusion follows in this case.

Now assume $J \subset I_{ \{ 1 \} }.$
Since $I \subsetneq J,$
Lemma~\ref{L_3724_Comp}
provides $S \in {\mathcal{S}}_G$
such that
$(I_S \cap I) + I_S^{-} \subsetneq (I_S \cap J) + I_S^{-}.$
By Lemma 5.3.3(5) of~\cite{Ph1},
there is a subgroup $H \subset G,$
an $H$-invariant ideal $N \subset J_S,$
a system $R$ of left coset representatives for $H$ in~$G,$
and a subset $R_0 \subset R,$
such that we have internal direct sum decompositions
\[
I_S / I_S^{-} = \bigoplus_{g \in R} \bt_g (N)
\andeqn
[(I \cap I_S) + I_S^{-}] / I_S^{-} = \bigoplus_{g \in R_0} \bt_g (N).
\]
Since $[ (I_S \cap J) + I_S^{-} ] / I_S^{-}$
is an ideal in $I_S / I_S^{-}$
which strictly contains $[ (I \cap I_S) + I_S^{-} ] / I_S^{-},$
there is a nonzero ideal
$M \S \bigoplus_{g \in R \SM R_0} \bt_g (N) \S I_S / I_S^{-}$
such that
\[
[(I_S \cap I) + I_S^{-}] / I_S^{-} \oplus M
  = [(I_S \cap J) + I_S^{-}] / I_S^{-}.
\]
Then Lemma~\ref{L_3721_DSum} provides a nonzero subquotient of~$B^{\bt}$
which is isomorphic to
a subalgebra of $M.$
Moreover,
$M$ is isomorphic to an ideal in
\begin{align*}
\big[ [(I_S \cap J) + I_S^{-}] / I_S^{-} \big] \big/
  \big[ [(I_S \cap I) + I_S^{-}] / I_S^{-} \big]
& \cong [(I_S \cap J) + I_S^{-}] / [(I_S \cap I) + I_S^{-}]
\\
& \cong (I_S \cap J) \big/
    \big[ [(I_S \cap I) + I_S^{-}] \cap I_S \cap J \big]
\\
& = (I_S \cap J) / (I_S \cap I),
\end{align*}
which is a subquotient of $J / I.$
This completes the proof.
\end{proof}

\begin{thm}\label{T_3726_FixWkId}
Let $\af \colon G \to \Aut (A)$ be an action
of a finite group~$G$ on a \ca~$A.$
If $A^{\af}$ has the weak ideal property,
or if every nonzero subquotient of $A^{\af}$
contains a nonzero \pj,
then the same is true of~$A.$
\end{thm}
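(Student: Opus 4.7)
The plan is to treat the second (intermediate) property directly from Lemma~\ref{L_3721_SbQt}, and then deduce the weak ideal property by applying that case to the action $\id_K \otimes \af$ on $K \otimes A.$

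First I would handle the intermediate property. Let $C$ be a nonzero subquotient of~$A.$ By Lemma~\ref{L_3721_SbQt}, there is a nonzero subquotient $D$ of~$A^{\af}$ together with an injective \hm{} $\iota \colon D \to C.$ The hypothesis gives a nonzero \pj{} $p \in D,$ and then $\iota (p)$ is a nonzero \pj{} in~$C.$ Since $C$ was arbitrary, every nonzero subquotient of~$A$ contains a nonzero \pj.

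Next I would reduce the weak ideal property case to the case just handled. Consider the action $\bt \colon G \to \Aut ( K \otimes A )$ defined by $\bt_g = \id_K \otimes \af_g$ for $g \in G.$ Its fixed point algebra is
\[
( K \otimes A )^{\bt} = K \otimes A^{\af}.
\]
By hypothesis, $A^{\af}$ has the weak ideal property, which by Definition~\ref{D-WIP2710} means exactly that every nonzero subquotient of $K \otimes A^{\af}$ contains a nonzero \pj. Thus the fixed point algebra $( K \otimes A )^{\bt}$ satisfies the intermediate condition. Applying the case already proved to the action $\bt$ of $G$ on $K \otimes A,$ we conclude that every nonzero subquotient of $K \otimes A$ contains a nonzero \pj. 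Unwinding the definition once more, this says that $A$ has the weak ideal property.

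There is essentially no obstacle beyond the correct bookkeeping: the content is entirely packaged inside Lemma~\ref{L_3721_SbQt}, and the main thing to verify is the harmless identity $( K \otimes A )^{\id_K \otimes \af} = K \otimes A^{\af},$ so that the intermediate property transfers between $A^{\af}$ and the fixed point algebra of the amplified action. Once this is in place, both implications are a one-line application of the lemma. If anything is subtle, it is only that the lemma must be applied to the $G$-action on $K \otimes A,$ not on~$A$ itself, so that $K$ appears on both sides of the fixed point computation and the definition of the weak ideal property can be invoked cleanly.
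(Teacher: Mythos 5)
Your proposal is correct and follows exactly the paper's own argument: the intermediate property is immediate from Lemma~\ref{L_3721_SbQt}, and the weak ideal property is obtained by applying that case to the amplified action on $K \otimes A,$ whose fixed point algebra is $K \otimes A^{\af}.$ The paper compresses this into the phrase ``follows by tensoring with~$K$''; your version just spells out the same bookkeeping.
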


\begin{proof}
For the condition
that every nonzero subquotient of $A$
contains a nonzero \pj,
the result is immediate from Lemma~\ref{L_3721_SbQt}.
The result for the weak ideal property follows by tensoring with~$K.$
\end{proof}

\begin{cor}\label{C_3726_CrPdWkId}
Let $\af \colon G \to \Aut (A)$ be an action
of a finite abelian group~$G$ on a \ca~$A.$
If $A$ has the weak ideal property,
or if every nonzero subquotient of $A$
contains a nonzero \pj,
then the same is true of $C^* (G, A, \af).$
\end{cor}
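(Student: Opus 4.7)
The plan is to deduce Corollary \ref{C_3726_CrPdWkId} from Theorem \ref{T_3726_FixWkId} by the standard duality argument for crossed products by finite abelian groups, exactly in parallel with how Corollary \ref{C_3719_CCZ2CP} was deduced from Theorem \ref{T_3719_CCZ2}.

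First, since $G$ is finite abelian, the Pontryagin dual $\widehat{G}$ is again a finite (abelian) group, and the dual action $\widehat{\af} \colon \widehat{G} \to \Aut( C^*(G, A, \af) )$ is defined. The key fact is that the fixed point algebra of the dual action coincides with the canonical copy of $A$ inside $C^*(G,A,\af)$; equivalently, an element $x = \sum_{g \in G} a_g u_g \in C^*(G,A,\af)$ is fixed by $\widehat{\af}$ \ifo{} $a_g = 0$ for all $g \neq 1$, since $\widehat{\af}_\tau(u_g) = \tau(g) u_g$. Thus
\[
  \big( C^*(G, A, \af) \big)^{\widehat{\af}} \;\cong\; A.
\]

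Now apply Theorem~\ref{T_3726_FixWkId} to the action $\widehat{\af}$ of the finite group $\widehat{G}$ on the \ca{} $C^*(G,A,\af)$. Since the fixed point algebra is $A$, and we are assuming that $A$ has the weak ideal property (respectively, that every nonzero subquotient of $A$ contains a nonzero \pj), Theorem~\ref{T_3726_FixWkId} yields the same conclusion for $C^*(G,A,\af)$.

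There is essentially no obstacle: the only thing one needs to check is the identification of the fixed point algebra of the dual action with $A$, which is standard, and is where the hypothesis that $G$ be abelian is used (so that the dual group exists as a group and one can form the dual action in the usual sense). This is also exactly the reason for the abelian hypothesis flagged in the introduction: the crossed product results are derived from the corresponding fixed point algebra results via duality, and duality requires the group to be abelian.
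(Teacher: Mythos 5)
Your proposal is correct and follows exactly the paper's own argument: apply Theorem~\ref{T_3726_FixWkId} to the dual action ${\widehat{\af}} \colon {\widehat{G}} \to \Aut ( C^* (G, A, \af) ),$ whose fixed point algebra is the canonical copy of~$A.$ The extra detail you supply on identifying that fixed point algebra is standard and consistent with what the paper leaves implicit.
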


\begin{proof}
Apply Theorem~\ref{T_3726_FixWkId}
to the dual action
${\widehat{\af}} \colon {\widehat{G}} \to \Aut ( C^* (G, A, \af) ).$
\end{proof}

\begin{qst}\label{Q_3727_NonAb}
Does Corollary~\ref{C_3726_CrPdWkId}
hold for finite nonabelian groups?
\end{qst}

\end{document}